\newcommand*{\doublerightarrow}[2]{\mathrel{
  \settowidth{\@tempdima}{$\scriptstyle#1$}
  \settowidth{\@tempdimb}{$\scriptstyle#2$}
  \ifdim\@tempdimb>\@tempdima \@tempdima=\@tempdimb\fi
  \mathop{\vcenter{
    \offinterlineskip\ialign{\hbox to\dimexpr\@tempdima+1em{##}\cr
    \rightarrowfill\cr\noalign{\kern.5ex}
    \rightarrowfill\cr}}}\limits^{\!#1}_{\!#2}}}
\newcommand*{\triplerightarrow}[1]{\mathrel{
  \settowidth{\@tempdima}{$\scriptstyle#1$}
  \mathop{\vcenter{
    \offinterlineskip\ialign{\hbox to\dimexpr\@tempdima+1em{##}\cr
    \rightarrowfill\cr\noalign{\kern.5ex}
    \rightarrowfill\cr\noalign{\kern.5ex}
    \rightarrowfill\cr}}}\limits^{\!#1}}}
\numberwithin{equation}{section} \DeclareMathSizes{2}{10}{12}{13}
\newtheorem{thm}{Proposition}[section]
\newtheorem{Thm}[thm]{Theorem}
\newtheorem{rem}[thm]{Remark}
\newtheorem{cor}[thm]{Corollary}
\newtheorem{lem}[thm]{Lemma}
\newtheorem{defn}[thm]{Definition}
\newtheorem{Thrm}{Theorem}
\title{Entwined modules over representations of categories
}
\author{Abhishek Banerjee \footnote{Department of Mathematics, Indian Institute of Science, Bangalore,  India. Email: abhishekbanerjee1313@gmail.com} \footnote{partially supported by SERB Matrics fellowship MTR/2017/000112} }
\date{ }
\begin{document}

\maketitle

\medskip

\begin{abstract}
We introduce  a theory of modules over a representation of a small category taking values in entwining structures over a semiperfect coalgebra. This takes forward the aim of developing  categories of entwined modules to the same extent as that of module categories as well as the philosophy of Mitchell of working with rings with several objects. The representations are motivated by work of Estrada and Virili, who developed a theory of modules over a representation taking values in small preadditive categories, which were then studied in the same spirit as sheaves of modules over a scheme. We also describe, by means of Frobenius and separable functors, how our theory relates
to that of modules over the underlying representation taking values in small $K$-linear categories.
\end{abstract}

\medskip

{\emph{{\bf\emph{MSC(2020) Subject Classification:} } 16T15, 18E05, 18E10}}

\medskip

{ \emph{{\bf \emph{Keywords:}} Rings with several objects, entwined modules, separable functors, Frobenius pairs}} 

\medskip

\section{Introduction}

The purpose of this paper is to study modules over representations of a small category taking values in spaces that behave like quotients of  categorified fiber bundles. Let $H$ be a Hopf algebra having a coaction $\rho:A\longrightarrow A\otimes H$ on an algebra $A$ such that $A$ becomes an $H$-comodule algebra. Let $B$ denote the algebra of coinvariants of this coaction. 
Suppose that the inclusion $B\hookrightarrow A$ is faithfully flat and the canonical morphism
\begin{equation*}
can: A\otimes_BA\longrightarrow A\otimes H \qquad x\otimes y\mapsto x\cdot \rho(y)
\end{equation*}
is an isomorphism. This  datum is the algebraic counterpart of  a principal fiber bundle given by the quotient of an affine algebraic group scheme acting freely on an affine scheme over a field $K$ (see, for instance, \cite{MF}, \cite{Schn}). If $H$ has bijective antipode, then modules over the algebra $B$ of coinvariants may be recovered as ``$(A,H)$-Hopf modules'' (see Schneider \cite{Schn}). 

\smallskip
These $(A,H)$-Hopf modules may be rolled into the more general concept of modules over an `entwining structure' consisting of an algebra $R$, a coalgebra $C$
and a morphism $\psi:C\otimes R\longrightarrow R\otimes C$ satisfying certain conditions. Entwining structures were introduced by Brzezi\'{n}ski and Majid \cite{BrMj}. It was soon realized (see 
Brzezi\'{n}ski \cite{Brx1}) that  entwining structures provide a single formalism that unifies relative Hopf modules, Doi-Hopf modules, Yetter-Drinfeld modules and several other concepts such as coalgebra Galois extensions. As pointed out in  Brzezi\'{n}ski \cite{Brx2}, an entwining structure $(R,C,\psi)$ behaves like a single bialgebra, or more generally a comodule algebra
over a bialgebra. Accordingly, the investigation of entwining structures as well as the modules over them has emerged as an object of study in its own right (see, for instance, \cite{Abu}, \cite{BBR0}, \cite{BBR}, \cite{Brx1}, \cite{Brx3}, \cite{BuTa2}, \cite{BuTa1}, \cite{CaDe}, \cite{HP}, \cite{Jia}, \cite{Schb}). 

\smallskip
We consider an entwining structure consisting of a small $K$-linear category $\mathcal R$, a coalgebra $C$ and a family of morphisms
\begin{equation*}
\psi=\{\psi_{rs}:C\otimes \mathcal R(r,s)\longrightarrow \mathcal R(r,s)\otimes C\}_{r,s\in \mathcal R}
\end{equation*} satisfying certain conditions (see Definition \ref{entcatx}). This is in keeping with the general philosophy of Mitchell \cite{Mit}, where a small
$K$-linear category is viewed as a $K$-algebra with several objects. In fact, we consider the category $\mathscr Ent$ of such entwining structures. When the coalgebra $C$ is fixed, we have the subcategory $\mathscr Ent_C$. Given an entwining structure $(\mathcal R,C,\psi)$, we have a category $\mathbf M^C_{\mathcal R}(\psi)$ of  modules over it (see our earlier work in
\cite{BBR}). These entwined modules over $(\mathcal R,C,\psi)$ may be seen as modules over a certain categorical quotient space of $\mathcal R$, which need not exist in an explicit
sense, but is studied only through its category of modules.

\smallskip
We work with representations $\mathscr R:\mathscr X\longrightarrow \mathscr Ent_C$ of a small category $\mathscr X$ taking values in $\mathscr Ent_C$, where $C$ is a fixed coalgebra. This is motivated by the work of Estrada and  Virili \cite{EV}, who introduced a theory of modules over a representation $\mathscr A:\mathscr X\longrightarrow Add$, where 
$Add$ is the category of small preadditive categories. The modules over $\mathscr A:\mathscr X\longrightarrow Add$ were studied in the spirit of sheaves of modules over a scheme, or more generally, a ringed space. By considering small preaditive categories, the authors in \cite{EV} also intended to take Mitchell's idea one step forward: from replacing rings with small preadditive categories to replacing ring representations by representations taking values in small preadditive categories. In this paper, we develop a theory of modules over a representation
$\mathscr R:\mathscr X\longrightarrow \mathscr Ent_C$ taking values in entwining structures. We also describe, by means of Frobenius and separable functors, how this theory relates
to that of modules over the underlying representation taking values in small $K$-linear categories.

\smallskip
This paper has two parts. In the first part, we introduce and develop the properties of the category $Mod^C-\mathscr R$ of modules over $\mathscr R:\mathscr X\longrightarrow \mathscr Ent_C$. For this, we have to combine techniques on comodules along with adapting the methods of Estrada and Virili \cite{EV}.   When $\mathscr R:\mathscr X\longrightarrow \mathscr Ent_C$ is a flat representation (see Section 6), we also consider the subcategory $Cart-\mathscr R$ of cartesian entwined modules over $\mathscr R$. In the analogy with sheaves of modules over a scheme, the cartesian objects may be seen as similar to quasi-coherent sheaves. 

\smallskip
Let $\mathscr Lin$ be the category of small $K$-linear categories. In the second part, we consider the underlying representation $\mathscr R:\mathscr X\longrightarrow \mathscr Ent_C
\longrightarrow \mathscr Lin$, which we continue to denote by $\mathscr R$. Accordingly, we have a category $Mod-\mathscr R$ of modules over $\mathscr R:\mathscr X\longrightarrow \mathscr Ent_C
\longrightarrow \mathscr Lin$ in the sense of Estrada and Virili \cite{EV}.  We study the relation between $Mod^C-\mathscr R$ and $Mod-\mathscr R$ by describing Frobenius and separability conditions for a pair of adjoint functors between them (see Section 7)
\begin{equation*}
\mathscr F:Mod^C-\mathscr R\longrightarrow Mod-\mathscr R\qquad\qquad\qquad \mathscr G:Mod-\mathscr R\longrightarrow Mod^C-\mathscr R
\end{equation*} Here, the left adjoint $\mathscr F$ may be thought of as an `extension of scalars' and the right adjoint $\mathscr G$ as  a `restriction of scalars.'

\smallskip
The idea is as follows: as mentioned before,  modules  over an entwining structure $(\mathcal R,C,\psi)$ may be seen as modules over a certain categorical quotient space of $\mathcal R$, which behaves like a subcategory of $\mathcal R$. Again, this ``subcategory'' of $\mathcal R$ need not exist in an explicit sense, but is studied only
through the category of modules $\mathbf M^C_{\mathcal R}(\psi)$. Accordingly, a representation $\mathscr R:\mathscr X\longrightarrow \mathscr Ent_C$ taking values
in $\mathscr Ent_C$ may be thought of as a subfunctor of the underlying representation $ \mathscr R:\mathscr X\longrightarrow \mathscr Ent_C
\longrightarrow \mathscr Lin$. We want to understand the properties of the inclusion of this ``subfunctor'': in particular, whether it behaves like a separable, split or Frobenius
extension of rings. We recall here (see \cite[Theorem 1.2]{uni}) that if $R\longrightarrow S$ is an extension of rings, these properties may be expressed in terms of the functors
$F:Mod-R\longrightarrow Mod-S$ (extension of scalars) and $G:Mod-S\longrightarrow Mod-R$ (restriction of scalars) as follows
\begin{equation*}
\mbox{
\begin{tabular}{ccc}
$R\longrightarrow S$ split extension & $\qquad\qquad \Leftrightarrow\qquad\qquad$ & $F:Mod-R\longrightarrow Mod-S$ separable\\
$R\longrightarrow S$ separable extension & $\qquad\qquad \Leftrightarrow\qquad\qquad$  & $G:Mod-S\longrightarrow Mod-R$ separable\\
$R\longrightarrow S$ Frobenius extension  &  $\qquad\qquad \Leftrightarrow\qquad\qquad$  & $(F,G)$ Frobenius pair of functors\\
\end{tabular}}
\end{equation*}

\smallskip
We now describe the paper in more detail. Throughout,we let $K$ be a field. We begin in Section 2 by describing the categories of entwining structures and entwined modules. For a morphism
$(\alpha,\gamma):(\mathcal R,C,\psi)\longrightarrow (\mathcal S,D,\psi')$ of entwining structures, we describe `extension of scalars' and `restriction of scalars' on categories
of entwined modules. Our first result is as follows.

\begin{Thrm}\label{resulta} (see \ref{P2.2}, \ref{P2.3} and \ref{T2.5}) Let  $(\alpha,\gamma):(\mathcal R,C,\psi)\longrightarrow (\mathcal S,D,\psi')$ be a morphism of entwining structures.

\smallskip
(1) There is a  functor $(\alpha,\gamma)^\ast : \mathbf M_{\mathcal R}^C(\psi)\longrightarrow \mathbf M_{\mathcal S}^D(\psi')$ of extension of scalars.

\smallskip
(2)  Suppose that the coalgebra map $\gamma:C\longrightarrow D$ is also a monomorphism of vector spaces. Then, there is a  functor $(\alpha,\gamma)_\ast : \mathbf M_{\mathcal S}^D(\psi')\longrightarrow \mathbf M_{\mathcal R}^C(\psi)$ of restriction of scalars. Further, there is an adjunction of functors which is given by natural isomorphisms \begin{equation*}
\mathbf M_{\mathcal S}^D(\psi')((\alpha,\gamma)^*\mathcal M,\mathcal N)=\mathbf M_{\mathcal R}^C(\psi)(\mathcal M,(\alpha,\gamma)_*\mathcal N)
\end{equation*} for any $\mathcal M\in \mathbf M_{\mathcal R}^C(\psi)$ and $\mathcal N\in \mathbf M_{\mathcal S}^D(\psi')$.

\end{Thrm}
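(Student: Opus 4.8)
The plan is to build both functors by combining the classical module-theoretic operations along $\alpha$ with the comodule-theoretic operations along $\gamma$, and then to glue the two halves by means of the entwinings $\psi,\psi'$ together with the compatibility
$$(\alpha\otimes\gamma)\circ\psi_{rs}=\psi'_{\alpha(r)\alpha(s)}\circ(\gamma\otimes\alpha)\colon C\otimes\mathcal R(r,s)\longrightarrow \mathcal S(\alpha r,\alpha s)\otimes D$$
that is part of the datum of a morphism of entwining structures.

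For (1), I would first forget the coalgebra data and perform the usual extension of scalars of a right $\mathcal R$-module $\mathcal M$ along the $K$-linear functor $\alpha$, setting $(\alpha,\gamma)^*\mathcal M:=\mathcal M\otimes_{\mathcal R}\mathcal S$ with $\mathcal S$ regarded as an $(\mathcal R,\mathcal S)$-bimodule through $\alpha$. The point is to equip this with a $D$-coaction. Starting from the $C$-coaction on $\mathcal M$, I would push it forward along $\gamma$ and then transport the coalgebra factor across $\mathcal S$ by means of $\psi'$, i.e. use the assignment
$$m\otimes s\;\longmapsto\;\sum m_{(0)}\otimes s_{\psi'}\otimes\bigl(\gamma(m_{(1)})\bigr)^{\psi'},$$
in the Sweedler-type notation $\psi'(d\otimes s)=\sum s_{\psi'}\otimes d^{\psi'}$. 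The first thing to check is that this descends to the tensor product over $\mathcal R$, which uses the entwined-module axiom for $\mathcal M$ together with the displayed compatibility of $\psi$ and $\psi'$; the second is that the resulting map is coassociative and counital and satisfies the $\psi'$-entwining axiom, so that $(\alpha,\gamma)^*\mathcal M$ genuinely lands in $\mathbf M^D_{\mathcal S}(\psi')$. Functoriality is then routine.

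For (2), restriction of the module structure of $\mathcal N\in\mathbf M^D_{\mathcal S}(\psi')$ along $\alpha$ is immediate, so the content is again on the coalgebra side, and here I would use the hypothesis that $\gamma$ is a monomorphism. Identifying $C$ with the subcoalgebra $\gamma(C)\subseteq D$, I would define $(\alpha,\gamma)_*\mathcal N$ to be the largest part of $\mathcal N$ on which the $D$-coaction factors through $C$, namely
$$(\alpha,\gamma)_*\mathcal N:=\bigl\{\,n\in\mathcal N:\rho_{\mathcal N}(n)\in\mathcal N\otimes\gamma(C)\,\bigr\},$$
equivalently the cotensor product of $\mathcal N$ and $C$ over $D$. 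Injectivity of $\mathrm{id}_{\mathcal N}\otimes\gamma$ lets me pull $\rho_{\mathcal N}$ back to a genuine $C$-coaction, and coassociativity guarantees that the first tensor leg again lies in this subspace. The key step is to verify that the subspace is stable under the restricted $\mathcal R$-action and that it obeys the $\psi$-entwining axiom; both follow from the entwining axiom for $\mathcal N$ once one feeds in the compatibility of $\psi,\psi'$, which is what keeps the coaction of $n\cdot f$, for $f\in\mathcal R(r,s)$, inside $\mathcal N\otimes\gamma(C)$.

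Finally, for the adjunction I would assemble the two standard adjunctions — the tensor–hom adjunction $\mathrm{Hom}_{\mathcal S}(\mathcal M\otimes_{\mathcal R}\mathcal S,-)\cong\mathrm{Hom}_{\mathcal R}\bigl(\mathcal M,(-)|_{\mathcal R}\bigr)$ for modules and the corestriction–cotensor adjunction along $\gamma$ for comodules — and show they are compatible. Concretely, a morphism in $\mathbf M^D_{\mathcal S}(\psi')$ from $(\alpha,\gamma)^*\mathcal M$ to $\mathcal N$ is a map that is simultaneously $\mathcal S$-linear and $D$-colinear; I would check that under the module bijection its adjunct is $\mathcal R$-linear, and that $D$-colinearity forces the adjunct to take values in the subspace $(\alpha,\gamma)_*\mathcal N$ and to be $C$-colinear there. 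The main obstacle, and the place where the argument is more than a formal juxtaposition of two known adjunctions, is precisely this last compatibility: one must confirm that the bijection furnished by the module adjunction carries entwined-module morphisms to entwined-module morphisms in both directions, which amounts to checking that the unit and counit of the combined adjunction respect the $\psi$- and $\psi'$-coactions. This is where the entwining compatibility of $(\alpha,\gamma)$ is used decisively, and where the monomorphism hypothesis on $\gamma$ is exactly what makes the comodule half of the adjunction well-posed.
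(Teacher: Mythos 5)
Your proposal is correct and follows essentially the same route as the paper: extension of scalars is the ordinary module-theoretic $\alpha^*$ equipped with the $D$-coaction $m\otimes s\mapsto m_0\otimes s_{\psi'}\otimes\gamma(m_1)^{\psi'}$, restriction is coinduction along $\gamma$ combined with restriction along $\alpha$, and the adjunction is assembled from the module-level tensor--hom adjunction and the corestriction--cotensor adjunction. The only cosmetic difference is that you realize $(\alpha,\gamma)_*\mathcal N$ as the subspace $\{n:\rho(n)\in\mathcal N\otimes\gamma(C)\}$ rather than working directly with $\mathcal N(\alpha(r))\Box_D C$ as the paper does; since $\gamma$ is injective these agree (and your plain restricted action corresponds to the paper's $\psi$-twisted action $(n\otimes c)\cdot f=n\alpha(f_\psi)\otimes c^{\psi}$ under this identification), so the verifications you list are exactly those carried out in the paper.
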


In Section 3, we give conditions for the category $\mathbf M^C_{\mathcal R}(\psi)$ of modules over an entwining structure $(\mathcal R,C,\psi)$ to have projective generators. We recall that a $K$-coalgebra $C$ is said to be right semiperfect if the category of right $C$-comodules has enough projectives. 

\begin{Thrm}\label{resultb} (see \ref{T3.5}) Let $(\mathcal R,C,\psi)$ be an entwining structure and let $C$ be a right semiperfect $K$-coalgebra. Then, the category $\mathbf M_{\mathcal R}^C(\psi)$ of entwined modules is a Grothendieck category with a set of projective generators.
\end{Thrm}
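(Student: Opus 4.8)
The plan is to inherit the abelian and Grothendieck structure of $\mathbf M_{\mathcal R}^C(\psi)$ from the well-understood category $\mathbf M^C$ of right $C$-comodules, and then to manufacture projective generators through a free/induction adjunction into which the semiperfectness hypothesis is fed as a black box. First I would fix the forgetful functor $U:\mathbf M_{\mathcal R}^C(\psi)\longrightarrow \prod_{r\in\mathcal R}\mathbf M^C$ sending an entwined module $\mathcal M$ to the family $\{\mathcal M(r)\}_{r\in\mathcal R}$ of its underlying right $C$-comodules, discarding the $\mathcal R$-action. The point is that $U$ creates kernels, cokernels and arbitrary colimits: one checks directly that the $\psi$-compatible action maps descend to kernels and cokernels and extend canonically along colimits, so these are again entwined modules. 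Since $\mathbf M^C$ is Grothendieck, the product $\prod_{r}\mathbf M^C$ over the set of objects of $\mathcal R$ is Grothendieck, hence abelian, satisfies AB5 (filtered colimits are computed and are exact componentwise), and has a set of generators. As $U$ is exact, faithful and creates these colimits, it follows that $\mathbf M_{\mathcal R}^C(\psi)$ is abelian and satisfies AB5; the generator is produced together with projectivity in the last step.

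Next I would construct the left adjoint $F$ of $U$, the induction functor, by setting $F(\{N_r\})(s)=\bigoplus_{r}N_r\otimes\mathcal R(r,s)$ with $\mathcal R$-action given by composition in $\mathcal R$ and $C$-coaction obtained by transporting the coactions on the $N_r$ through the maps $\psi_{rs}$. Here the work is to verify the entwining/compatibility axioms, so that $F(\{N_r\})$ is a genuine object of $\mathbf M_{\mathcal R}^C(\psi)$, and to establish the natural isomorphism exhibiting $F\dashv U$. Three consequences then drive the argument: $F$ preserves projectives because its right adjoint $U$ is exact; $F$ preserves epimorphisms because it is a left adjoint; and the counit $FU\mathcal M\longrightarrow\mathcal M$ is an epimorphism, since the component of the action map coming from $\mathrm{id}_s\in\mathcal R(s,s)$ is the identity on $\mathcal M(s)$, so every entwined module is a quotient of an induced one.

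Finally I would invoke the hypothesis. Because $C$ is right semiperfect, $\mathbf M^C$ has enough projectives; being Grothendieck it has a generator, and covering that generator by a projective yields a projective generator $P$ of $\mathbf M^C$. For each object $r$, write $P^{(r)}$ for the family with $P$ in slot $r$ and $0$ elsewhere, and put $\mathcal P_r:=F(P^{(r)})$; each $\mathcal P_r$ is projective since $P^{(r)}$ is projective in $\prod_r\mathbf M^C$ and $F$ preserves projectives. Given any $\mathcal M$, the generating property of $P$ in each slot gives an epimorphism from a coproduct of copies of the $P^{(r)}$ onto $U\mathcal M$; applying $F$ and composing with the counit produces an epimorphism from a coproduct of $\mathcal P_r$'s onto $\mathcal M$. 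Thus $\{\mathcal P_r\}_{r\in\mathcal R}$ is a set of projective generators, which also supplies the generator needed above and completes the proof. The main obstacle is the second paragraph: the several-objects bookkeeping that combines the $\mathcal R$-action with the $\psi$-twisted coaction, namely checking that $F(\{N_r\})$ satisfies the entwining axioms and that $F\dashv U$ with epimorphic counit. The semiperfect input is used only to guarantee enough projectives in $\mathbf M^C$.
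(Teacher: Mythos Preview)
Your proposal is correct and arrives at the same projective generators as the paper, namely the objects $V\otimes H_r$ for $r\in\mathcal R$ and $V$ a projective $C$-comodule, but the route is packaged differently. The paper does not set up the adjunction $F\dashv U$ with $\prod_{r\in\mathcal R}\mathbf M^C$ explicitly; instead it constructs $V\otimes H_r$ by hand (Lemma~\ref{L3.3}) and proves projectivity by a direct lifting argument (Proposition~\ref{P3.4}): given an epimorphism $\eta:\mathcal N\twoheadrightarrow\mathcal M$ and a map $\zeta:V\otimes H_r\to\mathcal M$, one restricts to $V\to\mathcal M(r)$, lifts to $V\to\mathcal N(r)$ using projectivity of $V$ in $\mathbf M^C$, and then extends along $H_r$. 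Your adjunction argument gives the same conclusion more conceptually: projectivity of $F(P^{(r)})=P\otimes H_r$ follows because $U$ is exact. What the paper's explicit approach buys is the refinement to \emph{finite-dimensional} projective comodules $V$, using that every comodule is a union of finite-dimensional subcomodules and that semiperfectness yields finite-dimensional projective covers; this matters later (Proposition~\ref{P6.1}) where the paper needs $V\otimes H_r$ to be finitely generated. Your single projective generator $P$ suffices for the present statement but would need refinement for those later arguments. One minor slip: for right $\mathcal R$-modules in the paper's convention you want $F(\{N_r\})(s)=\bigoplus_r N_r\otimes\mathcal R(s,r)$, not $\mathcal R(r,s)$.
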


In Section 4, we fix a coalgebra $C$. We introduce the category $Mod^C-\mathscr R$ of modules over a representation $\mathscr R:\mathscr X\longrightarrow \mathscr Ent_C$, which is our main object of study. Our first purpose is to show that $Mod^C-\mathscr R$  is a Grothendieck category.

\begin{Thrm}\label{resultc} (see \ref{T4.9}) Let $C$ be a right semiperfect coalgebra over a field $K$. Let $\mathscr R:\mathscr X\longrightarrow\mathscr Ent_C$ be an entwined $C$-representation of a small category $\mathscr X$. Then, the category $Mod^C-\mathscr R$
of entwined modules over $\mathscr R$ is a Grothendieck category.

\end{Thrm}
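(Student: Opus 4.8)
The plan is to prove Theorem~\ref{resultc} by assembling the category $Mod^C-\mathscr R$ as a suitable colimit / Grothendieck construction over the base category $\mathscr X$, and then verifying the Grothendieck axioms (AB5, existence of a generator) by leveraging the fiberwise results already established. Concretely, for each object $x\in\mathscr X$ the representation $\mathscr R$ assigns an entwining structure $(\mathcal R_x,C,\psi_x)$, and by Theorem~\ref{resultb} each fiber category $\mathbf M^C_{\mathcal R_x}(\psi_x)$ is a Grothendieck category with a set of projective generators (this is where the right semiperfectness of $C$ is consumed). A module over $\mathscr R$ should consist of a family of entwined modules $\mathcal M_x\in\mathbf M^C_{\mathcal R_x}(\psi_x)$ together with, for each morphism $f:x\to y$ in $\mathscr X$, a structure map compatible with the extension-of-scalars functor $(\mathscr R(f))^\ast$ coming from Theorem~\ref{resulta}(1), all satisfying the evident functoriality (identity and composition) conditions.

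First I would record the precise definition of $Mod^C-\mathscr R$ from Section~4 and identify the forgetful/evaluation functors $ev_x:Mod^C-\mathscr R\to\mathbf M^C_{\mathcal R_x}(\psi_x)$ for each $x\in\mathscr X$. The strategy is to show that $Mod^C-\mathscr R$ is abelian with limits and colimits computed \emph{pointwise}, i.e.\ $ev_x$ is exact and preserves all limits and colimits. Kernels, cokernels, direct sums and filtered colimits are formed objectwise in each fiber, and the structure maps are induced since $(\mathscr R(f))^\ast$, being a left adjoint by Theorem~\ref{resulta}(2), preserves colimits; this is what lets the structure maps of a colimit diagram be reconstructed. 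I would then verify exactness of filtered colimits (AB5) fiberwise: since each $\mathbf M^C_{\mathcal R_x}(\psi_x)$ is Grothendieck, filtered colimits are exact there, and because the evaluation functors jointly reflect exactness, AB5 lifts to $Mod^C-\mathscr R$.

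The remaining and most substantial step is the construction of a generator. The natural candidate is built from the fiberwise projective generators: for each $x$ one takes the projective generators $\{P_{x,i}\}$ of $\mathbf M^C_{\mathcal R_x}(\psi_x)$ provided by Theorem~\ref{resultb}, and then forms their ``free'' or ``induced'' extensions along the morphisms out of $x$, i.e.\ left Kan extension type objects $L_x(P_{x,i})$ whose value at $y$ aggregates the extension-of-scalars images $(\mathscr R(f))^\ast P_{x,i}$ over morphisms $f:x\to y$. By the adjunction in Theorem~\ref{resulta}(2), the $ev_x$ functor has a left adjoint $L_x$, and one shows $\mathrm{Hom}_{Mod^C-\mathscr R}(L_x(P_{x,i}),-)\cong\mathrm{Hom}_{\mathbf M^C_{\mathcal R_x}(\psi_x)}(P_{x,i},ev_x(-))$. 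A morphism in $Mod^C-\mathscr R$ is zero iff it is zero on every fiber, and the fiberwise generators detect nonzero maps there; hence the set $\{L_x(P_{x,i})\}_{x\in\mathscr X,\,i}$ is a (set-indexed, since $\mathscr X$ is small) generating set, whose coproduct is a generator.

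The main obstacle I anticipate is twofold. First, one must check that the adjunction $L_x\dashv ev_x$ exists and that $L_x$ sends projectives to objects whose corepresented functors are exact enough to serve as generators; this requires the monomorphism hypothesis on $\gamma$ in Theorem~\ref{resulta}(2) to hold for the structure maps $\mathscr R(f)$, so I would either verify that morphisms in $\mathscr Ent_C$ (where $C$ is fixed, so $\gamma=\mathrm{id}_C$) automatically satisfy it---which they should, since fixing $C$ forces $\gamma$ to be the identity and hence a monomorphism---or restrict to that setting. Second, the bookkeeping of the structure maps under colimits must be handled carefully to ensure that pointwise colimits indeed carry a canonical $\mathscr R$-module structure; the key enabling fact is that each $(\mathscr R(f))^\ast$ is a left adjoint and thus cocontinuous, so no obstruction arises at the level of colimits. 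Once the generator is in hand, the Grothendieck property follows by combining AB5 with the generating set.
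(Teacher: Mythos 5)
Your proposal is correct in substance, but the generator step takes a genuinely different route from the paper. For Theorem \ref{T4.9} the paper does \emph{not} use the left adjoints to the evaluation functors: it runs a cardinality argument. Given $\mathscr M$ and an element $m\in\mathscr M_x(r)$, it produces a morphism $\eta:V\otimes H_r\longrightarrow\mathscr M_x$ hitting $m$ (with $V$ a finite dimensional projective comodule), takes the subobject $\mathscr N\subseteq\mathscr M$ whose component at $y$ is the sum of the images of $\beta^\ast(V\otimes H_r)\rightarrow\beta^\ast\mathscr M_x\rightarrow\mathscr M_y$ over all $\beta\in\mathscr X(x,y)$ as in \eqref{eq4.6}, and shows $|\mathscr N|\leq\kappa$ (Lemma \ref{L4.8}); the isomorphism classes of objects of cardinality $\leq\kappa$ then form a set of generators. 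Your approach instead constructs the left adjoint $L_x$ of $ev_x$ and pushes the fiberwise projective generators of Theorem \ref{T3.5} forward. The paper only carries out that construction in Section 5 (Lemma \ref{L5.1}, Proposition \ref{P5.3}, Theorem \ref{T5.5}), and there it assumes $\mathscr X$ is a \emph{poset}, which is what makes the formula $ex_x^C(\mathcal M)_y=\alpha^\ast\mathcal M$ unambiguous. For a general small category your $L_x$ must take $L_x(\mathcal M)_y=\bigoplus_{\alpha\in\mathscr X(x,y)}\alpha^\ast\mathcal M$ with structure maps given by reindexing summands along composition; this does work (the adjunction $Mod^C\mbox{-}\mathscr R(L_x\mathcal M,\mathscr N)\cong\mathbf M^C_{\mathscr R_x}(\psi_x)(\mathcal M,\mathscr N_x)$ goes through verbatim, and your observation that $\gamma=id_C$ is automatically a monomorphism in $\mathscr Ent_C$ disposes of the hypothesis in the restriction-of-scalars adjunction), but it is a step you would need to write out, since the paper never does it outside the poset case. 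What your route buys is a set of \emph{projective} generators for arbitrary small $\mathscr X$ in one stroke, which is strictly more than Theorem \ref{T4.9} claims; what the paper's route buys is that the delicate adjoint construction is postponed and only needed in the simpler poset setting. Your remaining points (pointwise kernels, cokernels and colimits, using that $\alpha_\ast$ is exact and preserves colimits, hence AB5 is checked fiberwise) match the paper's Proposition \ref{P4.3} and the first paragraph of its proof of Theorem \ref{T4.9}.
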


Given $\mathscr R:\mathscr X\longrightarrow \mathscr Ent_C$, we have an entwining structure $(\mathscr R_x,C,\psi_x)$ for each $x\in \mathscr X$. Our next aim is to give conditions
for $Mod^C-\mathscr R$ to have projective generators. For this, we will construct an extension functor $ex_x^C$ and an evaluation functor $ev_x^C$ relating the categories
$Mod^C-\mathscr R$ and $\mathbf M_{\mathscr R_x}^C(\psi_x)$ at each $x\in \mathscr X$. 

\begin{Thrm}\label{resultd} (see \ref{P5.3} and \ref{T5.5})  Let $C$ be a right semiperfect coalgebra over a field $K$. Let $\mathscr X$ be a poset and let 
$\mathscr R:\mathscr X\longrightarrow \mathscr Ent_C$ be an entwined $C$-representation of $\mathscr X$.

\smallskip
(1) For each $x\in \mathscr X$, there is an extension functor $ex_x^C:\mathbf M_{\mathscr R_x}^C\longrightarrow Mod^C-\mathscr R$ which is left adjoint to an evaluation
functor $ev_x^C:Mod^C-\mathscr R\longrightarrow \mathbf M^C_{\mathscr R_x}(\psi_x)$. 

\smallskip
(2) The family $\{\mbox{$ex_x^C(V\otimes H_r)$ $\vert$ $x\in \mathscr X$,  $r\in \mathscr R_x$,  $V\in Proj^f(C)$}\}$  is a set of projective generators for $Mod^C-\mathscr R$, where $Proj^f(C)$ is the set of isomorphism classes of finite dimensional projective $C$-comodules. 

\end{Thrm}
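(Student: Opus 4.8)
The plan is to establish part~(1) by an explicit adjunction and then to deduce part~(2) formally from it together with the projective-generator statement of Theorem~\ref{resultb} at each fixed object of $\mathscr X$. For the evaluation functor I would simply let $ev_x^C$ send an entwined module $\mathcal N$ over $\mathscr R$ to its value $\mathcal N_x\in\mathbf M^C_{\mathscr R_x}(\psi_x)$ at $x$; because colimits and limits in $Mod^C-\mathscr R$ are computed objectwise over $\mathscr X$ (as in the construction underlying Theorem~\ref{resultc}), this functor is exact. To build its left adjoint $ex_x^C$ I would use the hypothesis that $\mathscr X$ is a poset, so that for each $y$ there is at most one morphism $x\to y$. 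For $M\in\mathbf M^C_{\mathscr R_x}(\psi_x)$, I set $ex_x^C(M)_y$ to be the extension of scalars of $M$ along the morphism of entwining structures $\mathscr R(x\le y)$ in the sense of Theorem~\ref{resulta}(1) when $x\le y$, and $ex_x^C(M)_y=0$ otherwise; the transition maps come from functoriality of extension of scalars along composable morphisms in $\mathscr X$. Here it is essential that the coalgebra is fixed at $C$, so that every structure map has $\gamma=\mathrm{id}_C$, a monomorphism, whence both the extension and restriction functors of Theorem~\ref{resulta} apply.

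Next I would verify the natural isomorphism
\begin{equation*}
Mod^C-\mathscr R\big(ex_x^C(M),\mathcal N\big)\;\cong\;\mathbf M^C_{\mathscr R_x}(\psi_x)\big(M,ev_x^C(\mathcal N)\big).
\end{equation*}
A morphism $f:ex_x^C(M)\to\mathcal N$ has components $f_y$ that necessarily vanish for $y\not\ge x$, while for $y\ge x$ compatibility of $f$ with the transition maps of $ex_x^C(M)$ forces $f_y$ to be the image of $f_x$ under the transition structure of $\mathcal N$ and the restriction adjunction of Theorem~\ref{resulta}(2). Thus $f$ is determined by $f_x:M\to\mathcal N_x=ev_x^C(\mathcal N)$, and any such $f_x$ extends uniquely; naturality in both variables is then routine. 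I expect the main obstacle to be the well-definedness underlying this construction: namely, checking that $ex_x^C(M)$ genuinely lies in $Mod^C-\mathscr R$, so that the $\mathscr R_y$-action, the $C$-coaction and the entwining $\psi_y$ remain compatible across every transition map and the cocycle conditions inherited from $\mathscr R$ hold. This requires threading the comodule structure carefully through the extension-of-scalars functors of Theorem~\ref{resulta}(1).

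For part~(2), projectivity is then immediate: since $ev_x^C$ is exact, its left adjoint $ex_x^C$ preserves projectives, and by the construction in the proof of Theorem~\ref{resultb} the objects $V\otimes H_r$ with $V\in Proj^f(C)$ and $r\in\mathscr R_x$ are projective generators of $\mathbf M^C_{\mathscr R_x}(\psi_x)$; hence each $ex_x^C(V\otimes H_r)$ is projective in the Grothendieck category $Mod^C-\mathscr R$ of Theorem~\ref{resultc}. For the generating property I would invoke the adjunction to obtain
\begin{equation*}
Mod^C-\mathscr R\big(ex_x^C(V\otimes H_r),\mathcal N\big)\;\cong\;\mathbf M^C_{\mathscr R_x}(\psi_x)\big(V\otimes H_r,\mathcal N_x\big),
\end{equation*}
so that these Hom groups detect the evaluation $\mathcal N_x$ through the generators at $x$. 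An object $\mathcal N$ of $Mod^C-\mathscr R$ vanishes precisely when all of its evaluations $\mathcal N_x$ vanish; therefore, if $\mathcal N\ne 0$, some $\mathcal N_x\ne 0$, and since the $V\otimes H_r$ generate $\mathbf M^C_{\mathscr R_x}(\psi_x)$ there is a nonzero morphism from the corresponding $ex_x^C(V\otimes H_r)$ to $\mathcal N$. This shows the displayed family is a set of projective generators, completing the argument.
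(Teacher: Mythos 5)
Your proposal follows essentially the same route as the paper: $ev_x^C$ is evaluation at $x$ (exact because limits and colimits in $Mod^C-\mathscr R$ are computed objectwise), $ex_x^C$ is built from the extension-of-scalars functors of Theorem~\ref{resulta}(1) using the poset hypothesis to set $ex_x^C(M)_y=\alpha^\ast M$ for the unique $\alpha:x\to y$ and $0$ otherwise, the adjunction is verified by showing a morphism out of $ex_x^C(M)$ is determined by its component at $x$, and projectivity of the $ex_x^C(V\otimes H_r)$ follows because a left adjoint of an exact functor preserves projectives. This is exactly Lemmas~\ref{L5.1}--\ref{L5.2}, Proposition~\ref{P5.3}, Corollary~\ref{C5.4} and Theorem~\ref{T5.5}.

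One small caveat on your final step. The criterion you invoke --- every nonzero $\mathcal N$ receives a nonzero morphism from some member of the family --- is in general strictly weaker than being a generating family (e.g.\ the simple module over $K[x]/(x^2)$ detects nonzero modules but does not generate). It suffices here only because the family consists of projectives: given a proper subobject $\mathscr N\subsetneq\mathscr M$, one applies your detection statement to the nonzero quotient $\mathscr M/\mathscr N$ and lifts the resulting nonzero map along the projection using projectivity, obtaining a map to $\mathscr M$ that does not factor through $\mathscr N$. You have already established projectivity at that point, so the gap is easily closed, but it should be said. The paper avoids the issue by arguing directly with a proper inclusion $\iota:\mathscr N\hookrightarrow\mathscr M$: pointwise computation of kernels gives some $x$ with $\mathscr N_x\subsetneq\mathscr M_x$, a generator $V\otimes H_r$ of $\mathbf M^C_{\mathscr R_x}(\psi_x)$ maps to $\mathscr M_x$ without factoring through $\iota_x$, and the adjunction transports this to a map $ex_x^C(V\otimes H_r)\to\mathscr M$ not factoring through $\iota$.
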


We introduce the category of cartesian entwined modules in Section 6. Here, we will assume that $\mathscr X$ is a poset and $\mathscr R:\mathscr X\longrightarrow \mathscr Ent_C$ is a flat representation, i.e., for any morphism $\alpha:x\longrightarrow y$ in $\mathscr X$, the functor $\alpha^\ast:=\mathscr R_\alpha^\ast:\mathbf M^C_{\mathscr R_x}(\psi_x)
\longrightarrow \mathbf M^C_{\mathscr R_y}(\psi_y)$ is exact. We then apply  induction on $\mathbb N\times Mor(\mathscr X)$ to show that any cartesian entwined module may be expressed as a sum of submodules
whose cardinality is $\leq \kappa :=sup\{
\mbox{$|\mathbb N|$, $|C|$, $|K|$, $|Mor(\mathscr X)|$,  $|Mor(\mathscr R_x)|$, $x\in \mathscr X$}\}$.

\begin{Thrm}\label{resulte} (see \ref{T6.10}) Let $C$ be a right semiperfect coalgebra over a field $K$. Let $\mathscr X$ be a poset and let 
$\mathscr R:\mathscr X\longrightarrow \mathscr Ent_C$ be an entwined $C$-representation of $\mathscr X$. Suppose that $\mathscr R$
is  flat. Then, $Cart^C-\mathscr R$ is a Grothendieck category.

\end{Thrm}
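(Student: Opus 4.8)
The plan is to realize $Cart^C-\mathscr R$ as a full subcategory of the Grothendieck category $Mod^C-\mathscr R$ (Theorem \ref{resultc}) that is closed under the operations needed to be Grothendieck, and then to produce a generator from the cardinality bound $\kappa$. Recall that an object $\mathcal M\in Mod^C-\mathscr R$ is cartesian precisely when, for every morphism $\alpha:x\longrightarrow y$ in $\mathscr X$, the structure morphism $\alpha^\ast\mathcal M_x\longrightarrow \mathcal M_y$ is an isomorphism in $\mathbf M^C_{\mathscr R_y}(\psi_y)$.

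First I would establish that $Cart^C-\mathscr R$ is an abelian subcategory and that the inclusion into $Mod^C-\mathscr R$ is exact. Given a morphism $f:\mathcal M\longrightarrow \mathcal N$ of cartesian modules, its kernel and cokernel are computed objectwise in $Mod^C-\mathscr R$. The decisive point is that, since $\mathscr R$ is flat, each $\alpha^\ast$ is exact; applying $\alpha^\ast$ to the objectwise exact sequences at $x$ and comparing with those at $y$ through the structure maps, a short diagram chase (five lemma) shows that the canonical morphisms $\alpha^\ast(\ker f)_x\longrightarrow (\ker f)_y$ and $\alpha^\ast(\operatorname{coker} f)_x\longrightarrow (\operatorname{coker} f)_y$ are again isomorphisms. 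Hence kernels, cokernels, and finite biproducts of cartesian modules are cartesian, so $Cart^C-\mathscr R$ is abelian and the inclusion is exact. Next I would show that $Cart^C-\mathscr R$ is closed under all colimits formed in $Mod^C-\mathscr R$, which at once yields cocompleteness and the exactness of filtered colimits (AB5) inherited from the ambient category. This uses that each $\alpha^\ast$ is the extension-of-scalars functor, which is a left adjoint by Theorem \ref{resultd}, hence commutes with colimits; since isomorphisms are stable under colimits, the cartesian condition passes to any colimit of cartesian modules.

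The final and most delicate step is the existence of a generator, and here the bound $\kappa=\sup\{|\mathbb N|,|C|,|K|,|Mor(\mathscr X)|,|Mor(\mathscr R_x)|\}$ enters. The key claim is that every cartesian module $\mathcal M$ is the directed union of its cartesian submodules of cardinality $\leq \kappa$. To prove this I would fix any finite collection of elements of $\mathcal M$ and construct, by the induction on $\mathbb N\times Mor(\mathscr X)$ announced before the statement, an ascending chain of submodules: at each stage one closes up under the $\mathscr R_x$-actions, the $C$-coactions, and---crucially---under the cartesian structure isomorphisms together with their inverses, each step enlarging the cardinality by at most $\kappa$. Flatness again guarantees that the limiting submodule is genuinely cartesian. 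Since there is only a set of isomorphism classes of cartesian modules of cardinality $\leq\kappa$, their coproduct is a generator: any nonzero morphism is already detected on such a small submodule.

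Assembling the three steps---abelian with exact inclusion, closed under colimits and hence satisfying AB5, and possessing a generator---shows that $Cart^C-\mathscr R$ is a Grothendieck category. I expect the main obstacle to be the generator construction: controlling the cardinality through the transfinite closure process while ensuring that the cartesian isomorphism conditions are actually met in the limit is where the flatness hypothesis and the indexing by $\mathbb N\times Mor(\mathscr X)$ must be used with care, whereas the abelian structure and the AB5 property follow comparatively formally from the exactness of the functors $\alpha^\ast$ and their being left adjoints.
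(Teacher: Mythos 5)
Your proposal is correct and follows essentially the same route as the paper: the abelian structure and (AB5) are inherited formally from $Mod^C-\mathscr R$ using exactness of the $\alpha^\ast$ (flatness) and the fact that they are left adjoints, and the generator is obtained by showing every cartesian module is a sum of cartesian submodules of cardinality $\leq\kappa$, built by the interleaved induction on $\mathbb N\times Mor(\mathscr X)$ that alternately closes up under the module/comodule structure and under the cartesian isomorphisms (this is the content of Lemmas \ref{L6.4}, \ref{L6.6}, \ref{L6.61}, \ref{L6.62} and \ref{L6.7}). The only detail you leave implicit is the cardinality-control step in the ``inverse direction'' (Lemma \ref{L6.4}: given a small subset of $el(\alpha^\ast\mathcal M)$, find a small submodule $\mathcal N\subseteq\mathcal M$ with $\alpha^\ast\mathcal N$ containing it), which the paper derives from the finitely generated projective generators $V\otimes H_r$; your plan correctly flags this as the delicate point.
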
 

In the next three sections, we study separability and Frobenius conditions for functors relating $Mod^C-\mathscr R$ to the category $Mod-\mathscr R$ of modules over the underlying representation $\mathscr R:\mathscr X\longrightarrow \mathscr Ent_C\longrightarrow \mathscr Lin$. For this, we have to adapt the techniques from \cite{uni} as well as our earlier work 
in \cite{BBR}. For more on Frobenius and separability conditions for Doi-Hopf modules and  modules over entwining structures of algebras, we refer the reader to \cite{Brx5}, \cite{X13}, \cite{X14}, \cite{X15}. 

\smallskip
At each $x\in \mathscr X$, we have functors $\mathscr F_x:\mathbf M^C_{\mathscr R_x}(\psi_x)\longrightarrow \mathbf M_{\mathscr R_x}$ and $\mathscr G_x:\mathbf M_{\mathscr R_x}
\longrightarrow \mathbf M_{\mathscr R_x}^C(\psi_x)$ which combine to give functors $\mathscr F:Mod^C-\mathscr R\longrightarrow Mod-\mathscr R$ and $\mathscr G:
Mod-\mathscr R\longrightarrow Mod^C-\mathscr R$ respectively. We will also need to consider a space $V_1$ of elements $\theta=\{\theta_x(r):C\otimes C\longrightarrow \mathscr R_x(r,r)\}_{x\in \mathscr X,r\in \mathscr R_x}$  and a space $W_1$ of elements  $\eta=\{\eta_x(s,r):\mathscr R_x(s,r)\longrightarrow \mathscr R_x(s,r)\otimes C\}_{x\in \mathscr X,r,s\in \mathscr R_x}$ satisfying certain conditions (see Sections 7 and 8). 

\begin{Thrm}\label{resultf} (see \ref{P7.2}, \ref{P7.25} and \ref{P7.6}) Let $\mathscr X$ be a poset, $C$ be a right semiperfect $K$-coalgebra and $\mathscr R:\mathscr X\longrightarrow \mathscr Ent_C$
 be an entwined $C$-representation. 
 
 \smallskip
 (1) The forgetful functor $\mathscr F:Mod^C-\mathscr R\longrightarrow Mod-\mathscr R$ has a right adjoint $\mathscr G:
Mod-\mathscr R\longrightarrow Mod^C-\mathscr R$. 

\smallskip
(2)  A  natural transformation $\upsilon\in Nat(\mathscr G\mathscr F,1_{Mod^C-\mathscr R})$ corresponds to  a collection 
of natural transformations $\{\upsilon_x\in Nat(\mathscr G_x\mathscr F_x,1_{\mathbf M^C_{\mathscr R_x}(\psi_x)})\}_{x\in \mathscr X}$ such that for any $\alpha:x\longrightarrow y$ in
$\mathscr X$ and object $\mathscr M\in Mod^C-\mathscr R$, we have $\mathscr M_\alpha\circ \upsilon_x(\mathscr M_x)=\alpha_\ast\upsilon_y(\mathscr M_y)\circ \mathscr G_x\mathscr F_x(\mathscr M_\alpha) $. 

\smallskip
(3) The space  $Nat(\mathscr G\mathscr F,1_{Mod^C-\mathscr R})$ is isomorphic to $V_1$.
\end{Thrm}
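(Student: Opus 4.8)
The plan is to prove the three parts in order, reducing everything to the pointwise entwining structures $(\mathscr R_x,C,\psi_x)$ and then gluing along the poset $\mathscr X$.

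For part (1), I would glue the pointwise adjunctions. At each $x\in\mathscr X$ the forgetful functor $\mathscr F_x:\mathbf M^C_{\mathscr R_x}(\psi_x)\longrightarrow \mathbf M_{\mathscr R_x}$ has the right adjoint $\mathscr G_x$ sending a module $N$ to $N\otimes C$ with the entwined structure dictated by $\psi_x$, exactly as for a single entwining structure in \cite{BBR}. Since $\mathscr F$ and $\mathscr G$ are assembled componentwise from the $\mathscr F_x$ and $\mathscr G_x$, I would build the unit and counit of $(\mathscr F,\mathscr G)$ objectwise out of the pointwise units and counits, check that these components commute with the transition morphisms $\mathscr M_\alpha$ (so that they are genuine morphisms of $Mod^C-\mathscr R$), and deduce the triangle identities from the pointwise ones. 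This is the content of \ref{P7.2}.

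For part (2), I would use the description of $Mod^C-\mathscr R$ as the category whose objects $\mathscr M$ are families $(\mathscr M_x)_{x\in\mathscr X}$ equipped with transition morphisms $\mathscr M_\alpha$, and whose morphisms are the pointwise morphisms that commute with the transitions. Because $\mathscr F$ and $\mathscr G$ act componentwise, $(\mathscr G\mathscr F\mathscr M)_x=\mathscr G_x\mathscr F_x(\mathscr M_x)$, so a natural transformation $\upsilon$ produces at each $x$ a component $\upsilon(\mathscr M)_x:\mathscr G_x\mathscr F_x(\mathscr M_x)\longrightarrow \mathscr M_x$; setting $\upsilon_x(\mathscr M_x):=\upsilon(\mathscr M)_x$ gives the candidate family. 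To see that each $\upsilon_x$ is natural, I would test against morphisms of $\mathbf M^C_{\mathscr R_x}(\psi_x)$, realizing every such morphism as the $x$-component of a morphism of $Mod^C-\mathscr R$ via the extension functors $ex_x^C$ of \ref{P5.3}, so that naturality of $\upsilon$ descends to naturality of $\upsilon_x$. The requirement that $\upsilon(\mathscr M)$ be a morphism of $Mod^C-\mathscr R$, i.e. that it commute with the transition data, is precisely the displayed identity $\mathscr M_\alpha\circ\upsilon_x(\mathscr M_x)=\alpha_\ast\upsilon_y(\mathscr M_y)\circ\mathscr G_x\mathscr F_x(\mathscr M_\alpha)$. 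Conversely, a compatible family $\{\upsilon_x\}$ reassembles to $\upsilon$ by setting $\upsilon(\mathscr M)_x=\upsilon_x(\mathscr M_x)$: compatibility makes $\upsilon(\mathscr M)$ a morphism of $Mod^C-\mathscr R$, and pointwise naturality makes $\upsilon$ natural. The delicate bookkeeping here is to keep straight the two roles of $\alpha$, namely the transition datum $\mathscr M_\alpha$ and the restriction functor $\alpha_\ast$, and to use the canonical identification $\mathscr G_x\mathscr F_x\circ\alpha_\ast\cong\alpha_\ast\circ\mathscr G_y\mathscr F_y$ (valid because $-\otimes C$ commutes with restriction of scalars, $C$ being fixed) so that the relevant square lives in $\mathbf M^C_{\mathscr R_x}(\psi_x)$.

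For part (3), I would first invoke the single-structure classification: for a fixed $(\mathscr R_x,C,\psi_x)$, the natural transformations $\upsilon_x\in Nat(\mathscr G_x\mathscr F_x,1)$ are in bijection with families $\theta_x=\{\theta_x(r):C\otimes C\longrightarrow \mathscr R_x(r,r)\}_{r\in\mathscr R_x}$ satisfying the normalization and $\psi_x$-compatibility conditions, adapting the Rafael-type computation of \cite{uni} and our earlier work in \cite{BBR}. Under this bijection I would then evaluate the compatibility identity of part (2) on generators and read off the relations it imposes between $\theta_x$ and $\theta_y$ across a morphism $\alpha:x\longrightarrow y$; by construction these are exactly the conditions carving out the space $V_1$. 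Combining this translation with part (2) yields the isomorphism $Nat(\mathscr G\mathscr F,1_{Mod^C-\mathscr R})\cong V_1$. I expect the main obstacle to lie precisely here: one must perform the explicit Yoneda-style computation expressing each $\upsilon_x$ through $\theta_x$, expand the compatibility condition of part (2) using the entwining maps $\psi_x,\psi_y$ together with the representation morphism $\mathscr R_\alpha$, and verify term by term that the resulting relations coincide with the defining relations of $V_1$.
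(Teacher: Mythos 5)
Your proposal is correct and follows essentially the same route as the paper: part (1) is the pointwise adjunction of \cite{BBR} glued via the commutation of $\mathscr F_x$, $\mathscr G_x$ with $\alpha_\ast$ (the paper phrases it as a hom-set bijection rather than unit/counit, an immaterial difference); part (2) uses exactly the extension functors $ex_x^C$ to define $\upsilon_x$ on all of $\mathbf M^C_{\mathscr R_x}(\psi_x)$ and to check consistency with the components $\upsilon(\mathscr M)_x$; and part (3) delegates the pointwise bijection $\upsilon_x\leftrightarrow\theta_x$ to \cite{BBR} and extracts the extra relation \eqref{theta3} from the compatibility square by testing on the representable-type objects $\mathscr H^{(x,r)}$. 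No gaps worth noting.
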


The main results in Sections 7 and 8 give necessary and sufficient conditions for  the forgetful functor $\mathscr F:Mod^C-\mathscr R\longrightarrow Mod-\mathscr R$ and its right
adjoint $\mathscr G:
Mod-\mathscr R\longrightarrow Mod^C-\mathscr R$ to be separable. In Section 9, we give necessary and sufficient conditions for  $(\mathscr F,\mathscr G)$ to be a Frobenius pair, i.e., $\mathscr G$ is both a left and a right adjoint of $\mathscr F$. 

\begin{Thrm}\label{resultg} (see \ref{T7.7}, \ref{P7.8} and \ref{P7.9}) Let $\mathscr X$ be a partially ordered set.  Let $C$ be a right semiperfect $K$-coalgebra and let $\mathscr R:\mathscr X\longrightarrow 
\mathscr Ent_C$ be an entwined $C$-representation. 

\smallskip
(1) The functor $\mathscr F:Mod^C-\mathscr R\longrightarrow Mod-\mathscr R$ is separable if and only if there exists
$\theta\in V_1$ such that 
$
\theta_x(r)(c_1\otimes c_2)=\varepsilon_C(c)\cdot id_r
$ for every $x\in \mathscr X$, $r\in\mathscr R_x$ and $c\in C$. 

\smallskip
(2) Suppose additionally that the representation $\mathscr R:\mathscr X\longrightarrow \mathscr Ent_C$ is flat. Then, we have
 
 \smallskip
\begin{itemize}
\item[(a)]  The functor $\mathscr F:Mod^C-\mathscr R\longrightarrow Mod-\mathscr R$ restricts to a functor $\mathscr F^c:Cart^C-\mathscr R\longrightarrow Cart-\mathscr R$. Moreover, $\mathscr F^c$ has a right adjoint $\mathscr G^c:Cart-\mathscr R\longrightarrow Cart^C-\mathscr R$. 

\smallskip
\item[(b)]   Suppose there exists
$\theta\in V_1$ such that 
$
\theta_x(r)(c_1\otimes c_2)=\varepsilon_C(c)\cdot id_r
$ for every $x\in \mathscr X$, $r\in\mathscr R_x$ and $c\in C$. 
Then, $\mathscr F^c:Cart^C-\mathscr R\longrightarrow Cart-\mathscr R$ is separable.
\end{itemize}
\end{Thrm}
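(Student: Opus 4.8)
The plan is to prove Theorem~\ref{resultg} by leveraging the separability machinery of \cite{uni} pointwise at each object $x\in\mathscr X$ and then globalizing via the comparison of natural transformation spaces established in Theorem~\ref{resultf}. Recall that a functor is separable precisely when the unit (or counit) of its defining adjunction admits a natural splitting, so the whole problem reduces to producing and analyzing appropriate natural transformations between the composite functors. For part~(1), the functor $\mathscr F:Mod^C-\mathscr R\longrightarrow Mod-\mathscr R$ is separable if and only if the counit of the adjunction $(\mathscr F,\mathscr G)$ splits, i.e. there exists $\upsilon\in Nat(\mathscr G\mathscr F,1_{Mod^C-\mathscr R})$ satisfying the triangle-type condition that composes with the unit to the identity.

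First I would invoke Theorem~\ref{resultf}(3), which identifies the space $Nat(\mathscr G\mathscr F,1_{Mod^C-\mathscr R})$ with $V_1$; under this identification a natural transformation $\upsilon$ corresponds to a family $\theta=\{\theta_x(r)\}$. I then translate the separability (Rafael-type) splitting condition into the pointwise language: using Theorem~\ref{resultf}(2), $\upsilon$ is assembled from compatible $\upsilon_x\in Nat(\mathscr G_x\mathscr F_x,1_{\mathbf M^C_{\mathscr R_x}(\psi_x)})$, and for each fixed $x$ the separability criterion for $\mathscr F_x$ is exactly the classical one from \cite{BBR}, \cite{uni}, which reads $\theta_x(r)(c_1\otimes c_2)=\varepsilon_C(c)\cdot id_r$. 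The point is that the global splitting condition holds if and only if it holds at every $x$ simultaneously, and the compatibility clause in Theorem~\ref{resultf}(2) guarantees that a coherent family of pointwise solutions assembles into a genuine global $\upsilon\in V_1$. This gives the ``if and only if'' of part~(1).

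For part~(2)(a) I would first check that $\mathscr F$ carries cartesian objects to cartesian objects: since cartesianness is the condition that the structure maps $\mathscr M_\alpha$ induce isomorphisms $\alpha^\ast\mathscr M_x\xrightarrow{\sim}\mathscr M_y$ and $\mathscr F$ is defined by forgetting the $C$-comodule structure objectwise while commuting with the transition functors $\alpha^\ast$ (flatness guaranteeing exactness of these, hence preservation of the defining colimits/isomorphisms), the restriction $\mathscr F^c:Cart^C-\mathscr R\longrightarrow Cart-\mathscr R$ is well-defined. The existence of the right adjoint $\mathscr G^c$ follows because $\mathscr G$ likewise preserves cartesianness when $\mathscr R$ is flat---the coreflection onto cartesian objects commutes with $\mathscr G$ since $\mathscr G$ is built objectwise from the $\mathscr G_x$ and the flatness hypothesis makes the relevant transition maps exact, so adjointness restricts from the ambient Grothendieck categories (using Theorem~\ref{resulte} that $Cart^C-\mathscr R$ is Grothendieck) to the cartesian subcategories.

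For part~(2)(b), the same $\theta\in V_1$ furnishing the splitting for $\mathscr F$ restricts to a splitting for $\mathscr F^c$: the natural transformation $\upsilon$ corresponding to $\theta$ is defined objectwise by the $\theta_x(r)$, and evaluating it on cartesian modules yields a natural transformation $\mathscr G^c\mathscr F^c\Rightarrow 1_{Cart^C-\mathscr R}$; the separability splitting identity is inherited verbatim because it is checked objectwise and the cartesian modules form a full subcategory. The main obstacle I anticipate is part~(2)(a)---specifically verifying that $\mathscr G$ preserves cartesianness and that the restricted adjunction is genuine. Preservation of cartesianness under the forgetful $\mathscr F$ is comparatively transparent, but $\mathscr G$ (restriction of scalars / cofree comodule functor) involves a cotensor-type construction, and one must use flatness of $\mathscr R$ to commute $\mathscr G_x$ past the transition functors $\alpha^\ast$ and conclude that the unit $\alpha^\ast\mathscr G_x\mathscr F_x\mathscr M_x\to\mathscr G_y\mathscr F_y\mathscr M_y$ remains an isomorphism on cartesian inputs. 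Once that coherence is secured, the remaining separability verification is routine, reducing to the pointwise criterion already captured by $V_1$.
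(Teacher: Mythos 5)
Your treatment of part (1) follows the paper's route: identify $Nat(\mathscr G\mathscr F,1_{Mod^C-\mathscr R})$ with $V_1$ and translate the Rafael splitting condition into the normalization $\theta_x(r)(c_1\otimes c_2)=\varepsilon_C(c)\cdot id_r$, which is what the paper does (evaluating on the modules $\mathscr H^{(x,r)}$ for the forward direction and computing $\upsilon\circ\mu$ directly for the converse). That part is fine.

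The genuine gap is in part (2)(a). You obtain the right adjoint $\mathscr G^c$ by asserting that $\mathscr G$ preserves cartesianness when $\mathscr R$ is flat, "since $\mathscr G$ is built objectwise from the $\mathscr G_x$." This step fails: $\mathscr G_x(\mathcal N)=\mathcal N\otimes C$ commutes with the restriction functors $\alpha_\ast$ (third square of \eqref{cd7.1}) but \emph{not}, in general, with the extension functors $\alpha^\ast$, because $\alpha^\ast$ is a quotient of $\bigoplus_r\mathcal N(r)\otimes\mathscr R_y(s,\alpha(r))$ by relations that involve the entwining map $\psi$ when applied to $\mathcal N\otimes C$, and there is no natural comparison isomorphism $\alpha^\ast(\mathcal N\otimes C)\cong(\alpha^\ast\mathcal N)\otimes C$. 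Cartesianness is a condition on the maps $\mathscr G(\mathscr N)^\alpha:\alpha^\ast\mathscr G_x(\mathscr N_x)\to\mathscr G_y(\mathscr N_y)$, so without that commutation the argument collapses. Indeed the paper only establishes $\mathscr G_y\alpha^\ast\cong\alpha^\ast\mathscr G_x$ (Lemma \ref{L9.3}) and the restriction of $\mathscr G$ to cartesian modules (Proposition \ref{P9.4}) under the additional hypothesis that $(\mathscr F,\mathscr G)$ is a \emph{Frobenius} pair, which is not available in Theorem \ref{resultg}. The correct argument for (2)(a), and the one the paper gives, is abstract: $Cart^C-\mathscr R$ is closed under colimits in $Mod^C-\mathscr R$, so $\mathscr F^c$ preserves colimits; both $Cart^C-\mathscr R$ and $Cart-\mathscr R$ are Grothendieck categories by Theorem \ref{T6.10}; hence $\mathscr F^c$ has a right adjoint by the adjoint functor theorem for Grothendieck categories (\cite[Proposition 8.3.27]{KSch}). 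In particular $\mathscr G^c$ need not be the restriction of $\mathscr G$. This also undermines your phrasing of (2)(b), where you evaluate $\upsilon$ on cartesian modules to get a transformation $\mathscr G^c\mathscr F^c\Rightarrow 1$ --- that composite is $\mathscr G\mathscr F$ restricted, not $\mathscr G^c\mathscr F^c$. The clean fix, which you gesture at in your last clause, is to bypass $\mathscr G^c$ entirely: separability of $\mathscr F$ means the maps $Mod^C\mbox{-}\mathscr R(\mathscr M,\mathscr N)\to Mod\mbox{-}\mathscr R(\mathscr F\mathscr M,\mathscr F\mathscr N)$ split naturally, and since $Cart^C-\mathscr R$ and $Cart-\mathscr R$ are full subcategories and $\mathscr F^c$ is the restriction of $\mathscr F$, the splitting restricts verbatim.
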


\begin{Thrm}\label{resulth} (see \ref{Pro8.2} and \ref{T8.3})  Let $\mathscr X$ be a partially ordered set, $C$ be a right semiperfect $K$-coalgebra and let $\mathscr R:\mathscr X\longrightarrow \mathscr Ent_C$ be an entwined $C$-representation.

\smallskip (1) The spaces $Nat(1_{Mod-\mathscr R},\mathscr F\mathscr G)$ and $ W_1$ are isomorphic.

\smallskip
(2) The functor
$\mathscr G:Mod-\mathscr R\longrightarrow Mod^C-\mathscr R$  is separable if and only if there exists $\eta\in W_1$ such that 
$
 id=(id\otimes \varepsilon_C)\circ \eta_x(s,r)
$
for each $x\in \mathscr X$ and $s$, $r\in \mathscr R_x$.

\end{Thrm}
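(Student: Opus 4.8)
The plan is to handle both parts through the standard criterion for separability of a right adjoint: since $\mathscr F$ is left adjoint to $\mathscr G$ by Theorem~\ref{resultf}(1), the functor $\mathscr G$ is separable if and only if the counit $\epsilon:\mathscr F\mathscr G\longrightarrow 1_{Mod-\mathscr R}$ of the adjunction admits a natural section, i.e.\ a $\zeta\in Nat(1_{Mod-\mathscr R},\mathscr F\mathscr G)$ with $\epsilon\circ\zeta=1$. Part (1) then identifies the space in which any such candidate section $\zeta$ lives, and part (2) picks out exactly those $\zeta$ that are genuine sections. Both statements thus reduce to one task: parametrizing $Nat(1_{Mod-\mathscr R},\mathscr F\mathscr G)$ explicitly by the data $W_1$ and then expressing the equation $\epsilon\circ\zeta=1$ in those terms.

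For part (1) I would first record the explicit shape of the right adjoint. At each $x\in\mathscr X$, the functor $\mathscr G_x:\mathbf M_{\mathscr R_x}\longrightarrow\mathbf M^C_{\mathscr R_x}(\psi_x)$ is the cofree construction $M\mapsto M\otimes C$ (module action twisted by $\psi_x$, comodule structure $id_M\otimes\Delta_C$), so that $\mathscr F_x\mathscr G_x(M)=M\otimes C$ and the component of the counit is $\epsilon_x(M)=id_M\otimes\varepsilon_C$. Since $\mathscr X$ is a poset, an object of $Mod-\mathscr R$ is a compatible family $\{\mathscr M_x\}_{x\in\mathscr X}$ with transition maps $\mathscr M_\alpha$, and, exactly as in the treatment of $V_1$ in Theorem~\ref{resultf}(2)--(3), a natural transformation $\zeta:1\longrightarrow\mathscr F\mathscr G$ breaks up into a family $\{\zeta_x\in Nat(1_{\mathbf M_{\mathscr R_x}},\mathscr F_x\mathscr G_x)\}_{x}$ subject to the compatibility $\mathscr M_\alpha\circ\zeta_x(\mathscr M_x)=\alpha_\ast\zeta_y(\mathscr M_y)\circ\mathscr F_x\mathscr G_x(\mathscr M_\alpha)$ over each $\alpha:x\longrightarrow y$. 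By a Yoneda-type argument each local $\zeta_x$ is determined by its values on the representable generators $\mathscr R_x(s,-)$, and these values are precisely a family of maps $\eta_x(s,r):\mathscr R_x(s,r)\longrightarrow\mathscr R_x(s,r)\otimes C$; naturality of $\zeta_x$ inside $\mathbf M_{\mathscr R_x}$ forces the local coherence relations defining $W_1$, while the displayed transition compatibility supplies the remaining conditions indexed by $Mor(\mathscr X)$. Verifying that $\zeta\mapsto\eta$ and its evident inverse are mutually inverse $K$-linear maps then yields $Nat(1_{Mod-\mathscr R},\mathscr F\mathscr G)\cong W_1$.

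For part (2), with this isomorphism in hand a section $\zeta$ of $\epsilon$ corresponds to some $\eta\in W_1$, and since $\epsilon_x=id\otimes\varepsilon_C$ the equation $\epsilon\circ\zeta=1$, evaluated on the generators $\mathscr R_x(s,-)$, reads exactly $(id\otimes\varepsilon_C)\circ\eta_x(s,r)=id$ for all $x\in\mathscr X$ and $s,r\in\mathscr R_x$; conversely any $\eta\in W_1$ satisfying this produces a section. The separability criterion for $\mathscr G$ then gives the stated equivalence. I expect the main obstacle to be the bookkeeping in part (1): one must check that naturality of $\zeta$ across all of $Mod-\mathscr R$, transition maps included, translates into the full defining list for $W_1$ and nothing stronger, so that the correspondence is bijective rather than merely injective. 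The poset hypothesis is what keeps this in check, since thinness of $\mathscr X$ collapses the coherence data over $Mor(\mathscr X)$ to the single relation displayed above.
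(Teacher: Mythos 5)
Your proposal is correct and follows essentially the same route as the paper: decompose a natural transformation $1_{Mod-\mathscr R}\longrightarrow\mathscr F\mathscr G$ into local components $\zeta_x$, identify each with an $\mathscr R_x$-bimodule map $\eta_x:H^x\longrightarrow H^x\otimes C$ via the representables, match the transition compatibility over $\alpha:x\longrightarrow y$ with the second defining condition of $W_1$, and then apply Rafael's criterion with counit $id\otimes\varepsilon_C$. The one step you only gesture at --- extending the verification of naturality and of $\epsilon\circ\zeta=1$ from the generators $ex_x(H^x_r)$ to arbitrary objects --- is exactly where the paper spends its effort, choosing an epimorphism from a direct sum of such generators and using that its components are epimorphisms; this is the bookkeeping you correctly flag as the main obstacle.
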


\begin{Thrm}\label{resulti} (see \ref{T9.1}, \ref{P9.4}, \ref{C9.5})  Let $\mathscr X$ be a partially ordered set, $C$ be a right semiperfect $K$-coalgebra and let $\mathscr R:\mathscr X\longrightarrow \mathscr Ent_C$ be an entwined $C$-representation. 

\smallskip
(1)  $(\mathscr F,\mathscr G)$ is a Frobenius pair if and only if 
there exist $\theta\in V_1$ and $\eta\in W_1$ such that
$
\varepsilon_C(d)f=\sum \widehat{f}\circ \theta_x(r)(c_f\otimes d)$ and $\varepsilon_C(d)f=\sum \widehat{f_{\psi_x}} 
\circ \theta_x(r)(d^{\psi_x}\otimes c_f)
$ for every $x\in \mathscr X$, $r\in \mathscr R_x$, $f\in \mathscr R_x(r,s)$ and $d\in C$, where $\eta_x(r,s)(f)=\widehat{f}\otimes c_f$. 
 
\smallskip
(2) Suppose additionally that the representation $\mathscr R:\mathscr X\longrightarrow \mathscr Ent_C$ is flat. Then, $\mathscr G:Mod-\mathscr R\longrightarrow Mod^C-\mathscr R$ restricts to a functor
$\mathscr G^c:Cart-\mathscr R\longrightarrow Cart^C-\mathscr R$. 
Further, $(\mathscr F^c,\mathscr G^c)$ is a Frobenius pair of adjoint functors between $Cart^C-\mathscr R$ and
$Cart-\mathscr R$.

\end{Thrm}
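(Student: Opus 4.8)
The plan is to reduce the Frobenius condition to the existence of the two natural transformations witnessing the ``second'' adjunction, and then to translate the triangle identities into the stated formulas by means of the isomorphisms already established in \ref{resultf} and \ref{resulth}.

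Since \ref{resultf}(1) provides the adjunction in which $\mathscr F$ is left adjoint to $\mathscr G$, the pair $(\mathscr F,\mathscr G)$ is Frobenius exactly when $\mathscr G$ is \emph{also} a left adjoint of $\mathscr F$. I would therefore seek a unit $\nu\in Nat(1_{Mod-\mathscr R},\mathscr F\mathscr G)$ and a counit $\zeta\in Nat(\mathscr G\mathscr F,1_{Mod^C-\mathscr R})$ obeying the triangle identities
\[
\mathscr F\zeta\circ\nu\mathscr F=1_{\mathscr F},\qquad\qquad \zeta\mathscr G\circ\mathscr G\nu=1_{\mathscr G}.
\]
By \ref{resulth}(1) the datum $\nu$ amounts to an element $\eta\in W_1$, and by \ref{resultf}(3) the datum $\zeta$ amounts to an element $\theta\in V_1$. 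Hence a Frobenius structure on $(\mathscr F,\mathscr G)$ is equivalent to a pair $(\theta,\eta)\in V_1\times W_1$ for which the two triangle identities hold, and both directions of the asserted equivalence will follow at once from these bijective correspondences.

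The substance of part (1) is to rewrite the two triangle identities, which a priori live in functor categories, as pointwise conditions on the morphism spaces $\mathscr R_x(r,s)$. Using \ref{resultf}(2), both transformations are determined by their local components at each $x\in\mathscr X$, so it suffices to evaluate on the projective generators $ex_x^C(V\otimes H_r)$ of \ref{resultd}(2) and read off the resulting identities in $\mathscr R_x$. Writing $\eta_x(r,s)(f)=\widehat f\otimes c_f$, the identity $\mathscr F\zeta\circ\nu\mathscr F=1_{\mathscr F}$ becomes $\varepsilon_C(d)f=\sum\widehat f\circ\theta_x(r)(c_f\otimes d)$, because on the underlying modules the coaction supplied by $\nu$ meets the counit $\zeta$ directly, with no twisting. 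The remaining identity $\zeta\mathscr G\circ\mathscr G\nu=1_{\mathscr G}$ is computed inside $Mod^C-\mathscr R$, where the comodule structure is present and must be transported past $f$ before $\zeta$ is applied; this is precisely where the entwining $\psi_x$ is forced in, giving $\varepsilon_C(d)f=\sum\widehat{f_{\psi_x}}\circ\theta_x(r)(d^{\psi_x}\otimes c_f)$. I expect this second computation, tracking how $\psi_x$ intertwines the coaction with composition in $\mathscr R_x$, to be the main obstacle; the first is comparatively direct.

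For part (2), I would first use flatness of $\mathscr R$ to show that $\mathscr G$ carries cartesian modules to cartesian modules. The functor $\mathscr G$ is assembled from the local right adjoints $\mathscr G_x$, while the cartesian condition asserts that the structural comparison maps attached to each $\alpha:x\longrightarrow y$ are isomorphisms; since every $\alpha^\ast$ is exact by flatness, the formation of $\mathscr G$ preserves these isomorphisms, so $\mathscr G$ restricts to $\mathscr G^c:Cart-\mathscr R\longrightarrow Cart^C-\mathscr R$. Together with the restriction $\mathscr F^c$ and its right adjoint already furnished by \ref{resultg}(2)(a), it then suffices to observe that the natural transformations $\nu$ and $\zeta$ of part (1) restrict to the full subcategories of cartesian modules and continue to satisfy the triangle identities there, these being verified objectwise. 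This produces the Frobenius pair $(\mathscr F^c,\mathscr G^c)$ between $Cart^C-\mathscr R$ and $Cart-\mathscr R$.
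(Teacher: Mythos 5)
Your treatment of part (1) follows the same route as the paper: identify the Frobenius condition with a pair $(\theta,\eta)\in V_1\times W_1$ via the isomorphisms $Nat(\mathscr G\mathscr F,1)\cong V_1$ and $Nat(1,\mathscr F\mathscr G)\cong W_1$, reduce the triangle identities to their components at each $x\in\mathscr X$ by evaluating on the objects $ex_x^C(\mathcal M)$ and $ex_x(\mathcal N)$, and then translate the local triangle identities into the two displayed formulas. The paper delegates that last translation to its earlier single-object result (\cite[Theorem 3.14]{BBR}) rather than recomputing it, but your sketch of where the entwining $\psi_x$ enters is consistent with that computation, so part (1) is essentially the paper's argument.

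Part (2) has a genuine gap. You claim that $\mathscr G$ carries cartesian modules to cartesian modules because ``every $\alpha^\ast$ is exact by flatness, so the formation of $\mathscr G$ preserves these isomorphisms.'' Exactness of $\alpha^\ast$ is not the issue. The cartesian condition for $\mathscr G(\mathscr N)$ asks that $\mathscr G(\mathscr N)^\alpha:\alpha^\ast\mathscr G_x(\mathscr N_x)\longrightarrow \mathscr G_y(\mathscr N_y)$ be an isomorphism, and to deduce this from the isomorphism $\mathscr N^\alpha:\alpha^\ast\mathscr N_x\longrightarrow\mathscr N_y$ you must first identify $\alpha^\ast\mathscr G_x(\mathscr N_x)$ with $\mathscr G_y(\alpha^\ast\mathscr N_x)$. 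But $\mathscr G_x(\mathcal N)=\mathcal N\otimes C$ with the $\psi_x$-twisted action $(n\otimes c)\cdot f=nf_{\psi_x}\otimes c^{\psi_x}$, and the coend defining $\alpha^\ast$ does not commute with $\_\otimes C$ in general precisely because of this twist; the diagrams in \eqref{cd7.1} record that $\mathscr G_x$ commutes with $\alpha_\ast$ but say nothing about $\alpha^\ast$. The paper closes this gap with Lemma \ref{L9.3}: \emph{because} $(\mathscr F,\mathscr G)$ is a Frobenius pair, each $\mathscr G_x$ is also a left adjoint of $\mathscr F_x$, and the chain
\begin{equation*}
\mathbf M^C_{\mathscr R_y}(\psi_y)(\mathscr G_y\alpha^\ast\mathcal M,\mathcal N)=\mathbf M_{\mathscr R_x}(\mathcal M,\alpha_\ast\mathscr F_y\mathcal N)=\mathbf M_{\mathscr R_x}(\mathcal M,\mathscr F_x\alpha_\ast\mathcal N)=\mathbf M^C_{\mathscr R_y}(\psi_y)(\alpha^\ast\mathscr G_x\mathcal M,\mathcal N)
\end{equation*}
yields $\mathscr G_y\alpha^\ast\cong\alpha^\ast\mathscr G_x$. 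So the restriction of $\mathscr G$ to cartesian modules is a consequence of the Frobenius hypothesis, not of flatness (flatness is only needed so that the cartesian categories and $\mathscr F^c$ are available). Your argument as written would also purport to prove the restriction without the Frobenius hypothesis, which the paper does not claim and which the twisted module structure on $\mathcal N\otimes C$ obstructs. Once this is repaired, your concluding observation --- that the unit and counit restrict to the full subcategories and the triangle identities are inherited --- is fine and is how the paper finishes.
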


We conclude in Section 10 by giving examples of how to construct entwined representations and describe modules over them. In particular, we show how to construct entwined representations using $B$-comodule categories, where $B$ is a bialgebra. 

\section{Category of entwining structures}

Let $K$ be a field and let $Vect_K$ be the category of vector spaces over $K$. Let $\mathcal R$ be a small $K$-linear category. The category of right $\mathcal R$-modules will be denoted by $\mathbf M_{\mathcal R}$. For any object $r\in \mathcal R$, we denote by $H_r:\mathcal R^{op}\longrightarrow 
Vect_K$ the right $\mathcal R$-module represented by $r$ and by $_rH:\mathcal R\longrightarrow Vect_K$ the left $\mathcal R$-module represented
by $r$. Given a $K$-coalgebra $C$, the category of right $C$-comodules will be denoted by $Comod-C$. 

\begin{defn}\label{entcatx} (see \cite[$\S$ 2]{BBR}) Let $\mathcal R$ be a small $K$-linear category and $C$ be a $K$-coalgebra. An entwining structure
$(\mathcal R,C,\psi)$ over $K$ is a collection of $K$-linear morphisms
\begin{equation*}
\psi=\{\psi_{rs}:C\otimes \mathcal R(r,s)\longrightarrow \mathcal R(r,s)\otimes C\}_{r,s\in \mathcal R}
\end{equation*} satisfying the following conditions
\begin{equation}
\begin{array}{c}
  (gf)_\psi \otimes c^\psi = g_\psi f_\psi \otimes {c^\psi}^\psi  \qquad 
 \varepsilon_C(c^\psi)(f_\psi) = \varepsilon_C(c)f \\
 f_\psi \otimes \Delta_C(c^\psi) =  {f_\psi}_\psi \otimes {c_1}^\psi \otimes {c_2}^\psi \qquad 
\psi(c \otimes id_r)= id_r \otimes c \\
\end{array}
\end{equation}
for each $f \in \mathcal{R}(r,s)$, $g \in \mathcal{R}(s,t)$ and $c \in C$. Here, we have  suppressed the summation and written $\psi(c\otimes f)$ simply as 
$f_\psi\otimes c^\psi$.

\smallskip
A morphism $(\alpha,\gamma):(\mathcal R,C,\psi)\longrightarrow (\mathcal S,D,\psi')$ of  entwining structures consists of a functor $\alpha:\mathcal{R}\longrightarrow \mathcal{S}$ and a counital coalgebra map $\gamma: C\longrightarrow D$ such that $\alpha({f}_{\psi})\otimes \gamma({c}^{\psi}) = \alpha(f)_{\psi'} \otimes \gamma(c)^{\psi'}$ for any $c\otimes f \in C\otimes  \mathcal{R}(r,s)$, where $r,s \in \mathcal R$. 

\smallskip We will denote by
$\mathscr Ent$ the category of entwining structures over $K$. 
\end{defn} 

If $\mathcal M$ is a right $\mathcal R$-module, $m\in \mathcal M(r)$ and $f\in \mathcal R(s,r)$, the element $\mathcal M(f)(r)\in \mathcal M(s)$
will often be denoted by $mf$. 

\smallskip
If $\alpha:\mathcal{R}\longrightarrow \mathcal{S}$ is a functor of small $K$-linear categories, there is an obvious functor $\alpha_*: \mathbf M_{\mathcal S}
\longrightarrow \mathbf M_{\mathcal R}$ of restriction of scalars. For the sake of convenience, we briefly recall here the well known extension of scalars 
$\alpha^*:\mathbf M_{\mathcal R}\longrightarrow \mathbf M_{\mathcal S}$. For $\mathcal M\in \mathbf M_{\mathcal R}$, the module $\alpha^*(\mathcal M)\in \mathbf M_{\mathcal S}$ is determined by setting
\begin{equation}\label{ke2.2}
\alpha^*(\mathcal M)(s):=\left(\underset{r\in \mathcal R}{\bigoplus}\mathcal M(r)\otimes \mathcal S(s,\alpha(r))\right)/V
\end{equation} for $s\in \mathcal S$, where $V$ is the subspace  generated by elements of the form
\begin{equation}
(m'\otimes \alpha(g)f)- (m'g\otimes f)
\end{equation} for $m'\in \mathcal M(r')$, $g\in \mathcal R(r,r')$, $f\in \mathcal S(s,\alpha(r))$ and $r$, $r'\in \mathcal R$.

\smallskip
On the other hand, if $\gamma : C\longrightarrow D$ is a morphism of coalgebras and $N$ is a right $C$-comodule, there is an obvious corestriction of scalars $\gamma^*:Comod-C\longrightarrow Comod-D$. The functor $\gamma^*$ has a well known right adjoint $\gamma_*:Comod-D\longrightarrow
Comod-C$, known as the coinduction functor, given by the cotensor product $N\mapsto N\Box_D C$ (see, for instance, \cite[$\S$ 11.10]{Wibook}). In general, we recall that the cotensor product $N\Box_DN'$ of a right $D$-comodule $(N,\rho:N\longrightarrow N\otimes D)$ with a left $D$-comodule $(N',\rho':N'\longrightarrow D\otimes N')$ is given by the equalizer
\begin{equation}
N\Box_DN':=Eq\left(N\otimes N'\doublerightarrow{\rho\otimes id}{id\otimes \rho'} N\otimes D\otimes N'\right)
\end{equation} In other words, an element $\sum n_i\otimes n'_i\in N\otimes N'$ lies in $N\Box_DN'$ if and only if $\sum n_{i0}\otimes n_{i1}\otimes 
n'_i=\sum n_i\otimes n'_{i0}\otimes n'_{i1}$.  However, we will continue to suppress the summation and write an element of $N\Box_DN'$ simply
as $n\otimes n'$. 
We will now consider modules over an entwining structure $(\mathcal R,C,\psi)$. 

\begin{defn}\label{D2.2} (see \cite[Definition 2.2]{BBR}) Let $\mathcal{M}$ be a right $\mathcal{R}$-module with a given right $C$-comodule structure $\rho_{\mathcal M(s)}:\mathcal M(s)\longrightarrow \mathcal M(s)\otimes C$ on $\mathcal{M}(s)$ for each $s\in \mathcal{R}$. Then, $\mathcal{M}$ is said to be an entwined module over  $(\mathcal{R},C,\psi)$ if the following compatibility condition holds:
\begin{equation}\label{comp 2}
 \rho_{\mathcal{M}(s)}(mf)= \big(mf\big)_0 \otimes \big(mf\big)_{1}=m_0f_\psi\otimes  {m_1}^\psi
\end{equation}
for every $f \in \mathcal{R}(s,r)$  and $m \in \mathcal{M}(r).$ 

\smallskip
A morphism $\eta:\mathcal M\longrightarrow \mathcal N$  of entwined modules is a morphism $\eta:\mathcal M\longrightarrow \mathcal N$ in 
$\mathbf M_{\mathcal R}$ such that $\eta(r):\mathcal M(r)\longrightarrow\mathcal N(r)$ is $C$-colinear for each $r\in\mathcal R$. The category of entwined modules over $(\mathcal R,C,\psi)$ will be denoted
by $\mathbf M_{\mathcal R}^C(\psi)$. 

\end{defn}

\begin{thm} \label{P2.2} Let  $(\alpha,\gamma):(\mathcal R,C,\psi)\longrightarrow (\mathcal S,D,\psi')$ be a morphism of entwining structures.  Then, there is a  functor $(\alpha,\gamma)^\ast : \mathbf M_{\mathcal R}^C(\psi)\longrightarrow \mathbf M_{\mathcal S}^D(\psi')$.
\end{thm}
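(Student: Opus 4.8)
The plan is to equip the extension of scalars $\alpha^\ast\mathcal M\in\mathbf M_{\mathcal S}$ of the underlying $\mathcal R$-module with a family of $D$-comodule structures, to check that the result is an entwined module over $(\mathcal S,D,\psi')$, and then to verify functoriality. Recall from \eqref{ke2.2} that $\alpha^\ast\mathcal M(s)$ is the quotient of $\bigoplus_r\mathcal M(r)\otimes\mathcal S(s,\alpha(r))$ by the relations $m'\otimes\alpha(g)f=m'g\otimes f$. On a generator $m\otimes f$, with $m\in\mathcal M(r)$ and $f\in\mathcal S(s,\alpha(r))$, I would define the coaction by coacting on the $\mathcal M$-factor, transporting the coefficient along $\gamma$, and entwining it past $f$ through $\psi'$:
\begin{equation*}
\rho(m\otimes f)=(m_0\otimes f_{\psi'})\otimes\gamma(m_1)^{\psi'},\qquad \psi'(\gamma(m_1)\otimes f)=f_{\psi'}\otimes\gamma(m_1)^{\psi'}.
\end{equation*}
On the ``identity'' generators $m\otimes id_{\alpha(r)}$ the axiom $\psi'(d\otimes id)=id\otimes d$ reduces this to the $\gamma$-corestriction of $\mathcal M$'s own coaction, so the formula is essentially forced once one insists on the $\mathcal S$-action coming from $\alpha^\ast$.

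The main obstacle is to show that $\rho$ descends to the quotient, i.e. that it agrees on $m'\otimes\alpha(g)f$ and on $m'g\otimes f$. On $m'g\otimes f$ I would first invoke the entwined compatibility \eqref{comp 2} of $\mathcal M$ to write the coaction of $m'g$ as $m'_0g_\psi\otimes(m'_1)^\psi$, apply $\gamma$ and $\psi'$, and then absorb $g_\psi$ into the module factor via the defining relation of $\alpha^\ast\mathcal M$. On $m'\otimes\alpha(g)f$ I would instead expand $\psi'\big(\gamma(m'_1)\otimes\alpha(g)f\big)$ by the multiplicativity axiom $(gf)_\psi\otimes c^\psi=g_\psi f_\psi\otimes{c^\psi}^\psi$ of Definition \ref{entcatx}, entwining $\gamma(m'_1)$ first past the structure morphism $\alpha(g)$ and only then past the free morphism $f$. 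The two expressions are then matched by the defining identity $\alpha(g_\psi)\otimes\gamma(c^\psi)=\alpha(g)_{\psi'}\otimes\gamma(c)^{\psi'}$ of a morphism of entwining structures, which rewrites the ``$\alpha(g)$-entwining'' of the second computation as a ``$g$-entwining'' inside $\mathcal R$ — precisely the term produced by $\mathcal M$'s coaction in the first computation, and precisely the term the relation allows us to move across the tensor. The delicate feature is the order of entwining: $\gamma(m'_1)$ must be slid past $\alpha(g)$ before it is slid past $f$, since the morphism condition only controls elements of $\gamma(C)$ paired with morphisms of $\alpha(\mathcal R)$, and sliding past $f$ first would carry $\gamma(m'_1)$ out of the image of $\gamma$.

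The remaining verifications I expect to be routine. Counitality and coassociativity of $\rho$ follow from those of the coaction of $\mathcal M$, the fact that $\gamma$ is a coalgebra map, and the counit and comultiplication compatibilities of $\psi'$ recorded in Definition \ref{entcatx}. The entwined compatibility \eqref{comp 2} for $(\mathcal S,D,\psi')$ — that $\rho\big((m\otimes f)h\big)$ equals $(m\otimes f)_0\,h_{\psi'}\otimes(m\otimes f)_1^{\psi'}$ for $h\in\mathcal S(s',s)$ — follows upon replacing $f$ by the composite $f\circ h$ and applying the same multiplicativity of $\psi'$, now in the order ``past $f$, then past $h$'', which matches the order in which the $\mathcal S$-action and the entwining act. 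Finally I would set $(\alpha,\gamma)^\ast\eta:=\alpha^\ast(\eta)$ on a morphism $\eta$; this is $\mathcal S$-linear by construction, and its $D$-colinearity at each object is inherited from the $C$-colinearity of the components of $\eta$ together with the naturality of $\psi'$. Preservation of identities and composites is then immediate from the corresponding properties of $\alpha^\ast$ on $\mathbf M_{\mathcal S}$.
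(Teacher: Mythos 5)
Your proposal is correct and follows essentially the same route as the paper: the same coaction formula $(m\otimes f)\mapsto(m_0\otimes f_{\psi'})\otimes\gamma(m_1)^{\psi'}$, and the same well-definedness check combining the entwined compatibility of $\mathcal M$, the defining relation of $\alpha^\ast$, the identity $\alpha(g_\psi)\otimes\gamma(c^\psi)=\alpha(g)_{\psi'}\otimes\gamma(c)^{\psi'}$, and the multiplicativity of $\psi'$ (with the order of entwining handled exactly as in the paper's chain of equalities). The remaining verifications are likewise dispatched in the paper just as you indicate.
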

\begin{proof}
We take $\mathcal M\in \mathbf M_{\mathcal R}^C(\psi)$. Then, $\mathcal M\in \mathbf M_{\mathcal R}$  and we consider $\mathcal N:=\alpha^*(\mathcal M)\in \mathbf M_{\mathcal S}$. For $s\in S$, we consider an element $m\otimes f\in \mathcal N(s)$, where $m\in \mathcal M(r)$ and $f\in \mathcal S(s,\alpha(r))$ for
some $r\in \mathcal R$. We claim that the morphism
\begin{equation}\label{eq2.6}
\rho_{\mathcal N(s)}:\mathcal N(s)\longrightarrow \mathcal N(s)\otimes D \qquad (m\otimes f)\mapsto (m\otimes f)_0\otimes (m\otimes f)_1:=(m_0\otimes f_{\psi'})\otimes \gamma(m_1)^{\psi'}
\end{equation}  makes $\mathcal N(s)$ a right $D$-comodule. Here, the association $m\mapsto m_0\otimes m_1$ comes from the $C$-comodule
structure $\rho_{\mathcal M(r)}:\mathcal M(r)\longrightarrow \mathcal M(r)\otimes C$ of $\mathcal M(r)$. 

\smallskip
First, we show that $\rho_{\mathcal N(s)}$ is well defined. For this, we consider $m'\in\mathcal M(r')$, $g\in \mathcal R(r,r')$ and $f\in \mathcal S(s,
\alpha(r))$.  We have
\begin{equation}
\begin{array}{ll}
(m'g\otimes f)_0\otimes (m'g\otimes f)_1 & = ((m'g)_0\otimes f_{\psi'})\otimes \gamma((m'g)_1)^{\psi'}\\
& = (m'_0g_\psi\otimes f_{\psi'})\otimes \gamma(m'^\psi_1)^{\psi'}\\
&= (m'_0\otimes \alpha(g_\psi)f_{\psi'})\otimes \gamma(m'^\psi_1)^{\psi'}\\
&= (m'_0\otimes \alpha(g)_{\psi'}f_{\psi'})\otimes \gamma(m'_1)^{\psi'\psi'}\\
&= (m'_0\otimes (\alpha(g)f)_{\psi'})\otimes \gamma(m'_1)^{\psi'}\\
\end{array}
\end{equation} From the properties of entwining structures, it may be easily verified that the structure maps in \eqref{eq2.6} are coassociative and counital, giving a right $D$-comodule
structure on $\mathcal N(s)$. We now consider $f'\in \mathcal S(s',s)$. Then, we have
\begin{equation}
\begin{array}{ll}
(m\otimes ff')_0\otimes (m\otimes ff')_1 & =(m_0\otimes (ff')_{\psi'})\otimes \gamma(m_1)^{\psi'}\\
&=(m_0\otimes f_{\psi'})f'_{\psi'}\otimes  \gamma(m_1)^{\psi'\psi'}\\
& = (m\otimes f)_0f'_{\psi'}\otimes (m\otimes f)_1^{\psi'}\\
\end{array}
\end{equation} This shows that $\mathcal N\in  \mathbf M_{\mathcal S}^D(\psi')$. 
\end{proof}

\begin{thm} \label{P2.3} Let  $(\alpha,\gamma):(\mathcal R,C,\psi)\longrightarrow (\mathcal S,D,\psi')$ be a morphism of entwining structures.  Suppose additionally that $\gamma:C\longrightarrow D$ is a monomorphism of vector spaces. Then, there is a  functor $(\alpha,\gamma)_\ast : \mathbf M_{\mathcal S}^D(\psi')\longrightarrow \mathbf M_{\mathcal R}^C(\psi)$.
\end{thm}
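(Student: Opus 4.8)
The plan is to build $(\alpha,\gamma)_\ast$ by combining the ordinary restriction of scalars $\alpha_\ast$ along the functor $\alpha$ with the coinduction functor $\gamma_\ast=(-)\Box_D C$ along the coalgebra map $\gamma$. Concretely, for $\mathcal N\in \mathbf M_{\mathcal S}^D(\psi')$ I would set $\mathcal M(r):=\mathcal N(\alpha(r))\Box_D C$ for each $r\in\mathcal R$, where $\mathcal N(\alpha(r))$ is regarded as a right $D$-comodule and $C$ as a left $D$-comodule via $c\mapsto \gamma(c_1)\otimes c_2$. The essential use of the hypothesis that $\gamma$ is a monomorphism is that $\gamma(C)$ is then a subcoalgebra of $D$ and $\gamma:C\longrightarrow\gamma(C)$ is an isomorphism of coalgebras; this yields the identification
\[
\mathcal M(r)\;\cong\;\{\, n\in\mathcal N(\alpha(r)) : \rho_{\mathcal N(\alpha(r))}(n)\in \mathcal N(\alpha(r))\otimes \gamma(C)\,\},
\]
under which the coinduced right $C$-comodule structure on $\mathcal M(r)$ is simply $n\mapsto n_0\otimes \gamma^{-1}(n_1)$. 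I will carry out the verification in this ``coinvariant'' picture, since it trivialises the bookkeeping of the cotensor product.

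For the $\mathcal R$-module structure I would take the restriction along $\alpha$: for $g\in\mathcal R(s,r)$ and $n\in\mathcal M(r)$ set $ng:=n\,\alpha(g)\in\mathcal N(\alpha(s))$, using the right $\mathcal S$-action on $\mathcal N$. The one genuine point to check is that this lands back in $\mathcal M(s)$, i.e. that the $D$-coaction of $n\,\alpha(g)$ again takes values in $\mathcal N(\alpha(s))\otimes\gamma(C)$. Here the compatibility condition \eqref{comp 2} for $\mathcal N$ over $(\mathcal S,D,\psi')$ gives $\rho(n\,\alpha(g))=n_0\,\alpha(g)_{\psi'}\otimes n_1^{\psi'}$, and since $n\in\mathcal M(r)$ we may write $n_1=\gamma(c)$ with $c\in C$. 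The defining compatibility of the morphism $(\alpha,\gamma)$, namely $\alpha(g)_{\psi'}\otimes\gamma(c)^{\psi'}=\alpha(g_\psi)\otimes\gamma(c^\psi)$, then shows $\rho(n\,\alpha(g))=n_0\,\alpha(g_\psi)\otimes\gamma(c^\psi)$, which indeed lies in $\mathcal N(\alpha(s))\otimes\gamma(C)$. This is the heart of the argument and the step where both hypotheses are used: the monomorphism property to recover $c=\gamma^{-1}(n_1)$ and hence the $C$-coaction, and the morphism condition to keep the twisted coaction inside $\gamma(C)$.

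It then remains to verify that $\mathcal M$ is an entwined module over $(\mathcal R,C,\psi)$ and that the construction is functorial. For the compatibility \eqref{comp 2} with respect to $\psi$, writing the $C$-coaction on $\mathcal M(r)$ as $\rho^C(n)=n_0\otimes c$ (so that $n_1=\gamma(c)$ for the $D$-coaction), the computation above yields $\rho^C(ng)=n_0\,\alpha(g_\psi)\otimes c^\psi$; since $n_0\,\alpha(g_\psi)$ is precisely the $\mathcal M$-action $n_0 g_\psi$ and $\psi(c\otimes g)=g_\psi\otimes c^\psi$, this is exactly the right-hand side of \eqref{comp 2}, as required. Coassociativity and counitality of the $C$-coaction, together with functoriality of $g\mapsto\mathcal M(g)$, are inherited from the corresponding structures on $\mathcal N$ and are routine. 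Finally, a morphism $\eta:\mathcal N\longrightarrow\mathcal N'$ in $\mathbf M_{\mathcal S}^D(\psi')$ restricts, at each $r$, to the $D$-colinear map $\eta(\alpha(r))$, which preserves the subspaces of elements coacting into $\gamma(C)$ and is automatically $\mathcal R$-linear and $C$-colinear; this defines $(\alpha,\gamma)_\ast(\eta)$ and makes $(\alpha,\gamma)_\ast$ a functor. I expect the only real obstacle to be the well-definedness of the module action in the second paragraph; once the morphism compatibility is invoked in the presence of the monomorphism $\gamma$, all remaining verifications are formal.
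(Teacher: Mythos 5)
Your construction yields the same functor as the paper --- the underlying object is $\mathcal N(\alpha(r))\Box_DC$ and your action $n\mapsto n\alpha(f)$ is exactly the paper's $n\otimes c\mapsto n\alpha(f_\psi)\otimes c^\psi$ read through your identification --- but the verification travels a genuinely different route, and both are correct. The paper works directly inside the cotensor product: it checks by hand (\eqref{eq2.10}--\eqref{eq2.13}) that the twisted action preserves the equalizer defining $\Box_D$, and invokes the injectivity of $\gamma$ only at the end, to deduce the entwining compatibility \eqref{eq2.14} in $\mathcal N(\alpha(r'))\otimes C\otimes C$ from the corresponding identity in $\mathcal N(\alpha(r'))\otimes D\otimes C$. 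You instead spend the injectivity hypothesis at the outset, on the isomorphism $N\Box_DC\cong\{n\in N:\rho(n)\in N\otimes\gamma(C)\}$, valid because $\gamma(C)$ is a subcoalgebra of $D$ isomorphic to $C$ as a coalgebra; after that, both the well-definedness of the action and the compatibility \eqref{comp 2} collapse into the single computation $\rho(n\alpha(g))=n_0\alpha(g)_{\psi'}\otimes n_1^{\psi'}=n_0\alpha(g_\psi)\otimes\gamma(c^\psi)$, which is indeed where the morphism condition on $(\alpha,\gamma)$ does all the work. This buys much lighter bookkeeping at the price of one imported lemma --- the coinvariant description of the cotensor product along an injective coalgebra map, including the fact that the coaction $n\mapsto n_0\otimes\gamma^{-1}(n_1)$ really lands in $\mathcal M(r)\otimes C$ --- which you assert rather than prove; it is standard and true, but in a full write-up you should either prove it or give a reference, since it is the only non-formal ingredient you leave unverified.
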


\begin{proof}
We take $\mathcal N\in \mathbf M_{\mathcal S}^D(\psi')$ and set $\mathcal M(r):=\mathcal N(\alpha(r))\Box_DC$ for each $r\in \mathcal R$. For $f\in 
\mathcal R(r',r)$, we define
\begin{equation}\label{eq2.9}
\mathcal M(f):\mathcal M(r)\longrightarrow \mathcal M(r')\qquad n\otimes c\mapsto (n\otimes c)\cdot f :=n\alpha(f_\psi)\otimes c^\psi
\end{equation} To show that this morphism is well defined, we need to check that $\mathcal M(f)(n\otimes c)\in \mathcal M(r')=\mathcal N(\alpha(r'))\Box_DC$. Since $n\otimes c\in \mathcal N(\alpha(r))\Box_DC$, we know that
\begin{equation}\label{eq2.10}
n_0\otimes n_1\otimes c= n\otimes \gamma(c_1)\otimes c_2
\end{equation} In particular, it follows that 
\begin{equation}\label{eq2.11}
n\otimes c\otimes f\in Eq\left(
\begin{CD}
\begin{tikzcd}
\mathcal N(\alpha(r))\otimes C\otimes \mathcal R(r',r) \ar[d,xshift = 5pt]\ar[d,xshift=-5pt]\\
\mathcal N(\alpha(r))\otimes D\otimes C\otimes \mathcal R(r',r) \\
\end{tikzcd}\\
@Vid\otimes id\otimes \psi VV\\
\mathcal N(\alpha(r))\otimes D\otimes \mathcal R(r',r)\otimes C \\
@Vid\otimes id \otimes \alpha \otimes id VV\\
\mathcal N(\alpha(r))\otimes D\otimes \mathcal S(\alpha(r'),\alpha(r))\otimes C \\
@Vid\otimes \psi'\otimes idVV\\
\mathcal N(\alpha(r))\otimes\mathcal S(\alpha(r'),\alpha(r))\otimes D\otimes C\\
@VVV\\
\mathcal N(\alpha(r'))\otimes D\otimes C\\
\end{CD}\right)
\end{equation} 
From \eqref{eq2.11}, it follows that
\begin{equation}\label{eq2.12}
n_0\alpha(f_\psi)_{\psi'}\otimes n_1^{\psi'}\otimes c^\psi=n\alpha(f_\psi)_{\psi'}\otimes \gamma(c_1)^{\psi'}\otimes {c_2}^{\psi}
\end{equation}
 Applying \eqref{eq2.10} and \eqref{eq2.12}, we now see that
\begin{equation}\label{eq2.13}
\begin{array}{ll}
(n\alpha(f_\psi))_0\otimes (n\alpha(f_\psi))_1 \otimes c^\psi & =n_0\alpha(f_\psi)_{\psi'}\otimes n_1^{\psi'}\otimes c^\psi \\
&=n\alpha(f_\psi)_{\psi'}\otimes \gamma(c_1)^{\psi'}\otimes {c_2}^{\psi}\\
&=n\alpha(f_{\psi\psi})\otimes \gamma({c_1}^{\psi})\otimes {c_2}^{\psi}\\
& =n\alpha(f_\psi)\otimes \gamma({c^{\psi}}_1)\otimes  {c^{\psi}}_2\\
\end{array}
\end{equation} From the definition, we may easily verify that the structure maps in \eqref{eq2.9} make $\mathcal M$ into a right $\mathcal R$-module. To show that $\mathcal M$ is entwined, it remains to check that
\begin{equation}\label{eq2.14}
n\alpha(f_\psi)\otimes {c^\psi}_1\otimes {c^\psi}_2=((n\otimes c)\cdot f)_0\otimes ((n\otimes c)\cdot f)_1= (n\otimes c)_0\cdot f_\psi\otimes (n\otimes c)_1^\psi=n\alpha(f_{\psi\psi})\otimes c_1^\psi\otimes c_2^\psi
\end{equation}  in $\mathcal N(\alpha(r'))\otimes C\otimes C$. Since $\gamma: C\longrightarrow D$ is a monomorphism and all tensor products are
taken over the field $K$, it suffices to show that
\begin{equation}\label{eq2.15}
n\alpha(f_\psi)\otimes \gamma({c^\psi}_1)\otimes {c^\psi}_2=n\alpha(f_{\psi\psi})\otimes \gamma( {c_1}^\psi)\otimes {c_2}^\psi\in \mathcal N(\alpha(r'))\otimes D\otimes C
\end{equation} Using \eqref{eq2.13} and the fact that $(\alpha,\gamma)$ is a morphism of entwining structures, the right hand side of
\eqref{eq2.15} becomes
\begin{equation}\label{eq2.16}
n\alpha(f_{\psi\psi})\otimes \gamma( {c_1}^\psi)\otimes {c_2}^\psi=n\alpha(f_\psi)_{\psi'}\otimes \gamma(c_1)^{\psi'}\otimes {c_2}^{\psi}=n_0\alpha(f_\psi)_{\psi'}\otimes n_1^{\psi'}\otimes c^\psi 
\end{equation} From \eqref{eq2.13}, we already know that $n\alpha(f_\psi)\otimes c^\psi\in \mathcal N(\alpha(r'))\Box_DC$. As such, we have
\begin{equation}\label{eq2.17}
n\alpha(f_\psi)\otimes \gamma({c^\psi}_1)\otimes {c^\psi}_2=(n\alpha(f_\psi))_0\otimes (n\alpha(f_\psi))_1\otimes c^\psi=n_0\alpha(f_\psi)_{\psi'}\otimes n_1^{\psi'}\otimes c^\psi
\end{equation} where the second equality follows from \eqref{eq2.13}. From \eqref{eq2.16} and \eqref{eq2.17}, the result of \eqref{eq2.15} is now clear. 
\end{proof}

\begin{Thm}\label{T2.5}   Let  $(\alpha,\gamma):(\mathcal R,C,\psi)\longrightarrow (\mathcal S,D,\psi')$ be a morphism of entwining structures such that   $\gamma:C\longrightarrow D$ is a monomorphism of vector spaces. Then, 
there is an adjuction of functors
\begin{equation}
\mathbf M_{\mathcal S}^D(\psi')((\alpha,\gamma)^*\mathcal M,\mathcal N)=\mathbf M_{\mathcal R}^C(\psi)(\mathcal M,(\alpha,\gamma)_*\mathcal N)
\end{equation} for $\mathcal M\in \mathbf M_{\mathcal R}^C(\psi)$ and $\mathcal N\in \mathbf M_{\mathcal S}^D(\psi')$.

\end{Thm}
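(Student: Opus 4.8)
The plan is to exhibit the claimed isomorphism as a natural bijection of Hom-sets, assembled from the two underlying adjunctions already recalled in the text: the extension/restriction adjunction $\alpha^\ast\dashv\alpha_\ast$ between $\mathbf M_{\mathcal R}$ and $\mathbf M_{\mathcal S}$, and the corestriction/coinduction adjunction $\gamma^\ast\dashv\gamma_\ast$ between $Comod$-$C$ and $Comod$-$D$, the latter being the cotensor product $-\Box_D C$. Since a morphism in $\mathbf M_{\mathcal R}^C(\psi)$ (resp. in $\mathbf M_{\mathcal S}^D(\psi')$) is precisely an $\mathcal R$-module (resp. $\mathcal S$-module) morphism that is objectwise $C$-colinear (resp. $D$-colinear), I would define the bijection by explicit formulas and then verify that it carries entwined-module morphisms to entwined-module morphisms in both directions. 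Equivalently, one may package the data as a unit $\eta_{\mathcal M}:\mathcal M\longrightarrow (\alpha,\gamma)_\ast(\alpha,\gamma)^\ast\mathcal M$, $m\mapsto (m_0\otimes id_{\alpha(r)})\otimes m_1$, and a counit $\varepsilon_{\mathcal N}:(\alpha,\gamma)^\ast(\alpha,\gamma)_\ast\mathcal N\longrightarrow\mathcal N$, $(n\otimes c)\otimes f\mapsto \varepsilon_C(c)\,nf$, and check the triangle identities; I will organize the verification around the two explicit Hom-maps below.

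The two maps are as follows. Given $\phi\in\mathbf M_{\mathcal S}^D(\psi')((\alpha,\gamma)^\ast\mathcal M,\mathcal N)$ and $\chi\in\mathbf M_{\mathcal R}^C(\psi)(\mathcal M,(\alpha,\gamma)_\ast\mathcal N)$, set
\begin{equation*}
\Phi(\phi)(r)(m)=\phi(\alpha(r))(m_0\otimes id_{\alpha(r)})\otimes m_1,\qquad \Psi(\chi)(s)(m\otimes f)=\varepsilon_C(c)\,nf,
\end{equation*}
where $m\mapsto m_0\otimes m_1$ is the $C$-coaction of \ref{D2.2} on $\mathcal M(r)$, where $\chi(r)(m)=n\otimes c$ (summation suppressed), and where $nf$ denotes the right $\mathcal S$-action of $\mathcal N$ for $f\in\mathcal S(s,\alpha(r))$. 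Here $m_0\otimes id_{\alpha(r)}$ is the image of $m_0$ under the unit of $\alpha^\ast\dashv\alpha_\ast$, landing in $(\alpha,\gamma)^\ast\mathcal M(\alpha(r))$ as in \eqref{ke2.2}.

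The verifications I would then carry out are: (i) that $\Phi(\phi)(r)(m)$ actually lies in the cotensor product $\mathcal N(\alpha(r))\Box_D C$ defining $(\alpha,\gamma)_\ast\mathcal N(r)$ — one expands the $D$-coaction on $(\alpha,\gamma)^\ast\mathcal M(\alpha(r))$ via \eqref{eq2.6}, simplifies $(id_{\alpha(r)})_{\psi'}$ using the entwining axiom $\psi(c\otimes id_r)=id_r\otimes c$ of Definition \ref{entcatx}, invokes $D$-colinearity of $\phi$, and matches the two legs of the defining equalizer by coassociativity of the $C$-coaction; (ii) that $\Phi(\phi)$ is $\mathcal R$-linear and $C$-colinear, using the compatibility $\rho(mf)=m_0f_\psi\otimes m_1^\psi$ of \eqref{comp 2} together with the $\mathcal R$-action $(n\otimes c)\cdot f=n\alpha(f_\psi)\otimes c^\psi$ from \eqref{eq2.9}; (iii) that $\Psi(\chi)$ descends to the quotient by $V$ in \eqref{ke2.2}, i.e.\ $\Psi(\chi)(s)(m'\otimes\alpha(g)f)=\Psi(\chi)(s)(m'g\otimes f)$, which reduces via $\mathcal R$-linearity of $\chi$ and the action formula to the axiom $\varepsilon_C(c^\psi)f_\psi=\varepsilon_C(c)f$, after which $\mathcal S$-linearity and $D$-colinearity are direct; and (iv) that $\Phi$ and $\Psi$ are mutually inverse — $\Psi\Phi=\mathrm{id}$ follows from $\varepsilon_C(m_1)m_0=m$, the $\mathcal S$-linearity of $\phi$, and $(m\otimes id_{\alpha(r)})\cdot f=m\otimes f$, while $\Phi\Psi=\mathrm{id}$ follows from $C$-colinearity of $\chi$ relative to the coaction $n\otimes c\mapsto (n\otimes c_1)\otimes c_2$ on $\mathcal N(\alpha(r))\Box_D C$ together with the counit axiom. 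Naturality in $\mathcal M$ and $\mathcal N$ is then immediate from the formulas.

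The genuinely delicate step is (i), and equally the colinearity half of (iii): because the right-hand functor passes through the cotensor product $\Box_D C$, each ``lands in the correct space'' argument requires feeding the defining equalizer relation of $\Box_D C$ through the entwinings $\psi,\psi'$, exactly in the style of the computation performed in \ref{P2.3}, and it is precisely the hypothesis that $\gamma$ is a monomorphism that allows one to deduce the needed identities in $\mathcal N(\alpha(r'))\otimes C\otimes C$ from their $\gamma$-images in $\mathcal N(\alpha(r'))\otimes D\otimes C$. By contrast, the module-linearity and mutual-inverse checks are short consequences of the counit $\varepsilon_C$ and of the axiom $\varepsilon_C(c^\psi)f_\psi=\varepsilon_C(c)f$.
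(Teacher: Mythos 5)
Your proposal is correct and follows essentially the same route as the paper: the bijection is the composite of the extension/restriction adjunction $\alpha^\ast\dashv\alpha_\ast$ on underlying modules with the pointwise corestriction/coinduction adjunction $\gamma^\ast\dashv(-\Box_DC)$ on comodules, which is exactly how the paper argues, only stated there without the explicit unit/counit formulas you supply. The one small imprecision is that the injectivity of $\gamma$ is really consumed in Proposition \ref{P2.3} (to know $(\alpha,\gamma)_\ast\mathcal N$ is an entwined module at all) rather than in your verifications (i) and (iii), which take place in the $D$-equalizer and need no passage back from $D$ to $C$; this does not affect the validity of the argument.
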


\begin{proof} We consider a morphism $\eta :(\alpha,\gamma)^*\mathcal M\longrightarrow\mathcal N$ in $\mathbf M_{\mathcal S}^D(\psi')$. Then, $\eta$ corresponds
to a morphism $\eta:\alpha^*\mathcal M\longrightarrow \mathcal N$ in $\mathbf M_{\mathcal S}$ such that  $\eta(s):\alpha^*\mathcal M(s)\longrightarrow \mathcal N(s)$ is $D$-colinear for each $s\in S$. Accordingly, we have $\eta':\mathcal M\longrightarrow \mathcal N$ in $\mathbf M_{\mathcal R}$ such that
$\eta'(r):\mathcal M(r)\longrightarrow \mathcal N(\alpha(r))$ is $D$-colinear for each $r\in \mathcal R$. Here, $\mathcal M(r)$ is treated
as a $D$-comodule via corestriction of scalars. Therefore, we have morphisms $\eta''(r):\mathcal M(r)\longrightarrow \mathcal N(\alpha(r))\Box_DC$ 
of $C$-comodules for each $r\in \mathcal R$. Together, these determine a morphism $\mathcal M\longrightarrow (\alpha,\gamma)_*\mathcal N$ 
in $\mathbf M_{\mathcal R}$. These arguments can be easily reversed and hence the result. 
\end{proof}

\section{Projective generators and entwined modules}

Let $(\mathcal R,C,\psi)$ be an entwining structure. In \cite[Proposition 2.9]{BBR}, it was shown that the category 
$\mathbf M_{\mathcal R}^C(\psi)$ of entwined modules is a Grothendieck category. In this section, we will refine this result to give conditions for
$\mathbf M_{\mathcal R}^C(\psi)$ to have a collection of projective generators.

\begin{lem}\label{L3.1} Let $\mathcal G$ be a Grothendieck category. Fix a set of generators $\{ G_k\}_{k\in K}$ for $\mathcal G$. Let $Z\in \mathcal G$ be an object. Let $i_X:X\hookrightarrow Z$, $i_Y:Y\hookrightarrow Z$ be two subobjects  of $Z$ such that for any $k\in K$ and any morphism $f_k:G_k\longrightarrow X$, there exists $g_k:G_k\longrightarrow Y$ such that $i_Y\circ g_k=i_X\circ f_k$. Then, $i_X:X\hookrightarrow Z$ factors through $i_Y:Y\hookrightarrow Z$, i.e., $X$ is a subobject of $Y$.
\end{lem}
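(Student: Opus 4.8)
The plan is to realize $X$ and $Y$ as subobjects of $Z$ through the monomorphisms $i_X$ and $i_Y$, to form their intersection $W := X\cap Y$, and then to show that the resulting monomorphism $p:W\hookrightarrow X$ is in fact an isomorphism. Since $\mathcal G$ is a Grothendieck category it is abelian and has pullbacks, so I would construct $W$ as the pullback of $i_X$ and $i_Y$, equipped with projections $p:W\to X$ and $q:W\to Y$ satisfying $i_X\circ p=i_Y\circ q$. Being a pullback of the monomorphism $i_Y$, the map $p$ is itself a monomorphism. If I can show $p$ is an isomorphism, then $i_X=i_Y\circ(q\circ p^{-1})$ exhibits the factorization of $i_X$ through $i_Y$, proving that $X$ is a subobject of $Y$.

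The bridge between the hypothesis and the pullback is the universal property. Given any $k\in K$ and any $f_k:G_k\longrightarrow X$, the hypothesis supplies $g_k:G_k\longrightarrow Y$ with $i_Y\circ g_k=i_X\circ f_k$. This is exactly the compatibility needed by the universal property of the pullback $W$, so I obtain a unique $h_k:G_k\longrightarrow W$ with $p\circ h_k=f_k$ (and $q\circ h_k=g_k$). In other words, every morphism from a generator $G_k$ into $X$ factors through the subobject $p:W\hookrightarrow X$.

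It remains to promote this factorization to an isomorphism, and this is where the generating property enters. Since $\{G_k\}_{k\in K}$ is a set of generators, the canonical morphism $\bigoplus_{k\in K}\bigoplus_{f\in\mathcal G(G_k,X)}G_k\longrightarrow X$ is an epimorphism. By the previous paragraph each structure map of this epimorphism factors through $p$, so the epimorphism itself factors as $p\circ u$ for a suitable $u$; hence $p$ is an epimorphism. Being simultaneously a monomorphism and an epimorphism in the abelian category $\mathcal G$, the map $p$ is an isomorphism, and the proof concludes as in the first paragraph.

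I do not expect a serious obstacle: the argument is purely formal once the intersection is identified with the pullback. The only points requiring care are to invoke the correct characterization of a generating set, namely that the canonical map out of the coproduct over all morphisms from the generators is an epimorphism, and to use that a map $p$ admitting a factorization $p\circ u$ of an epimorphism is itself an epimorphism. Everything else follows from the universal property of the pullback and the fact that a map which is both monic and epic in an abelian category is invertible.
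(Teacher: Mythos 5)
Your proposal is correct. The core of your argument coincides with the paper's: both proofs use the characterization of a generating set to produce an epimorphism from a coproduct of generators onto $X$, apply the hypothesis componentwise to obtain a compatible morphism into $Y$ over $Z$, and then conclude. Where you differ is in the final step. The paper compares images directly: writing $f:\bigoplus_j G_j\twoheadrightarrow X$ and assembling the lifts into $g:\bigoplus_j G_j\to Y$ with $i_Y\circ g=i_X\circ f$, it concludes $X=Im(i_X)=Im(i_X\circ f)=Im(i_Y\circ g)\subseteq Im(i_Y)=Y$. You instead form the pullback $W=X\cap Y$, use the universal property to show every morphism from a generator into $X$ factors through the monomorphism $p:W\to X$, deduce that $p$ is also an epimorphism (hence an isomorphism, as $\mathcal G$ is abelian), and read off the factorization $i_X=i_Y\circ(q\circ p^{-1})$. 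Your route is marginally longer but has the virtue of producing the factoring morphism $X\to Y$ explicitly rather than asserting an inclusion of images, and it makes transparent exactly where the abelian-category axioms are used; the paper's route is shorter but leans on routine manipulations of images of composites. Both are sound, and the one point you flagged as needing care --- that the canonical map $\bigoplus_{k}\bigoplus_{f\in\mathcal G(G_k,X)}G_k\to X$ is an epimorphism --- is indeed the correct form of the generating property in a Grothendieck category (and is exactly the fact the paper invokes via \cite[Proposition 1.9.1]{Tohoku}).
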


\begin{proof}
Since  $\{ G_k\}_{k\in K}$ is a set of generators for $\mathcal G$, we can choose (see \cite[Proposition 1.9.1]{Tohoku}) an epimorphism
$f:\underset{j\in J}{\bigoplus}\textrm{ }G_j\longrightarrow X$, corresponding to a collection of maps
$f_j:G_j\longrightarrow X$, with each $G_j$ a generator from the collection $\{ G_k\}_{k\in K}$. Accordingly, we can choose morphisms
$g_j:G_j\longrightarrow Y$ such that  $i_Y\circ g_j=i_X\circ f_j$ for each $j\in J$. Together, these $\{g_j\}_{j\in J}$ determine
a morphism $g:\underset{j\in J}{\bigoplus}\textrm{ }G_j\longrightarrow Y$ satisfying $i_Y\circ g=i_X\circ f$. Since $i_X$, $i_Y$ are monomorphisms and $f$ is an epimorphism, we have
\begin{equation}
X=Im(i_X)=Im(i_X\circ f)=Im(i_Y\circ g)=Im(i_Y|Im(g))\subseteq Im(i_Y)=Y
\end{equation}
\end{proof}

\begin{lem}\label{L3.2} Let $\mathcal G$ be a Grothendieck category having a set of projective generators $\{ G_k\}_{k\in K}$. Let $f:X\longrightarrow Y$ be a morphism in $\mathcal G$. Let $i:X'\hookrightarrow X$ and $j:Y'\hookrightarrow Y$ be monomorphisms. Suppose that for any $k\in K$ and 
any morphism $f_k:G_k\longrightarrow X'$, there exists a morphism $g_k:G_k\longrightarrow Y'$ such that $f\circ i\circ f_k=j\circ g_k:G_k
\longrightarrow Y$. Then, there exists $f':X'\longrightarrow Y'$ such that $j\circ f'=f\circ i$. 
\end{lem}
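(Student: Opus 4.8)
The plan is to build an epimorphism onto $X'$ out of the generators, transport the hypothesis along it to obtain a single morphism into $Y'$, and then use that $j$ is a monomorphism to descend this morphism back down to $X'$. Exactly as in the proof of Lemma \ref{L3.1}, since $\{G_k\}_{k\in K}$ is a set of generators there is an epimorphism $p:P:=\bigoplus_{l\in L}G_l\longrightarrow X'$ with each $G_l$ lying in the family $\{G_k\}_{k\in K}$; this corresponds to a family $f_l:=p\circ\iota_l:G_l\longrightarrow X'$, where the $\iota_l$ are the canonical coproduct inclusions. Applying the hypothesis to each $f_l$ yields morphisms $g_l:G_l\longrightarrow Y'$ with $f\circ i\circ f_l=j\circ g_l$, and by the universal property of the coproduct the $\{g_l\}_{l\in L}$ assemble into a single $g:P\longrightarrow Y'$ determined by $g\circ\iota_l=g_l$. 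Checking on each summand gives $j\circ g=f\circ i\circ p$.

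Next I would exploit that $\mathcal G$ is abelian, so the epimorphism $p$ is the cokernel of its kernel $\kappa:\ker p\hookrightarrow P$. Composing the identity $j\circ g=f\circ i\circ p$ with $\kappa$ gives $j\circ g\circ\kappa=f\circ i\circ p\circ\kappa=0$, and since $j$ is a monomorphism we conclude $g\circ\kappa=0$. The universal property of the cokernel then produces a unique $f':X'\longrightarrow Y'$ with $f'\circ p=g$, whence $j\circ f'\circ p=j\circ g=f\circ i\circ p$; cancelling the epimorphism $p$ yields $j\circ f'=f\circ i$, as required. The only step carrying any real content is this descent: it is precisely the monomorphicity of $j$ that forces $g$ to annihilate $\ker p$ and hence to factor through $X'$, everything else being bookkeeping with coproducts and cokernels, so I do not expect a genuine obstacle here.

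I note that the argument above uses only that the $G_k$ generate and that $j$ is monic. The \emph{projectivity} of the generators enters if one instead routes the proof through Lemma \ref{L3.1}: factor $f\circ i=i_I\circ q$ through its image, with $q:X'\longrightarrow I$ an epimorphism and $i_I:I\hookrightarrow Y$ a monomorphism, and then verify the hypothesis of Lemma \ref{L3.1} for the two subobjects $I$ and $Y'$ of $Y$. Lifting an arbitrary $h_k:G_k\longrightarrow I$ along the epimorphism $q$ to some $\widetilde f_k:G_k\longrightarrow X'$ with $q\circ\widetilde f_k=h_k$ is exactly where projectivity of $G_k$ is needed; the hypothesis applied to $\widetilde f_k$ then supplies $g_k:G_k\longrightarrow Y'$ with $j\circ g_k=i_I\circ h_k$, and Lemma \ref{L3.1} gives $I\le Y'$, from which the desired $f'$ is immediate. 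In this variant the projective lifting is the crux of the argument.
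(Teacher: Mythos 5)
Your first argument is correct and is genuinely different from the paper's. The paper reduces the claim to showing $Im(f\circ i)\subseteq Y'$, takes an arbitrary $h_k:G_k\longrightarrow Im(f\circ i)$, lifts it along the epimorphism $X'\twoheadrightarrow Im(f\circ i)$ using the \emph{projectivity} of $G_k$, applies the hypothesis to the lift, and then invokes Lemma \ref{L3.1} to conclude $Im(f\circ i)\subseteq Y'$ --- exactly the ``variant'' you sketch in your last paragraph. Your main argument instead covers $X'$ by an epimorphism $p:\bigoplus_{l}G_l\longrightarrow X'$, assembles the individually chosen $g_l$ into $g:\bigoplus_l G_l\longrightarrow Y'$ with $j\circ g=f\circ i\circ p$, and descends $g$ to $X'$ by observing that $j\circ g\circ\kappa=f\circ i\circ p\circ\kappa=0$ forces $g\circ\kappa=0$ (as $j$ is monic), so $g$ factors through $p=\mathrm{coker}(\kappa)$. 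This buys something real: it proves the lemma assuming only that the $G_k$ generate, with projectivity playing no role, whereas the paper's route needs projectivity for the lifting step (though it dispenses with the kernel--cokernel bookkeeping by leaning on Lemma \ref{L3.1}, which is already in place). Since the paper only ever applies Lemma \ref{L3.2} in settings where projective generators are available, nothing downstream changes, but your observation that the hypothesis can be weakened is accurate.
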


\begin{proof}
It suffices to show that $Im(f\circ i)\subseteq Y'$. We choose any $k\in K$ and a morphism $h_k:G_k\longrightarrow Im(f\circ i)\hookrightarrow Y$. Since $G_k$ is projective, we can choose $f_k:G_k\longrightarrow X'$ such that $f\circ i\circ f_k=h_k$.  By assumption, we can now find $g_k:G_k\longrightarrow Y'$ such that $f\circ i\circ f_k=j\circ g_k:G_k
\longrightarrow Y$.  In particular, $j\circ g_k=h_k$. Applying Lemma \ref{L3.1}, we obtain $Im(f\circ i)\subseteq Y'$.
\end{proof}

\begin{lem}\label{L3.3} Let $(\mathcal R,C,\psi)$ be an entwining structure. Let $V$ be a right $C$-comodule. Then, for any $r\in \mathcal R$, the module
$V\otimes H_r$ given by 
\begin{equation}
\begin{array}{c}
(V\otimes H_r)(r')=V\otimes \mathcal R(r',r)\\
(V\otimes H_r)(f):(V\otimes H_r)(r')\longrightarrow (V\otimes H_r)(r'')\qquad v\otimes g\mapsto v\otimes gf
\end{array}
\end{equation} for $r'\in \mathcal R$, $f\in \mathcal R(r'',r')$ is an entwined module in $\mathbf M^C_{\mathcal R}(\psi)$. Here, the right $C$-comodule structure on
$(V\otimes H_r)(r')$ is given by taking $v\otimes g$ to $v_0\otimes g_\psi\otimes v^\psi_1$. 

\end{lem}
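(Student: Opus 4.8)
The plan is to verify, in turn, the three structures that the statement packages together: that $V \otimes H_r$ is a right $\mathcal{R}$-module, that each space $(V \otimes H_r)(r') = V \otimes \mathcal{R}(r',r)$ is a right $C$-comodule under the given map, and that these fit together to satisfy the compatibility condition \eqref{comp 2}. The right $\mathcal{R}$-module structure requires no work involving $\psi$: it is just the vector space $V$ tensored with the representable module $H_r$, so the relations $g\cdot(ff') = (g\cdot f)f'$ and $g\cdot id = g$ are immediate from associativity and unitality of composition in $\mathcal{R}$.

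Next I would show that $\rho_{(V\otimes H_r)(r')}(v\otimes g) = v_0\otimes g_\psi\otimes v_1^\psi$ is counital and coassociative, so that it endows $V\otimes\mathcal{R}(r',r)$ with a genuine right $C$-comodule structure. Counitality follows by applying $id\otimes\varepsilon_C$ and invoking the axiom $\varepsilon_C(c^\psi)f_\psi = \varepsilon_C(c)f$ from Definition \ref{entcatx} together with counitality of $V$, which collapses $v_0\otimes g_\psi\varepsilon_C(v_1^\psi)$ back to $v\otimes g$. Coassociativity is the delicate step: applying $\rho\otimes id$ produces the iterated expression $v_{00}\otimes(g_\psi)_\psi\otimes v_{01}^\psi\otimes v_1^\psi$ with a double occurrence of $\psi$, whereas applying $id\otimes\Delta_C$ and then the compatibility axiom $f_\psi\otimes\Delta_C(c^\psi) = {f_\psi}_\psi\otimes c_1^\psi\otimes c_2^\psi$ produces $v_0\otimes(g_\psi)_\psi\otimes(v_1)_1^\psi\otimes(v_1)_2^\psi$. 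These two agree once one uses coassociativity of the comodule $V$ to identify $v_{00}\otimes v_{01}\otimes v_1$ with $v_0\otimes(v_1)_1\otimes(v_1)_2$; the content of the axiom is precisely that one copy of $\psi$ distributes correctly over $\Delta_C$, so that the two entwinings attach to the matching tensor factors.

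Finally, for the compatibility condition \eqref{comp 2} I would take $m = v\otimes g\in(V\otimes H_r)(r')$ and $f\in\mathcal{R}(r'',r')$, so that $mf = v\otimes gf$ and hence $\rho(mf) = v_0\otimes(gf)_\psi\otimes v_1^\psi$. Reading off $m_0 = v_0\otimes g_\psi$ and $m_1 = v_1^\psi$, the right-hand side $m_0 f_\psi\otimes m_1^\psi$ equals $v_0\otimes g_\psi f_\psi\otimes(v_1^\psi)^\psi$, and the two sides coincide by the multiplicativity axiom $(gf)_\psi\otimes c^\psi = g_\psi f_\psi\otimes{c^\psi}^\psi$ taken with $c = v_1$. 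I expect the only real obstacle to be bookkeeping: keeping straight which copy of $\psi$ entwines which copy of $C$ once the comodule map and the comultiplication are composed, and the coassociativity check is where this is heaviest.
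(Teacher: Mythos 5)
Your proof is correct. The paper itself gives no argument here --- it simply cites \cite[Lemma 2.5]{BBR} --- and your direct verification is precisely the computation that reference carries out: the $\mathcal R$-module structure is free of $\psi$, counitality and coassociativity of $\rho_{(V\otimes H_r)(r')}$ follow from the axioms $\varepsilon_C(c^\psi)f_\psi=\varepsilon_C(c)f$ and $f_\psi\otimes\Delta_C(c^\psi)={f_\psi}_\psi\otimes c_1^\psi\otimes c_2^\psi$ together with coassociativity of $V$, and the compatibility condition \eqref{comp 2} is exactly the multiplicativity axiom $(gf)_\psi\otimes c^\psi=g_\psi f_\psi\otimes{c^\psi}^\psi$ applied with $c=v_1$. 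Your bookkeeping of which copy of $\psi$ entwines which tensor factor in the coassociativity step is the only delicate point, and you have it right.
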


\begin{proof}
See \cite[Lemma 2.5]{BBR}. 
\end{proof}

For the rest of this section, we will assume that the coalgebra $C$ is such that the category $Comod-C$ of right $C$-comodules has enough projective objects. In other words, the coalgebra $C$ is right semiperfect (see \cite[Definition 3.2.4]{book3}). 

\begin{thm}\label{P3.4} Let $(\mathcal R,C,\psi)$ be an entwining structure with $C$ a right semiperfect coalgebra. Let $V$ be a projective right $C$-comodule. Then, for any $r\in \mathcal R$, the module
$V\otimes H_r$ is a projective object of $\mathbf M_{\mathcal R}^C(\psi)$.
\end{thm}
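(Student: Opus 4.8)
The plan is to realize $V\otimes H_r$ as a representing object for a composite of two exact functors. Concretely, I would produce a natural isomorphism
\begin{equation*}
\mathbf M_{\mathcal R}^C(\psi)(V\otimes H_r,\mathcal M)\;\cong\;Comod-C\big(V,\mathcal M(r)\big)
\end{equation*}
natural in $\mathcal M\in \mathbf M_{\mathcal R}^C(\psi)$, and then deduce projectivity from the fact that the right-hand side is exact in $\mathcal M$. By Lemma \ref{L3.3} the object $V\otimes H_r$ is genuinely an entwined module, with comodule structure $v\otimes g\mapsto v_0\otimes g_\psi\otimes v_1^\psi$ on $(V\otimes H_r)(r')=V\otimes \mathcal R(r',r)$, so the Hom-set on the left makes sense.

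First I would set up the underlying (non-comodule) bijection. Since $\mathcal R$-module maps out of the representable $H_r$ are controlled by Yoneda, a morphism $\eta:V\otimes H_r\longrightarrow \mathcal M$ in $\mathbf M_{\mathcal R}$ is determined by the $K$-linear map $\phi:V\longrightarrow \mathcal M(r)$, $v\mapsto \eta(r)(v\otimes id_r)$; conversely, any $K$-linear $\phi$ yields an $\mathcal R$-module map by the rule $\eta(r')(v\otimes g)=\phi(v)g$ for $g\in\mathcal R(r',r)$, using that $v\otimes g=(V\otimes H_r)(g)(v\otimes id_r)$ and the notation $mf=\mathcal M(f)(m)$. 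This gives $\mathbf M_{\mathcal R}(V\otimes H_r,\mathcal M)\cong Hom_K(V,\mathcal M(r))$.

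The substantive step, which I expect to be the main obstacle, is to check that this bijection matches $C$-colinear maps on both sides. On the subspace $V\otimes id_r\subseteq(V\otimes H_r)(r)$ the normalization axiom $\psi(c\otimes id_r)=id_r\otimes c$ collapses the comodule structure to $v\otimes id_r\mapsto v_0\otimes id_r\otimes v_1$, so if $\eta$ is a morphism of entwined modules then its restriction $\phi$ is automatically $C$-colinear. For the converse I would compute, assuming $\phi$ colinear, that for every $r'$ and $g\in\mathcal R(r',r)$ the entwined-module compatibility \eqref{comp 2} for $\mathcal M$ gives
\begin{equation*}
\rho_{\mathcal M(r')}\big(\phi(v)g\big)=\phi(v)_0\, g_\psi\otimes \phi(v)_1^{\,\psi}=\phi(v_0)\,g_\psi\otimes v_1^{\,\psi},
\end{equation*}
while applying $\eta(r')\otimes id$ to the comodule structure of $V\otimes H_r$ yields $\phi(v_0)\,g_\psi\otimes v_1^{\,\psi}$ as well; the two agree, so $\eta(r')$ is $C$-colinear at every object. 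This is exactly where the axioms of the entwining structure and of an entwined module are needed, and it is the delicate part of the argument. Naturality of the resulting isomorphism in $\mathcal M$ is immediate, since both sides are computed through evaluation at $v\otimes id_r$.

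Finally I would conclude projectivity. Since $V$ is a projective object of $Comod-C$, the functor $Comod-C(V,-)$ is exact; and since kernels and cokernels in $\mathbf M_{\mathcal R}^C(\psi)$ are computed objectwise in $Comod-C$ (as in the Grothendieck structure of \cite[Proposition 2.9]{BBR}), the evaluation functor $\mathcal M\mapsto \mathcal M(r)$ is exact and in particular preserves epimorphisms. Composing, $\mathbf M_{\mathcal R}^C(\psi)(V\otimes H_r,-)$ sends epimorphisms to surjections, which is precisely the projectivity of $V\otimes H_r$ in $\mathbf M_{\mathcal R}^C(\psi)$.
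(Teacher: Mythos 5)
Your proposal is correct and is essentially the paper's argument repackaged: the paper lifts a given map $\zeta$ directly by restricting to $v\otimes id_r$, lifting in $Comod-C$ using projectivity of $V$, and extending via $\xi(s)(v\otimes g)=\mathcal N(g)(T(v))$, which is precisely your bijection $\mathbf M_{\mathcal R}^C(\psi)(V\otimes H_r,\mathcal M)\cong Comod-C(V,\mathcal M(r))$ combined with the pointwise computation of epimorphisms. The colinearity verification you identify as the delicate step is the same computation the paper carries out, using the compatibility \eqref{comp 2} together with the normalization $\psi(c\otimes id_r)=id_r\otimes c$.
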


\begin{proof}
We begin with  a morphism $\zeta: V\otimes 
H_r\longrightarrow \mathcal M$ and an epimorphism $\eta:\mathcal N\longrightarrow \mathcal M$ in $\mathbf M_{\mathcal R}^C(\psi)$. In particular, we consider the composition
\begin{equation}\label{eq3.3e}
V\longrightarrow V\otimes H_r(r)\longrightarrow \mathcal M(r) \qquad v\mapsto v\otimes id_r\mapsto \zeta(r)(v\otimes id_r)
\end{equation} which is a morphism in $Comod-C$. Since $V$ is projective, we can lift the map in \eqref{eq3.3e} to a map $T:V\longrightarrow \mathcal N(r)$ in $Comod-C$ such that
$(\eta(r)(T(v))=\zeta(r)(v\otimes id_r)$ for each $v\in V$. 

\smallskip
We now define $\xi:V\otimes H_r\longrightarrow \mathcal N$ by setting for each $s\in \mathcal R$
\begin{equation}\label{eq3.4e}
\xi(s):V\otimes H_r(s)\longrightarrow \mathcal N(s)\qquad v\otimes g\mapsto \mathcal N(g)(T(v))
\end{equation}  We first check that $\xi:V\otimes H_r\longrightarrow \mathcal N$ is a morphism in $\mathbf M_{\mathcal R}$. Given $g'\in \mathcal R(s',s)$, we have
\begin{equation}\label{pf3.5}
\mathcal N(g')(\xi(s)(v\otimes g))=\mathcal N(gg')(T(v))=\xi(s')(v\otimes gg')=\xi(s')((V\otimes H_r)(g')(v\otimes g))
\end{equation} We also have, for $v\otimes g\in V\otimes H_r(s)$,
\begin{equation}
\begin{array}{ll}
\xi(s)(v\otimes g)_0\otimes \xi(s)(v\otimes g)_1 & =\mathcal N(g)(T(v))_0\otimes \mathcal N(g)(T(v))_1\\
& = T(v)_0g_\psi\otimes T(v)_1^\psi \\
&= T(v_0)g_\psi \otimes v_1^\psi\\
& =\mathcal N(g_\psi)(T(v_0))\otimes v_1^\psi=(\xi(s)\otimes id_C)(v_0\otimes g_\psi\otimes v_1^\psi)\\
\end{array}
\end{equation} This shows that $\xi(s):V\otimes H_r(s)\longrightarrow \mathcal N(s)$ is a morphism in $Comod-C$.  Together with \eqref{pf3.5}, it follows that $\xi:V\otimes H_r\longrightarrow \mathcal N$ is a morphism in $\mathbf M_{\mathcal R}^C(\psi)$.  Finally, we see that for $v\otimes g\in V\otimes H_r(s)$, we have
\begin{equation}
\begin{array}{ll}
(\eta(s)\circ \xi(s))(v\otimes g)& =\eta(s)(\mathcal N(g)(T(v)))\\
&= \mathcal M(g)(\eta(r)(T(v)))\\
& = \mathcal M(g)(\zeta(r)(v\otimes id_r))\\
& =\zeta(s)((V\otimes H_r)(g)(v\otimes id_r))\\
&= \zeta(s)(v\otimes g)\\
\end{array}
\end{equation} This gives us $\eta\circ \xi=\zeta:V\otimes H_r\longrightarrow \mathcal M$. Hence the result.
\end{proof}

\begin{Thm}\label{T3.5} Let $(\mathcal R,C,\psi)$ be an entwining structure and let $C$ be a right semiperfect $K$-coalgebra. Then, the category $\mathbf M_{\mathcal R}^C(\psi)$ of entwined modules is a Grothendieck category with a set of projective generators.
\end{Thm}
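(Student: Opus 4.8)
The plan is to observe that the Grothendieck property has already been established in \cite[Proposition 2.9]{BBR}, so that the only new content is the existence of a \emph{set} of projective generators. The natural candidate, suggested by Lemma~\ref{L3.3} and Proposition~\ref{P3.4}, is the family $\mathcal G=\{\,V\otimes H_r \mid r\in\mathcal R,\ V\in\mathcal V\,\}$, where $\mathcal V$ is a fixed set of representatives of isomorphism classes of projective generators of $Comod-C$. Since $\mathcal R$ is small and $\mathcal V$ is a set, $\mathcal G$ is a set; by Proposition~\ref{P3.4} each member is a projective object of $\mathbf M_{\mathcal R}^C(\psi)$. Thus the theorem reduces to two points: that $Comod-C$ admits such a set $\mathcal V$, and that $\mathcal G$ generates.

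For the first point I would invoke right semiperfectness directly. Since $C$ is right semiperfect, each simple right $C$-comodule $S_\lambda$ admits a finite-dimensional projective cover $P_\lambda\twoheadrightarrow S_\lambda$ (see \cite[\S 3.2]{book3}), and I would set $\mathcal V=\{P_\lambda\}_\lambda$, indexed by the set of isomorphism classes of simple comodules. That $\{P_\lambda\}$ generates $Comod-C$ follows from the fundamental theorem of comodules: any nonzero comodule $M$ contains a finite-dimensional, hence a simple, subcomodule $S_\lambda$, and composing the projective cover with the inclusion $S_\lambda\hookrightarrow M$ yields a nonzero map $P_\lambda\to M$. This also identifies $\mathcal V$ with a subset of the $Proj^f(C)$ appearing in Theorem~\ref{resultd}.

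The heart of the argument is showing that $\mathcal G$ generates $\mathbf M_{\mathcal R}^C(\psi)$, for which I would use the Hom--tensor bijection implicit in the proof of Proposition~\ref{P3.4},
\[
\mathbf M_{\mathcal R}^C(\psi)(V\otimes H_r,\mathcal M)\;\cong\;(Comod-C)(V,\mathcal M(r)),
\]
sending $\zeta$ to $v\mapsto\zeta(r)(v\otimes id_r)$ and, conversely, $T$ to the morphism $\xi$ with $\xi(s)(v\otimes g)=\mathcal M(g)(T(v))$; one checks, exactly as in \ref{P3.4}, that $\xi$ is simultaneously $\mathcal R$-linear and $C$-colinear, and that the two assignments are mutually inverse and natural in $\mathcal M$. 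Fixing $\mathcal M\in\mathbf M_{\mathcal R}^C(\psi)$, for each $s\in\mathcal R$ I would choose a comodule epimorphism $\bigoplus_i P_{\lambda_i}\twoheadrightarrow\mathcal M(s)$ (possible since $\mathcal V$ generates $Comod-C$); transporting each summand across the bijection gives maps $P_{\lambda_i}\otimes H_s\to\mathcal M$ whose evaluation at $s$ sends $v\otimes id_s$ to $\mathcal M(id_s)(T(v))=T(v)$, so that jointly they surject onto $\mathcal M(s)$. Summing over all $s\in\mathcal R$ then produces a morphism $\bigoplus_{s,i}(P_{\lambda_i}\otimes H_s)\to\mathcal M$ which is surjective in every component $s_0$, since already the $s=s_0$ summands cover $\mathcal M(s_0)$.

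Finally I would conclude using the fact, established together with the Grothendieck property in \cite[Proposition 2.9]{BBR}, that kernels and cokernels in $\mathbf M_{\mathcal R}^C(\psi)$ are computed componentwise; consequently a morphism is an epimorphism precisely when each component $\phi(s)$ is surjective in $Comod-C$. The morphism built above is therefore an epimorphism, so every object is a quotient of a direct sum of members of $\mathcal G$, and $\mathcal G$ is a set of projective generators. The main obstacle I anticipate is bookkeeping rather than conceptual: verifying that the Hom--tensor bijection is natural and respects both the module and the comodule structures (essentially contained in \ref{P3.4}), and confirming that epimorphisms are detected componentwise. This componentwise detection, together with the semiperfectness of $C$ entering through the existence of the \emph{set} $\mathcal V$, is precisely where the hypotheses are used.
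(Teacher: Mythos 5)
Your proposal is correct and follows essentially the same strategy as the paper: quote \cite[Proposition 2.9]{BBR} for the Grothendieck property, use Proposition \ref{P3.4} for projectivity of the objects $V\otimes H_r$ with $V$ a finite-dimensional projective comodule, and then exhibit every entwined module as a quotient of a direct sum of such objects. The only (harmless) divergence is in the generation step: the paper takes the epimorphism $\bigoplus V'_i\otimes H_{r_i}\twoheadrightarrow\mathcal M$ with $V'_i$ finite-dimensional already produced in the proof of \cite[Proposition 2.9]{BBR} and replaces each $V'_i$ by a finite-dimensional projective cover, whereas you rebuild the epimorphism from scratch via the bijection $\mathbf M_{\mathcal R}^C(\psi)(V\otimes H_r,\mathcal M)\cong (Comod\text{-}C)(V,\mathcal M(r))$ and componentwise detection of epimorphisms --- both routes are valid and rest on the same use of semiperfectness.
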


\begin{proof}
From \cite[Proposition 2.9]{BBR}, we know that $\mathbf M_{\mathcal R}^C(\psi)$ is a Grothendieck category.  Let $\mathcal M$ be an object of $\mathbf M_{\mathcal R}^C(\psi)$. From the proof of \cite[Proposition 2.9]{BBR}, we know that there exists an epimorphism
\begin{equation}
\eta' : \underset{i\in I}{\bigoplus}V'_i\otimes H_{r_i}\longrightarrow \mathcal M
\end{equation} where each $r_i\in \mathcal R$ and each $V'_i$ is a finite dimensional $C$-comodule. Since $Comod-C$ has enough projectives, it follows from \cite[Corollary 2.4.21]{book3} that we can choose for each $V'_i$ an epimorphism $V_i\longrightarrow V'_i$ in $Comod-C$ such that $V_i$ is a finite dimensional projective in $Comod-C$.  This induces an epimorphism
\begin{equation}
\eta : \underset{i\in I}{\bigoplus}V_i\otimes H_{r_i}\longrightarrow \mathcal M
\end{equation} The collection $\{V\otimes H_r\}$ now gives a set of projective generators for $\mathbf M_{\mathcal R}^C(\psi)$, where
$r\in \mathcal R$ and $V$ ranges over (isomorphism classes of) finite dimensional projective $C$-comodules. 
\end{proof}

\section{Modules over an entwined representation}

We fix a $K$-coalgebra $C$ which is right semiperfect. We consider the category $\mathscr Ent_C$ whose objects are entwining structures 
$(\mathcal R,C,\psi)$. A morphism in $\mathscr Ent_C$ is a map $(\alpha,id):(\mathcal R,C,\psi)\longrightarrow (\mathcal R',C,\psi')$ of entwining structures, which we will denote simply by $\alpha$. From Section 2, it follows that we have adjoint functors 
\begin{equation}\label{eq4.1}
\begin{array}{c}
\alpha^\ast=(\alpha,id_C)^\ast: \mathbf M^C_{\mathcal R}(\psi)\longrightarrow \mathbf M^C_{\mathcal R'}(\psi')\qquad 
\alpha_\ast=(\alpha,id_C)_\ast :\mathbf M^C_{\mathcal R'}(\psi')\longrightarrow \mathbf M^C_{\mathcal R}(\psi)\\ 
\end{array} 
\end{equation} We note in particular that the functors $\alpha_\ast=(\alpha,id_C)_\ast$ are exact. In fact, the functors $\alpha_\ast$ preserve both limits and colimits. 

\smallskip
\begin{defn}\label{D4.1}
Let $\mathscr X$ be a small category. Let $C$ be a right semiperfect coalgebra over the field $K$. By an entwined $C$-representation of a small category, we will mean a functor $\mathscr R:\mathscr X\longrightarrow \mathscr Ent_C$.

\smallskip In particular, for each object $x\in \mathscr X$, we have an entwining structure
$(\mathscr R_x,C,\psi_x)$. Given a morphism $\alpha : x\longrightarrow y$ in $\mathscr X$, we have a morphism $\mathscr R_\alpha=(\mathscr R_\alpha,id_C):(\mathscr R_x,C,\psi_x)
\longrightarrow (\mathscr R_y,C,\psi_y)$ of entwining structures. 
\end{defn}

By abuse of notation, if  $\mathscr R:\mathscr X\longrightarrow \mathscr Ent_C$ is an entwined $C$-representation, we will write
\begin{equation}
\alpha^\ast=\mathscr R_\alpha^\ast:  \mathbf M_{\mathscr R_x}^C(\psi_x)\longrightarrow  \mathbf M_{\mathscr R_y}^C(\psi_y)
\qquad \alpha_\ast=\mathscr R_{\alpha\ast}: \mathbf M_{\mathscr R_y}^C(\psi_y)\longrightarrow  \mathbf M_{\mathscr R_x}^C(\psi_x)
\end{equation} for any morphism $\alpha:x\longrightarrow y$ in $\mathscr X$. Also by abuse of notation, if $f:r'\longrightarrow r$ is a morphism in $\mathscr R_x$, we will often denote
$\mathscr R_\alpha(f):\mathscr R_\alpha(r')\longrightarrow \mathscr R_\alpha(r)$ in $\mathscr R_y$ simply as $\alpha(f):\alpha(r')\longrightarrow \alpha(r)$. 
We will now consider modules over an entwined $C$-representation.

\begin{defn}\label{D4.2} Let  $\mathscr R:\mathscr X\longrightarrow \mathscr Ent_C$ be an entwined $C$-representation of a small category
$\mathscr X$. An entwined module $\mathscr M$ over $\mathscr R$ will consist of the following data 

\smallskip
(1) For each object $x\in \mathscr X$, an entwined module $\mathscr M_x\in \mathbf M_{\mathscr R_x}^C(\psi_x)$. 

\smallskip
(2) For each morphism $\alpha : x\longrightarrow y$ in $\mathscr X$, a morphism $\mathscr M_\alpha: \mathscr M_x\longrightarrow \alpha_\ast\mathscr M_y$ in $\mathbf M_{\mathscr R_x}^C(\psi_x)$ (equivalently, a morphism $\mathscr M^\alpha: \alpha^\ast\mathscr M_x\longrightarrow \mathscr M_y$ in $\mathbf M_{\mathscr R_y}^C(\psi_y)$).

\smallskip
Further, we suppose that $\mathscr M_{id_x}=id_{\mathscr M_x}$ for each $x\in \mathscr X$ and that for any  composable
morphisms $x\overset{\alpha}{\longrightarrow}y\overset{\beta}{\longrightarrow}z$, we have $\alpha_\ast(\mathscr M_\beta)\circ \mathscr M_\alpha=\mathscr M_{\beta\alpha}:\mathscr M_x\longrightarrow \alpha_\ast\mathscr M_y\longrightarrow \alpha_\ast\beta_\ast\mathscr M_z=(\beta\alpha)_\ast\mathscr M_z$. The latter condition may be expressed in any of two equivalent ways
\begin{equation}\label{4.25d}
\mathscr M_{\beta\alpha}=\alpha_\ast(\mathscr M_\beta)\circ \mathscr M_\alpha\qquad\Leftrightarrow\qquad\mathscr M^{\beta\alpha}=\mathscr M^\beta\circ \beta^\ast(\mathscr M^\alpha)
\end{equation}

\smallskip
A morphism $\eta:\mathscr M\longrightarrow \mathscr N$ of entwined modules over $\mathscr R$ consists of morphisms $\eta_x:\mathscr M_x\longrightarrow
\mathscr N_x$ in each $\mathbf M^C_{\mathscr R_x}(\psi_x)$ such that the following diagram commutes
\begin{equation}
\begin{CD}
\mathscr M_x @>\eta_x>>
\mathscr N_x\\
@V\mathscr M_\alpha VV @VV\mathscr N_\alpha V \\
\alpha_\ast\mathscr M_y @>\alpha_\ast\eta_y>> \alpha_\ast\mathscr N_y \\
\end{CD}
\end{equation} for each $\alpha:x\longrightarrow y$ in $\mathscr R$. The category of entwined modules over $\mathscr R$ will be denoted by
$Mod^C-\mathscr R$.
\end{defn}

\begin{thm}\label{P4.3}   Let  $\mathscr R:\mathscr X\longrightarrow \mathscr Ent_C$ be an entwined $C$-representation of a small category
$\mathscr X$. Then, $Mod^C-\mathscr R$ is an abelian category. 
\end{thm}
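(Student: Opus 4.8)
The plan is to realize $Mod^C-\mathscr R$ as a category of twisted representations valued in the abelian (indeed Grothendieck, by Theorem \ref{T3.5}) categories $\mathbf M^C_{\mathscr R_x}(\psi_x)$, and to check the abelian axioms componentwise, leaning throughout on the fact recorded after \eqref{eq4.1} that each restriction functor $\alpha_\ast=(\alpha,id_C)_\ast$ is exact and preserves all limits and colimits. First I would observe that $Mod^C-\mathscr R$ is $K$-linear: a morphism $\eta:\mathscr M\to\mathscr N$ is a family $\{\eta_x\}$ subject to the linear compatibility $\mathscr N_\alpha\circ\eta_x=\alpha_\ast(\eta_y)\circ\mathscr M_\alpha$, and since each $\alpha_\ast$ is additive this condition is closed under $K$-linear combinations; thus $Mod^C-\mathscr R(\mathscr M,\mathscr N)$ is a $K$-subspace of $\prod_x \mathbf M^C_{\mathscr R_x}(\psi_x)(\mathscr M_x,\mathscr N_x)$ and composition is bilinear. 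The zero object is $\mathscr M_x=0$ with zero structure maps, and finite biproducts are formed componentwise, the structure maps of $\mathscr M\oplus\mathscr N$ being $\mathscr M_\alpha\oplus\mathscr N_\alpha$ under the identification $\alpha_\ast(\mathscr M_y\oplus\mathscr N_y)=\alpha_\ast\mathscr M_y\oplus\alpha_\ast\mathscr N_y$ afforded by additivity of $\alpha_\ast$.

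The heart of the argument is the construction of kernels and cokernels componentwise. Given $\eta:\mathscr M\to\mathscr N$, set $(\ker\eta)_x:=\ker(\eta_x)$, which lies in $\mathbf M^C_{\mathscr R_x}(\psi_x)$ since that category is abelian. I would then show that $\mathscr M_\alpha$ restricts to a structure map on the kernel: the defining square $\mathscr N_\alpha\circ\eta_x=\alpha_\ast(\eta_y)\circ\mathscr M_\alpha$ forces $\mathscr M_\alpha$ to carry $\ker\eta_x$ into $\ker(\alpha_\ast\eta_y)$, and since $\alpha_\ast$ is left exact we may identify $\ker(\alpha_\ast\eta_y)=\alpha_\ast(\ker\eta_y)$, yielding the desired $(\ker\eta)_\alpha:(\ker\eta)_x\to\alpha_\ast(\ker\eta)_y$. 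The unit and cocycle conditions \eqref{4.25d} are inherited by restriction, so $\ker\eta\in Mod^C-\mathscr R$, and its universal property is verified evaluationwise. Cokernels are treated dually: $(\mathrm{coker}\,\eta)_x:=\mathrm{coker}(\eta_x)$, and composing $\mathscr N_\alpha$ with the projection onto $\mathrm{coker}(\alpha_\ast\eta_y)=\alpha_\ast(\mathrm{coker}\,\eta_y)$ — the identification now using right exactness of $\alpha_\ast$ — gives a map that kills $\mathrm{im}\,\eta_x$ and hence factors through $(\mathrm{coker}\,\eta)_x$.

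Finally I would verify the axiom that every monomorphism is a kernel and every epimorphism is a cokernel. Since kernels and cokernels are computed componentwise, the evaluation functors $\mathscr M\mapsto\mathscr M_x$ are exact and jointly faithful, so $\eta$ is a monomorphism (resp. epimorphism) in $Mod^C-\mathscr R$ if and only if each $\eta_x$ is a monomorphism (resp. epimorphism) in $\mathbf M^C_{\mathscr R_x}(\psi_x)$. The normality and conormality axioms then reduce to the corresponding statements in each abelian fibre, namely that the canonical morphism $\mathrm{coim}\,\eta_x\to\mathrm{im}\,\eta_x$ is an isomorphism; assembling these component isomorphisms gives an isomorphism $\mathrm{coim}\,\eta\cong\mathrm{im}\,\eta$ in $Mod^C-\mathscr R$, which is all that remains.

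The main obstacle is the single point where the structure maps must be transported to the kernel and cokernel objects: this is exactly where the exactness of the transition functors $\alpha_\ast$ is indispensable, since it is precisely what lets one commute $\alpha_\ast$ past $\ker$ and $\mathrm{coker}$ and thereby land the induced maps in the correct objects. Once this compatibility is secured, every other axiom is a routine componentwise verification transported from the abelian categories $\mathbf M^C_{\mathscr R_x}(\psi_x)$.
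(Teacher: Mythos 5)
Your proposal is correct and follows essentially the same route as the paper: kernels and cokernels are defined componentwise, the exactness of the restriction functors $\alpha_\ast$ is used to transport the structure maps $\mathscr M_\alpha$ to the kernel and cokernel objects, and the remaining abelian axioms are checked evaluationwise. Your write-up is in fact more detailed than the paper's, which leaves the additivity, the universal properties, and the coincidence of image and coimage as routine verifications.
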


\begin{proof}
Let $\eta:\mathscr M\longrightarrow \mathscr N$ be a morphism $Mod^C-\mathscr R$. We define the kernel and cokernel of $\eta$ by setting
\begin{equation}
Ker(\eta)_x:=Ker(\eta_x:\mathscr M_x\longrightarrow \mathscr N_x)\qquad Cok(\eta)_x:=Cok(\eta_x:\mathscr M_x\longrightarrow \mathscr N_x)
\end{equation}
for each $x\in \mathscr X$. For $\alpha:x\longrightarrow y$ in $\mathscr X$, the morphisms $Ker(\eta)_\alpha$ and $Cok(\eta)_\alpha$ are induced in the obvious manner, using the fact that $\alpha_\ast:\mathbf M^C_{\mathscr R_y}(\psi_y)\longrightarrow \mathbf M_{\mathscr R_x}^C(\psi_x)$ is exact. From this, it is also clear that $Cok(Ker(\eta)\hookrightarrow \mathscr M)=Ker(\mathscr N\twoheadrightarrow Cok(\eta))$. 
\end{proof}

We now let $\mathscr R:\mathscr X\longrightarrow \mathscr Ent_C$ be an entwined $C$-representation of a small category
$\mathscr X$ and let $\mathscr M$ be an entwined module over $\mathscr R$. We consider some $x\in \mathscr X$ and a morphism
\begin{equation}
\eta: V\otimes H_r\longrightarrow \mathscr M_x
\end{equation} in $\mathbf M_{\mathscr R_x}^C(\psi_x)$,  where $V$ is a finite dimensional projective in $Comod-C$ and $r\in \mathscr R_x$. For each $y\in \mathscr X$, we now set $\mathscr N_y\subseteq \mathscr M_y$ to be the image of the family of maps
\begin{equation}\label{eq4.6}
\begin{array}{ll}
\mathscr N_y&=Im\left(\underset{\beta\in \mathscr X(x,y)}{\bigoplus}\textrm{ }\begin{CD}\beta^\ast (V\otimes H_r)@>\bigoplus \beta^\ast\eta>>\beta^\ast \mathscr M_x@>\mathscr M^\beta>>\mathscr M_y\end{CD}\right)\\
&=\underset{\beta\in \mathscr X(x,y)}{\sum}\textrm{ }Im\left(\begin{CD}\beta^\ast (V\otimes H_r)@>\beta^\ast\eta>>\beta^\ast \mathscr M_x@>\mathscr M^\beta>>\mathscr M_y\end{CD}\right)\\
\end{array}
\end{equation} We denote by $\iota_y$ the inclusion $\iota_y:\mathscr N_y\hookrightarrow\mathscr M_y$. For each $\beta\in \mathscr X(x,y)$, we denote by $\eta'_\beta:\beta^\ast (V\otimes H_r)
\longrightarrow \mathscr N_y$ the canonical morphism induced from \eqref{eq4.6}. 

\begin{lem}\label{L4.4} For any $\alpha\in \mathscr X(y,z)$, $\beta\in \mathscr X(x,y)$, the following composition 
\begin{equation}
\begin{CD}
\beta^\ast (V\otimes H_r)@>\eta'_\beta>> \mathscr N_y @>\iota_y>> \mathscr M_y @>\mathscr M_\alpha>> \alpha_\ast\mathscr M_z
\end{CD}
\end{equation}
factors through $\alpha_\ast(\iota_z):\alpha_\ast\mathscr N_z\longrightarrow\alpha_\ast\mathscr M_z$.
\end{lem}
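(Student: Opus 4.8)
The plan is to reduce the claimed factorization to the defining property of $\mathscr N_z$ by transposing the question across the adjunction $\alpha^\ast\dashv\alpha_\ast$ of \eqref{eq4.1}. First I would record the elementary identity that, by the construction of $\eta'_\beta$ and $\iota_y$, the composite $\iota_y\circ\eta'_\beta:\beta^\ast(V\otimes H_r)\to\mathscr M_y$ is exactly $\mathscr M^\beta\circ\beta^\ast\eta$ (this is the $\beta$-component of the family defining $\mathscr N_y$ in \eqref{eq4.6}). Since $\alpha_\ast$ is exact, as noted after \eqref{eq4.1}, the map $\alpha_\ast(\iota_z)$ is a monomorphism, so ``factors through $\alpha_\ast(\iota_z)$'' is an unambiguous statement about subobjects of $\alpha_\ast\mathscr M_z$.

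The key point is that the adjunction bijection $\Phi:\mathbf M^C_{\mathscr R_z}(\psi_z)(\alpha^\ast A,B)\xrightarrow{\cong}\mathbf M^C_{\mathscr R_y}(\psi_y)(A,\alpha_\ast B)$ is natural in $B$; applied to the monomorphism $\iota_z$ this yields $\Phi(\iota_z\circ g)=\alpha_\ast(\iota_z)\circ\Phi(g)$ for every $g:\alpha^\ast A\to\mathscr N_z$. Consequently the map $h:=\mathscr M_\alpha\circ\iota_y\circ\eta'_\beta:\beta^\ast(V\otimes H_r)\to\alpha_\ast\mathscr M_z$ factors through $\alpha_\ast(\iota_z)$ \emph{if and only if} its adjoint transpose $\Phi^{-1}(h):\alpha^\ast\beta^\ast(V\otimes H_r)\to\mathscr M_z$ factors through $\iota_z$. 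Thus I would trade the subscript-level factorization problem for a superscript-level one, where the definition of $\mathscr N_z$ lives.

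It then remains to compute $\Phi^{-1}(h)$ and recognize it inside \eqref{eq4.6}. Writing $\varepsilon$ for the counit of $\alpha^\ast\dashv\alpha_\ast$, one has $\Phi^{-1}(h)=\varepsilon_{\mathscr M_z}\circ\alpha^\ast(h)=\varepsilon_{\mathscr M_z}\circ\alpha^\ast(\mathscr M_\alpha)\circ\alpha^\ast(\iota_y\circ\eta'_\beta)=\mathscr M^\alpha\circ\alpha^\ast(\mathscr M^\beta\circ\beta^\ast\eta)$, using $\mathscr M^\alpha=\varepsilon_{\mathscr M_z}\circ\alpha^\ast(\mathscr M_\alpha)$ and the identity from the first step. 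Expanding and invoking the pseudofunctoriality $\alpha^\ast\beta^\ast\cong(\alpha\beta)^\ast$ (from $\mathscr R_{\alpha\beta}=\mathscr R_\alpha\circ\mathscr R_\beta$) together with the cocycle relation $\mathscr M^{\alpha\beta}=\mathscr M^\alpha\circ\alpha^\ast(\mathscr M^\beta)$ of \eqref{4.25d}, this becomes $\Phi^{-1}(h)=\mathscr M^{\alpha\beta}\circ(\alpha\beta)^\ast\eta$. Since $\alpha\beta\in\mathscr X(x,z)$, this is precisely the $\gamma=\alpha\beta$ summand of the family defining $\mathscr N_z$ in \eqref{eq4.6}; hence its image lies in $\mathscr N_z$ and it factors through $\iota_z$. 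By the equivalence above, $h$ factors through $\alpha_\ast(\iota_z)$, as claimed.

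I expect the main obstacle to be purely bookkeeping: transposing $h$ across the adjunction and matching the resulting arrow against the defining family of $\mathscr N_z$, with care about the direction of the cocycle identity \eqref{4.25d} and the coherence isomorphism $\alpha^\ast\beta^\ast\cong(\alpha\beta)^\ast$. No genuinely new input beyond the adjunction $\alpha^\ast\dashv\alpha_\ast$, the exactness of $\alpha_\ast$, and the functoriality of $\mathscr M$ encoded in \eqref{4.25d} should be needed.
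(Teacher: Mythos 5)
Your proof is correct and follows essentially the same route as the paper: transpose the problem across the adjunction $(\alpha^\ast,\alpha_\ast)$, use $\iota_y\circ\eta'_\beta=\mathscr M^\beta\circ\beta^\ast\eta$ and the cocycle identity \eqref{4.25d} to identify the transpose with $\mathscr M^{\alpha\beta}\circ(\alpha\beta)^\ast\eta$, and recognize this as the $\alpha\beta$-summand defining $\mathscr N_z$ in \eqref{eq4.6}. Your treatment merely makes explicit the naturality of the adjunction bijection that the paper invokes with the phrase ``it suffices to show.''
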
 

\begin{proof}
Since $(\alpha^\ast,\alpha_\ast)$ is an adjoint pair, it suffices to show that the composition
\begin{equation}
\begin{CD}
\alpha^\ast\beta^\ast (V\otimes H_r)@>\alpha^\ast(\eta'_\beta)>> \alpha^\ast\mathscr N_y @>\alpha^\ast(\iota_y)>> \alpha^\ast\mathscr M_y @>\mathscr M^\alpha>> \mathscr M_z
\end{CD} 
\end{equation}factors through $\iota_z:\mathscr N_z\longrightarrow \mathscr M_z$. By definition, we know that the composition $\beta^\ast (V\otimes H_r)\xrightarrow{\eta'_\beta}\mathscr N_y\xrightarrow{\iota_y}\mathscr M_y$ factors through $\beta^\ast\mathscr M_x$, i.e., we have
\begin{equation}
\iota_y\circ \eta'_\beta=\mathscr M^\beta\circ \beta^\ast\eta
\end{equation}
Applying $\alpha^\ast$, composing with $\mathscr M^\alpha$ and using \eqref{4.25d}, we get
\begin{equation}
\mathscr M^\alpha\circ \alpha^\ast(\iota_y)\circ \alpha^\ast(\eta'_\beta)=\mathscr M^\alpha\circ \alpha^\ast(\mathscr M^\beta)\circ \alpha^\ast(\beta^\ast\eta)=\mathscr M^{\alpha\beta}\circ \alpha^\ast\beta^\ast\eta
\end{equation} From the definition in \eqref{eq4.6}, it is now clear that the composition $\mathscr M^\alpha\circ \alpha^\ast(\iota_y)\circ \alpha^\ast(\eta'_\beta)=\mathscr M^{\alpha\beta}\circ \alpha^\ast\beta^\ast\eta$ factors  through $\iota_z:\mathscr N_z\longrightarrow \mathscr M_z$ as $\mathscr M^\alpha\circ \alpha^\ast(\iota_y)\circ \alpha^\ast(\eta'_\beta)=\iota_z\circ \eta'_{\alpha\beta}$. 
\end{proof}

\begin{thm}\label{P4.5}
For any $\alpha\in \mathscr X(y,z)$, the morphism $\mathscr M_\alpha:\mathscr M_y\longrightarrow \alpha_\ast\mathscr M_z$ restricts to a morphism
 $\mathscr N_\alpha:\mathscr N_y\longrightarrow \alpha_\ast\mathscr N_z$, giving us a commutative diagram
 \begin{equation}\label{cd4}
 \begin{CD}
 \mathscr M_y @>\mathscr M_\alpha>>\alpha_\ast\mathscr M_z\\
 @A\iota_yAA @AA\alpha_\ast(\iota_z)A \\
 \mathscr N_y @>\mathscr N_\alpha >> \alpha_\ast\mathscr N_z\\
 \end{CD}
 \end{equation}
\end{thm}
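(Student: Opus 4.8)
The plan is to produce $\mathscr N_\alpha$ as the (necessarily unique) factorization of $\mathscr M_\alpha\circ\iota_y$ through the monomorphism $\alpha_\ast(\iota_z)$, with Lemma \ref{L4.4} doing all the real work and the present statement amounting to gluing its conclusions over all $\beta$. Everything takes place inside the Grothendieck category $\mathbf M^C_{\mathscr R_y}(\psi_y)$: the subobject $\mathscr N_y\subseteq\mathscr M_y$ lives there, and $\alpha_\ast$ carries $\mathbf M^C_{\mathscr R_z}(\psi_z)$ into it, so $\alpha_\ast(\iota_z):\alpha_\ast\mathscr N_z\hookrightarrow\alpha_\ast\mathscr M_z$ is again a monomorphism because $\alpha_\ast$ is exact (as noted after \eqref{eq4.1}). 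Thus any map we build by descent will automatically be a morphism of entwined modules over $\mathscr R_y$.

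First I would record from \eqref{eq4.6} that the family $\{\eta'_\beta:\beta^\ast(V\otimes H_r)\to\mathscr N_y\}_{\beta\in\mathscr X(x,y)}$ assembles into an epimorphism $p:\bigoplus_{\beta\in\mathscr X(x,y)}\beta^\ast(V\otimes H_r)\twoheadrightarrow\mathscr N_y$, since $\mathscr N_y$ is by definition the sum of the images of the $\iota_y\circ\eta'_\beta$. Lemma \ref{L4.4} tells us that for each $\beta$ the composite $\mathscr M_\alpha\circ\iota_y\circ\eta'_\beta$ factors through $\alpha_\ast(\iota_z)$; as $\alpha_\ast(\iota_z)$ is monic this gives a unique morphism $q_\beta:\beta^\ast(V\otimes H_r)\to\alpha_\ast\mathscr N_z$ with $\alpha_\ast(\iota_z)\circ q_\beta=\mathscr M_\alpha\circ\iota_y\circ\eta'_\beta$. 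Since maps out of a direct sum are the same as families of maps out of the summands, the $q_\beta$ glue into a single $q:\bigoplus_\beta\beta^\ast(V\otimes H_r)\to\alpha_\ast\mathscr N_z$ satisfying $\alpha_\ast(\iota_z)\circ q=\mathscr M_\alpha\circ\iota_y\circ p$, both sides agreeing on each summand.

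It then remains to descend $q$ along the epimorphism $p$, and this is the only step needing genuine care. Because $p$ is epic it suffices to check that the restriction of $q$ to $\ker p$ is zero; but $\alpha_\ast(\iota_z)\circ q$ restricted to $\ker p$ equals $\mathscr M_\alpha\circ\iota_y\circ p$ restricted to $\ker p$, which is $0$, and $\alpha_\ast(\iota_z)$ is monic, so $q|_{\ker p}=0$. Hence $q$ factors uniquely as $q=\mathscr N_\alpha\circ p$ for some $\mathscr N_\alpha:\mathscr N_y\to\alpha_\ast\mathscr N_z$, and from $\alpha_\ast(\iota_z)\circ\mathscr N_\alpha\circ p=\alpha_\ast(\iota_z)\circ q=\mathscr M_\alpha\circ\iota_y\circ p$ we cancel the epic $p$ to obtain $\alpha_\ast(\iota_z)\circ\mathscr N_\alpha=\mathscr M_\alpha\circ\iota_y$, which is precisely the commutativity of \eqref{cd4}. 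I expect no real obstacle beyond this descent; indeed one could replace the chase entirely by invoking Lemma \ref{L3.2} in $\mathbf M^C_{\mathscr R_y}(\psi_y)$ with the projective generators furnished by Theorem \ref{T3.5}, the hypothesis of that lemma being exactly the per-$\beta$ factorizations supplied by Lemma \ref{L4.4}. The one thing to watch throughout is the bookkeeping of the $(\alpha^\ast,\alpha_\ast)$-adjunction conventions already fixed in the statement of Lemma \ref{L4.4}.
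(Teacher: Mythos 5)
Your proof is correct, and it follows the same overall skeleton as the paper's: both arguments rest on the epimorphism $p:\bigoplus_{\beta\in\mathscr X(x,y)}\beta^\ast(V\otimes H_r)\twoheadrightarrow\mathscr N_y$ coming from \eqref{eq4.6} together with the per-$\beta$ factorizations supplied by Lemma \ref{L4.4}, and both note that $\alpha_\ast(\iota_z)$ is monic. Where you diverge is in the final gluing step. The paper reduces the problem via Lemma \ref{L3.2} to checking the factorization on the projective generators of $\mathbf M^C_{\mathscr R_y}(\psi_y)$ furnished by Theorem \ref{T3.5}: a map $\xi_k:G_k\to\mathscr N_y$ from a projective generator is lifted through $p$ and then pushed through the factorizations of Lemma \ref{L4.4}. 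You instead assemble the factorizations into a single map $q$ on the direct sum and descend it along the epimorphism $p$ by observing that $q$ kills $\ker p$ (since $\alpha_\ast(\iota_z)\circ q=\mathscr M_\alpha\circ\iota_y\circ p$ vanishes there and $\alpha_\ast(\iota_z)$ is monic), then invoke that an epimorphism in an abelian category is the cokernel of its kernel. Your version is marginally more economical: it works in any abelian category and does not need the projective generators (hence does not lean on semiperfectness of $C$) for this particular step, whereas the paper's route recycles the machinery of Section 3. You correctly identify the paper's argument as the alternative in your closing remark, so the two proofs are interchangeable here; the only point worth emphasizing is that your descent step genuinely requires the ambient category to be abelian so that $p=\operatorname{coker}(\ker p\hookrightarrow\bigoplus_\beta\beta^\ast(V\otimes H_r))$, which is guaranteed since $\mathbf M^C_{\mathscr R_y}(\psi_y)$ is Grothendieck.
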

\begin{proof}
We already know that $\iota_z:\mathscr N_z\longrightarrow \mathscr M_z$ is a monomorphism. Since $\alpha_\ast$ is a right adjoint, it follows that
$\alpha_\ast(\iota_z)$ is also a monomorphism. Since $C$ is right semiperfect, we know from Theorem \ref{T3.5} that $\mathbf M^C_{\mathscr R_y}(\psi_y)$ is a Grothendieck category
with projective generators $\{G_k\}_{k\in K}$.Using Lemma \ref{L3.2}, it suffices to show that for any $k\in K$ and any morphism $\xi_k : G_k
\longrightarrow \mathscr N_y$, there exists $\xi'_k:G_k\longrightarrow \alpha_\ast\mathscr N_z$ such that $\alpha_\ast(\iota_z)\circ \xi'_k=\mathscr M_\alpha\circ \iota_y\circ \xi_k$. 

\smallskip
From \eqref{eq4.6}, we have an epimorphism 
\begin{equation}\underset{\beta\in \mathscr X(x,y)}{\bigoplus}\textrm{ }\eta'_\beta:\underset{\beta\in \mathscr X(x,y)}{\bigoplus}\textrm{ }\beta^\ast(V\otimes H_r)\longrightarrow \mathscr N_y
\end{equation} Since $G_k$ is projective, we can lift $\xi_k : G_k
\longrightarrow \mathscr N_y$ to a morphism $\xi''_k:G_k\longrightarrow \underset{\beta\in \mathscr X(x,y)}{\bigoplus}\textrm{ }\beta^\ast(V\otimes H_r)$ such that 
\begin{equation}\xi_k=\left(\underset{\beta\in \mathscr X(x,y)}{\bigoplus}\textrm{ }\eta'_\beta\right)\circ \xi''_k
\end{equation} From  Lemma \ref{L4.4}, we know that  $\mathscr M_\alpha\circ \iota_y\circ \eta'_\beta$ factors through
$\alpha_\ast(\iota_z):\alpha_\ast\mathscr N_z\longrightarrow\alpha_\ast\mathscr M_z$ for each $\beta\in \mathscr X(x,y)$. The result is now clear.
\end{proof}

Using the adjointness of $(\alpha^\ast,\alpha_\ast)$, we can also obtain a morphism $\mathscr N^\alpha:\alpha^\ast\mathscr N_y\longrightarrow \mathscr N_z$ for each $\alpha\in \mathscr X(y,z)$, corresponding to the morphism $\mathscr N_\alpha:\mathscr N_y\longrightarrow\alpha_\ast\mathscr N_z$ in \eqref{cd4}. The objects $\{\mathscr N_y\in \mathbf M_{\mathscr R_y}^C(\psi_y)\}_{y\in \mathscr X}$, together with the morphisms $\{\mathscr N_\alpha\}_{\alpha\in Mor(\mathscr X)}$ determine an object of $Mod^C-\mathscr R$ that we denote by $\mathscr N$. Additionally, Proposition \ref{P4.5} shows that we have an inclusion $\iota:\mathscr N\hookrightarrow \mathscr M$ in $Mod^C-\mathscr R$.
Before we proceed further, we will describe the object $\mathscr N$ in a few more ways.

\begin{lem}\label{P4.6}
Let $\eta'_1:V\otimes H_r\longrightarrow \mathscr N_x$ be the canonical morphism corresponding to the identity map in $\mathscr X(x,x)$. Then, for any $y\in \mathscr X$, we have
\begin{equation}
\mathscr N_y=Im\left(\underset{\beta\in \mathscr X(x,y)}{\bigoplus}\textrm{ }\begin{CD}\beta^\ast (V\otimes H_r)@>\bigoplus\beta^\ast\eta'_1>>\beta^\ast \mathscr N_x@>\mathscr N^\beta>>\mathscr N_y\end{CD}\right)
\end{equation}
\end{lem}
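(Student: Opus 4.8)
The plan is to reduce everything to a single cancellation of the monomorphism $\iota_y$, by showing that for each $\beta\in\mathscr X(x,y)$ the composite $\mathscr N^\beta\circ\beta^\ast\eta'_1$ coincides with the canonical map $\eta'_\beta:\beta^\ast(V\otimes H_r)\longrightarrow\mathscr N_y$ produced in \eqref{eq4.6}. Once this is known, the image displayed in the statement is precisely $\sum_{\beta\in\mathscr X(x,y)}Im(\eta'_\beta)$, which is all of $\mathscr N_y$ by the construction of $\mathscr N_y$.

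First I would record the two inputs that drive the argument. On the one hand, $\eta'_1$ is the canonical map attached to $id_x\in\mathscr X(x,x)$; since $\mathscr M_{id_x}=id_{\mathscr M_x}$ (Definition \ref{D4.2}) and $id^\ast$ is canonically the identity functor, the defining relation $\iota_y\circ\eta'_\beta=\mathscr M^\beta\circ\beta^\ast\eta$ specializes at $\beta=id_x$ to $\iota_x\circ\eta'_1=\eta$. On the other hand, I would apply Proposition \ref{P4.5} to the morphism $\beta:x\longrightarrow y$ (that is, with $(y,z,\alpha)$ there replaced by $(x,y,\beta)$), which gives the commuting square $\beta_\ast(\iota_y)\circ\mathscr N_\beta=\mathscr M_\beta\circ\iota_x$. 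Passing this through the adjunction $(\beta^\ast,\beta_\ast)$—apply $\beta^\ast$ and compose with the counit, using naturality of the counit in the form $\iota_y\circ\epsilon_{\mathscr N_y}=\epsilon_{\mathscr M_y}\circ\beta^\ast\beta_\ast(\iota_y)$—yields the adjoint form
\begin{equation}
\iota_y\circ\mathscr N^\beta=\mathscr M^\beta\circ\beta^\ast(\iota_x).
\end{equation}

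With these in hand the verification is purely formal. Precomposing the displayed identity with $\beta^\ast\eta'_1$ and inserting $\iota_x\circ\eta'_1=\eta$ gives
\begin{equation}
\iota_y\circ(\mathscr N^\beta\circ\beta^\ast\eta'_1)=\mathscr M^\beta\circ\beta^\ast(\iota_x\circ\eta'_1)=\mathscr M^\beta\circ\beta^\ast\eta=\iota_y\circ\eta'_\beta.
\end{equation}
Since $\iota_y$ is a monomorphism, this forces $\mathscr N^\beta\circ\beta^\ast\eta'_1=\eta'_\beta$ for every $\beta\in\mathscr X(x,y)$.

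Finally I would assemble the conclusion. Summing over $\beta$ and using the previous step, the image in the statement equals $\sum_{\beta\in\mathscr X(x,y)}Im(\eta'_\beta)$. By construction the maps $\{\eta'_\beta\}_\beta$ assemble into the epimorphism $\bigoplus_\beta\eta'_\beta:\bigoplus_\beta\beta^\ast(V\otimes H_r)\longrightarrow\mathscr N_y$ already exhibited in the proof of Proposition \ref{P4.5}; equivalently, since $\iota_y$ is monic, $Im(\eta'_\beta)$ corresponds under $\iota_y$ to $Im(\mathscr M^\beta\circ\beta^\ast\eta)$, and the sum of these is exactly $\mathscr N_y$ by \eqref{eq4.6}. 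Hence $\sum_\beta Im(\eta'_\beta)=\mathscr N_y$, as desired. I expect the only genuinely delicate point to be the passage to the adjoint form of the square from Proposition \ref{P4.5}: one must track the unit/counit bookkeeping of $(\beta^\ast,\beta_\ast)$ and the canonical identifications at $id_x$ with care, after which the whole statement collapses to the monomorphism cancellation above.
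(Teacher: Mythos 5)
Your proposal is correct and follows essentially the same route as the paper: both arguments rest on the relation $\iota_x\circ\eta'_1=\eta$, the adjoint (commutative-square) form $\iota_y\circ\mathscr N^\beta=\mathscr M^\beta\circ\beta^\ast(\iota_x)$ coming from Proposition \ref{P4.5}, and cancellation of the monomorphism $\iota_y$ to identify the images with those defining $\mathscr N_y$ in \eqref{eq4.6}. The only difference is cosmetic — you cancel $\iota_y$ to get the pointwise identity $\mathscr N^\beta\circ\beta^\ast\eta'_1=\eta'_\beta$ before summing, while the paper compares images directly.
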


\begin{proof}
For any $\beta\in\mathscr X(x,y)$, we consider the commutative diagram
\begin{equation}\label{cd4.16}
\begin{CD}
\beta^\ast(V\otimes H_r)@>\beta^\ast\eta'_1>>\beta^\ast\mathscr N_x @>\mathscr N^\beta>>\mathscr N_y\\
@. @V\beta^\ast(\iota_x)VV @VV\iota_yV\\
@. \beta^\ast\mathscr M_x @>\mathscr M^\beta>> \mathscr M_y\\
\end{CD}
\end{equation} By definition, we know that $\iota_x\circ \eta'_1=\eta$, which gives $\beta^\ast(\iota_x)\circ \beta^\ast(\eta'_1)=\beta^\ast(\eta)$. Composing with $\mathscr M^\beta$, we get
\begin{equation}
Im(\mathscr M^\beta\circ \beta^\ast(\eta))=Im(\mathscr M^\beta\circ \beta^\ast(\iota_x)\circ \beta^\ast(\eta'_1))=Im(\iota_y\circ \mathscr N^\beta\circ \beta^\ast\eta'_1)\cong Im(\mathscr N^\beta\circ \beta^\ast\eta'_1)
\end{equation} where the last isomorphism follows from the fact that $\iota_y$ is monic. The result is now clear from the definition in \eqref{eq4.6}.
\end{proof}

\begin{lem}\label{L4.7} For any $y\in \mathscr X$, we have
\begin{equation}\label{ny}
\mathscr N_y=\underset{\beta\in \mathscr X(x,y)}{\sum}\textrm{ }Im\left(\begin{CD}\beta^\ast \mathscr N_x@>\beta^\ast(\iota_x)>>\beta^\ast \mathscr M_x@>\mathscr M^\beta>>\mathscr M_y\end{CD}\right)
\end{equation} 
\end{lem}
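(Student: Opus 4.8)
The plan is to prove the asserted equality of subobjects of $\mathscr M_y$ by establishing two inclusions, both of which reduce to the commutativity of the right-hand square of the diagram \eqref{cd4.16} from the proof of Lemma \ref{P4.6}. That square records the single identity $\iota_y\circ \mathscr N^\beta=\mathscr M^\beta\circ \beta^\ast(\iota_x)$ for every $\beta\in \mathscr X(x,y)$, and this is the one fact that drives the whole argument. Throughout I would abbreviate $P_\beta:=\mathscr M^\beta\circ \beta^\ast(\iota_x):\beta^\ast\mathscr N_x\longrightarrow \mathscr M_y$, so that the right-hand side of \eqref{ny} is exactly $\sum_\beta Im(P_\beta)$, viewed as a subobject of $\mathscr M_y$.

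For the inclusion $\sum_\beta Im(P_\beta)\subseteq \mathscr N_y$, I would use the identity above in the form $P_\beta=\iota_y\circ \mathscr N^\beta$. It shows $Im(P_\beta)=Im(\iota_y\circ \mathscr N^\beta)\subseteq Im(\iota_y)=\mathscr N_y$ for each $\beta$, and hence the sum over $\beta$ is contained in $\mathscr N_y$. For the reverse inclusion I would return to the defining formula \eqref{eq4.6}, which presents $\mathscr N_y$ as the sum of the images of the maps $\mathscr M^\beta\circ \beta^\ast\eta$. Since $\eta=\iota_x\circ \eta'_1$ by the very definition of the canonical morphism $\eta'_1$, I can factor $\mathscr M^\beta\circ \beta^\ast\eta=\mathscr M^\beta\circ \beta^\ast(\iota_x)\circ \beta^\ast\eta'_1=P_\beta\circ \beta^\ast\eta'_1$, so that $Im(\mathscr M^\beta\circ \beta^\ast\eta)\subseteq Im(P_\beta)$. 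Summing over $\beta$ then gives $\mathscr N_y\subseteq \sum_\beta Im(P_\beta)$, and the two inclusions together yield \eqref{ny}.

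The argument is essentially formal once the key identity from \eqref{cd4.16} is in hand, so there is no serious obstacle; the only point requiring care is that all the displayed equalities must be read as equalities of subobjects of $\mathscr M_y$. Concretely, one uses that $\iota_y$ is a monomorphism in the Grothendieck category $\mathbf M^C_{\mathscr R_y}(\psi_y)$ (guaranteed by Theorem \ref{T3.5}), so that it identifies $\mathscr N_y$ with $Im(\iota_y)$ and is compatible with the formation of sums of subobjects. I note that the reverse inclusion could alternatively be derived from the presentation of $\mathscr N_y$ in Lemma \ref{P4.6} rather than from \eqref{eq4.6}, but routing it through the original definition is the most direct, since it makes the factorization $\mathscr M^\beta\circ\beta^\ast\eta=P_\beta\circ\beta^\ast\eta'_1$ immediate.
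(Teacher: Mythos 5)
Your proof is correct and follows essentially the same route as the paper: both inclusions rest on the commutativity of the right square of \eqref{cd4.16} (equivalently, $\iota_y\circ\mathscr N^\beta=\mathscr M^\beta\circ\beta^\ast(\iota_x)$) together with the factorization $\eta=\iota_x\circ\eta'_1$. The only cosmetic difference is that the paper derives the reverse inclusion via Lemma \ref{P4.6} while you go straight back to \eqref{eq4.6}, a variant you already note and which amounts to the same argument.
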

\begin{proof} For the sake of convenience, we set 
\begin{equation*}
\mathscr N'_y:=\underset{\beta\in \mathscr X(x,y)}{\sum}\textrm{ }Im\left(\begin{CD}\beta^\ast \mathscr N_x@>\beta^\ast(\iota_x)>>\beta^\ast \mathscr M_x@>\mathscr M^\beta>>\mathscr M_y\end{CD}\right)
\end{equation*}
 From the commutative diagram in \eqref{cd4.16}, we see that each of the morphisms $\begin{CD}\beta^\ast \mathscr N_x@>\beta^\ast(\iota_x)>>\beta^\ast \mathscr M_x@>\mathscr M^\beta>>\mathscr M_y\end{CD}$ factors through the subobject $\mathscr N_y\subseteq \mathscr M_y$. Hence,  $\mathscr N_y'\subseteq \mathscr N_y$. On the other hand, it is clear that
\begin{equation*}
Im\left(\begin{CD}\beta^\ast(V\otimes H_r)@>\beta^\ast\eta_1'>>\beta^\ast \mathscr N_x@>\beta^\ast(\iota_x)>>\beta^\ast \mathscr M_x@>\mathscr M^\beta>>\mathscr M_y\end{CD}\right)\subseteq Im\left(\begin{CD}\beta^\ast \mathscr N_x@>\beta^\ast(\iota_x)>>\beta^\ast \mathscr M_x@>\mathscr M^\beta>>\mathscr M_y\end{CD}\right)
\end{equation*} Applying Lemma \ref{P4.6}, it is now clear that $\mathscr N_y\subseteq \mathscr N_y'$. This proves the result.
\end{proof}

We now make a few conventions : if $\mathcal M$ is a module over a small $K$-linear category $\mathcal R$, we denote by $el(\mathcal M)$ the union $\underset{r\in \mathcal R}{\bigcup}\textrm{ }\mathcal M(r)$. The cardinality of $el(\mathcal M)$ will be denoted by $|\mathcal M|$. If $\mathscr M$ is a module
over an entwined $C$-representation $\mathscr R:\mathscr X\longrightarrow \mathscr Ent_C$, we denote by $el_{\mathscr X}(\mathscr M)$ the union $\underset{x\in 
\mathscr X}{\bigcup}\textrm{ }el(\mathscr M_x)$. The cardinality of $el_{\mathscr X}(\mathscr M)$ will be denoted by $|\mathscr M|$. It is evident that if $\mathscr M\in Mod^C-\mathscr R$ and $\mathscr N$ is either a quotient or a subobject of $\mathscr M$, then $|\mathscr N|\leq |\mathscr M|$.  

\smallskip
We now define the following cardinality
\begin{equation}
\kappa =sup\{
\mbox{$|\mathbb N|$, $|C|$, $|K|$, $|Mor(\mathscr X)|$,  $|Mor(\mathscr R_x)|$, $x\in \mathscr X$}\}
\end{equation} 
We observe that  $|\beta^\ast(V\otimes H_r)|\leq\kappa$, where $V$ is any finite dimensional $C$-comodule and $\beta\in \mathscr X(x,y)$. 

\begin{lem}\label{L4.8}
We have $|\mathscr N|\leq \kappa$.
\end{lem}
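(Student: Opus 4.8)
The plan is to bound the cardinality of $\mathscr N$ by analyzing the explicit description of each $\mathscr N_y$ given in Lemma \ref{P4.6}, namely
\[
\mathscr N_y=Im\left(\underset{\beta\in \mathscr X(x,y)}{\bigoplus}\beta^\ast (V\otimes H_r)\xrightarrow{\bigoplus\beta^\ast\eta'_1}\beta^\ast \mathscr N_x\xrightarrow{\mathscr N^\beta}\mathscr N_y\right).
\]
Since $|\mathscr N|=|el_{\mathscr X}(\mathscr N)|=|\bigcup_{y\in\mathscr X}el(\mathscr N_y)|$, and since a union indexed by $Ob(\mathscr X)$ (whose cardinality is at most $|Mor(\mathscr X)|\le\kappa$) of sets each of size $\le\kappa$ has size $\le\kappa$, it suffices to show that each individual $|\mathscr N_y|\le\kappa$.

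**First I would** estimate the size of the source $\bigoplus_{\beta\in\mathscr X(x,y)}\beta^\ast(V\otimes H_r)$. We have already observed that $|\beta^\ast(V\otimes H_r)|\le\kappa$ for each $\beta$, and the indexing set $\mathscr X(x,y)\subseteq Mor(\mathscr X)$ has cardinality $\le\kappa$; a direct sum of $\le\kappa$ modules, each having $\le\kappa$ elements at every object, has element-set of cardinality $\le\kappa\cdot\kappa=\kappa$ (here one uses that finite tuples from a $\kappa$-sized pool number $\le\kappa$, and that $|K|\le\kappa$ so forming $K$-linear combinations does not increase cardinality beyond $\kappa$). Thus the source module of the defining surjection has $\le\kappa$ elements at each object, and since $\mathscr N_y$ is (at each object $r\in\mathscr R_y$) the image of this module under a $K$-linear surjection, $|\mathscr N_y|\le\kappa$ as well.

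**The main obstacle** — really the only point requiring care — is the cardinal arithmetic underlying ``a $\kappa$-indexed sum of $\kappa$-sized objects is $\kappa$-sized.'' The subtlety is that each $\mathscr N_y(r)$ is a $K$-vector space, so one is counting not just the generating elements but all their $K$-linear combinations. This is handled by the fact that $\kappa\ge|K|$ and $\kappa\ge|\mathbb N|$ are both built into the definition of $\kappa$: a vector space spanned by $\le\kappa$ elements over a field of size $\le\kappa$ has $\le\kappa$ elements, because each vector is a finite $K$-linear combination of basis/spanning vectors, and the set of such finite combinations has cardinality $\le\sup_n(\kappa\cdot|K|)^n\cdot|\mathbb N|=\kappa$ since $\kappa$ is infinite. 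One must also note that $el(\mathscr N_y)=\bigcup_{r\in\mathscr R_y}\mathscr N_y(r)$ is a union over $Ob(\mathscr R_y)$, which has cardinality $\le|Mor(\mathscr R_y)|\le\kappa$, so summing over objects of $\mathscr R_y$ keeps us at $\le\kappa$. Assembling these observations — source has $\le\kappa$ elements, image is no larger, union over $Ob(\mathscr R_y)$ stays $\le\kappa$, and final union over $Ob(\mathscr X)$ stays $\le\kappa$ — yields $|\mathscr N|\le\kappa$.
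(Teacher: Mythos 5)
Your proposal is correct and follows essentially the same route as the paper: both invoke the description of $\mathscr N_y$ from Lemma \ref{P4.6} as an epimorphic image of $\bigoplus_{\beta\in\mathscr X(x,y)}\beta^\ast(V\otimes H_r)$, bound that direct sum by $\kappa$, and then sum over $y\in\mathscr X$. The paper simply leaves the cardinal arithmetic implicit (having already recorded $|\beta^\ast(V\otimes H_r)|\le\kappa$ just before the lemma), whereas you spell it out; the content is the same.
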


\begin{proof}
We choose $y\in \mathscr X$. From Lemma \ref{P4.6}, we have
\begin{equation}
\mathscr N_y=Im\left(\underset{\beta\in \mathscr X(x,y)}{\bigoplus}\textrm{ }\begin{CD}\beta^\ast (V\otimes H_r)@>\bigoplus\beta^\ast\eta'_1>>\beta^\ast \mathscr N_x@>\mathscr N^\beta>>\mathscr N_y\end{CD}\right)
\end{equation} Since $\mathscr N_y$ is an epimorphic image of $\underset{\beta\in \mathscr X(x,y)}{\bigoplus}\textrm{ }\beta^\ast (V\otimes H_r)$, we have
\begin{equation}|\mathscr N_y|\leq | \underset{\beta\in \mathscr X(x,y)}{\bigoplus}\textrm{ }\beta^\ast (V\otimes H_r)|\leq \kappa
\end{equation} It follows that
$
|\mathscr N|=\underset{y\in \mathscr X}{\sum}\textrm{ }|\mathscr N_y|\leq \kappa
$.
\end{proof}

\begin{Thm}\label{T4.9}
Let $C$ be a right semiperfect coalgebra over a field $K$. Let $\mathscr R:\mathscr X\longrightarrow\mathscr Ent_C$ be an entwined $C$-representation of a small category $\mathscr X$. Then, the category $Mod^C-\mathscr R$
of entwined modules over $\mathscr R$ is a Grothendieck category.
\end{Thm}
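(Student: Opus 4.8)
The plan is to invoke the standard characterization: an abelian category is Grothendieck exactly when it is cocomplete, satisfies AB5 (filtered colimits are exact), and possesses a set of generators. Proposition \ref{P4.3} already supplies that $Mod^C-\mathscr R$ is abelian, so three tasks remain: build colimits, check AB5, and exhibit a generating set. The organizing principle throughout is that every colimit in $Mod^C-\mathscr R$ is computed \emph{objectwise}, i.e. $(\mathrm{colim}_i\,\mathscr M^{(i)})_x=\mathrm{colim}_i\,\mathscr M^{(i)}_x$ for each $x\in\mathscr X$.

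First I would construct coproducts, and more generally filtered colimits, objectwise. Given a diagram $\{\mathscr M^{(i)}\}$ in $Mod^C-\mathscr R$, set $\mathscr L_x:=\mathrm{colim}_i\,\mathscr M^{(i)}_x$ in the Grothendieck category $\mathbf M^C_{\mathscr R_x}(\psi_x)$ (Theorem \ref{T3.5}). For $\alpha:x\to y$, the transition map is obtained from $\mathrm{colim}_i\,\mathscr M^{(i)}_x\to\mathrm{colim}_i\,\alpha_\ast\mathscr M^{(i)}_y=\alpha_\ast\,\mathrm{colim}_i\,\mathscr M^{(i)}_y$, where the identification uses that $\alpha_\ast$ preserves colimits, as noted just after \eqref{eq4.1}. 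The composition condition \eqref{4.25d} and the universal property then follow formally, so $\mathscr L$ is the colimit in $Mod^C-\mathscr R$. In particular the category is cocomplete.

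Next I would verify AB5. Since kernels and cokernels are formed objectwise (Proposition \ref{P4.3}) and filtered colimits are formed objectwise by the previous step, the exactness of a filtered colimit of exact sequences in $Mod^C-\mathscr R$ reduces at each $x\in\mathscr X$ to the exactness of filtered colimits in $\mathbf M^C_{\mathscr R_x}(\psi_x)$, which holds because that category is Grothendieck. Hence AB5 holds.

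Finally, and this is the crux, I would produce a set of generators via the cardinal $\kappa$. The preceding analysis (Lemmas \ref{L4.4}--\ref{L4.8} and Proposition \ref{P4.5}) attaches, to every $\mathscr M\in Mod^C-\mathscr R$, every $x\in\mathscr X$, every finite-dimensional projective $V\in Comod-C$, every $r\in\mathscr R_x$, and every $\eta:V\otimes H_r\to\mathscr M_x$, a subobject $\iota:\mathscr N\hookrightarrow\mathscr M$ with $|\mathscr N|\le\kappa$ and $\mathrm{Im}(\eta)\subseteq\mathscr N_x$ (take $\beta=id_x$ in \eqref{eq4.6}). The isomorphism classes of objects of cardinality $\le\kappa$ form a \emph{set} $\mathcal G$, since such an object is specified by module and comodule data on sets of size $\le\kappa$ together with transition maps indexed by $Mor(\mathscr X)$. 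I claim $\mathcal G$ generates: fixing $\mathscr M$, let $\mathscr P\subseteq\mathscr M$ be the sum of all the subobjects $\mathscr N$ above, each of which is the image of a map from a member of $\mathcal G$. By Theorem \ref{T3.5} the objects $\{V\otimes H_r\}$ generate $\mathbf M^C_{\mathscr R_x}(\psi_x)$, so at each $x$ the images of all such $\eta$ cover $\mathscr M_x$; as $\mathrm{Im}(\eta)\subseteq\mathscr N_x\subseteq\mathscr P_x$ and sums are objectwise, $\mathscr P_x=\mathscr M_x$ for every $x$, whence $\mathscr P=\mathscr M$. Thus $\mathcal G$ is a generating set and $\bigoplus_{G\in\mathcal G}G$ is a generator. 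Combining the three steps with Proposition \ref{P4.3} shows $Mod^C-\mathscr R$ is an AB5 abelian category with a generator, hence Grothendieck. The main obstacle is precisely this last step: cocompleteness and AB5 are routine once one knows $\alpha_\ast$ preserves colimits, but the generating set genuinely depends on the cardinality estimate of Lemma \ref{L4.8}, which is what motivates the delicate construction of $\mathscr N$ in \eqref{eq4.6} and Lemmas \ref{L4.4}--\ref{L4.7}.
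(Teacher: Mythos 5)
Your proposal is correct and follows essentially the same route as the paper: AB5 via pointwise (co)limits, and a generating set obtained from the subobject $\mathscr N$ of \eqref{eq4.6} together with the cardinality bound $|\mathscr N|\leq\kappa$ of Lemma \ref{L4.8}, taking isomorphism classes of objects of cardinality $\leq\kappa$. The only cosmetic difference is that the paper covers $\mathscr M$ element-by-element (producing, for each $m\in el_{\mathscr X}(\mathscr M)$, a map $\eta:V\otimes H_r\to\mathscr M_x$ hitting $m$ via \cite[Lemma 2.8]{BBR} and semiperfectness of $C$), whereas you cover each $\mathscr M_x$ by the images of all maps from the generators $V\otimes H_r$; these amount to the same thing.
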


\begin{proof}
Since filtered  colimits and finite limits in $Mod^C-\mathscr R$ are computed pointwise, it is clear that they commute with each other. 

\smallskip
We now consider an object $\mathscr M$ in $Mod^C-\mathscr R$ and an element $m\in el_{\mathscr X}(\mathscr M)$. Then, $m\in \mathscr M_x(r)$ for
some $x\in \mathscr X$ and $r\in \mathscr R_x$. By \cite[Lemma 2.8]{BBR}, we can find a finite dimensionsal $C$-subcomodule $V'\subseteq \mathscr M_x(r)$
containing $m$ and a morphism $\eta': V'\otimes H_r\longrightarrow \mathscr M_x$ in $\mathbf M_{\mathscr R_x}^C(\psi_x)$ such that
$\eta'(r)(m\otimes id_r)=m$. Since $C$ is semiperfect, we can choose a finite dimensional projective $V$ in $Comod-C$ along with an epimorphism
$V\longrightarrow V'$. This induces a morphism $\eta:V\otimes H_r\longrightarrow \mathscr M_x$  in $\mathbf M_{\mathscr R_x}^C(\psi_x)$. Corresponding to $\eta$, we now define the subobject $\mathscr N\subseteq \mathscr M$ as in \eqref{eq4.6}. It is clear that $m\in el_{\mathscr X}(\mathscr N)$. By Lemma \ref{L4.8}, we know that $|\mathscr N|\leq \kappa$. 

\smallskip
We now consider the set of isomorphism classes of objects  in $Mod^C-\mathscr R$ having cardinality $\leq \kappa$. From the above, it is clear that any object in $Mod^C-\mathscr R$ may be expressed as a sum of such objects. By choosing one object from each such isomorphism class, we obtain a set of
generators for $Mod^C-\mathscr R$. 
\end{proof}

\section{Entwined representations of a poset and projective generators}

In this section, the small category $\mathscr X$ will always be a partially ordered set. If $x\leq y$ in $\mathscr X$, we will say that there is a single morphism
$x\longrightarrow y$ in $\mathscr X$. We continue with $C$ being a right semiperfect coalgebra over the field $K$ and $\mathscr R:\mathscr X\longrightarrow
\mathscr Ent_C$ being an entwined $C$-representation of $\mathscr X$. From Theorem \ref{T4.9}, we know that $Mod^C-\mathscr R$ is a Grothendieck category. 

\smallskip In this section, we will show that $Mod^C-\mathscr R$ has projective generators. For this, we will construct a pair of adjoint functors
\begin{equation}\label{adjexev}
ex_x^C:\mathbf M_{\mathscr R_x}^C(\psi_x)\longrightarrow Mod^C-\mathscr R \qquad ev_x^C:Mod^C-\mathscr R\longrightarrow \mathbf M_{\mathscr R_x}^C(\psi_x)
\end{equation} for each $x\in \mathscr X$. 

\begin{lem}\label{L5.1} Let $\mathscr X$ be a poset. Fix $x\in \mathscr X$. Then, there is a functor $ex_x^C:\mathbf M_{\mathscr R_x}^C(\psi_x)\longrightarrow Mod^C-\mathscr R$ defined by setting
\begin{equation}
ex_x^C(\mathcal M)_y:=\left\{
\begin{array}{ll}
\alpha^\ast\mathcal M & \mbox{if $\alpha\in \mathscr X(x,y)$}\\
0 & \mbox{if $\mathscr X(x,y)=\phi$}\\
\end{array}\right.
\end{equation} for each $y\in \mathscr X$.
\end{lem}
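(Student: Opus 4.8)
The plan is to define $ex_x^C$ on objects, equip each value with the transition data required by Definition \ref{D4.2}, and then extend to morphisms. Because $\mathscr X$ is a poset, for every $y$ there is at most one arrow $x\longrightarrow y$, so the prescription $ex_x^C(\mathcal M)_y=\alpha^\ast\mathcal M$ (with $\alpha$ the unique element of $\mathscr X(x,y)$, and $0$ when $\mathscr X(x,y)=\phi$) is unambiguous; moreover each $ex_x^C(\mathcal M)_y$ lies in $\mathbf M^C_{\mathscr R_y}(\psi_y)$ by Proposition \ref{P2.2}. The first genuine task is to produce, for each $\gamma:y\longrightarrow z$ in $\mathscr X$, a transition morphism $ex_x^C(\mathcal M)^\gamma:\gamma^\ast ex_x^C(\mathcal M)_y\longrightarrow ex_x^C(\mathcal M)_z$, and to check the two coherence conditions of \eqref{4.25d}.

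For the transition maps I would argue as follows. With $C$ fixed and $\gamma=id_C$, restriction of scalars is strictly functorial, $(\gamma\alpha)_\ast=\alpha_\ast\gamma_\ast$, and by Theorem \ref{T2.5} each $\alpha^\ast$ is left adjoint to $\alpha_\ast$; hence, by uniqueness of adjoints, there is a canonical natural isomorphism $c_{\gamma,\alpha}:\gamma^\ast\alpha^\ast\xrightarrow{\;\cong\;}(\gamma\alpha)^\ast$ of functors $\mathbf M^C_{\mathscr R_x}(\psi_x)\longrightarrow\mathbf M^C_{\mathscr R_z}(\psi_z)$. When $x\leq y$, the unique arrow $x\longrightarrow z$ is $\gamma\alpha$, so I set $ex_x^C(\mathcal M)^\gamma:=c_{\gamma,\alpha}(\mathcal M)$; when $\mathscr X(x,y)=\phi$ the source $\gamma^\ast(0)=0$ (as $\gamma^\ast$ is a left adjoint), and I take the zero morphism.

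The composition condition in \eqref{4.25d}, for composable $y\overset{\gamma}{\longrightarrow}z\overset{\delta}{\longrightarrow}w$ with $x\leq y$, then reads $c_{\delta\gamma,\alpha}=c_{\delta,\gamma\alpha}\circ\delta^\ast(c_{\gamma,\alpha})$ after the canonical identifications are inserted; this is precisely the cocycle identity satisfied by the comparison isomorphisms of a pseudofunctor, and it holds because the $c_{-,-}$ are the canonical mates of the strict equalities $(\delta\gamma)_\ast=\gamma_\ast\delta_\ast$, while the unit condition $ex_x^C(\mathcal M)^{id_y}=id$ records the normalization $c_{id,\alpha}=id$. In the degenerate case $\mathscr X(x,y)=\phi$ every morphism in sight has the zero object as source, so both sides of \eqref{4.25d} are the zero morphism and the conditions hold trivially. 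This shows $ex_x^C(\mathcal M)\in Mod^C-\mathscr R$. Finally, for $\eta:\mathcal M\longrightarrow\mathcal M'$ in $\mathbf M^C_{\mathscr R_x}(\psi_x)$ I set $ex_x^C(\eta)_y:=\alpha^\ast\eta$ when $x\leq y$ and $0$ otherwise; commutativity of the squares of Definition \ref{D4.2} follows from the naturality of each $c_{\gamma,\alpha}$ in the module variable, and preservation of identities and composition follows from the corresponding properties of the functors $\alpha^\ast$.

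I expect the only real subtlety to be the coherent assembly of the comparison isomorphisms $c_{\gamma,\alpha}$, i.e.\ the pseudofunctoriality of extension of scalars; once this is granted, everything else is bookkeeping forced by the poset hypothesis, which collapses all parallel arrows and thereby removes any ambiguity in the choice of $\alpha$ and in the compatibility of the transition maps.
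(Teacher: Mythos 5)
Your proposal is correct and follows essentially the same route as the paper: use the poset hypothesis to make the choice of $\alpha$ unambiguous, invoke Proposition \ref{P2.2} for the values, and take the transition map $\gamma^\ast\alpha^\ast\mathcal M\longrightarrow(\gamma\alpha)^\ast\mathcal M$ to be the canonical comparison, with coherence then automatic. The only difference is one of bookkeeping: the paper simply writes this comparison as the identity (identifying $\beta^\ast\alpha^\ast\mathcal M$ with $\alpha'^\ast\mathcal M$ outright), whereas you realize it as the canonical natural isomorphism obtained from uniqueness of left adjoints to the strictly composable functors $\alpha_\ast$ and verify the cocycle identity as a pseudofunctoriality statement — a more careful rendering of the same argument.
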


\begin{proof}
It is immediate that each $ex_x^C(\mathcal M)_y\in \mathbf M_{\mathscr R_y}^C(\psi_y)$. We consider $\beta:y\longrightarrow y'$ in $\mathscr X$. If $x\not\leq y$, we have  $0=ex_x^C(\mathcal M)^\beta:0=\beta^\ast ex_x^C(\mathcal M)_y\longrightarrow ex_x^C(\mathcal M)_{y'}$ in $\mathbf M_{\mathscr R_{y'}}^C(\psi_{y'})$. Otherwise, we consider
$\alpha:x\longrightarrow y$ and $\alpha':x\longrightarrow y'$. Then, we have
\begin{equation*}
id=ex_x^C(\mathcal M)^\beta :\beta^\ast ex_x^C(\mathcal M)_y=\beta^\ast\alpha^\ast \mathcal M\longrightarrow \alpha'^\ast\mathcal M=ex_x^C(\mathcal M)_{y'}
\end{equation*} which follows from the fact that $\beta\circ \alpha=\alpha'$. Given composable morphisms $\beta$, $\gamma$ in $\mathscr X$, it is now  clear from the definitions that $ex_x^C(\mathcal M)^{\gamma\beta}=ex_x^C(\mathcal M)^\gamma\circ \gamma^\ast(ex_x^C(\mathcal M)^\beta)$. 
\end{proof}

\begin{lem}\label{L5.2} Let $\mathscr X$ be a poset. Fix $x\in \mathscr X$. Then, there is a functor
\begin{equation} ev_x^C:Mod^C-\mathscr R\longrightarrow \mathbf M_{\mathscr R_x}^C(\psi_x)\qquad \mathscr M\mapsto \mathscr M_x
\end{equation} Additionally, $ev_x^C$ is exact.
\end{lem}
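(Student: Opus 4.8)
The plan is to define $ev_x^C$ on objects and morphisms directly by reading off the component at $x$, check that this assignment respects the categorical structure, and then deduce exactness from the fact, established in the proof of Proposition \ref{P4.3}, that kernels and cokernels in $Mod^C-\mathscr R$ are computed pointwise.

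First I would set $ev_x^C(\mathscr M):=\mathscr M_x$ on objects. By Definition \ref{D4.2}, the datum of an object $\mathscr M\in Mod^C-\mathscr R$ already includes an entwined module $\mathscr M_x\in \mathbf M_{\mathscr R_x}^C(\psi_x)$, so this is manifestly an object of the target category. On morphisms, given $\eta:\mathscr M\longrightarrow \mathscr N$ in $Mod^C-\mathscr R$, I would set $ev_x^C(\eta):=\eta_x$; again by Definition \ref{D4.2}, a morphism in $Mod^C-\mathscr R$ is precisely a family $\{\eta_y:\mathscr M_y\longrightarrow \mathscr N_y\}_{y\in \mathscr X}$ of morphisms in the categories $\mathbf M_{\mathscr R_y}^C(\psi_y)$ satisfying the compatibility squares, so in particular $\eta_x$ is a morphism in $\mathbf M_{\mathscr R_x}^C(\psi_x)$. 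Since identities and composition in $Mod^C-\mathscr R$ are defined componentwise, one has $ev_x^C(id_{\mathscr M})=id_{\mathscr M_x}$ and $ev_x^C(\zeta\circ \eta)=\zeta_x\circ \eta_x=ev_x^C(\zeta)\circ ev_x^C(\eta)$, so $ev_x^C$ is a functor. This part is purely formal.

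For exactness I would invoke the description of kernels and cokernels from the proof of Proposition \ref{P4.3}, where it is shown that for a morphism $\eta:\mathscr M\longrightarrow \mathscr N$ in $Mod^C-\mathscr R$ one has $Ker(\eta)_y=Ker(\eta_y:\mathscr M_y\longrightarrow \mathscr N_y)$ and $Cok(\eta)_y=Cok(\eta_y:\mathscr M_y\longrightarrow \mathscr N_y)$ for every $y\in \mathscr X$; that is, kernels and cokernels are formed objectwise. Specializing to $y=x$ gives $ev_x^C(Ker\,\eta)=Ker(\eta_x)=Ker(ev_x^C\,\eta)$ and $ev_x^C(Cok\,\eta)=Cok(ev_x^C\,\eta)$, so $ev_x^C$ preserves both kernels and cokernels. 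Since $Mod^C-\mathscr R$ is abelian by Proposition \ref{P4.3} and $\mathbf M_{\mathscr R_x}^C(\psi_x)$ is abelian by Theorem \ref{T3.5}, a functor preserving kernels and cokernels is both left and right exact, hence exact, which is what we want.

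I do not expect a serious obstacle, since the content is entirely formal and rests on the pointwise nature of the abelian structure on $Mod^C-\mathscr R$. The only point that merits a moment's care is confirming that the objectwise kernel and cokernel genuinely assemble into objects of $Mod^C-\mathscr R$, so that the pointwise formula is legitimate; but this is precisely what was verified in Proposition \ref{P4.3}, using the exactness of the restriction functors $\alpha_\ast$ to induce the transition morphisms $Ker(\eta)_\alpha$ and $Cok(\eta)_\alpha$. Given that verification, the statement follows immediately.
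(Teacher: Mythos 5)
Your proof is correct and follows the same route as the paper: functoriality is the purely formal observation that objects and morphisms of $Mod^C-\mathscr R$ are by definition families indexed by $\mathscr X$, and exactness follows because kernels and cokernels in $Mod^C-\mathscr R$ are computed pointwise, as established in Proposition \ref{P4.3}. The paper states this in two lines; your version merely spells out the same argument.
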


\begin{proof}
It is immediate that $ev_x^C$ is a functor. Since finite limits and finite colimits in $Mod^C-\mathscr R$ are computed pointwise, it follows that $ev_x^C$ is exact. 
\end{proof}

\begin{thm}\label{P5.3}Let $\mathscr X$ be a poset. Fix $x\in \mathscr X$. Then, $(ex_x^C,ev_x^C)$ is a pair of adjoint functors. 
\end{thm}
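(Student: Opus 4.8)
The plan is to exhibit, for each $\mathcal M \in \mathbf M_{\mathscr R_x}^C(\psi_x)$ and $\mathscr M \in Mod^C-\mathscr R$, a natural bijection
\[
Mod^C-\mathscr R(ex_x^C(\mathcal M), \mathscr M) \;\cong\; \mathbf M_{\mathscr R_x}^C(\psi_x)(\mathcal M, ev_x^C(\mathscr M)) = \mathbf M_{\mathscr R_x}^C(\psi_x)(\mathcal M, \mathscr M_x),
\]
sending a morphism $\Phi = \{\Phi_y\}_{y\in\mathscr X}$ to its $x$-component $\Phi_x$. The poset hypothesis is exactly what makes this work: for every $y$ with $x \le y$ there is a \emph{unique} arrow $\alpha_y : x \to y$, so that $ex_x^C(\mathcal M)_y = \alpha_y^\ast \mathcal M$, and for any $\beta : y \to y'$ with $x \le y$ the transition map $ex_x^C(\mathcal M)^\beta$ is the identity once one identifies $\beta^\ast\alpha_y^\ast\mathcal M = (\beta\alpha_y)^\ast\mathcal M = \alpha_{y'}^\ast\mathcal M$ (as in the proof of Lemma \ref{L5.1}). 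This rigidity is what forces every component $\Phi_y$ to be recoverable from $\Phi_x$.

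First I would note that $\Phi_x$ is a well-defined element of $\mathbf M_{\mathscr R_x}^C(\psi_x)(\mathcal M, \mathscr M_x)$, since $ex_x^C(\mathcal M)_x = id_x^\ast\mathcal M \cong \mathcal M$ and since morphisms in $Mod^C-\mathscr R$ are componentwise morphisms of entwined modules. Next I would establish injectivity by applying the morphism condition of Definition \ref{D4.2}, in its $\mathscr M^\alpha$-form $\Phi_{y'}\circ ex_x^C(\mathcal M)^\beta = \mathscr M^\beta\circ\beta^\ast(\Phi_y)$, to the arrow $\beta = \alpha_y : x \to y$: since $ex_x^C(\mathcal M)^{\alpha_y}$ is the identity, this yields $\Phi_y = \mathscr M^{\alpha_y}\circ\alpha_y^\ast(\Phi_x)$ for all $y \ge x$, while $\Phi_y = 0$ for $y \not\ge x$. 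Thus $\Phi$ is completely determined by $\Phi_x$. For the inverse, given $\phi : \mathcal M \to \mathscr M_x$, I define $\Phi_y := \mathscr M^{\alpha_y}\circ\alpha_y^\ast(\phi)$ for $x \le y$ and $\Phi_y := 0$ otherwise.

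The one step needing genuine verification is that this family $\{\Phi_y\}$ really is a morphism in $Mod^C-\mathscr R$, i.e.\ that it commutes with every transition map. When $x \not\le y$ the compatibility square degenerates, both composites vanishing because $ex_x^C(\mathcal M)_y = 0$. When $x \le y$ (hence $x \le y'$ and $\alpha_{y'} = \beta\alpha_y$), the required identity $\Phi_{y'} = \mathscr M^\beta\circ\beta^\ast(\Phi_y)$ follows by expanding $\beta^\ast(\Phi_y) = \beta^\ast(\mathscr M^{\alpha_y})\circ\beta^\ast\alpha_y^\ast(\phi)$, invoking the cocycle relation \eqref{4.25d} in the form $\mathscr M^{\beta\alpha_y} = \mathscr M^\beta\circ\beta^\ast(\mathscr M^{\alpha_y})$, and using $\beta^\ast\alpha_y^\ast = (\beta\alpha_y)^\ast = \alpha_{y'}^\ast$. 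The same computation shows the two constructions are mutually inverse. Finally, naturality of the bijection in both $\mathcal M$ and $\mathscr M$ is routine, as every map in sight is built componentwise from the functors $\alpha_y^\ast$ and the structure maps $\mathscr M^{\alpha_y}$; I expect the verification of the morphism condition to be the only place requiring real care, everything else being forced by the uniqueness of arrows out of $x$.
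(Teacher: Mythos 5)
Your proposal is correct and follows essentially the same route as the paper: both establish the bijection by restricting a morphism $\Phi:ex_x^C(\mathcal M)\longrightarrow \mathscr M$ to its $x$-component and, conversely, extending $\phi:\mathcal M\longrightarrow \mathscr M_x$ to the family $\mathscr M^{\alpha}\circ \alpha^\ast(\phi)$ for $x\leq y$ (zero otherwise), with the compatibility square checked via the cocycle relation \eqref{4.25d} and the identification $\beta^\ast\alpha^\ast=(\beta\alpha)^\ast$. Your explicit remark that the poset hypothesis forces $ex_x^C(\mathcal M)^{\alpha}$ to be the identity, which makes $\Phi$ determined by $\Phi_x$, is exactly the mechanism implicit in the paper's verification.
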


\begin{proof}
For any $\mathcal M\in \mathbf M_{\mathscr R_x}^C(\psi_x)$ and $\mathscr N\in Mod^C-\mathscr R$, we will show that
\begin{equation}
Mod^C-\mathscr R(ex_x^C(\mathcal M),\mathscr N)\cong \mathbf M_{\mathscr R_x}^C(\psi_x)(\mathcal M,ev_x^C(\mathscr N))
\end{equation} We begin with a morphism $f:\mathcal M\longrightarrow \mathscr N_x$ in $\mathbf M_{\mathscr R_x}^C(\psi_x)$. Corresponding to $f$, we define $\eta^f:ex_x^C(\mathcal M)\longrightarrow\mathscr N$ in $Mod^C-\mathscr R$ by setting
\begin{equation}
\eta^f_y:ex^C_x(\mathcal M)_y=\alpha^\ast\mathcal M\xrightarrow{\alpha^\ast f}\alpha^\ast\mathscr N_x\xrightarrow{\mathscr N^\alpha}\mathscr N_y
\end{equation} whenever $x\leq y$ and $\alpha\in \mathscr X(x,y)$. Otherwise, we set $0=\eta^f_y:0=ex^C_x(\mathcal M)_y\longrightarrow \mathscr N_y$. For $\beta:y\longrightarrow y'$ in $\mathscr X$, we have to show that the following diagram is commutative. 
\begin{equation}\label{5.6cd}
\begin{CD}
\beta^\ast ex_x^C(\mathcal M)_y @>\beta^\ast\eta^f_y>> \beta^\ast\mathscr N_y \\
@Vex_x^C(\mathcal M)^\beta VV @VV\mathscr N^\beta V \\
ex_x^C(\mathcal M)_{y'} @>\eta^f_{y'}>> \mathscr N_{y'} \\
\end{CD}
\end{equation} If $x\not\leq y$, then $ex_x^C(\mathcal M)_y=0$ and the diagram commutes. Otherwise, we consider $\alpha:x\longrightarrow y$ and
$\alpha'=\beta\circ \alpha:x\longrightarrow y'$. Then, \eqref{5.6cd} reduces to the commutative diagram
\begin{equation}
\begin{CD}
\beta^\ast \alpha^\ast\mathcal M @>\beta^\ast(\mathscr N^\alpha\circ \alpha^\ast f)>> \beta^\ast\mathscr N_y \\
@VidVV @VV\mathscr N^\beta V \\
\beta^\ast\alpha^\ast\mathcal M=\alpha'^\ast\mathcal M @>\mathscr N^{\alpha'}\circ \alpha'^\ast(f)=\mathscr N^\beta\circ\beta^\ast(\mathscr N^\alpha)\circ \beta^\ast\alpha^\ast f>>\mathscr N_{y'} \\
\end{CD}
\end{equation} Conversely, we take $\eta:ex_x^C(\mathcal M)\longrightarrow \mathscr N$ in $Mod^C-\mathscr R$. In particular, this determines
$f^\eta=\eta_x:\mathcal M\longrightarrow \mathscr N_x$ in $ \mathbf M_{\mathscr R_x}^C(\psi_x)$. It may be easily verified that these two associations are inverse to each other. This proves the result. 
\end{proof}

\begin{cor}\label{C5.4}
The functor $ex_x^C:\mathbf M_{\mathscr R_x}^C(\psi_x)\longrightarrow Mod^C-\mathscr R $ preserves projectives.
\end{cor}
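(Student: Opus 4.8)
The plan is to deduce this from the standard categorical fact that a left adjoint preserves projective objects whenever its right adjoint preserves epimorphisms. The two ingredients are already available: by Proposition \ref{P5.3} the functor $ex_x^C$ is left adjoint to $ev_x^C$, and by Lemma \ref{L5.2} the functor $ev_x^C$ is exact, hence in particular carries epimorphisms to epimorphisms. So the proof is purely formal and requires no new construction.

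Concretely, I would fix a projective object $P\in \mathbf M_{\mathscr R_x}^C(\psi_x)$ and show that $ex_x^C(P)$ is projective in $Mod^C-\mathscr R$. Starting from an epimorphism $\pi:\mathscr N\twoheadrightarrow \mathscr M$ in $Mod^C-\mathscr R$ and an arbitrary morphism $f:ex_x^C(P)\longrightarrow \mathscr M$, the goal is to produce a lift $\tilde f:ex_x^C(P)\longrightarrow \mathscr N$ with $\pi\circ\tilde f=f$. Applying the adjunction isomorphism of Proposition \ref{P5.3}, the morphism $f$ corresponds to $f^\flat:P\longrightarrow ev_x^C(\mathscr M)=\mathscr M_x$. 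Since $ev_x^C$ is exact, $ev_x^C(\pi):\mathscr N_x\longrightarrow \mathscr M_x$ is an epimorphism in $\mathbf M_{\mathscr R_x}^C(\psi_x)$, so the projectivity of $P$ yields a lift $g:P\longrightarrow \mathscr N_x=ev_x^C(\mathscr N)$ with $ev_x^C(\pi)\circ g=f^\flat$. Transporting $g$ back across the adjunction produces the desired $\tilde f:ex_x^C(P)\longrightarrow \mathscr N$.

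The only point that is not immediate is the final compatibility $\pi\circ\tilde f=f$, but this is not a separate computation: it follows from the naturality of the adjunction bijection $Mod^C-\mathscr R(ex_x^C(P),-)\cong \mathbf M_{\mathscr R_x}^C(\psi_x)(P,ev_x^C(-))$ in its argument. Naturality along $\pi$ gives a commuting square relating the two bijections via $\pi$ and $ev_x^C(\pi)$, and chasing $g$ through this square converts the identity $ev_x^C(\pi)\circ g=f^\flat$ into $\pi\circ\tilde f=f$. I expect no genuine obstacle here — the statement is a formal consequence of Proposition \ref{P5.3} together with the exactness of $ev_x^C$ recorded in Lemma \ref{L5.2} — so the task reduces to assembling these two facts and invoking the adjunction's naturality.
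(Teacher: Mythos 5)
Your proposal is correct and follows exactly the paper's own argument: the paper likewise deduces the corollary from the adjunction $(ex_x^C,ev_x^C)$ of Proposition \ref{P5.3} together with the exactness of $ev_x^C$ from Lemma \ref{L5.2}, invoking the standard fact that a left adjoint preserves projectives when its right adjoint is exact. You merely unwind that standard fact explicitly, which the paper leaves implicit.
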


\begin{proof}
From Proposition \ref{P5.3}, we know that  $(ex_x^C,ev_x^C)$ is a pair of adjoint functors. From Lemma \ref{L5.2}, we know that the right adjoint $ev_x^C$
is exact. It follows therefore that its left adjoint $ex_x^C$ preserves projective objects. 
\end{proof}

\begin{Thm}\label{T5.5}  Let $C$ be a right semiperfect coalgebra over a field $K$. Let $\mathscr X$ be a poset and let 
$\mathscr R:\mathscr X\longrightarrow \mathscr Ent_C$ be an entwined $C$-representation of $\mathscr X$. Then, $Mod^C-\mathscr R$ has projective generators.
\end{Thm}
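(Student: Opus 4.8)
The plan is to exhibit the family
\begin{equation*}
\mathcal P = \{ex_x^C(V\otimes H_r) \mid x\in\mathscr X,\ r\in\mathscr R_x,\ V\in Proj^f(C)\}
\end{equation*}
as a set of projective generators, where $Proj^f(C)$ denotes the set of isomorphism classes of finite dimensional projective $C$-comodules. Each such object is projective: by Proposition \ref{P3.4}, $V\otimes H_r$ is projective in $\mathbf M^C_{\mathscr R_x}(\psi_x)$, and by Corollary \ref{C5.4} the functor $ex_x^C$ preserves projectives, so $ex_x^C(V\otimes H_r)$ is projective in $Mod^C-\mathscr R$. Since $\mathscr X$ and every $\mathscr R_x$ are small and $Proj^f(C)$ is a set, $\mathcal P$ is a set.

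It remains to show that $\mathcal P$ generates. First I would record that epimorphisms in $Mod^C-\mathscr R$ are detected pointwise: by Proposition \ref{P4.3} kernels and cokernels are computed at each $x\in\mathscr X$, so a morphism $\varphi:\mathscr A\longrightarrow\mathscr B$ is an epimorphism if and only if each $\varphi_x:\mathscr A_x\longrightarrow\mathscr B_x$ is an epimorphism in $\mathbf M^C_{\mathscr R_x}(\psi_x)$. The crucial observation is that, under the adjunction of Proposition \ref{P5.3}, the transpose $\tilde g:ex_x^C(V\otimes H_r)\longrightarrow\mathscr M$ of a morphism $g:V\otimes H_r\longrightarrow\mathscr M_x$ satisfies $(\tilde g)_x=g$. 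Indeed, from the explicit description in the proof of Proposition \ref{P5.3}, the $y$-component of $\tilde g$ is $\mathscr M^\alpha\circ\alpha^\ast g$ for $\alpha\in\mathscr X(x,y)$, which at $y=x$ (taking $\alpha=id_x$) reduces to $g$.

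Given $\mathscr M\in Mod^C-\mathscr R$, I would then form the canonical map
\begin{equation*}
\Phi:\bigoplus ex_x^C(V\otimes H_r)\longrightarrow\mathscr M
\end{equation*}
indexed over all $x\in\mathscr X$, $r\in\mathscr R_x$, $V\in Proj^f(C)$ and all $g\in\mathbf M^C_{\mathscr R_x}(\psi_x)(V\otimes H_r,\mathscr M_x)$, whose component on the summand indexed by $g$ is $\tilde g$. To see that $\Phi$ is an epimorphism, by the pointwise criterion it suffices to check that $\Phi_y$ is an epimorphism for each $y\in\mathscr X$. Restricting attention to the summands with $x=y$, the observation above shows that their $y$-components are exactly the maps $g\in\mathbf M^C_{\mathscr R_y}(\psi_y)(V\otimes H_r,\mathscr M_y)$; since by Theorem \ref{T3.5} the family $\{V\otimes H_r\}$ is a set of generators for $\mathbf M^C_{\mathscr R_y}(\psi_y)$, these maps already cover $\mathscr M_y$, so $\Phi_y$ is an epimorphism. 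Hence $\Phi$ is an epimorphism and $\mathcal P$ generates.

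The only real content beyond assembling the earlier results is the identity $(\tilde g)_x=g$, which is also the step I expect to be the main point to get right: it is precisely what lets the pointwise generation in each fibre category $\mathbf M^C_{\mathscr R_x}(\psi_x)$ propagate to global generation. Everything else is bookkeeping (smallness of the index set, projectivity via Corollary \ref{C5.4}, and the pointwise detection of epimorphisms from Proposition \ref{P4.3}). Combined with Theorem \ref{T4.9}, which already gives that $Mod^C-\mathscr R$ is a Grothendieck category, this shows that $Mod^C-\mathscr R$ is a Grothendieck category with a set of projective generators.
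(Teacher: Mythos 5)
Your proposal is correct and follows essentially the same route as the paper: the same family $\{ex_x^C(V\otimes H_r)\}$, projectivity via Proposition \ref{P3.4} and Corollary \ref{C5.4}, and generation transferred from the fibre categories $\mathbf M^C_{\mathscr R_x}(\psi_x)$ through the adjunction $(ex_x^C,ev_x^C)$, whose key point is exactly your identity $(\tilde g)_x=g$. The only (immaterial) difference is the criterion for generation you verify: you exhibit a pointwise-epimorphic map from a coproduct of generators, whereas the paper checks that any proper subobject $\mathscr N\subsetneq\mathscr M$ fails to receive some map $ex_x^C(V\otimes H_r)\longrightarrow\mathscr M$ — two standard equivalent formulations.
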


\begin{proof} We denote by $Proj^f(C)$ the set of isomorphism classes of finite dimensional projective $C$-comodules. We will show that  the family 
\begin{equation}\mathcal G=\{\mbox{$ex_x^C(V\otimes H_r)$ $\vert$ $x\in \mathscr X$,  $r\in \mathscr R_x$,  $V\in Proj^f(C)$}\}
\end{equation} is a set of projective generators for $Mod^C-\mathscr R$. From Proposition \ref{P3.4}, we know that $V\otimes H_r$ is projective in
$\mathbf M_{\mathscr R_x}^C(\psi_x)$, where $r\in \mathscr R_x$ and $V\in Proj^f(C)$.  It now follows from Corollary \ref{C5.4}  that each $ex_x^C(V\otimes H_r)$
is projective in $Mod^C-\mathscr R$.

\smallskip
It remains to show that $\mathcal G$ is a set of generators for $Mod^C-\mathscr R$. For this, we consider a monomorphism $\iota:\mathscr N\hookrightarrow \mathscr M$ in $Mod^C-\mathscr R$ such that $\mathscr N\subsetneq\mathscr M$. Since kernels and cokernels in $Mod^C-\mathscr R$ are taken pointwise, it follows that there is some 
$x\in \mathscr X$ such that $\iota_x:\mathscr N_x\hookrightarrow\mathscr M_x$ is a monomorphism with $\mathscr N_x\subsetneq\mathscr M_x$.  

\smallskip From the proof of Theorem \ref{T3.5}, we know that $\{V\otimes H_r\}_{r\in \mathscr R_x,V\in Proj^f(C)}$ is a set of generators for $\mathbf M^C_{\mathscr R_x}(\psi_x)$. Accordingly, we can choose a morphism $f:V\otimes H_r\longrightarrow \mathscr M_x$ with $r\in \mathscr R_x$ and $V\in Proj^f(C)$ such that $f$ does not factor through $ev_x^C(\iota)=\iota_x:\mathscr N_x\hookrightarrow\mathscr M_x$. Applying the adjunction $(ex^C_x,ev_x^C)$,
we now obtain a morphism $\eta:ex_x^C(V\otimes H_r)\longrightarrow \mathscr M$ corresponding to $f$, which does not factor through $\iota:
\mathscr N\longrightarrow \mathscr M$. It now follows (see, for instance, \cite[$\S$ 1.9]{Tohoku}) that the family $\mathcal G$ is a set of generators
for $Mod^C-\mathscr R$. 

\end{proof}

\section{Cartesian modules over entwined representations}

We continue with $\mathscr X$ being a poset, $C$ being a right semiperfect $K$-coalgebra and $\mathscr R:\mathscr X\longrightarrow \mathscr Ent_C$ being an entwined $C$-representation of $\mathscr X$. In this section, we will introduce the category of cartesian modules over $\mathscr R$.

\smallskip
Given a morphism $\alpha:(\mathcal R,C,\psi)\longrightarrow (\mathcal S,C,\psi')$ in $\mathscr Ent_C$, we already know that the left adjoint $\alpha^\ast$  is right exact. We will say that   $\alpha:(\mathcal R,C,\psi)\longrightarrow (\mathcal S,C,\psi')$ is  flat if $\alpha^\ast: \mathbf M^C_{\mathcal R}(\psi)\longrightarrow \mathbf M^C_{\mathcal S}(\psi')$ is exact. Accordingly, 
we will say that an entwined $C$-representation $\mathscr R:\mathscr X\longrightarrow \mathscr Ent_C$  is flat if $\alpha^\ast=\mathscr R_\alpha^\ast:\mathbf M^C_{\mathscr R_x}(\psi_x)
\longrightarrow \mathbf M^C_{\mathscr R_y}(\psi_y)$ is exact for each $\alpha:x\longrightarrow y$ in $\mathscr X$.

\begin{defn}\label{D6.1}  Let  $\mathscr R:\mathscr X\longrightarrow \mathscr Ent_C$ be an entwined $C$-representation of 
$\mathscr X$. Suppose that $\mathscr R$ is flat. Let $\mathscr M$ be an entwined module over $\mathscr R$. We will say that $\mathscr M$ is cartesian if for each $\alpha :x\longrightarrow y$
in $\mathscr X$, the morphism $\mathscr M^\alpha:\alpha^\ast\mathscr M_x\longrightarrow\mathscr M_y$  in $\mathbf M^C_{\mathscr R_y}(\psi_y)$ is an isomorphism.  

\smallskip
We will denote by $Cart^C-\mathscr R$ the full subcategory of $Mod^C-\mathscr R$ consisting of cartesian modules. 
\end{defn} It is clear that $Cart^C-\mathscr R$ is an abelian category, with filtered colimits and finite limits coming from $Mod^C-\mathscr R$.

\smallskip We will now give conditions so that $Cart-\mathscr R$ is a Grothendieck category. For this, we will need some intermediate results. First, we recall (see, for instance, \cite{AR}) that an object $M$ in a Grothendieck category $\mathcal A$ is said to be finitely generated if the functor ${\mathcal A}(M,\_\_)$ satisfies
\begin{equation}
\underset{i\in I}{\varinjlim}\textrm{ } {\mathcal A}(M,M_i)={\mathcal A}(M,\underset{i\in I}{\varinjlim}\textrm{ } M_i)
\end{equation}
where $\{M_i\}_{i\in I}$ is any filtered system of objects in $\mathcal A$ connected by monomorphisms. 

\begin{thm}\label{P6.1}  Let $(\mathcal R,C,\psi)$ be an entwining structure with $C$ a right semiperfect coalgebra. Let $V$ be a finite dimensional projective right $C$-comodule. Then, for any $r\in \mathcal R$, the module
$V\otimes H_r$ is a finitely generated projective object in  $\mathbf M_{\mathcal R}^C(\psi)$.

\end{thm}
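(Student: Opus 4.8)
The plan is to separate the two assertions. Projectivity of $V\otimes H_r$ is exactly Proposition \ref{P3.4}, so nothing new is needed there; the whole content lies in showing $V\otimes H_r$ is finitely generated in the sense recalled just above.

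First I would extract from the proof of Proposition \ref{P3.4} a natural isomorphism
\[
\mathbf M_{\mathcal R}^C(\psi)(V\otimes H_r,\mathcal M)\;\cong\;Comod-C(V,\mathcal M(r)),
\]
natural in $\mathcal M\in\mathbf M_{\mathcal R}^C(\psi)$. In one direction, a morphism $\xi:V\otimes H_r\to\mathcal M$ is sent to its component at the identity, $T_\xi:V\to\mathcal M(r)$, $v\mapsto\xi(r)(v\otimes id_r)$. Since the entwining axiom gives $\psi(c\otimes id_r)=id_r\otimes c$, the comodule structure $v\otimes g\mapsto v_0\otimes g_\psi\otimes v_1^\psi$ of Lemma \ref{L3.3} restricts on $V\otimes id_r$ to that of $V$, so $T_\xi$ is $C$-colinear. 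In the other direction, a $C$-comodule map $T:V\to\mathcal M(r)$ is sent to $\xi_T$ with $\xi_T(s)(v\otimes g)=\mathcal M(g)(T(v))$; that $\xi_T$ is a well-defined morphism in $\mathbf M_{\mathcal R}^C(\psi)$ is precisely what is verified in the proof of Proposition \ref{P3.4}. Using the module action together with $(V\otimes H_r)(g)(v\otimes id_r)=v\otimes g$ and $\mathcal M(id_r)=id$, a short computation shows these assignments are mutually inverse, and naturality in $\mathcal M$ is immediate.

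Next I would observe that the evaluation $ev_r:\mathcal M\mapsto\mathcal M(r)$ from $\mathbf M_{\mathcal R}^C(\psi)$ to $Comod-C$ both preserves monomorphisms and commutes with filtered colimits, since monomorphisms and filtered colimits in $\mathbf M_{\mathcal R}^C(\psi)$ are computed objectwise at each $r$, with the $C$-comodule structure of the colimit computed in $Comod-C$. Hence $ev_r$ carries a filtered system connected by monomorphisms in $\mathbf M_{\mathcal R}^C(\psi)$ to one connected by monomorphisms in $Comod-C$ and interchanges with its colimit. Through the natural isomorphism of the previous step, the finite-generation statement for $V\otimes H_r$ thus reduces to finite-generation of $V$ in $Comod-C$.

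Finally I would prove that every finite dimensional $C$-comodule $V$ is finitely generated in $Comod-C$. Given a filtered system $\{N_i\}$ connected by comodule monomorphisms, its colimit $N=\varinjlim N_i$ is the filtered union of the underlying vector spaces, each $\iota:N_i\hookrightarrow N$ injective (the forgetful functor to $Vect_K$ has right adjoint $-\otimes C$, hence preserves colimits). Any comodule map $f:V\to N$ has $f(V)$ finite dimensional, so $f(V)\subseteq N_i$ for some $i$; the vector-space factorization $g:V\to N_i$ is automatically $C$-colinear, since from $\rho_N f=(f\otimes id)\rho_V$ and $\rho_N\iota=(\iota\otimes id)\rho_{N_i}$ one cancels the injection $\iota\otimes id$ (tensoring over the field $K$) to obtain $\rho_{N_i}g=(g\otimes id)\rho_V$. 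This gives surjectivity of $\varinjlim Comod-C(V,N_i)\to Comod-C(V,N)$, and injectivity is the routine filtered-colimit argument. Combining the three steps yields the claim. The step needing the most care is the second, namely confirming that the comodule structure on pointwise filtered colimits is the expected one so that $ev_r$ genuinely preserves colimits into $Comod-C$, together with the automatic colinearity of the factorization in the last step; the algebra of the first step is routine once extracted from Proposition \ref{P3.4}.
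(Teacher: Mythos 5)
Your proof is correct, but it follows a genuinely different route from the paper's. The paper argues directly: it fixes a basis $\{v_1,\dots,v_n\}$ of $V$, forms the $\mathcal R$-linear (not colinear) maps $\zeta_k:H_r\to V\otimes H_r$, $f\mapsto v_k\otimes f$, uses the fact that $H_r$ is finitely generated in $\mathbf M_{\mathcal R}$ to find a single index $j$ through which every composite $\zeta\circ\zeta_k$ factors, and then shows that the pullback of $\zeta$ along $\eta_j:\mathcal M_j\hookrightarrow\mathcal M$, computed pointwise, is all of $V\otimes H_r$ — so $\zeta$ itself factors through $\mathcal M_j$ inside $\mathbf M^C_{\mathcal R}(\psi)$. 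You instead extract from Proposition \ref{P3.4} the adjunction isomorphism $\mathbf M_{\mathcal R}^C(\psi)(V\otimes H_r,\mathcal M)\cong Comod\text{-}C(V,\mathcal M(r))$, observe that $ev_r$ preserves filtered colimits along monomorphisms, and reduce everything to the finite generation of a finite-dimensional comodule in $Comod\text{-}C$, which you prove from scratch (the automatic colinearity of the vector-space factorization, via injectivity of $\iota\otimes id_C$ over a field, is exactly the point that needs saying, and you say it). Your approach is more modular: the adjunction is a reusable statement, and it makes transparent that $V\otimes H_r$ inherits finite generation from $V$ without any appeal to a basis or to pullbacks. The paper's approach buys self-containedness — it never formalizes the adjunction and instead leans on the known finite generation of the representables $H_r$ in $\mathbf M_{\mathcal R}$, with the pullback computation doing the work of transporting the factorization back into the entwined category. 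Both proofs address only surjectivity of the comparison map $\varinjlim\mathbf M^C_{\mathcal R}(\psi)(V\otimes H_r,\mathcal M_i)\to\mathbf M^C_{\mathcal R}(\psi)(V\otimes H_r,\varinjlim\mathcal M_i)$ in detail, leaving injectivity to the standard argument for systems of monomorphisms, so you are not missing anything the paper supplies.
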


\begin{proof}
From Proposition \ref{P3.4}, we already know that $V\otimes H_r$ is a projective object in  $\mathbf M_{\mathcal R}^C(\psi)$. To show that it is finitely generated, we consider a filtered system $\{\mathcal M_i\}_{i\in I}$ of objects in $\mathbf M_{\mathcal R}^C(\psi)$ connected by monomorphisms and set 
$\mathcal M:=\underset{i\in I}{\varinjlim}\textrm{ } \mathcal M_i$. Since $\mathbf M_{\mathcal R}^C(\psi)$ is a Grothendieck category, we note that we have an inclusion $\eta_i:\mathcal M_i\hookrightarrow \mathcal M$ for each $i\in I$. 

\smallskip We now take a morphism $\zeta:V\otimes H_r\longrightarrow \mathcal M$ in $\mathbf M^C_{\mathcal R}(\psi)$. We choose a basis 
$\{v_1,...,v_n\}$ for $V$. For each $1\leq k\leq n$, we now have a morphism in $\mathbf M_{\mathcal R}$ given by 
\begin{equation}
\zeta_k:H_r\longrightarrow V\otimes H_r \qquad H_r(s)=\mathcal R(s,r)\ni f\mapsto v_k\otimes f\in (V\otimes H_r)(s)
\end{equation} Then, each composition $\zeta\circ \zeta_k:H_r\longrightarrow\mathcal M$ is a morphism in $\mathbf M_{\mathcal R}$. Since $H_r$ is a finitely generated object in $\mathbf M_{\mathcal R}$, we can now choose $j\in I$ such that every $\zeta\circ \zeta_k$ factors through $\eta_j:\mathcal M_j\hookrightarrow \mathcal M$. We now construct the following pullback diagram in $\mathbf M^C_{\mathcal R}(\psi)$
\begin{equation}\label{eq6.3}
\begin{CD}
\mathcal N @>>> \mathcal M_j \\
@V\iota VV @VV\eta_jV \\
V\otimes H_r @>\zeta>> \mathcal M\\
\end{CD}
\end{equation} Then, $\iota:\mathcal N\longrightarrow\mathcal M$ is a monomorphism in $\mathbf M^C_{\mathcal R}(\psi)$. From the construction of finite limits in $\mathbf M^C_{\mathcal R}(\psi)$, it follows that for each $s\in \mathcal R$, we have a pullback diagram in $Vect_K$
\begin{equation}\label{eq6.4}
\begin{CD}
\mathcal N(s) @>>> \mathcal M_j(s) \\
@V\iota(s) VV @VV\eta_j(s)V \\
(V\otimes H_r)(s) @>\zeta(s)>> \mathcal M(s)\\
\end{CD}
\end{equation} By assumption, we know that $\zeta(s)(v_k\otimes f)\in Im(\eta_j(s))$ for any basis element $v_k$ and any $f\in H_r(s)$. It follows that $Im(\zeta(s))\subseteq Im(\eta_j(s))$ and hence the pullback $\mathcal N(s)=(V\otimes H_r)(s)$. In other words, $\mathcal N=V\otimes H_r$. The result is now clear.  
\end{proof}

\begin{lem}\label{L6.3} Let $\alpha:(\mathcal R,C,\psi)\longrightarrow (\mathcal S,C,\psi')$ be a flat morphism in $\mathscr Ent_C$. Let $\mathcal M\in 
\mathbf M^C_{\mathcal R}(\psi)$. 

\smallskip (a) There exists a family $\{r_i\}_{i\in I}$ of objects of $\mathcal R$ and a family $\{V_i\}_{i\in I}$ of finite dimensional projective $C$-comodules such that there is an epimorphism in $\mathbf M^C_{\mathcal S}(\psi')$
\begin{equation}
\eta: \underset{i\in I}{\bigoplus}\textrm{ } (V_i\otimes H_{\alpha(r_i)})\longrightarrow \alpha^\ast\mathcal M
\end{equation}

\smallskip
(b) Let $s\in \mathcal S$ and let $W$ be a finite dimensional projective in $Comod-C$. Let $\zeta:W\otimes H_s\longrightarrow \alpha^\ast\mathcal M$ be a morphism in $\mathbf M^C_{\mathcal S}(\psi')$. Then, there exists a finite set $\{r_1,...,r_n\}$ of objects of $\mathcal R$,  a finite family 
$\{V_1,...,V_n\}$ of finite dimensional projective $C$-comodules and a morphism $\eta'':\underset{k=1}{\overset{n}{\bigoplus}}V_k\otimes H_{r_k}\longrightarrow \mathcal M$ in $\mathbf M^C_{\mathcal R}(\psi)$   such that $\zeta$ factors through $\alpha^\ast\eta''$.

\end{lem}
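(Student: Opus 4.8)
The linchpin of both parts is the identification $\alpha^\ast(V\otimes H_r)\cong V\otimes H_{\alpha(r)}$ in $\mathbf M^C_{\mathcal S}(\psi')$ for $r\in\mathcal R$ and $V$ a finite dimensional projective right $C$-comodule. I would establish this by representability rather than by wrestling with the structure maps of \eqref{eq2.6}. The proof of Proposition \ref{P3.4} already shows that a morphism $V\otimes H_r\to\mathcal N$ in $\mathbf M^C_{\mathcal R}(\psi)$ is the same datum as a $C$-colinear map $V\to\mathcal N(r)$ (from $\xi$ one recovers $T(v)=\xi(r)(v\otimes id_r)$, and $\xi(s)(v\otimes g)=\mathcal N(g)(T(v))$ by $\mathcal R$-linearity), so $V\otimes H_r$ represents $\mathcal N\mapsto\mathrm{Hom}^C(V,\mathcal N(r))$; the same holds over $\mathcal S$. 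Combining this with the adjunction $(\alpha^\ast,\alpha_\ast)$ of \eqref{eq4.1} and the fact that $(\alpha_\ast\mathcal N)(r)=\mathcal N(\alpha(r))\,\Box_C C\cong\mathcal N(\alpha(r))$ as $C$-comodules, one obtains, naturally in $\mathcal N$,
\[
\mathbf M^C_{\mathcal S}(\psi')(\alpha^\ast(V\otimes H_r),\mathcal N)\cong\mathrm{Hom}^C(V,\mathcal N(\alpha(r)))\cong\mathbf M^C_{\mathcal S}(\psi')(V\otimes H_{\alpha(r)},\mathcal N),
\]
and Yoneda gives the stated isomorphism.

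For part (a): from the proof of Theorem \ref{T3.5} there is an epimorphism $\eta_0:\bigoplus_{i\in I}V_i\otimes H_{r_i}\twoheadrightarrow\mathcal M$ in $\mathbf M^C_{\mathcal R}(\psi)$ with each $V_i$ a finite dimensional projective $C$-comodule and each $r_i\in\mathcal R$. Since $\alpha^\ast$ is a left adjoint it preserves coproducts, and being right exact it preserves epimorphisms. Applying $\alpha^\ast$ to $\eta_0$ and invoking the identification above, $\eta:=\alpha^\ast\eta_0$ is an epimorphism $\bigoplus_{i\in I}(V_i\otimes H_{\alpha(r_i)})\twoheadrightarrow\alpha^\ast\mathcal M$, as required. (Right exactness alone suffices here; flatness is not actually used for (a).)

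For part (b): by Proposition \ref{P3.4}, $W\otimes H_s$ is projective in $\mathbf M^C_{\mathcal S}(\psi')$, so $\zeta$ lifts along the epimorphism $\eta=\alpha^\ast\eta_0$ of part (a) to a morphism $\tilde\zeta:W\otimes H_s\to\bigoplus_{i\in I}(V_i\otimes H_{\alpha(r_i)})$ with $\eta\circ\tilde\zeta=\zeta$. By Proposition \ref{P6.1}, $W\otimes H_s$ is finitely generated (as $W$ is finite dimensional), and the coproduct $\bigoplus_{i\in I}$ is the filtered colimit of its finite subcoproducts, connected by the split coproduct inclusions, which are monomorphisms; hence $\tilde\zeta$ factors as $\tilde\zeta=\iota_n\circ\tilde\zeta'$ through a finite subcoproduct $\bigoplus_{k=1}^n(V_k\otimes H_{\alpha(r_k)})$. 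Since $\alpha^\ast$ preserves coproducts together with their structure inclusions, $\eta\circ\iota_n=\alpha^\ast\eta_0\circ\alpha^\ast\iota_n'=\alpha^\ast\eta''$, where $\eta'':\bigoplus_{k=1}^n V_k\otimes H_{r_k}\to\mathcal M$ is the restriction of $\eta_0$ to the corresponding finite subcoproduct over $\mathcal R$. Then $\zeta=\eta\circ\iota_n\circ\tilde\zeta'=\alpha^\ast\eta''\circ\tilde\zeta'$ factors through $\alpha^\ast\eta''$, which completes the proof.

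The main obstacle is the identification $\alpha^\ast(V\otimes H_r)\cong V\otimes H_{\alpha(r)}$: one must ensure the isomorphism is one of \emph{entwined} modules, respecting the $\psi'$-twisted $C$-comodule structure of Lemma \ref{L3.3} and not merely the underlying $\mathcal S$-module structure, which is precisely why I would route through the representability/adjunction argument instead of computing with \eqref{eq2.6} directly. A secondary bookkeeping point in (b) is that the restriction of $\alpha^\ast\eta_0$ to a finite subcoproduct is genuinely of the form $\alpha^\ast\eta''$ for a morphism $\eta''$ defined over $\mathcal R$; this hinges on $\eta$ having been produced as $\alpha^\ast$ of the $\mathcal R$-epimorphism of part (a) and on $\alpha^\ast$ commuting with the coproduct inclusions.
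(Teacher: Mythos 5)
Your proof is correct and follows essentially the same route as the paper: part (a) applies the left adjoint $\alpha^\ast$ to the epimorphism from the proof of Theorem \ref{T3.5} together with the identification $\alpha^\ast(V\otimes H_r)\cong V\otimes H_{\alpha(r)}$, and part (b) uses the finite generation of $W\otimes H_s$ from Proposition \ref{P6.1} to factor the lift through a finite subcoproduct. The only difference is that the paper treats the identification $\alpha^\ast(V_i\otimes H_{r_i})=V_i\otimes H_{\alpha(r_i)}$ as immediate from \eqref{ke2.2} and Proposition \ref{P2.2}, whereas you justify it by representability, the adjunction $(\alpha^\ast,\alpha_\ast)$, and Yoneda --- a clean and valid substitute for the direct computation.
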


\begin{proof} (a) 
From the proof of Theorem \ref{T3.5}, we know that there exists an epimorphism in $\mathbf M^C_{\mathcal R}(\psi)$  \begin{equation}
\eta' : \underset{i\in I}{\bigoplus}V_i\otimes H_{r_i}\longrightarrow \mathcal M
\end{equation} where each $r_i\in \mathcal R$ and each $V_i$ is a finite dimensional projective $C$-comodule. Since $\alpha^\ast:\mathbf M^C_{\mathcal R}(\psi)\longrightarrow \mathbf M^C_{\mathcal S}(\psi')$ is a left adjoint, it induces an epimorphism $\alpha^\ast(\eta')$ in $\mathbf M^C_{\mathcal S}(\psi')$. From the definition in \eqref{ke2.2} and the construction in Proposition \ref{P2.2}, it is clear that $\alpha^\ast(V_i\otimes H_{r_i})=V_i\otimes \alpha^\ast H_{r_i}=V_i\otimes H_{\alpha(r_i)}$. This proves (a). 

\smallskip
(b) We consider the epimorphism $\alpha^\ast\eta'=\eta: \underset{i\in I}{\bigoplus}\textrm{ } (V_i\otimes H_{\alpha(r_i)})\longrightarrow \alpha^\ast\mathcal M$ constructed in (a). From Proposition \ref{P6.1}, we know that $W\otimes H_s$ is a finitely generated projective object in $\mathbf M^C_{\mathcal S}(\psi')$. 
As such $\zeta:W\otimes H_s\longrightarrow \alpha^\ast\mathcal M$ can be lifted to a morphism $\zeta': W\otimes H_s\longrightarrow \underset{i\in I}{\bigoplus}\textrm{ } (V_i\otimes H_{\alpha(r_i)})$ and $\zeta'$ factors through a finite direct sum of objects from the family $\{V_i\otimes H_{\alpha(r_i)}\}_{i\in I}$.  The result is now clear.
\end{proof}

\begin{lem}\label{L6.4}  Let $\alpha:(\mathcal R,C,\psi)\longrightarrow (\mathcal S,C,\psi')$ be a flat morphism in $\mathscr Ent_C$. Let $\kappa_1$ be any cardinal such that
\begin{equation}\kappa_1\geq max\{
\mbox{$\mathbb N$, $|Mor(\mathcal R)|$,  $|C|$, $|K|$}\}
\end{equation}
 Let $\mathcal M\in 
\mathbf M^C_{\mathcal R}(\psi)$ and let $A\subseteq el(\alpha^\ast\mathcal M)$ be a set of elements such that $|A|\leq \kappa_1$. Then, there is a submodule $\mathcal N\hookrightarrow \mathcal M$ in $\mathbf M^C_{\mathcal R}(\psi)$ with $|\mathcal N|\leq \kappa_1$ such that $A\subseteq 
el(\alpha^\ast\mathcal N)$. 

\end{lem}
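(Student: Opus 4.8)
The plan is to capture each element of $A$ by a finitely generated subobject of $\alpha^\ast\mathcal M$, to descend that generator through Lemma \ref{L6.3}(b) to a finitely generated subobject of $\mathcal M$, and then to take the sum of these descended subobjects over all of $A$.

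First I would fix $a\in A$, so that $a\in(\alpha^\ast\mathcal M)(s_a)$ for some $s_a\in\mathcal S$. Applying \cite[Lemma 2.8]{BBR} inside $\mathbf M^C_{\mathcal S}(\psi')$ to the element $a$ of the entwined module $\alpha^\ast\mathcal M$, I obtain a finite dimensional $C$-subcomodule $W'_a\subseteq(\alpha^\ast\mathcal M)(s_a)$ containing $a$ together with a morphism $\zeta'_a:W'_a\otimes H_{s_a}\longrightarrow\alpha^\ast\mathcal M$ in $\mathbf M^C_{\mathcal S}(\psi')$ satisfying $\zeta'_a(s_a)(a\otimes id_{s_a})=a$. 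Since $C$ is right semiperfect, I may choose a finite dimensional projective $C$-comodule $W_a$ with an epimorphism $W_a\twoheadrightarrow W'_a$, and composing I obtain $\zeta_a:W_a\otimes H_{s_a}\longrightarrow\alpha^\ast\mathcal M$ with $a\in Im(\zeta_a(s_a))$. As $W_a$ is now finite dimensional projective, Lemma \ref{L6.3}(b) applies and yields a finite family $\{V_{a,k}\otimes H_{r_{a,k}}\}_{k=1}^{n_a}$ of objects together with a morphism $\eta''_a:\bigoplus_{k=1}^{n_a}V_{a,k}\otimes H_{r_{a,k}}\longrightarrow\mathcal M$ in $\mathbf M^C_{\mathcal R}(\psi)$ such that $\zeta_a=\alpha^\ast(\eta''_a)\circ\tilde\zeta_a$ for some $\tilde\zeta_a$.

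Next I would set $\mathcal N_a:=Im(\eta''_a)\subseteq\mathcal M$, an entwined submodule. Here flatness enters decisively: since $\alpha^\ast$ is exact it carries the epi-mono factorization of $\eta''_a$ to one of $\alpha^\ast\eta''_a$, so $Im(\alpha^\ast\eta''_a)=\alpha^\ast\mathcal N_a$ as a subobject of $\alpha^\ast\mathcal M$. Because $\zeta_a$ factors through $\alpha^\ast(\eta''_a)$, we get $a\in Im(\zeta_a(s_a))\subseteq(\alpha^\ast\mathcal N_a)(s_a)$, hence $a\in el(\alpha^\ast\mathcal N_a)$. I then define $\mathcal N:=\sum_{a\in A}\mathcal N_a\subseteq\mathcal M$. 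Since $\alpha^\ast$ is a left adjoint and exact, it preserves both direct sums and images and hence sums of subobjects, giving $\alpha^\ast\mathcal N=\sum_{a\in A}\alpha^\ast\mathcal N_a$; therefore $A\subseteq\bigcup_{a\in A}el(\alpha^\ast\mathcal N_a)\subseteq el(\alpha^\ast\mathcal N)$, as required.

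Finally I would bound $|\mathcal N|$. Exactly as in the observation preceding the statement, for any finite dimensional $C$-comodule $V$ and object $r$ of $\mathcal R$ one has $|V\otimes H_r|\leq\kappa_1$, since $V$ is finite dimensional over $K$ and $el(V\otimes H_r)$ is controlled by $|K|$, $|C|$ and $|Mor(\mathcal R)|$; a finite direct sum of such modules still has cardinality $\leq\kappa_1$, and $\mathcal N_a$ is a quotient of $\bigoplus_{k=1}^{n_a}V_{a,k}\otimes H_{r_{a,k}}$, so $|\mathcal N_a|\leq\kappa_1$. As $|A|\leq\kappa_1$ and $\kappa_1\geq|\mathbb N|$, we obtain $|\mathcal N|\leq\sum_{a\in A}|\mathcal N_a|\leq\kappa_1\cdot\kappa_1=\kappa_1$. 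The crux of the argument is the descent of the generator in Lemma \ref{L6.3}(b) together with the exactness of $\alpha^\ast$: it is precisely the flatness of $\alpha$ that guarantees the recaptured element $a$ lies in $el(\alpha^\ast\mathcal N_a)$ rather than merely in the image of $\alpha^\ast$ applied to the larger generating object.
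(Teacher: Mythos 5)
Your proposal is correct and follows essentially the same route as the paper: capture each $a\in A$ by a morphism from a finite dimensional projective comodule tensored with a representable, descend it through Lemma \ref{L6.3}(b), take the image of the resulting (sum of) morphisms into $\mathcal M$, and use exactness of $\alpha^\ast$ to identify $\alpha^\ast$ of that image with the image of $\alpha^\ast$ of the map. The only cosmetic difference is that you form $\mathcal N$ as $\sum_{a\in A}Im(\eta''_a)$ while the paper takes the image of the single map $\bigoplus_{a\in A}\eta^{a''}$, which yields the same subobject.
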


\begin{proof} We consider some element $a\in A\subseteq el(\alpha^\ast\mathcal M)$. Then, we can choose a morphism $\zeta^a:W^a\otimes H_{s^a}
\longrightarrow \alpha^\ast\mathcal M$ in $\mathbf M_{\mathcal S}^C(\psi')$ such that $a\in el(Im(\zeta^a))$, where $s^a\in \mathcal S$ and $W^a$ is a finite dimensional projective in
$Comod-C$. Using Lemma \ref{L6.3}(b), we can now choose a finite set $\{r^a_1,...,r^a_{n^a}\}$ of objects of $\mathcal R$,  a finite family 
$\{V^a_1,...,V^a_{n^a}\}$ of finite dimensional projective $C$-comodules and a morphism $\eta^{a''}:\underset{k=1}{\overset{n^a}{\bigoplus}}V_k^a\otimes H_{r_k^a}\longrightarrow \mathcal M$ in $\mathbf M^C_{\mathcal R}(\psi)$ such that $\zeta^a$ factors through $\alpha^\ast\eta^{a''}$. We now set
\begin{equation}
\mathcal N:=Im\left(\eta'':=\underset{a\in A}{\bigoplus}\eta^{a''}:\underset{a\in A}{\bigoplus}\textrm{ }\underset{k=1}{\overset{n^a}{\bigoplus}}V_k^a\otimes H_{r_k^a}\longrightarrow \mathcal M\right)
\end{equation} Since $\alpha$ is flat and $\alpha^\ast$ is a left adjoint, we obtain 
\begin{equation}
\alpha^\ast\mathcal N=Im\left(\alpha^\ast\eta''=\underset{a\in A}{\bigoplus}\alpha^\ast\eta^{a''}:\underset{a\in A}{\bigoplus}\textrm{ }\underset{k=1}{\overset{n^a}{\bigoplus}}V_k^a\otimes H_{\alpha(r_k^a)}\longrightarrow\alpha^\ast \mathcal M\right)
\end{equation} Since each $a\in el(Im(\zeta^a))$ and $\zeta^a$ factors through $\alpha^\ast\eta^{a''}$, we get $A\subseteq 
el(\alpha^\ast\mathcal N)$. 

\smallskip
It remains to show that $|\mathcal N|\leq \kappa_1$. Since $\mathcal N$ is a quotient of $\underset{a\in A}{\bigoplus}\textrm{ }\underset{k=1}{\overset{n^a}{\bigoplus}}V_k^a\otimes H_{r_k^a}$ and $|A|\leq \kappa_1$, it suffices to show that each $|V_k^a\otimes H_{r_k^a}|\leq \kappa_1$. This is clear from the definition of $\kappa_1$, using the fact that each $V_k^a$ is finite dimensional. 

\end{proof}

\begin{rem}\label{Rem6.5} \emph{By considering $\alpha=id$ in Lemma \ref{L6.4}, we obtain the following simple consequence: if $A\subseteq el(\mathcal M)$ is any subset with $|A|\leq \kappa_1$, there is a submodule $\mathcal N\hookrightarrow \mathcal M$ in $\mathbf M^C_{\mathcal R}(\psi)$ with $|\mathcal N|\leq \kappa_1$ such that $A\subseteq 
el(\mathcal N)$.  }

\end{rem}

\begin{lem}\label{L6.6} Let $\alpha:(\mathcal R,C,\psi)\longrightarrow (\mathcal S,C,\psi')$ be a flat morphism in $\mathscr Ent_C$ and let $\mathcal M\in 
\mathbf M^C_{\mathcal R}(\psi)$. Let $\kappa_2$ be any cardinal such that $\kappa_2 \geq max\{
\mbox{$\mathbb N$, $|Mor(\mathcal R)|$, $|Mor(\mathcal S)|$, $|C|$, $|K|$}\}$ and let $A\subseteq el(\mathcal M)$ and $B\subseteq el(\alpha^\ast\mathcal M)$ be subsets with 
$|A|$, $|B|\leq \kappa_2$. Then, there exists a submodule $\mathcal N\subseteq \mathcal M$ in $\mathbf M^C_{\mathcal R}(\psi)$ such that 

\smallskip
(1) $|\mathcal N|\leq \kappa_2$, $|\alpha^\ast\mathcal N|\leq \kappa_2$

\smallskip
(2) $A\subseteq el(\mathcal N)$ and $B\subseteq el(\alpha^\ast\mathcal N)$.

\end{lem}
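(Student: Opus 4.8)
The plan is to build $\mathcal N$ as a sum of two submodules of $\mathcal M$, one arranged to capture $A$ and one arranged to capture $B$, and then to bound the cardinality of $\alpha^\ast\mathcal N$ by hand from the explicit presentation of extension of scalars. The reason no iteration is needed here is a monotonicity observation: both membership conditions in (2) are increasing in $\mathcal N$. Enlarging $\mathcal N$ enlarges $el(\mathcal N)$; and since $\alpha$ is flat, $\alpha^\ast$ is exact and in particular preserves monomorphisms, so enlarging $\mathcal N$ also enlarges $el(\alpha^\ast\mathcal N)$. Hence once a submodule dominates the two pieces produced below, both conditions hold simultaneously.

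First I would apply Remark \ref{Rem6.5} to the set $A\subseteq el(\mathcal M)$ with the cardinal $\kappa_2$, which is admissible there since $\kappa_2\geq\max\{\mathbb N,|Mor(\mathcal R)|,|C|,|K|\}$, to obtain a submodule $\mathcal N_1\hookrightarrow\mathcal M$ with $|\mathcal N_1|\leq\kappa_2$ and $A\subseteq el(\mathcal N_1)$. Next I would apply Lemma \ref{L6.4} to the set $B\subseteq el(\alpha^\ast\mathcal M)$, again with $\kappa_2$ in the role of $\kappa_1$, to obtain a submodule $\mathcal N_2\hookrightarrow\mathcal M$ with $|\mathcal N_2|\leq\kappa_2$ and $B\subseteq el(\alpha^\ast\mathcal N_2)$.

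I would then set $\mathcal N:=\mathcal N_1+\mathcal N_2\subseteq\mathcal M$, the image of $\mathcal N_1\oplus\mathcal N_2\to\mathcal M$. Condition (2) is then immediate: $A\subseteq el(\mathcal N_1)\subseteq el(\mathcal N)$, while the monomorphism $\mathcal N_2\hookrightarrow\mathcal N$ yields a monomorphism $\alpha^\ast\mathcal N_2\hookrightarrow\alpha^\ast\mathcal N$ by exactness of $\alpha^\ast$, so that $B\subseteq el(\alpha^\ast\mathcal N_2)\subseteq el(\alpha^\ast\mathcal N)$. Since $\mathcal N$ is a quotient of $\mathcal N_1\oplus\mathcal N_2$ and $\kappa_2$ is infinite, $|\mathcal N|\leq|\mathcal N_1|+|\mathcal N_2|\leq\kappa_2$, giving the first half of (1).

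The crux — and the reason this statement carries the extra hypothesis $|Mor(\mathcal S)|\leq\kappa_2$ that was absent from Lemma \ref{L6.4} — is the remaining bound $|\alpha^\ast\mathcal N|\leq\kappa_2$, which I would verify directly from \eqref{ke2.2}. For each $s\in\mathcal S$, the space $\alpha^\ast\mathcal N(s)$ is a quotient of $\bigoplus_{r\in\mathcal R}\mathcal N(r)\otimes_K\mathcal S(s,\alpha(r))$. Here $|\mathcal N(r)|\leq|\mathcal N|\leq\kappa_2$ and $|\mathcal S(s,\alpha(r))|\leq|Mor(\mathcal S)|\leq\kappa_2$, so each tensor factor is a $K$-vector space of dimension $\leq\kappa_2$ and hence of cardinality $\leq\max\{|K|,\kappa_2\}=\kappa_2$; as the objects of $\mathcal R$ number $\leq|Mor(\mathcal R)|\leq\kappa_2$, the finitely supported direct sum has cardinality $\leq\kappa_2$, whence $|\alpha^\ast\mathcal N(s)|\leq\kappa_2$. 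Summing over the $\leq|Mor(\mathcal S)|\leq\kappa_2$ objects $s$ gives $|\alpha^\ast\mathcal N|\leq\kappa_2$. I expect this final cardinal estimate to be the only point requiring care; the rest follows formally from the cited results together with the exactness of $\alpha^\ast$.
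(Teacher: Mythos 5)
Your proposal is correct and follows essentially the same route as the paper: obtain $\mathcal N_1$ from Remark \ref{Rem6.5} for $A$ and $\mathcal N_2$ from Lemma \ref{L6.4} for $B$, set $\mathcal N=\mathcal N_1+\mathcal N_2$, use flatness of $\alpha^\ast$ to propagate $B\subseteq el(\alpha^\ast\mathcal N_2)\subseteq el(\alpha^\ast\mathcal N)$, and bound $|\alpha^\ast\mathcal N|$ directly from the presentation \eqref{ke2.2}. Your added remarks on monotonicity and the explicit cardinal arithmetic for the direct sum are just slightly more detailed versions of what the paper does.
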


\begin{proof}
Applying Lemma \ref{L6.4} (and Remark \ref{Rem6.5}), we obtain submodules $\mathcal N_1$, $\mathcal N_2\subseteq \mathcal M$ such that 

\smallskip
(1) $|\mathcal N_1|, |\mathcal N_2|\leq \kappa_2$

\smallskip
(2) $A\subseteq el(\mathcal N_1)$, $B\subseteq el(\alpha^\ast\mathcal N_2)$. 

\smallskip
We set $\mathcal N:=(\mathcal N_1+\mathcal N_2)\subseteq \mathcal M$. Then, $(\mathcal N_1+\mathcal N_2)$ is a quotient of
$\mathcal N_1\oplus\mathcal N_2$ and hence $|\mathcal N|\leq \kappa_2$. Also, it is clear that $A\subseteq  el(\mathcal N_1) \subseteq el(\mathcal N)$. Since $\alpha$ is flat, we get $B\subseteq el(\alpha^\ast\mathcal N_2)\subseteq el(\alpha^\ast\mathcal N)$.

\smallskip
It remains to show that $|\alpha^\ast\mathcal N|\leq \kappa_2$. By the definition in \eqref{ke2.2},  we know that $\alpha^*(\mathcal N)(s)$ is a quotient of \begin{equation}\label{ke6.9}
\left(\underset{r\in \mathcal R}{\bigoplus}\mathcal N(r)\otimes \mathcal S(s,\alpha(r))\right)
\end{equation} for each $s\in \mathcal S$. Since $\kappa_2 \geq  |Mor(\mathcal R)|, |Mor(\mathcal S)|$, it follows from \eqref{ke6.9} that $|\alpha^*(\mathcal N)(s)|\leq\kappa_2$. Again since $\kappa_2 \geq  |Mor(\mathcal S)|$, we get $|\alpha^\ast\mathcal N|\leq \kappa_2$.

\end{proof}

We will now show that $Cart^C-\mathscr R$ has a generator when $\mathscr R:\mathscr X\longrightarrow \mathscr Ent_C$ is a flat representation of the poset $\mathscr X$.  This will be done using   induction on $\mathbb N\times Mor(\mathscr X)$  in a manner similar to the proof of  
\cite[Proposition 3.25]{EV}. As in Section 4, we set
\begin{equation}
\kappa =sup\{
\mbox{$|\mathbb N|$, $|C|$, $|K|$, $|Mor(\mathscr X)|$,  $|Mor(\mathscr R_x)|$, $x\in \mathscr X$}\}
\end{equation} Let $\mathscr M$ be a cartesian module over $\mathscr R:\mathscr X\longrightarrow \mathscr Ent_C$. We now consider an element $m\in el_{\mathscr X}(\mathscr M)$. Suppose that $m\in \mathscr M_x(r)$ for some $x\in \mathscr X$ and
$r\in \mathscr R_x$. As in the proof of Theorem \ref{T4.9}, we fix a finite dimensional projective $C$-comodule $V$ and a morphism $\eta: V\otimes H_r
\longrightarrow \mathscr M_x$ in $\mathbf M^C_{\mathscr R_x}(\psi_x)$ such that $m$ is an element of the image of $\eta$. Corresponding to $\eta$, we define $\mathscr N\subseteq \mathscr M$ as in  \eqref{eq4.6}. It is clear that $m\in el_{\mathscr X}(\mathscr N)$. By Lemma \ref{L4.8}, we know that $|\mathscr N|\leq \kappa$. 

\smallskip
Next, we choose a well ordering of the set $Mor(\mathscr X)$ and consider the induced lexicographic ordering of $\mathbb N\times Mor(\mathscr X)$. Corresponding to each pair $(n,\alpha:y\longrightarrow z)\in \mathbb N\times Mor(\mathscr X)$, we will now define a subobject $\mathscr P(n,\alpha)
\hookrightarrow \mathscr M$ in $Mod^C-\mathscr R$ satisfying the following conditions.

\smallskip
(1) $m\in el_{\mathscr X}(\mathscr P(1,\alpha_0))$, where $\alpha_0$ is the least element of $Mor(\mathscr X)$.

\smallskip
(2) $\mathscr P(n,\alpha)\subseteq \mathscr P(m,\beta)$, whenever $(n,\alpha)\leq (m,\beta)$ in  $\mathbb N\times Mor(\mathscr X)$

\smallskip
(3) For each $(n,\alpha:y\longrightarrow z)\in \mathbb N\times Mor(\mathscr X)$, the morphism $\mathscr P(n,\alpha)^\alpha:\alpha^\ast \mathscr P(n,\alpha)_y
\longrightarrow \mathscr P(n,\alpha)_z$ is an isomorphism in $\mathbf M^C_{\mathscr R_z}(\psi_z)$. 

\smallskip
(4) $|\mathscr P(n,\alpha)|\leq \kappa$. 

\smallskip
For $(n,\alpha:y\longrightarrow z)\in \mathbb N\times Mor(\mathscr X)$, we start the process of constructing the module $\mathscr P(n,\alpha)$ as follows: we set
\begin{equation}\label{6.11dp}
A^0_0(w):=\left\{
\begin{array}{ll}
\mathscr N_w & \mbox{if $n=1$ and $\alpha=\alpha_0$}\\
\underset{(m,\beta)<(n,\alpha)}{\bigcup}\textrm{ }\mathscr P(m,\beta)_w& \mbox{otherwise} \\
\end{array}\right.
\end{equation} for each $w\in \mathscr X$. It is clear that  each $A^0_0(w)\subseteq el(\mathscr M_w)$ and $|A^0_0(w)|\leq \kappa$.

\smallskip
 Since $\mathscr M$ is cartesian, we know that $\alpha^\ast\mathscr M_y=\mathscr M_z$. Since $\alpha:(\mathscr R_y,C,\psi_y)\longrightarrow (\mathscr R_z,C,\psi_z)$ is flat in $\mathscr Ent_C$, we use Lemma \ref{L6.6} with $A^0_0(y)\subseteq el(\mathscr M_y)$ and $A^0_0(z)\subseteq el(\alpha^\ast\mathscr M_y)=
el( \mathscr M_z)$ to obtain $A^0_1(y)\hookrightarrow \mathscr M_y$ in $\mathbf M^C_{\mathscr R_y}(\psi_y)$ such that
\begin{equation}\label{card6.12}
|A^0_1(y)|\leq \kappa \qquad |\alpha^\ast A^0_1(y)|\leq \kappa \qquad A^0_0(y) \subseteq el(A^0_1(y))\qquad A^0_0(z)\subseteq el(\alpha^\ast A^0_1(y))
\end{equation} We now set $A^0_1(z):=\alpha^\ast A^0_1(y)$. Then, \eqref{card6.12} can be rewritten as 
\begin{equation}\label{card6.13}
|A^0_1(y)|\leq \kappa \qquad |A^0_1(z)|\leq \kappa \qquad A^0_0(y) \subseteq el(A^0_1(y))\qquad A^0_0(z)\subseteq el(A^0_1(z))
\end{equation} We observe here that since $\mathscr X$ is a poset, then $y=z$ implies $\alpha:y\longrightarrow z$ is the identity and hence $A^0_1(y)=
A^0_1(z)$.  For any $w\ne y,z$ in $\mathscr X$, we set $A^0_1(w)=A^0_0(w)$. Combining with \eqref{card6.13}, we have  $A^0_0(w)\subseteq A^0_1(w)$ for every $w\in \mathscr X$ and each $|A^0_1(w)|\leq \kappa$.

\begin{lem}\label{L6.61} Let $B\subseteq el_{\mathscr X}(\mathscr M)$ with $|B|\leq \kappa$. Then, there is a submodule $\mathscr Q\hookrightarrow 
\mathscr M$ in $Mod^C-\mathscr R$ such that $B\subseteq el_{\mathscr X}(\mathscr Q)$  and $|\mathscr Q|\leq \kappa$.
\end{lem}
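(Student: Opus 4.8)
The plan is to reduce the statement to the single-element construction already performed in the proof of Theorem \ref{T4.9}, and then to assemble $\mathscr Q$ as a sum of the small submodules so produced, controlling its size by elementary cardinal arithmetic. Throughout I use that $Mod^C-\mathscr R$ is a Grothendieck category (Theorem \ref{T4.9}), so that arbitrary coproducts, images and sums of subobjects exist, and that $|\cdot|$ does not increase on passing to a quotient or a subobject.

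First, I would fix $b\in B$ and write $b\in \mathscr M_x(r)$ for suitable $x\in\mathscr X$ and $r\in \mathscr R_x$. Exactly as in the proof of Theorem \ref{T4.9}, invoking \cite[Lemma 2.8]{BBR} and the semiperfectness of $C$, one obtains a finite dimensional projective $C$-comodule $V$ and a morphism $\eta\colon V\otimes H_r\longrightarrow \mathscr M_x$ in $\mathbf M^C_{\mathscr R_x}(\psi_x)$ whose image contains $b$. Feeding $\eta$ into the construction \eqref{eq4.6} and Proposition \ref{P4.5} yields a subobject $\mathscr N^b\hookrightarrow \mathscr M$ in $Mod^C-\mathscr R$ with $b\in el_{\mathscr X}(\mathscr N^b)$, and Lemma \ref{L4.8} gives $|\mathscr N^b|\leq\kappa$.

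Next I would set $\mathscr Q:=\sum_{b\in B}\mathscr N^b$, namely the image in $Mod^C-\mathscr R$ of the canonical morphism $\mathscr D:=\bigoplus_{b\in B}\mathscr N^b\longrightarrow \mathscr M$. This is a subobject of $\mathscr M$, and since each $b$ lies in $\mathscr N^b\subseteq\mathscr Q$ we have $B\subseteq el_{\mathscr X}(\mathscr Q)$. As $\mathscr Q$ is a quotient of $\mathscr D$, it suffices to prove $|\mathscr D|\leq\kappa$. Here $\mathscr D_x(r)=\bigoplus_{b\in B}(\mathscr N^b)_x(r)$; the number of pairs $(x,r)$ with $x\in\mathscr X$ and $r\in\mathscr R_x$ is at most $\sum_{x\in\mathscr X}|Mor(\mathscr R_x)|\leq\kappa$, since $|Mor(\mathscr X)|\leq\kappa$ and each $|Mor(\mathscr R_x)|\leq\kappa$. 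For a fixed pair $(x,r)$, an element of $\bigoplus_{b\in B}(\mathscr N^b)_x(r)$ is a finitely supported tuple, hence is determined by a finite subset $F\subseteq B$ together with one element of $(\mathscr N^b)_x(r)$ for each $b\in F$; as $|B|\leq\kappa$, each $|(\mathscr N^b)_x(r)|\leq|\mathscr N^b|\leq\kappa$, and $\kappa$ is infinite, this forces $|\mathscr D_x(r)|\leq\kappa$. Summing over the at most $\kappa$ pairs $(x,r)$ gives $|\mathscr D|\leq\kappa$, whence $|\mathscr Q|\leq\kappa$.

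The conceptual content is entirely in the first two steps, which are a near-verbatim reuse of Section 4; the only point requiring genuine care is the final cardinal bookkeeping, i.e.\ verifying that a $\kappa$-indexed coproduct of objects each of size $\leq\kappa$ is again of size $\leq\kappa$. I expect this cardinal arithmetic --- rather than any categorical subtlety --- to be the main, and only mild, obstacle.
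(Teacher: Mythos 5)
Your proof is correct and follows exactly the paper's own argument: for each $b\in B$ one extracts, as in the proof of Theorem \ref{T4.9}, a subobject of $\mathscr M$ of size $\leq\kappa$ containing $b$, and then takes the sum of these over $B$, which is a quotient of their direct sum and hence still of size $\leq\kappa$. The only difference is that you spell out the cardinal arithmetic that the paper dispatches with ``Since $|B|\leq\kappa$, the result follows,'' and your bookkeeping there is accurate.
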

\begin{proof}
For any $m\in B\subseteq el_{\mathscr X}(\mathscr M)$ we can choose, as in the proof of Theorem \ref{T4.9}, a subobject $\mathscr Q_m\subseteq \mathscr M$ such that $m\in el_{\mathscr X}(\mathscr Q_m)$ and $|\mathscr Q_m|\leq \kappa$. Then, we set $\mathscr Q:=\underset{m\in B}{\sum}
\mathscr Q_m$. In particular, $\mathscr Q$ is a quotient of $ \underset{m\in B}{\bigoplus}
\mathscr Q_m$. Since $|B|\leq \kappa$, the result follows. 
\end{proof}

Using Lemma \ref{L6.61}, we now choose a submodule $\mathscr Q^0(n,\alpha)\hookrightarrow \mathscr M$ in $Mod^C-\mathscr R$ such that $\underset{w\in 
\mathscr X}{\bigcup}\textrm{ }A^0_1(w)\subseteq el_{\mathscr X}(\mathscr Q^0(n,\alpha))$ and $|\mathscr Q^0(n,\alpha)|\leq \kappa$. In particular,
$A^0_1(w)\subseteq \mathscr Q^0(n,\alpha)_w$ for each $w\in \mathscr X$. 

\smallskip
We now iterate this construction. Suppose we have constructed a submodule $\mathscr Q^l(n,\alpha)\hookrightarrow \mathscr M$ for every
$l\leq m$ such that $\underset{w\in 
\mathscr X}{\bigcup}\textrm{ }A^l_1(w)\subseteq el_{\mathscr X}(\mathscr Q^l(n,\alpha))$ and $|\mathscr Q^l(n,\alpha)|\leq \kappa$. Then, we set $A^{m+1}_0(w):=\mathscr Q^m(n,\alpha)_w$ for each $w\in \mathscr X$. We then use Lemma \ref{L6.6} with $A^{m+1}_0(y)\subseteq el(\mathscr M_y)$ and $A^{m+1}_0(z)\subseteq el(\alpha^\ast\mathscr M_y)=
el( \mathscr M_z)$ to obtain $A^{m+1}_1(y)\hookrightarrow \mathscr M_y$ in $\mathbf M^C_{\mathscr R_y}(\psi_y)$ such that
\begin{equation}\label{card6.14}
|A^{m+1}_1(y)|\leq \kappa \qquad |\alpha^\ast A^{m+1}_1(y)|\leq \kappa \qquad A^{m+1}_0(y) \subseteq el(A^{m+1}_1(y))\qquad A^{m+1}_0(z)\subseteq el(\alpha^\ast A^{m+1}_1(y))
\end{equation} We now set $A^{m+1}_1(z):=\alpha^\ast A^{m+1}_1(y)$. Then, \eqref{card6.14} can be rewritten as 
\begin{equation}\label{card6.15}
|A^{m+1}_1(y)|\leq \kappa \qquad |A^{m+1}_1(z)|\leq \kappa \qquad A^{m+1}_0(y) \subseteq el(A^{m+1}_1(y))\qquad A^{m+1}_0(z)\subseteq el(A^{m+1}_1(z))
\end{equation} For any $w\ne y,z$ in $\mathscr X$, we set $A^{m+1}_1(w)=A^{m+1}_0(w)$. Combining with \eqref{card6.15}, we have  $A^{m+1}_0(w)\subseteq A^{m+1}_1(w)$ for every $w\in \mathscr X$ and each $|A^{m+1}_1(w)|\leq \kappa$.

\smallskip
Using Lemma \ref{L6.61}, we now choose a submodule $\mathscr Q^{m+1}(n,\alpha)\hookrightarrow \mathscr M$ in $Mod^C-\mathscr R$ such that $\underset{w\in 
\mathscr X}{\bigcup}\textrm{ }A^{m+1}_1(w)\subseteq el_{\mathscr X}(\mathscr Q^{m+1}(n,\alpha))$ and $|\mathscr Q^{m+1}(n,\alpha)|\leq \kappa$. In particular,
$A^{m+1}_1(w)\subseteq \mathscr Q^{m+1}(n,\alpha)_w$ for each $w\in \mathscr X$. 

\smallskip
Finally, we set
\begin{equation}\label{6.16ep}
\mathscr P(n,\alpha):=\underset{m\geq 0}{\varinjlim}\textrm{ }\mathscr Q^m(n,\alpha)
\end{equation}  in $Mod^C-\mathscr R$.

\begin{lem}\label{L6.62}
The family $\{\mbox{$\mathscr P(n,\alpha)$ $\vert$ $(n,\alpha)\in \mathbb N\times Mor(\mathscr X)$}\}$ satisfies the following conditions.

\smallskip
(1) $m\in el_{\mathscr X}(\mathscr P(1,\alpha_0))$, where $\alpha_0$ is the least element of $Mor(\mathscr X)$.

\smallskip
(2) $\mathscr P(n,\alpha)\subseteq \mathscr P(m,\beta)$, whenever $(n,\alpha)\leq (m,\beta)$ in  $\mathbb N\times Mor(\mathscr X)$

\smallskip
(3) For each $(n,\alpha:y\longrightarrow z)\in \mathbb N\times Mor(\mathscr X)$, the morphism $\mathscr P(n,\alpha)^\alpha:\alpha^\ast \mathscr P(n,\alpha)_y
\longrightarrow \mathscr P(n,\alpha)_z$ is an isomorphism in $\mathbf M^C_{\mathscr R_z}(\psi_z)$. 

\smallskip
(4) $|\mathscr P(n,\alpha)|\leq \kappa$. 
\end{lem}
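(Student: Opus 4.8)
The plan is to check the four conditions separately; conditions (1), (2) and (4) are bookkeeping that falls out of the construction, while (3) is the substantive point and rests on the back-and-forth alternation built into the iteration together with the flatness of $\alpha^\ast$.

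For (4), note that $\mathscr P(n,\alpha)=\varinjlim_{m\ge 0}\mathscr Q^m(n,\alpha)$ is a countable increasing union of submodules, each of cardinality $\le\kappa$ by construction; since $\kappa\ge|\mathbb N|$ we get $|\mathscr P(n,\alpha)|\le|\mathbb N|\cdot\kappa=\kappa$. For (1), at the least pair $(1,\alpha_0)$ the base set is $A^0_0(w)=\mathscr N_w$, and the chain $A^0_0(w)\subseteq A^0_1(w)\subseteq \mathscr Q^0(1,\alpha_0)_w\subseteq \mathscr P(1,\alpha_0)_w$ shows $el_{\mathscr X}(\mathscr N)\subseteq el_{\mathscr X}(\mathscr P(1,\alpha_0))$; as $m\in el_{\mathscr X}(\mathscr N)$, condition (1) follows. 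For (2), when $(n,\alpha)$ is not the least pair, the defining base set in \eqref{6.11dp} is $A^0_0(w)=\bigcup_{(m,\beta)<(n,\alpha)}\mathscr P(m,\beta)_w$, which is contained in $\mathscr P(n,\alpha)_w$ by the same chain of inclusions; hence $\mathscr P(m,\beta)\subseteq\mathscr P(n,\alpha)$ for every $(m,\beta)<(n,\alpha)$, and with reflexivity this is exactly (2).

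The \emph{main obstacle} is (3). Fix $(n,\alpha:y\to z)$ and write $Q^l_w:=\mathscr Q^l(n,\alpha)_w$. The crucial observation is that the invocation of Lemma \ref{L6.6} at each level, together with the normalisation $A^l_1(z)=\alpha^\ast A^l_1(y)$, produces two interlocking inclusions. On one hand, since $A^l_0(z)=Q^{l-1}_z\subseteq el(\alpha^\ast A^l_1(y))$ and $\alpha^\ast A^l_1(y)\subseteq\alpha^\ast Q^l_y$, we obtain $Q^{l-1}_z\subseteq\alpha^\ast Q^l_y$. On the other hand, since $A^{l+1}_0(y)=Q^l_y\subseteq A^{l+1}_1(y)$, applying the flat functor $\alpha^\ast$ gives $\alpha^\ast Q^l_y\subseteq\alpha^\ast A^{l+1}_1(y)=A^{l+1}_1(z)\subseteq Q^{l+1}_z$. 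Now, because $\mathscr M$ is cartesian (Definition \ref{D6.1}) the map $\mathscr M^\alpha$ identifies $\alpha^\ast\mathscr M_y$ with $\mathscr M_z$, and because $\alpha^\ast$ is exact and preserves colimits I may compute $\alpha^\ast\mathscr P(n,\alpha)_y$ and $\mathscr P(n,\alpha)_z$ as increasing unions of submodules of $\mathscr M_z$. The two inclusions above are mutually cofinal, so the unions agree:
\[
\alpha^\ast\mathscr P(n,\alpha)_y=\bigcup_{l}\alpha^\ast Q^l_y=\bigcup_{l}Q^l_z=\mathscr P(n,\alpha)_z .
\]
Since $\mathscr P(n,\alpha)^\alpha$ is the restriction of the isomorphism $\mathscr M^\alpha$ to the subobject $\mathscr P(n,\alpha)\hookrightarrow\mathscr M$, it is automatically a monomorphism, and the displayed equality of images shows it is surjective onto $\mathscr P(n,\alpha)_z$; hence it is an isomorphism, which is (3).

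The remaining points are purely formal and I would only remark on them briefly: that each $\mathscr Q^m(n,\alpha)$ is a genuine subobject of $\mathscr M$ in $Mod^C-\mathscr R$ (so the colimit $\mathscr P(n,\alpha)$ inherits its structure maps from $\mathscr M$), and that the interchange $\alpha^\ast\varinjlim=\varinjlim\alpha^\ast$ together with the preservation of an increasing union of submodules by $\alpha^\ast$ both follow from flatness and from the fact that filtered colimits in $Mod^C-\mathscr R$ are computed pointwise and are exact.
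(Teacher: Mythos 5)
Your proof is correct and follows essentially the same route as the paper's: conditions (1), (2), (4) are read off the construction, and (3) rests on the identities $A^l_1(z)=\alpha^\ast A^l_1(y)$ together with exactness and colimit-preservation of $\alpha^\ast$. Your phrasing via the two interlocking inclusions $Q^{l-1}_z\subseteq\alpha^\ast Q^l_y\subseteq Q^{l+1}_z$ is just the mutual-cofinality content of the paper's interleaved filtered chain $A^0_1\hookrightarrow \mathscr Q^0\hookrightarrow A^1_1\hookrightarrow\cdots$, so the two arguments coincide in substance.
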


\begin{proof}
The conditions (1) and (2) are immediate from the definition in \eqref{6.11dp}. The condition (4) follows from \eqref{6.16ep} and the fact that each $|\mathscr Q^{m+1}(n,\alpha)|\leq \kappa$.

\smallskip
To prove (3), we notice that $\mathscr P(n,\alpha)_y$  may be expressed  as the  filtered union
\begin{equation} 
A^0_1(y)\hookrightarrow \mathscr Q^0(n,\alpha)_y\hookrightarrow A^1_1(y)\hookrightarrow \mathscr Q^1(n,\alpha)_y\hookrightarrow \dots 
\hookrightarrow A^{m+1}_1(y)\hookrightarrow\mathscr Q^{m+1}(n,\alpha)_y\hookrightarrow ... 
\end{equation}
of objects in $\mathbf M^C_{\mathscr R_y}(\psi_y)$.  Since $\alpha^\ast$ is exact and a left adjoint, we can express $\alpha^\ast\mathscr P(n,\alpha)_y$  as the  filtered union
\begin{equation} \label{c6.18}
\alpha^\ast A^0_1(y)\hookrightarrow \alpha^\ast\mathscr Q^0(n,\alpha)_y\hookrightarrow \alpha^\ast A^1_1(y)\hookrightarrow \alpha^\ast\mathscr Q^1(n,\alpha)_y\hookrightarrow \dots 
\hookrightarrow \alpha^\ast A^{m+1}_1(y)\hookrightarrow\alpha^\ast\mathscr Q^{m+1}(n,\alpha)_y\hookrightarrow ... 
\end{equation}
 in $\mathbf M^C_{\mathscr R_z}(\psi_z)$.  Similarly, $\mathscr P(n,\alpha)_z$  may be expressed  as the  filtered union
\begin{equation} \label{c6.19}
A^0_1(z)\hookrightarrow \mathscr Q^0(n,\alpha)_z\hookrightarrow A^1_1(z)\hookrightarrow \mathscr Q^1(n,\alpha)_z\hookrightarrow \dots 
\hookrightarrow A^{m+1}_1(z)\hookrightarrow\mathscr Q^{m+1}(n,\alpha)_z\hookrightarrow ... 
\end{equation} in $\mathbf M^C_{\mathscr R_z}(\psi_z)$. By definition, we know that $A^m_1(z)=\alpha^\ast A^m_1(y)$ for each $m\geq 0$. From \eqref{c6.18} and \eqref{c6.19}, it is clear that the filtered colimit of the isomorphisms $\alpha^\ast A^m_1(y)=A^m_1(z)$ induces an isomorphism $\mathscr P(n,\alpha)^\alpha:\alpha^\ast \mathscr P(n,\alpha)_y
\longrightarrow \mathscr P(n,\alpha)_z$. 

\end{proof}

\begin{lem}\label{L6.7} Let $\mathscr M$ be a cartesian module over a flat representation $\mathscr R:\mathscr X\longrightarrow \mathscr Ent_C$. Choose $m\in el_{\mathscr X}(\mathscr M)$. Let  $\kappa =max\{
\mbox{$|\mathbb N|$, $|C|$, $|K|$, $|Mor(\mathscr X)|$,  $|Mor(\mathscr R_x)|$, $x\in \mathscr X$}\}$. Then, there is a cartesian submodule $\mathscr P\subseteq \mathscr M$ with $m\in el_{\mathscr X}(\mathscr P)$ such that $|\mathscr P|\leq \kappa$. 
\end{lem}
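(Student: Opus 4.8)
The plan is to let $\mathscr P$ be the filtered union of the entire family $\{\mathscr P(n,\alpha)\}_{(n,\alpha)\in \mathbb N\times Mor(\mathscr X)}$ constructed above, namely
\begin{equation*}
\mathscr P:=\underset{(n,\alpha)\in \mathbb N\times Mor(\mathscr X)}{\varinjlim}\textrm{ }\mathscr P(n,\alpha)\subseteq \mathscr M,
\end{equation*}
which is a directed (indeed totally ordered) system by condition (2) of Lemma \ref{L6.62}. I would then verify the three requirements of the statement in turn: that $m\in el_{\mathscr X}(\mathscr P)$, that $|\mathscr P|\leq \kappa$, and that $\mathscr P$ is cartesian.

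The first two are immediate from Lemma \ref{L6.62}. Since $\mathscr P(1,\alpha_0)\subseteq \mathscr P$, condition (1) gives $m\in el_{\mathscr X}(\mathscr P)$. For the cardinality bound, the index set $\mathbb N\times Mor(\mathscr X)$ has cardinality $\leq \kappa$ (as $|\mathbb N|,|Mor(\mathscr X)|\leq \kappa$ and $\kappa$ is infinite), while each $|\mathscr P(n,\alpha)|\leq \kappa$ by condition (4); hence $\mathscr P$, being computed pointwise as a union of at most $\kappa$ submodules of cardinality $\leq \kappa$, satisfies $|\mathscr P|\leq \kappa$.

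The key step is to check cartesianness, i.e. that $\mathscr P^\beta:\beta^\ast\mathscr P_y\longrightarrow \mathscr P_z$ is an isomorphism for every $\beta:y\longrightarrow z$ in $\mathscr X$. Here I would exploit the lexicographic ordering on $\mathbb N\times Mor(\mathscr X)$: for any fixed $\beta$, the subfamily $\{(n,\beta)\}_{n\geq 1}$ is cofinal, since $(m,\gamma)<(m+1,\beta)$ for every $(m,\gamma)$. Consequently $\mathscr P=\underset{n}{\varinjlim}\textrm{ }\mathscr P(n,\beta)$, so in particular $\mathscr P_y=\underset{n}{\varinjlim}\textrm{ }\mathscr P(n,\beta)_y$ and $\mathscr P_z=\underset{n}{\varinjlim}\textrm{ }\mathscr P(n,\beta)_z$. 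Since $\beta^\ast$ is a left adjoint it commutes with this filtered colimit, giving $\beta^\ast\mathscr P_y=\underset{n}{\varinjlim}\textrm{ }\beta^\ast\mathscr P(n,\beta)_y$. Thus $\mathscr P^\beta$ is the filtered colimit of the morphisms $\mathscr P(n,\beta)^\beta$, each of which is an isomorphism by condition (3) of Lemma \ref{L6.62}; as a filtered colimit of isomorphisms, $\mathscr P^\beta$ is itself an isomorphism, so $\mathscr P$ is cartesian.

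The only genuinely delicate point is this cofinality observation, and it is exactly what the elaborate stagewise construction preceding the lemma was designed to make available. Placing $\mathbb N$ in the first coordinate of the lexicographic order ensures that, for \emph{each} individual $\beta$, the ``cartesian at $\beta$'' condition secured at the countably many indices $(n,\beta)$ forms a cofinal subsystem, so all these conditions survive \emph{simultaneously} into the single colimit $\mathscr P$. Everything else — exactness of $\beta^\ast$, commutation with filtered colimits, and pointwise computation of colimits in $Mod^C-\mathscr R$ — is formal, so I expect the cofinality bookkeeping to be the heart of the argument.
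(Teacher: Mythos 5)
Your proposal is correct and follows essentially the same route as the paper: both define $\mathscr P$ as the filtered union of the whole family $\{\mathscr P(n,\alpha)\}$, deduce $m\in el_{\mathscr X}(\mathscr P)$ and $|\mathscr P|\leq\kappa$ from Lemma \ref{L6.62}, and establish cartesianness by observing that for each fixed $\beta$ the chain $\{(n,\beta)\}_{n\geq 1}$ is cofinal in the lexicographic order, so $\mathscr P^\beta$ is a filtered colimit of the isomorphisms $\mathscr P(n,\beta)^\beta$ (using that $\beta^\ast$, being a left adjoint, commutes with filtered colimits). Your closing remark correctly identifies the cofinality bookkeeping as the point of the stagewise construction.
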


\begin{proof} It is clear that $\mathbb N\times Mor(\mathscr X)$ with the lexicographic ordering  is filtered. We set 
\begin{equation}
\mathscr P:=\underset{(n,\alpha)\in \mathbb N\times Mor(\mathscr X)}{\bigcup}\textrm{ }\mathscr P(n,\alpha)\subseteq \mathscr M
\end{equation} in $Mod^C-\mathscr R$. It is immediate that $m\in el_{\mathscr X}(\mathscr P)$.  Since each $|\mathscr P(n,\alpha)|\leq \kappa$, it is clear that $|\mathscr P|\leq \kappa$. 

\smallskip
We now consider a morphism $\beta:z\longrightarrow w$ in $\mathscr X$. Then, the family $\{(m,\beta)\}_{m\geq 1}$ is cofinal in  $\mathbb N\times Mor(\mathscr X)$ and hence it follows that
\begin{equation}
\mathscr P:=\underset{m\geq 1}{\varinjlim}\textrm{ }\mathscr P(m,\beta)
\end{equation} Since each $\mathscr P(m,\beta)^\beta:\beta^\ast \mathscr P(m,\beta)_z
\longrightarrow \mathscr P(m,\beta)_w$ is an isomorphism, the filtered colimit $\mathscr P^\beta:\beta^\ast\mathscr P_z\longrightarrow
\mathscr P_w$ is an isomorphism. 

\end{proof}

\begin{Thm}\label{T6.10} Let $C$ be a right semiperfect coalgebra over a field $K$. Let $\mathscr X$ be a poset and let 
$\mathscr R:\mathscr X\longrightarrow \mathscr Ent_C$ be an entwined $C$-representation of $\mathscr X$. Suppose that $\mathscr R$
is  flat. Then, $Cart^C-\mathscr R$ is a Grothendieck category.
\end{Thm}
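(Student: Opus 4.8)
The plan is to verify the three defining properties of a Grothendieck category for $Cart^C-\mathscr R$: that it is abelian, that it is cocomplete with exact filtered colimits (the AB5 axiom), and that it admits a set of generators. The first property has already been recorded immediately after Definition \ref{D6.1}, where $Cart^C-\mathscr R$ is noted to be abelian with finite limits and filtered colimits inherited from $Mod^C-\mathscr R$, so I would take that as the starting point and concentrate on the remaining two.

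First I would confirm that $Cart^C-\mathscr R$ is closed under the colimits of $Mod^C-\mathscr R$, and in particular that the colimits relevant to the abelian and AB5 structures agree with those computed in $Mod^C-\mathscr R$. Since each transition functor $\alpha^\ast$ is a left adjoint it preserves all colimits, and the flatness hypothesis makes each $\alpha^\ast$ exact; combining these, the cokernel of a morphism of cartesian modules and an arbitrary coproduct of cartesian modules are again cartesian, because the structure maps $\mathscr M^\alpha$ pass to the colimit as colimits of isomorphisms. Thus $Cart^C-\mathscr R$ is cocomplete with coproducts and cokernels computed as in $Mod^C-\mathscr R$. The AB5 axiom is then inherited directly: filtered colimits and finite limits in $Cart^C-\mathscr R$ coincide with those in the Grothendieck category $Mod^C-\mathscr R$ (Theorem \ref{T4.9}), where they already commute.

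The substantive step is producing a set of generators, and here the heavy lifting has already been carried out in Lemma \ref{L6.7}. I would argue as follows. Let $\mathcal G$ be a set of representatives for the isomorphism classes of cartesian modules $\mathscr P$ with $|\mathscr P|\leq \kappa$; the cardinality bound guarantees this is a genuine set. Given any cartesian module $\mathscr M$ and any element $m\in el_{\mathscr X}(\mathscr M)$, Lemma \ref{L6.7} furnishes a cartesian submodule $\mathscr P\subseteq\mathscr M$ with $m\in el_{\mathscr X}(\mathscr P)$ and $|\mathscr P|\leq\kappa$, so that $\mathscr P$ is isomorphic to an object of $\mathcal G$. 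Consequently every cartesian module is the sum of its cartesian submodules of cardinality $\leq\kappa$, hence an epimorphic image of a coproduct of objects of $\mathcal G$, and therefore $\mathcal G$ is a set of generators by the standard criterion (see \cite[$\S$ 1.9]{Tohoku}).

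I expect the only point requiring genuine care is the closure of $Cart^C-\mathscr R$ under the colimits of $Mod^C-\mathscr R$ — specifically that coproducts and cokernels of cartesian modules remain cartesian — since this is exactly what lets the ambient Grothendieck structure of Theorem \ref{T4.9} descend to the subcategory. Everything genuinely delicate about the generation argument, namely the transfinite induction on $\mathbb N\times Mor(\mathscr X)$ that controls cardinalities while enforcing the cartesian condition, has already been absorbed into Lemma \ref{L6.7}, so the remainder of the proof is a routine assembly of the pieces.
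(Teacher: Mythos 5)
Your proposal is correct and follows essentially the same route as the paper: the (AB5) axiom is inherited from $Mod^C-\mathscr R$ (the paper dismisses this as clear after Definition \ref{D6.1}, while you usefully spell out that exactness and colimit-preservation of each $\alpha^\ast$ keep cokernels and coproducts cartesian), and the set of generators is obtained exactly as you describe, by applying Lemma \ref{L6.7} to write any cartesian module as a sum of cartesian submodules of cardinality at most $\kappa$.
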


\begin{proof}
It is already clear that $Cart^C-\mathscr R$ satisfies the (AB5) condition. From Lemma \ref{L6.7}, it is clear that any $\mathscr M\in Cart^C-\mathscr R$
can be expressed as a sum of a family $\{\mathscr P_m\}_{m\in el_{\mathscr X}(\mathscr M)}$ of cartesian submodules such that each $|\mathscr P_m|
\leq \kappa$. As such, isomorphism classes of cartesian modules $\mathscr P$ with $|\mathscr P|\leq \kappa$ form a family of generators
for $Cart^C-\mathscr R$. 
\end{proof}

\section{Separability of the forgetful functor}

Let $(\mathcal R,C,\psi)$ be an entwining structure. We consider the forgetful functor $\mathcal F:\mathbf M^C_{\mathcal R}(\psi)\longrightarrow \mathbf M_{\mathcal R}$. By \cite[Lemma 2.4 \& Lemma 3.1]{BBR}, we know that $\mathcal F$  has a right adjoint $\mathcal G:\mathbf M_{\mathcal R}\longrightarrow \mathbf M^C_{\mathcal R}(\psi)$ given by setting $\mathcal G(
\mathcal N):=\mathcal N\otimes C$, i.e. $\mathcal G(\mathcal N)(r):=\mathcal N(r)\otimes C$ for each $r\in \mathcal R$. The right $\mathcal R$-module structure on $\mathcal G(\mathcal N)$ is given by $(n\otimes c)\cdot f:=nf_\psi\otimes c^\psi$ for $f\in \mathcal R(r',r)$, $n\in \mathcal N(r)$ and $c\in C$.

\smallskip
We continue with $\mathscr X$ being a poset, $C$ being a right semiperfect coalgebra and let $\mathscr R:\mathscr X\longrightarrow \mathscr Ent_C$ be an entwined $C$-representation. We denote by $\mathscr Lin$ the category
of small $K$-linear categories. Then, for each $x\in \mathscr X$, we may replace
the entwining structure $(\mathscr R_x,C,\psi_x)$ by the $K$-linear category $\mathscr R_x$ to obtain a functor that we continue to denote by $\mathscr R:\mathscr X\longrightarrow
\mathscr Lin$. We consider modules  over $\mathscr R:\mathscr X\longrightarrow
\mathscr Lin$ in the sense of Estrada and Virili \cite[Definition 3.6]{EV} and denote their category by $Mod-\mathscr R$. Explicitly, an object $\mathscr N$ in $Mod-\mathscr R$ consists of a module
$\mathscr N_x\in \mathbf M_{\mathscr R_x}$ for each $x\in \mathscr X$ as well as compatible morphisms $\mathscr N_\alpha:\mathscr N_x\longrightarrow \alpha_\ast\mathscr N_y$
(equivalently $\mathscr N^\alpha:\alpha^\ast\mathscr N_x\longrightarrow \mathscr N_y$) for each  $\alpha:x\longrightarrow y$ in $\mathscr X$. The module $\mathscr N$ is said to be cartesian if each  $\mathscr N^\alpha:\alpha^\ast\mathscr N_x\longrightarrow \mathscr N_y$  is an isomorphism. We denote by $Cart-\mathscr R$ the full subcategory of cartesian
modules on $\mathscr R$. 

\smallskip For each $x\in \mathscr X$, we have a forgetful functor $\mathscr F_x:\mathbf M^C_{\mathscr R_x}(\psi_x)\longrightarrow \mathbf M_{\mathscr R_x}$
having right adjoint $\mathscr G_x:  \mathbf M_{\mathscr R_x}\longrightarrow \mathbf M^C_{\mathscr R_x}(\psi_x)$. From the proofs of Propositions \ref{P2.2} and \ref{P2.3}, it is clear that we have  commutative
diagrams
\begin{equation}\label{cd7.1}
\begin{CD}
\mathbf M^C_{\mathscr R_y}(\psi_y) @>\alpha_\ast >> \mathbf M^C_{\mathscr R_x}(\psi_x)\\
@V\mathscr F_yVV @VV\mathscr F_xV \\
\mathbf M_{\mathscr R_y}@>\alpha_\ast >> \mathbf M_{\mathscr R_x}\\
\end{CD} \qquad  \begin{CD}
\mathbf M^C_{\mathscr R_x}(\psi_x) @>\alpha^\ast >> \mathbf M^C_{\mathscr R_y}(\psi_y)\\
@V\mathscr F_xVV @VV\mathscr F_yV \\
\mathbf M_{\mathscr R_x}@>\alpha^\ast >> \mathbf M_{\mathscr R_y}\\
\end{CD}\qquad 
\begin{CD}
\mathbf M_{\mathscr R_y} @>\alpha_\ast >> \mathbf M_{\mathscr R_x}\\
@V\mathscr G_yVV @VV\mathscr G_xV \\
\mathbf M_{\mathscr R_y}^C(\psi_y)@>\alpha_\ast >> \mathbf M^C_{\mathscr R_x}(\psi_x)\\
\end{CD}
\end{equation} for each $\alpha:x\longrightarrow y$ in $\mathscr X$. 

\begin{thm} Let $\mathscr R:\mathscr X\longrightarrow \mathscr Ent_C$ be an entwined $C$-representation. Then, the collection
$\{\mathscr F_x:\mathbf M^C_{\mathscr R_x}(\psi_x)\longrightarrow \mathbf M_{\mathscr R_x}\}_{x\in \mathscr X}$ (resp. the collection $\{\mathscr G_x:  \mathbf M_{\mathscr R_x}\longrightarrow \mathbf M^C_{\mathscr R_x}(\psi_x)\}_{x\in \mathscr X}$ ) together defines a functor $\mathscr F:Mod^C-\mathscr R\longrightarrow Mod-\mathscr R$ (resp. a functor 
$\mathscr G:Mod-\mathscr R\longrightarrow Mod^C-\mathscr R$).
\end{thm}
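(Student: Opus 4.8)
The plan is to assemble the pointwise functors into functors of the representation categories, taking the structure morphisms of $\mathscr F(\mathscr M)$ (resp. $\mathscr G(\mathscr N)$) to be the images under $\mathscr F_x$ (resp. $\mathscr G_x$) of the structure morphisms of the given module. I will describe $\mathscr F$ in detail; the case of $\mathscr G$ is entirely analogous.

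First I would define $\mathscr F$ on objects. Given $\mathscr M\in Mod^C-\mathscr R$, set $\mathscr F(\mathscr M)_x:=\mathscr F_x(\mathscr M_x)\in \mathbf M_{\mathscr R_x}$ for each $x\in \mathscr X$, and for each $\alpha:x\longrightarrow y$ define the structure morphism
\[
\mathscr F(\mathscr M)_\alpha:=\mathscr F_x(\mathscr M_\alpha):\mathscr F_x(\mathscr M_x)\longrightarrow \mathscr F_x(\alpha_\ast\mathscr M_y)=\alpha_\ast\mathscr F_y(\mathscr M_y),
\]
where the identification $\mathscr F_x\circ\alpha_\ast=\alpha_\ast\circ \mathscr F_y$ is precisely the first commutative diagram in \eqref{cd7.1}. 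This makes sense because $\mathscr M_\alpha$ is a morphism in $\mathbf M^C_{\mathscr R_x}(\psi_x)$, so $\mathscr F_x$ may be applied to it.

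Next I would check that this data satisfies the axioms of Definition \ref{D4.2}. The normalization $\mathscr F(\mathscr M)_{id_x}=\mathscr F_x(\mathscr M_{id_x})=\mathscr F_x(id)=id$ is immediate from functoriality of $\mathscr F_x$. For the cocycle condition, given composable $x\xrightarrow{\alpha}y\xrightarrow{\beta}z$ I would apply $\mathscr F_x$ to the identity $\mathscr M_{\beta\alpha}=\alpha_\ast(\mathscr M_\beta)\circ \mathscr M_\alpha$ of \eqref{4.25d}; using functoriality of $\mathscr F_x$ together with $\mathscr F_x\circ\alpha_\ast=\alpha_\ast\circ\mathscr F_y$ once more to rewrite $\mathscr F_x(\alpha_\ast(\mathscr M_\beta))=\alpha_\ast(\mathscr F_y(\mathscr M_\beta))=\alpha_\ast(\mathscr F(\mathscr M)_\beta)$, this yields exactly $\mathscr F(\mathscr M)_{\beta\alpha}=\alpha_\ast(\mathscr F(\mathscr M)_\beta)\circ \mathscr F(\mathscr M)_\alpha$. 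Hence $\mathscr F(\mathscr M)\in Mod-\mathscr R$.

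On morphisms, for $\eta:\mathscr M\longrightarrow \mathscr N$ in $Mod^C-\mathscr R$ I would set $\mathscr F(\eta)_x:=\mathscr F_x(\eta_x)$; applying $\mathscr F_x$ to the compatibility square defining $\eta$ and again invoking \eqref{cd7.1} shows that $\mathscr F(\eta)$ is a morphism in $Mod-\mathscr R$, while $\mathscr F(id)=id$ and $\mathscr F(\eta'\circ\eta)=\mathscr F(\eta')\circ\mathscr F(\eta)$ hold pointwise by functoriality of each $\mathscr F_x$. The construction of $\mathscr G$ proceeds identically, now using the third commutative diagram in \eqref{cd7.1}, namely $\mathscr G_x\circ\alpha_\ast=\alpha_\ast\circ\mathscr G_y$, to supply the structure morphisms $\mathscr G(\mathscr N)_\alpha:=\mathscr G_x(\mathscr N_\alpha)$. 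The entire argument is formal once the compatibilities in \eqref{cd7.1} are available; the only point requiring genuine care is the preservation of the cocycle condition, which hinges on commuting $\mathscr F_x$ (resp. $\mathscr G_x$) past the restriction functor $\alpha_\ast$ — and this is exactly what \eqref{cd7.1} guarantees.
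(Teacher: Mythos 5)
Your proposal is correct and follows essentially the same route as the paper's own proof: both define $\mathscr F(\mathscr M)_\alpha:=\mathscr F_x(\mathscr M_\alpha)$ and $\mathscr G(\mathscr N)_\alpha:=\mathscr G_x(\mathscr N_\alpha)$ and rely on the commutation of $\mathscr F_x$, $\mathscr G_x$ with $\alpha_\ast$ recorded in \eqref{cd7.1}. You merely spell out the identity, cocycle, and morphism-level checks that the paper leaves implicit.
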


\begin{proof} We consider $\mathscr M\in Mod^C-\mathscr R$ and set $\mathscr F(\mathscr M)_x:=\mathscr F_x(\mathscr M_x)\in \mathbf M_{\mathscr R_x}$. For a morphism
$\alpha:x\longrightarrow y$, we obtain from \eqref{cd7.1} a morphism $\mathscr F(\mathscr M)_\alpha:=\mathscr F_x(\mathscr M_\alpha):\mathscr F_x(\mathscr M_x)
\longrightarrow \mathscr F_x(\alpha_\ast\mathscr M_y)=\alpha_\ast\mathscr F_y(\mathscr M_y)$. This shows that $\mathscr F(\mathscr M)$ is an object of $Mod-\mathscr R$. Similarly, it follows from
\eqref{cd7.1} that for any $\mathscr N\in Mod-\mathscr R$, we have $\mathscr G(\mathscr N)\in Mod^C-\mathscr R$ obtained by setting $\mathscr G(\mathscr N)_x:=\mathscr G_x(\mathscr N_x)
=\mathscr N_x\otimes C$. 
\end{proof}

\begin{thm}\label{P7.2}  Let   $\mathscr R:\mathscr X\longrightarrow \mathscr Ent_C$ be an entwined $C$-representation. Then, the functor $\mathscr F:Mod^C-\mathscr R\longrightarrow Mod-\mathscr R$ 
has a right adjoint, given by $\mathscr G:Mod-\mathscr R\longrightarrow Mod^C-\mathscr R$.
\end{thm}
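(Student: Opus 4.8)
The plan is to build the adjunction $(\mathscr F,\mathscr G)$ out of the pointwise adjunctions $(\mathscr F_x,\mathscr G_x)$, the only substantive point being that these pointwise adjunctions are compatible with the restriction functors $\alpha_\ast$. Fix $\mathscr M\in Mod^C-\mathscr R$ and $\mathscr N\in Mod-\mathscr R$. By Definition \ref{D4.2}, a morphism $\phi\in Mod-\mathscr R(\mathscr F\mathscr M,\mathscr N)$ is a family $\{\phi_x:\mathscr F_x\mathscr M_x\longrightarrow\mathscr N_x\}_{x\in\mathscr X}$ in the categories $\mathbf M_{\mathscr R_x}$ such that for every $\alpha:x\longrightarrow y$
\begin{equation*}
\alpha_\ast(\phi_y)\circ\mathscr F_x(\mathscr M_\alpha)=\mathscr N_\alpha\circ\phi_x,\qquad (\star)
\end{equation*}
where the first diagram of \eqref{cd7.1} has been used to identify $\mathscr F(\mathscr M)_\alpha$ with $\mathscr F_x(\mathscr M_\alpha)$. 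Applying the pointwise adjunction bijection $\Phi_x:\mathbf M_{\mathscr R_x}(\mathscr F_x\mathscr M_x,\mathscr N_x)\cong\mathbf M^C_{\mathscr R_x}(\psi_x)(\mathscr M_x,\mathscr G_x\mathscr N_x)$ at each $x$ yields a family $\{\tilde\phi_x:=\Phi_x(\phi_x)\}$. The goal is to show that $\{\tilde\phi_x\}$ is exactly a morphism $\mathscr M\longrightarrow\mathscr G\mathscr N$ in $Mod^C-\mathscr R$, i.e.\ that $(\star)$ is equivalent to
\begin{equation*}
\alpha_\ast(\tilde\phi_y)\circ\mathscr M_\alpha=\mathscr G_x(\mathscr N_\alpha)\circ\tilde\phi_x,\qquad (\star\star)
\end{equation*}
in which $\mathscr G_x(\mathscr N_\alpha)=\mathscr G(\mathscr N)_\alpha$ by the third diagram of \eqref{cd7.1}.

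The key step, which I expect to be the main obstacle, is a ``mate'' compatibility: for any $\alpha:x\longrightarrow y$ and any $g:\mathscr F_yA\longrightarrow B$ in $\mathbf M_{\mathscr R_y}$, the $\Phi_x$-adjunct of $\alpha_\ast(g):\mathscr F_x(\alpha_\ast A)=\alpha_\ast\mathscr F_yA\longrightarrow\alpha_\ast B$ should equal $\alpha_\ast(\Phi_y(g)):\alpha_\ast A\longrightarrow\alpha_\ast\mathscr G_yB=\mathscr G_x(\alpha_\ast B)$. I would prove this by checking that $\alpha_\ast$ carries the counit $\epsilon^y:\mathscr F_y\mathscr G_y\longrightarrow 1$ of the $y$-adjunction to the counit $\epsilon^x:\mathscr F_x\mathscr G_x\longrightarrow 1$ of the $x$-adjunction under the identifications of \eqref{cd7.1}. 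Concretely, both counits are given pointwise by $\mathrm{id}\otimes\varepsilon_C$, and since $\alpha_\ast$ merely reindexes $\mathscr N(-)$ as $\mathscr N(\alpha(-))$ while leaving the tensor factor $C$ untouched, one gets $\alpha_\ast(\epsilon^y_B)=\epsilon^x_{\alpha_\ast B}$ on the nose; equivalently one may verify the dual statement for the units. This is the one place where the uniformity of $C$ across the whole representation is used, and it is exactly what makes the pointwise cofree functors $\mathscr G_x=-\otimes C$ glue.

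Granting the mate compatibility, the equivalence of $(\star)$ and $(\star\star)$ becomes formal. Taking $\Phi_x$-adjuncts of both sides of $(\star)$, naturality of $\Phi_x$ in its target variable turns the right-hand side $\mathscr N_\alpha\circ\phi_x$ into $\mathscr G_x(\mathscr N_\alpha)\circ\tilde\phi_x$, while naturality in the source variable together with the mate compatibility turns the left-hand side $\alpha_\ast(\phi_y)\circ\mathscr F_x(\mathscr M_\alpha)$ into $\alpha_\ast(\tilde\phi_y)\circ\mathscr M_\alpha$. Hence $\phi\mapsto\{\tilde\phi_x\}$ is a well-defined bijection between $Mod-\mathscr R(\mathscr F\mathscr M,\mathscr N)$ and $Mod^C-\mathscr R(\mathscr M,\mathscr G\mathscr N)$, with inverse obtained by applying each $\Phi_x^{-1}$. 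Finally, naturality of this bijection in $\mathscr M$ and in $\mathscr N$ is inherited immediately from the naturality of each $\Phi_x$ in both variables, so $(\mathscr F,\mathscr G)$ is an adjoint pair, as claimed.
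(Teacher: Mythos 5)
Your proposal is correct and follows essentially the same route as the paper: both build $\mathscr G$ from the pointwise adjunctions $(\mathscr F_x,\mathscr G_x)$ and reduce the whole adjunction to checking that the pointwise bijections $\Phi_x$ intertwine the structure morphisms over each $\alpha:x\longrightarrow y$. The only difference is cosmetic: where you verify the key compatibility abstractly via the mate calculus (checking that $\alpha_\ast$ preserves the counits $id\otimes\varepsilon_C$ under the identifications of \eqref{cd7.1}), the paper performs the equivalent element-level computation using the $C$-colinearity of $\mathscr M_\alpha$.
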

\begin{proof}
We consider $\mathscr M\in Mod^C-\mathscr R$ and $\mathscr N\in Mod-\mathscr R$ along with a morphism $\eta:\mathscr F(\mathscr M)\longrightarrow \mathscr N$ in $Mod-\mathscr R$. We will show how to construct a morphism $\zeta:\mathscr M\longrightarrow \mathscr G(\mathscr N)$ in $Mod^C-\mathscr R$ corresponding to $\eta$. 

\smallskip
For each $x\in \mathscr X$, we consider $\eta_x:\mathscr F(\mathscr M)_x=\mathscr F_x(\mathscr M_x)\longrightarrow \mathscr N_x$ in $\mathbf M_{\mathscr R_x}$. By \cite[Lemma 3.1]{BBR}, we already know that $(\mathscr F_x,\mathscr G_x)$ is a pair of adjoint functors, which gives us $\mathbf M_{\mathscr R_x}(\mathscr F_x(\mathscr M_x),\mathscr N_x)\cong \mathbf M^C_{\mathscr R_x}(
\mathscr M_x,\mathscr G_x(\mathscr N_x))$. Accordingly, we define $\zeta_x:\mathscr M_x\longrightarrow \mathscr G_x(\mathscr N_x)=\mathscr N_x\otimes C$ by setting
$\zeta_x(m'):=\eta_x(r)(m'_0)\otimes m'_1$ for $m'\in \mathscr M_x(r)$, $r\in \mathscr R_x$. We now consider the diagrams
\begin{equation}\label{cd7.2}
\begin{CD}
\mathscr F_x(\mathscr M_x) @>\eta_x>> \mathscr N_x \\
@V\mathscr F_x(\mathscr M_\alpha)VV @VV\mathscr N_\alpha V\\
\alpha_\ast\mathscr F_y(\mathscr M_y) @>\alpha_\ast(\eta_y)>> \alpha_\ast\mathscr N_y\\
\end{CD}
\qquad \Rightarrow \qquad 
\begin{CD}
\mathscr M_x @>\zeta_x>> \mathscr G_x(\mathscr N_x)\\
@V\mathscr M_\alpha VV @VV\mathscr G_x(\mathscr N_\alpha)V\\
\alpha_\ast\mathscr M_y @>\alpha_\ast(\zeta_y)>> \alpha_\ast \mathscr G_y(\mathscr N_y) \\
\end{CD}
\end{equation}
The left hand side diagram in \eqref{cd7.2} is commutative because $\eta:\mathscr F(\mathscr M)\longrightarrow \mathscr N$ is a morphism in $Mod-\mathscr R$. In order to prove that
we have a morphism $\zeta:\mathscr M\longrightarrow \mathscr G(\mathscr N)$ in $Mod^C-\mathscr R$, it suffices to show that this  implies the commutativity of the right hand side diagram in \eqref{cd7.2}.

\smallskip
We consider $m\in el(\mathscr M_x)$. Then, we have $\mathscr G_x(\mathscr N_\alpha)(\zeta_x(m))=\mathscr N_\alpha(\eta_x(m_0))\otimes m_1$. On the other hand, we have
$\alpha_\ast(\zeta_y)(\mathscr M_\alpha(m))=\eta_y((\mathscr M_\alpha(m))_0)\otimes (\mathscr M_\alpha(m))_1$. Since $\mathscr M_\alpha$ is $C$-colinear, we have $(\mathscr M_\alpha(m))_0
\otimes (\mathscr M_\alpha(m))_1=\mathscr M_\alpha(m_0)\otimes m_1$. It follows that $\alpha_\ast(\zeta_y)(\mathscr M_\alpha(m))=\eta_y(\mathscr M_\alpha(m_0))\otimes m_1$. From the left hand side commutative diagram in \eqref{cd7.2}, we get $\eta_y(\mathscr M_\alpha(m_0))=\mathscr N_\alpha(\eta_x(m_0))$, which shows that the right hand diagram in 
\eqref{cd7.2} is commutative.

\smallskip
Similarly, we may show that a morphism $\zeta':\mathscr M\longrightarrow \mathscr G(\mathscr N)$ in $Mod^C-\mathscr R$  induces a morphism $\eta':\mathscr F(\mathscr M)\longrightarrow \mathscr N$ in $Mod-\mathscr R$ and that these two associations are inverse to each other. This proves the result.
\end{proof}

We now recall that a functor $F:\mathcal A\longrightarrow \mathcal B$ is said to be separable if the natural transformation $\mathcal A(\_\_,\_\_)\longrightarrow
\mathcal B(F(\_\_),F(\_\_))$ is a split monomorphism (see \cite{NBO}, \cite{Raf}). If $F$ has a right adjoint $G:\mathcal B\longrightarrow \mathcal A$, then $F$ is separable if and only if there exists a natural transformation 
$\upsilon \in Nat(GF,1_{\mathcal A})$ satisfying $\upsilon\circ \mu=1_{\mathcal A}$, where $\mu$ is the unit of the adjunction (see \cite[Theorem 1.2]{Raf}).

\smallskip
We now consider the forgetful functor $\mathscr F:Mod^C-\mathscr R\longrightarrow Mod-\mathscr R$ as well as its right adjoint $\mathscr G:Mod-\mathscr R\longrightarrow
Mod^C-\mathscr R$ constructed in Proposition \ref{P7.2}. We will need an alternate description for the natural transformations $\mathscr G\mathscr F\longrightarrow 1_{Mod^C-\mathscr R}$.

\begin{thm}\label{P7.25}   A  natural transformation $\upsilon\in Nat(\mathscr G\mathscr F,1_{Mod^C-\mathscr R})$ corresponds to  a collection 
of natural transformations $\{\upsilon_x\in Nat(\mathscr G_x\mathscr F_x,1_{\mathbf M^C_{\mathscr R_x}(\psi_x)})\}_{x\in \mathscr X}$ such that for any $\alpha:x\longrightarrow y$ in
$\mathscr X$ and object $\mathscr M\in Mod^C-\mathscr R$, we have a commutative diagram
\begin{equation}\label{cd7.3}
\begin{CD}
\mathscr G_x\mathscr F_x(\mathscr M_x) @>\upsilon_x(\mathscr M_x)>> \mathscr M_x\\
@V\mathscr G_x\mathscr F_x(\mathscr M_\alpha) VV @VV\mathscr M_\alpha V \\
\alpha_\ast\mathscr G_y\mathscr F_y(\mathscr M_y) @>\alpha_\ast\upsilon_y(\mathscr M_y)>> \alpha_\ast\mathscr M_y\\
\end{CD}
\end{equation} in $\mathbf M^C_{\mathscr R_x}(\psi_x)$. 
\end{thm}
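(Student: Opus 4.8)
The plan is to exploit the fact that the functors $\mathscr F$, $\mathscr G$, and hence $\mathscr G\mathscr F$, are computed pointwise, so that $(\mathscr G\mathscr F\mathscr M)_x=\mathscr G_x\mathscr F_x(\mathscr M_x)$, $(\mathscr G\mathscr F(\theta))_x=\mathscr G_x\mathscr F_x(\theta_x)$ for any $\mathscr M\in Mod^C-\mathscr R$ and morphism $\theta$, and $(\mathscr G\mathscr F\mathscr M)_\alpha=\mathscr G_x\mathscr F_x(\mathscr M_\alpha)$ for $\alpha:x\longrightarrow y$. Since a morphism in $Mod^C-\mathscr R$ is exactly a family of pointwise morphisms commuting with the structure maps (Definition \ref{D4.2}), a natural transformation $\upsilon$ is assembled from pointwise data; the content of the proposition is to match this data with honest natural transformations $\upsilon_x$ on each $\mathbf M^C_{\mathscr R_x}(\psi_x)$. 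The one delicate point is that $\upsilon_x$ must be defined and natural on \emph{all} of $\mathbf M^C_{\mathscr R_x}(\psi_x)$, and for this I would use the extension functor $ex_x^C$ of Lemma \ref{L5.1}, which satisfies $ev_x^C\circ ex_x^C=id$ and whose adjunction $(ex_x^C,ev_x^C)$ of Proposition \ref{P5.3} has unit equal to the identity, as is visible from the proof of Proposition \ref{P5.3}.

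Given $\upsilon\in Nat(\mathscr G\mathscr F,1_{Mod^C-\mathscr R})$, I would define, for each $x$ and each $\mathcal M\in\mathbf M^C_{\mathscr R_x}(\psi_x)$,
\[
\upsilon_x(\mathcal M):=\big(\upsilon(ex_x^C\mathcal M)\big)_x:\mathscr G_x\mathscr F_x(\mathcal M)\longrightarrow\mathcal M,
\]
which makes sense because $ex_x^C(\mathcal M)_x=\mathcal M$. Naturality of $\upsilon_x$ in $\mathcal M$ follows by applying $ev_x^C$ to the naturality square of $\upsilon$ along $ex_x^C(g)$ for a morphism $g:\mathcal M\longrightarrow\mathcal M'$, using $ev_x^C(ex_x^C(g))=g$ and the pointwise description of $\mathscr G\mathscr F$. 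The key step is the identity $\upsilon_x(\mathscr M_x)=(\upsilon(\mathscr M))_x$ for every $\mathscr M\in Mod^C-\mathscr R$: one applies naturality of $\upsilon$ to the counit $\epsilon_{\mathscr M}:ex_x^C(\mathscr M_x)\longrightarrow\mathscr M$ of the adjunction $(ex_x^C,ev_x^C)$ and evaluates at $x$; since the unit of this adjunction is the identity, the triangle identities force $(\epsilon_{\mathscr M})_x=id_{\mathscr M_x}$, whence the left vertical map of the evaluated square is $\mathscr G_x\mathscr F_x(id)=id$, yielding the claim. With this in hand, the compatibility \eqref{cd7.3} is immediate: it is precisely the assertion that the morphism $\upsilon(\mathscr M):\mathscr G\mathscr F(\mathscr M)\longrightarrow\mathscr M$ in $Mod^C-\mathscr R$ commutes with the structure maps at $\alpha:x\longrightarrow y$, rewritten via $(\upsilon(\mathscr M))_x=\upsilon_x(\mathscr M_x)$ and $(\mathscr G\mathscr F\mathscr M)_\alpha=\mathscr G_x\mathscr F_x(\mathscr M_\alpha)$.

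Conversely, given a family $\{\upsilon_x\}$ satisfying \eqref{cd7.3}, I would set $(\upsilon(\mathscr M))_x:=\upsilon_x(\mathscr M_x)$. Condition \eqref{cd7.3} says exactly that this family commutes with the structure maps of $\mathscr G\mathscr F(\mathscr M)$ and $\mathscr M$, so $\upsilon(\mathscr M)$ is a genuine morphism in $Mod^C-\mathscr R$; naturality of $\upsilon$ in $\mathscr M$ reduces pointwise to naturality of each $\upsilon_x$ applied to $\theta_x$. Finally I would check that the two assignments are mutually inverse: starting from $\upsilon$ and returning, the key identity recovers $(\upsilon(\mathscr M))_x=\upsilon_x(\mathscr M_x)$, while starting from $\{\upsilon_x\}$ one computes $\big(\upsilon(ex_x^C\mathcal M)\big)_x=\upsilon_x\big((ex_x^C\mathcal M)_x\big)=\upsilon_x(\mathcal M)$ using $ev_x^C\circ ex_x^C=id$.

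The main obstacle is the bookkeeping in the forward direction, namely producing from the single datum $\upsilon$ a natural transformation defined on every object of $\mathbf M^C_{\mathscr R_x}(\psi_x)$ and verifying that its restriction to objects of the form $\mathscr M_x$ agrees with the naive componentwise restriction of $\upsilon$; everything hinges on the adjunction $(ex_x^C,ev_x^C)$ having identity unit, so that the evaluation at $x$ of the counit is the identity. Once this compatibility is established, both the diagram \eqref{cd7.3} and the inverse-bijection statements are formal.
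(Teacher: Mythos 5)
Your proposal is correct and follows essentially the same route as the paper: the paper also defines $\upsilon_x(\mathcal M):=\upsilon(ex_x^C(\mathcal M))_x$, establishes the key identity $\upsilon(\mathscr M)_x=\upsilon_x(\mathscr M_x)$ via the canonical morphism $\zeta:ex_x^C(ev_x^C(\mathscr M))\longrightarrow\mathscr M$ with $ev_x^C(\zeta)=id$ (your counit argument), and then reads off the compatibility square from the fact that $\upsilon(\mathscr M)$ is a morphism in $Mod^C-\mathscr R$, with the converse assembled pointwise exactly as you describe.
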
 

\begin{proof} We consider $\upsilon \in Nat(\mathscr G\mathscr F,1_{Mod^C-\mathscr R})$. For  $x\in \mathscr X$, we define the natural transformation $\upsilon_x\in Nat(\mathscr G_x\mathscr F_x,1_{\mathbf M^C_{\mathscr R_x}(\psi_x)})$ by setting
\begin{equation}\label{eq7.35b}
\upsilon_x(\mathcal M):=\upsilon(ex^C_x(\mathcal M))_x:\mathscr G_x\mathscr F_x(\mathcal M)=\mathscr G_x\mathscr F_x((ex^C_x(\mathcal M))_x)\longrightarrow (ex_x^C(\mathcal M))_x=\mathcal M\end{equation} for $\mathcal M\in \mathbf M^C_{\mathscr R_x}(\psi_x)$. We now consider  $\mathscr M\in Mod^C-\mathscr R$. For $\alpha:x\longrightarrow y$ in $\mathscr X$, the morphism $\upsilon(\mathscr M):\mathscr G\mathscr F(\mathscr M)\longrightarrow
\mathscr M$ in $Mod^C-\mathscr R$ leads to a commutative diagram
\begin{equation}\label{7.36b}
\begin{CD}
(\mathscr G\mathscr F(\mathscr M))_x = \mathscr G_x\mathscr F_x(\mathscr M_x) @>\upsilon(\mathscr M)_x>> \mathscr M_x \\
@V\mathscr G_x\mathscr F_x(\mathscr M_\alpha) VV @VV\mathscr M_\alpha V \\
\alpha_\ast(\mathscr G\mathscr F(\mathscr M))_y=\alpha_\ast\mathscr G_y\mathscr F_y(\mathscr M_y) @> \alpha_\ast(\upsilon(\mathscr M)_y)>> \alpha_\ast\mathscr M_y\\
\end{CD}
\end{equation} We now claim that $\upsilon(\mathscr M)_x=(\upsilon(ex^C_x(\mathscr M_x)))_x=\upsilon_x(\mathscr M_x)$ for each $x\in \mathscr X$. For this, we consider the canonical morphism $\zeta: ex_x^C(\mathscr M_x)=ex_x^C(ev_x^C(\mathscr M))\longrightarrow \mathscr M$ in $Mod^C-\mathscr R$ corresponding to the adjoint pair $(ex_x^C,ev_x^C)$ in Proposition \ref{P5.3}. It is clear that $ev_x^C(\zeta)=id$. Then, we have commutative diagrams
\begin{equation}\label{7.37b}
\begin{array}{ccc}
\begin{CD}
\mathscr G\mathscr F(ex^C_x(\mathscr M_x)) @>\upsilon(ex^C_x(\mathscr M_x))>> ex_x^C(\mathscr M_x)\\
@V\mathscr G\mathscr F(\zeta)VV @VV\zeta V\\
\mathscr G\mathscr F(\mathscr M) @>\upsilon(\mathscr M)>> \mathscr M \\
\end{CD} & \qquad \Rightarrow \qquad & \begin{CD}
\mathscr G_x\mathscr F_x(\mathscr M_x) @>(\upsilon(ex^C_x(\mathscr M_x)))_x>> \mathscr M_x\\
@Vid VV @VVid V\\
\mathscr G_x\mathscr F_x(\mathscr M_x) @>\upsilon(\mathscr M)_x>> \mathscr M_x \\
\end{CD} \\
\end{array}
\end{equation} This proves that $\upsilon(\mathscr M)_x=(\upsilon(ex^C_x(\mathscr M_x)))_x=\upsilon_x(\mathscr M_x)$ for each $x\in \mathscr X$. The commutativity
of the diagram \eqref{cd7.3} now follows from \eqref{7.36b}.

\smallskip Conversely, given a  collection 
of natural transformations $\{\upsilon_x\in Nat(\mathscr G_x\mathscr F_x,1_{\mathbf M^C_{\mathscr R_x}(\psi_x)})\}_{x\in \mathscr X}$ satisfying \eqref{cd7.3} for each $\mathscr M
\in Mod^C-\mathscr R$, we get $\upsilon(\mathscr M):\mathscr G\mathscr F(\mathscr M)\longrightarrow \mathscr M$ in $Mod^C-\mathscr R$ by setting $\upsilon(\mathscr M)_x=
\upsilon_x(\mathscr M_x)$ for each $x\in \mathscr X$. From \eqref{cd7.3}, it is clear that $\upsilon\in Nat(\mathscr G\mathscr F,1_{Mod^C-\mathscr R})$.

\end{proof}

More explicitly, the diagram in \eqref{cd7.3} shows that for each $\alpha:x\longrightarrow y$ in $\mathscr X$ and $r\in \mathscr R_x$, we have a commutative diagram
\begin{equation}\label{cd7.4}
\begin{CD}
\mathscr M_x(r)\otimes C=(\mathscr G_x\mathscr F_x(\mathscr M_x))(r) @>(\upsilon_x(\mathscr M_x))(r)>> \mathscr M_x(r)\\
@V(\mathscr G_x\mathscr F_x(\mathscr M_\alpha))(r) VV @VV\mathscr M_\alpha(r) V \\
\mathscr M_y(\alpha(r))\otimes C=(\mathscr G_y\mathscr F_y(\mathscr M_y))(\alpha(r))=(\alpha_\ast\mathscr G_y\mathscr F_y(\mathscr M_y))(r) @>(\alpha_\ast\upsilon_y(\mathscr M_y))(r)>=(\upsilon_y(\mathscr M_y))(\alpha(r))> (\alpha_\ast\mathscr M_y)(r)=\mathscr M_y(\alpha(r))\\
\end{CD}
\end{equation} We note that all morphisms in \eqref{cd7.4} are $C$-colinear. 
We now give another interpretation of the space $ Nat(\mathscr G\mathscr F,1_{Mod^C-\mathscr R})$. For this, we consider a collection 
$\theta:=\{\theta_x(r):C\otimes C\longrightarrow \mathscr R_x(r,r)\}_{x\in \mathscr X,r\in \mathscr R_x}$ of $K$-linear maps satisfying the following conditions. 

\smallskip
(1) Fix $x\in \mathscr X$ and $r\in \mathscr R_x$. Then, for $c$, $d\in C$, we have
\begin{equation}\label{theta1}
\theta_x(r)(c\otimes d_1)\otimes d_2=(\theta_x(r)(c_2\otimes d))_{\psi_x}\otimes {c_1}^{\psi_x}
\end{equation}
(2) Fix $x\in \mathscr X$ and $c$, $d\in C$. Then, for $f:s\longrightarrow r$ in $\mathscr R_x$, we have
\begin{equation}\label{theta2}
(\theta_x(r)(c\otimes d))\circ f=f_{{\psi_x}_{\psi_x}}\circ (\theta_x(s)(c^{\psi_x}\otimes d^{\psi_x}))
\end{equation}
(3) Fix $c$, $d\in C$. Then, for any $\alpha:x\longrightarrow y$ in $\mathscr X$ and $r\in \mathscr R_x$, we have
\begin{equation}\label{theta3}
\alpha(\theta_x(r)(c\otimes d))=\theta_y(\alpha(r))(c\otimes d)
\end{equation} The space of all such $\theta$ will be denoted by $V_1$. 

\begin{thm}\label{P7.3} Let $\theta\in V_1$. Then, $\theta$ induces a natural transformation $\upsilon\in Nat(\mathscr G\mathscr F,1_{Mod^C-\mathscr R})$, such that for each $x\in \mathscr X$,  $\upsilon_x\in Nat(\mathscr G_x\mathscr F_x,1_{\mathbf M^C_{\mathscr R_x}(\psi_x)})$ is given by 
\begin{equation}\label{eq7.8}
\upsilon_x(\mathcal M):\mathcal M\otimes C\longrightarrow \mathcal  M\qquad (m\otimes c)\mapsto \mathcal M(\theta_x(r)(m_1\otimes c))(m_0)
\end{equation} for any $\mathcal  M\in \mathbf M^C_{\mathscr R_x}(\psi_x)$, $r\in \mathscr R_x$, $m\in \mathcal M(r)$ and $c\in C$. 
\end{thm}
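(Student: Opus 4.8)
The plan is to fix $x\in\mathscr{X}$ first and verify that each $\upsilon_x(\mathcal{M})$ defined by \eqref{eq7.8} is a well-defined morphism in $\mathbf{M}^C_{\mathscr{R}_x}(\psi_x)$, then check naturality in $\mathcal{M}$, and finally glue the $\upsilon_x$ into a natural transformation on $Mod^C-\mathscr{R}$ by appealing to Proposition \ref{P7.25}. Recall that $\mathscr{G}_x\mathscr{F}_x(\mathcal{M})=\mathcal{M}\otimes C$, carrying the right $\mathscr{R}_x$-action $(m\otimes c)\cdot f=mf_{\psi_x}\otimes c^{\psi_x}$ and the cofree right $C$-coaction $m\otimes c\mapsto m\otimes c_1\otimes c_2$. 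Writing $\upsilon_x(\mathcal{M})(m\otimes c)=m_0\cdot\theta_x(r)(m_1\otimes c)$ in the module notation $mf=\mathcal{M}(f)(m)$, the task for fixed $x$ is to show this map is $\mathscr{R}_x$-linear, $C$-colinear, and natural.

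For $\mathscr{R}_x$-linearity I would take $f\in\mathscr{R}_x(s,r)$ and expand $\upsilon_x(\mathcal{M})((m\otimes c)\cdot f)$ using the entwined module axiom \eqref{comp 2} to compute $\rho_{\mathcal{M}(s)}(mf_{\psi_x})$; on the other side, $\upsilon_x(\mathcal{M})(m\otimes c)\cdot f$ rewrites the composite $\theta_x(r)(m_1\otimes c)\circ f$ by means of condition \eqref{theta2}, and the two expressions agree once the nested $\psi_x$-summations are matched. For $C$-colinearity I would apply the coaction $\rho$ to $m_0\cdot\theta_x(r)(m_1\otimes c)$, again using \eqref{comp 2} together with coassociativity of the coaction on $m$, and compare with $(\upsilon_x(\mathcal{M})\otimes id_C)(m\otimes c_1\otimes c_2)$; condition \eqref{theta1} is exactly what forces these to coincide. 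Naturality in $\mathcal{M}$ is then routine: for a morphism $\phi:\mathcal{M}\to\mathcal{M}'$ of entwined modules, $\phi$ commutes with the structure maps $\mathcal{M}(-)$ and is $C$-colinear, so $\phi\circ\upsilon_x(\mathcal{M})=\upsilon_x(\mathcal{M}')\circ(\phi\otimes id_C)$ follows directly from \eqref{eq7.8}.

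To assemble the global $\upsilon$, I would invoke Proposition \ref{P7.25}: it suffices to check the compatibility square \eqref{cd7.3} for each $\alpha:x\to y$ and $\mathscr{M}\in Mod^C-\mathscr{R}$, which amounts, evaluating at $r\in\mathscr{R}_x$, to the identity $\mathscr{M}_\alpha(r)(\mathscr{M}_x(\theta_x(r)(m_1\otimes c))(m_0))=\mathscr{M}_y(\theta_y(\alpha(r))(m_1\otimes c))(\mathscr{M}_\alpha(r)(m_0))$, where I have used that $\mathscr{M}_\alpha$ is $C$-colinear to pass the coaction through. Here condition \eqref{theta3} identifies $\theta_y(\alpha(r))(m_1\otimes c)$ with $\alpha(\theta_x(r)(m_1\otimes c))$, and the fact that $\mathscr{M}_\alpha:\mathscr{M}_x\to\alpha_\ast\mathscr{M}_y$ is a morphism of $\mathscr{R}_x$-modules, so that it intertwines $\mathscr{M}_x(g)$ with $\mathscr{M}_y(\alpha(g))$, closes the diagram.

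I expect the main obstacle to be the first step, specifically the bookkeeping of the suppressed Sweedler-type summations in the $\mathscr{R}_x$-linearity and $C$-colinearity checks: both involve composing two separate applications of the entwining $\psi_x$, one coming from the axiom \eqref{comp 2} and one hidden inside \eqref{theta1} or \eqref{theta2}, so care is needed to ensure the repeated indices are paired with the correct tensor factors and that coassociativity is applied in the right place. Once this index-matching is carried out, conditions \eqref{theta1}, \eqref{theta2} and \eqref{theta3} are each used precisely once, for colinearity, linearity, and the transition morphisms respectively.
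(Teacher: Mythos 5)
Your proposal is correct and follows essentially the same route as the paper: the paper likewise reduces to checking the compatibility square \eqref{cd7.3}/\eqref{cd7.4} via Proposition \ref{P7.25}, using \eqref{theta3}, the $C$-colinearity of $\mathscr M_\alpha$, and the intertwining diagram \eqref{cd7.11}. The only difference is that the paper delegates the fixed-$x$ verification (that \eqref{theta1} and \eqref{theta2} make $\upsilon_x$ a natural transformation in $Nat(\mathscr G_x\mathscr F_x,1)$) to \cite[Proposition 3.6]{BBR}, whereas you carry out that Sweedler bookkeeping directly; your outline of it is accurate.
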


\begin{proof} From \cite[Proposition 3.6]{BBR}, it follows that each $\upsilon_x$ as defined in \eqref{eq7.8} by the collection $\theta_x:=
\{\theta_x(r):C\otimes C\longrightarrow \mathscr R_x(r,r)\}_{r\in \mathscr R_x}$ gives a natural transformation $\upsilon_x\in Nat(\mathscr G_x\mathscr F_x,1_{\mathbf M^C_{\mathscr R_x}(\psi_x)})$. To prove the result, it therefore suffices to show the commutativity of the diagram \eqref{cd7.4} for any $\mathscr M\in Mod^C-\mathscr R$. Accordingly, for $\alpha:
x\longrightarrow y$ in $\mathscr X$ and $r\in \mathscr R_x$, we have
\begin{equation}\label{eq7.9}
((\mathscr M_\alpha(r))\circ (\upsilon_x(\mathscr M_x))(r))(m\otimes c)=(\mathscr M_\alpha(r))(\mathscr M_x(\theta_x(r)(m_1\otimes c))(m_0))
\end{equation} for $m\otimes c\in \mathscr M_x(r)\otimes C$. On the other hand, we have
\begin{equation}\label{eq7.10}
\begin{array}{ll}
(((\upsilon_y(\mathscr M_y))(\alpha(r)))\circ ((\mathscr G_x\mathscr F_x(\mathscr M_\alpha))(r)))(m\otimes c)&=\mathscr M_y(\theta_y(\alpha(r))(\mathscr M_\alpha(m)_1\otimes c ))(\mathscr M_\alpha(r)(m))_0\\
&=\mathscr M_y(\theta_y(\alpha(r))(m_1\otimes c))(\mathscr M_\alpha(r)(m_0))\\
&=\mathscr M_y(\alpha(\theta_x(r)(m_1\otimes c)))(\mathscr M_\alpha(r)(m_0))\\
\end{array}
\end{equation}
The second equality in \eqref{eq7.10} follows from the $C$-colinearity of $\mathscr M_\alpha(r)$ and the third equality follows by applying condition \eqref{theta3}. We now notice that for any $f\in \mathscr R_x(r,r)$, we have a commutative diagram
\begin{equation}\label{cd7.11}
\begin{CD}
\mathscr M_x(r) @>\mathscr M_\alpha(r)>> \mathscr M_y(\alpha(r))\\
@V\mathscr M_x(f)VV @VV\mathscr M_y(\alpha(f))V \\
\mathscr M_x(r) @>\mathscr M_\alpha(r)>> \mathscr M_y(\alpha(r))\\
\end{CD} 
\end{equation} Applying \eqref{cd7.11}   to $f=\theta_x(r)(m_1\otimes c)\in \mathscr R_x(r,r)$, we obtain from \eqref{eq7.10} that
\begin{equation}\label{eq7.12}
(((\upsilon_y(\mathscr M_y))(\alpha(r)))\circ ((\mathscr G_x\mathscr F_x(\mathscr M_\alpha))(r)))(m\otimes c)=(\mathscr M_\alpha(r))(\mathscr M_x(\theta_x(r)(m_1\otimes c))(m_0))
\end{equation} This proves the result.
\end{proof}

 Fix $x\in \mathscr X$ and $r\in \mathscr R_x$. We now set
\begin{equation}
\mathscr H_y^{(x,r)}:=\left\{\begin{array}{ll}
\mathscr R_y(\_\_,\alpha(r)) \otimes C & \mbox{if $\alpha:x\longrightarrow y$} \\
0  & \mbox{if $x\not\leq y$}\\
\end{array}\right.
\end{equation} for each $y\in \mathscr X$.

\begin{lem}\label{L7.4}
For each $x\in \mathscr X$ and $r\in \mathscr R_x$, the collection $\mathscr H^{(x,r)}:=\{\mathscr H_y^{(x,r)}\}_{y\in \mathscr X}$ determines an object of $Mod^C-\mathscr R$. 
\end{lem}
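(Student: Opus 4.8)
The plan is to exhibit $\mathscr H^{(x,r)}$ as a functorial family satisfying Definition \ref{D4.2}, reading off the required structure maps from the description of $\mathscr G_y$ and the squares in \eqref{cd7.1}. First I would settle the pointwise claim. For $y$ with $x\leq y$, write $\alpha\in\mathscr X(x,y)$ for the unique morphism; then $\mathscr H_y^{(x,r)}=\mathscr R_y(\_\_,\alpha(r))\otimes C$ is precisely $\mathscr G_y(H_{\alpha(r)})$, the cofree entwined module on the representable $\mathscr R_y$-module $H_{\alpha(r)}$, so $\mathscr H_y^{(x,r)}\in\mathbf M^C_{\mathscr R_y}(\psi_y)$. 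For $x\not\leq y$ we have $\mathscr H_y^{(x,r)}=0$, which is trivially an entwined module.

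Next I would define the transition morphisms. For $\beta:y\longrightarrow z$ with $x\not\leq y$, set $\mathscr H_\beta^{(x,r)}=0:0\longrightarrow\beta_\ast\mathscr H_z^{(x,r)}$. For $x\leq y$ we have $x\leq z$; let $\alpha\in\mathscr X(x,y)$ and $\alpha'\in\mathscr X(x,z)$, so that $\alpha'=\beta\alpha$ and hence $\alpha'(r)=\beta(\alpha(r))$. Using the rightmost commutative square of \eqref{cd7.1}, namely $\beta_\ast\mathscr G_z=\mathscr G_y\beta_\ast$, I identify $(\beta_\ast\mathscr H_z^{(x,r)})(r')=\mathscr R_z(\beta(r'),\alpha'(r))\otimes C$ and define
\[
\mathscr H_\beta^{(x,r)}(r'):\mathscr R_y(r',\alpha(r))\otimes C\longrightarrow\mathscr R_z(\beta(r'),\alpha'(r))\otimes C,\qquad g\otimes c\longmapsto\beta(g)\otimes c,
\]
which is well targeted since $\beta(g)\in\mathscr R_z(\beta(r'),\beta(\alpha(r)))=\mathscr R_z(\beta(r'),\alpha'(r))$. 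Abstractly this is $\mathscr G_y$ applied to the unit $H_{\alpha(r)}\longrightarrow\beta_\ast\beta^\ast H_{\alpha(r)}=\beta_\ast H_{\alpha'(r)}$.

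I would then check that $\mathscr H_\beta^{(x,r)}$ is a morphism in $\mathbf M^C_{\mathscr R_y}(\psi_y)$. The $C$-colinearity is immediate, since on both source and target the coaction is $\mathrm{id}\otimes\Delta_C$ and $g\mapsto\beta(g)$ leaves the $C$-factor untouched. The $\mathscr R_y$-linearity is the one substantive point, and I expect it to be the main obstacle: acting by $f\in\mathscr R_y(r'',r')$ on the source gives $gf_{\psi_y}\otimes c^{\psi_y}$, while on $\beta_\ast\mathscr H_z^{(x,r)}$ the morphism $f$ acts through $\beta(f)$, producing $\beta(g)\,\beta(f)_{\psi_z}\otimes c^{\psi_z}$; matching the two amounts exactly to $\beta(f_{\psi_y})\otimes c^{\psi_y}=\beta(f)_{\psi_z}\otimes c^{\psi_z}$, which is precisely the defining condition in Definition \ref{entcatx} that $\mathscr R_\beta=(\mathscr R_\beta,\mathrm{id}_C)$ be a morphism of entwining structures. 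This is exactly where the hypothesis that $\mathscr R$ is an entwined $C$-representation enters.

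Finally I would verify the two conditions of Definition \ref{D4.2}. The identity condition $\mathscr H_{\mathrm{id}_y}^{(x,r)}=\mathrm{id}$ is clear from $g\mapsto\mathrm{id}(g)=g$. For composable $y\xrightarrow{\beta}z\xrightarrow{\gamma}w$, when $x\leq y$ (which forces $x\leq y\leq z\leq w$) the required identity $\mathscr H_{\gamma\beta}^{(x,r)}=\beta_\ast(\mathscr H_\gamma^{(x,r)})\circ\mathscr H_\beta^{(x,r)}$ reduces on elements to $(\gamma\beta)(g)=\gamma(\beta(g))$, i.e. the functoriality of $\mathscr R$; and when $x\not\leq y$ both sides are the zero map out of $\mathscr H_y^{(x,r)}=0$. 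This exhausts the cases, so the data $\{\mathscr H_y^{(x,r)}\}_{y\in\mathscr X}$ together with $\{\mathscr H_\beta^{(x,r)}\}$ constitute an object of $Mod^C-\mathscr R$.
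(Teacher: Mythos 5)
Your proposal is correct and follows essentially the same route as the paper: identify each $\mathscr H_y^{(x,r)}$ as the cofree entwined module $\mathscr G_y(H_{\alpha(r)})$ (the paper cites \cite[Lemma 2.4]{BBR} for this), define the transition maps by $g\otimes c\mapsto\beta(g)\otimes c$, and reduce the $\mathscr R_y$-linearity check to the identity $\beta(f_{\psi_y})\otimes c^{\psi_y}=\beta(f)_{\psi_z}\otimes c^{\psi_z}$ coming from $\mathscr R_\beta$ being a morphism of entwining structures, which is exactly the computation verifying \eqref{cd7.15}. Your additional remarks (the abstract description of the transition map as $\mathscr G_y$ applied to the unit of $(\beta^\ast,\beta_\ast)$, and the explicit verification of the unit and cocycle conditions of Definition \ref{D4.2}) only make explicit what the paper leaves as routine.
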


\begin{proof}
For each $y\in \mathscr X$, it follows by \cite[Lemma 2.4]{BBR} that $\mathscr H_y^{(x,r)}$ is an object of $\mathbf M^C_{\mathscr R_y}(\psi_y)$. We consider $\beta:y\longrightarrow z$
in $\mathscr X$ and suppose we have $\alpha:x\longrightarrow y$, i.e., $x\leq y$. Then, for $r'\in \mathscr R_y$, we have an obvious morphism 
\begin{equation}\beta(\_\_)\otimes C: \mathscr H_y^{(x,r)}(r')=
\mathscr R_y(r',\alpha(r))\otimes C\longrightarrow \beta_\ast(\mathscr R_z(\_\_,\beta\alpha(r))\otimes C)(r')=\mathscr R_z(\beta(r'),\beta\alpha(r))\otimes C
\end{equation} which is $C$-colinear. To prove  that $\mathscr H_y^{(x,r)}\longrightarrow \beta_\ast\mathscr H_z^{(x,r)}$ is a morphism in $\mathbf M^C_{\mathscr R_y}(
\psi_y)$, it remains to show that for any $g:r''\longrightarrow r'$ in $\mathscr R_y$, the following diagram commutes
\begin{equation}\label{cd7.15}
\begin{CD}
\mathscr R_y(r',\alpha(r))\otimes C @>\cdot g>> \mathscr R_y(r'',\alpha(r))\otimes C\\
@V\beta(\_\_)\otimes CVV @VV\beta(\_\_)\otimes CV \\
\mathscr R_z(\beta(r'),\beta\alpha(r))\otimes C @>\cdot \beta(g)>>\mathscr R_z(\beta(r''),\beta\alpha(r))\otimes C  \\
\end{CD}
\end{equation} For $f\otimes c\in \mathscr R_y(r',\alpha(r))\otimes C$, we have
\begin{equation*}
(\beta(\_\_)\otimes C)((f\otimes c)\cdot g)=(\beta(\_\_)\otimes C)(fg_{\psi_y}\otimes c^{\psi_y})=\beta(f)\beta(g_{\psi_y})\otimes c^{\psi_y}=\beta(f)\beta(g)_{\psi_z}
\otimes c^{\psi_z}=(\beta(f)\otimes c)\cdot \beta(g)
\end{equation*} This shows that \eqref{cd7.15} is commutative. Finally, if $x\not\leq y$, then $0=\mathscr H_y^{(x,r)}\longrightarrow \beta_\ast\mathscr H_z^{(x,r)}$ is obviously a morphism in $\mathbf M^C_{\mathscr R_y}(
\psi_y)$. This proves the result.
\end{proof}

\begin{thm}\label{P7.5} Let $\upsilon\in Nat(\mathscr G\mathscr F,1_{Mod^C-\mathscr R})$. For each $x\in \mathscr X$ and $r\in \mathscr R_x$, define $\theta_x(r):
C\otimes C\longrightarrow \mathscr R_x(r,r)$ by setting
\begin{equation}\label{eq7.16}
\theta_x(r)(c\otimes d):=((id\otimes \varepsilon_C)\circ (\upsilon_x(\mathscr H^{(x,r)}_x)(r)))(id_r\otimes c\otimes d)
\end{equation}
for $c$, $d\in C$. Then, the collection $\theta:=\{\theta_x(r):C\otimes C\longrightarrow \mathscr R_x(r,r)\}_{x\in \mathscr X,r\in \mathscr R_x}$ is an element of $V_1$. 
\end{thm}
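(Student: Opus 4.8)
The plan is to decompose membership in $V_1$ into its three defining conditions \eqref{theta1}, \eqref{theta2}, \eqref{theta3} and to handle them by two separate mechanisms. Conditions \eqref{theta1} and \eqref{theta2} are purely ``local in $x$'' (they only involve the single entwining structure $(\mathscr R_x,C,\psi_x)$) and will be inherited from the one-object correspondence of \cite[Proposition 3.6]{BBR}; condition \eqref{theta3} is the only one that sees the morphisms of $\mathscr X$, and it will be forced by the compatibility square \eqref{cd7.3} (equivalently \eqref{cd7.4}) supplied by Proposition \ref{P7.25}.

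For \eqref{theta1} and \eqref{theta2}, I would first invoke Proposition \ref{P7.25} to obtain the components $\upsilon_x\in Nat(\mathscr G_x\mathscr F_x,1_{\mathbf M^C_{\mathscr R_x}(\psi_x)})$. By \cite[Proposition 3.6]{BBR} there is a \emph{unique} collection $\{\theta_x(r)\}_{r\in\mathscr R_x}$ satisfying \eqref{theta1} and \eqref{theta2} for which $\upsilon_x$ is given by formula \eqref{eq7.8}. The key step is then to verify that the recovery formula \eqref{eq7.16} inverts \eqref{eq7.8}, so that the $\theta$ of the statement coincides pointwise with this \cite{BBR}-collection. Concretely, evaluating \eqref{eq7.8} on the test object $\mathscr H^{(x,r)}_x=\mathscr G_x(H_r)=H_r\otimes C$ at the object $r$, and using that its $C$-comodule structure is $id\otimes\Delta_C$ while its $\mathscr R_x$-action is $(n\otimes e)\cdot g=ng_{\psi_x}\otimes e^{\psi_x}$, gives
\[
\upsilon_x(\mathscr H^{(x,r)}_x)(r)(id_r\otimes c\otimes d)=(\theta_x(r)(c_2\otimes d))_{\psi_x}\otimes c_1^{\psi_x}.
\]
Applying $id\otimes\varepsilon_C$ and the counit axiom $\varepsilon_C(c^{\psi})f_\psi=\varepsilon_C(c)f$ collapses the right-hand side to $\theta_x(r)(c\otimes d)$, which is exactly \eqref{eq7.16}. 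Hence the $\theta$ defined in the proposition is the \cite{BBR}-collection and automatically satisfies \eqref{theta1} and \eqref{theta2}.

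The heart of the argument is \eqref{theta3}. Here I would specialize the commutative square \eqref{cd7.4} to the single object $\mathscr H^{(x,r)}\in Mod^C-\mathscr R$ of Lemma \ref{L7.4}, evaluated at $r\in\mathscr R_x$; recall from the proof of Lemma \ref{L7.4} that its structure map acts as $f\otimes e\mapsto\alpha(f)\otimes e$ and that $\alpha(id_r)=id_{\alpha(r)}$. Chasing $id_r\otimes c\otimes d$ along the top-then-right edge yields, using the computation above, $\alpha\big((\theta_x(r)(c_2\otimes d))_{\psi_x}\big)\otimes c_1^{\psi_x}$, while the left-then-bottom edge, computed with $\upsilon_y$ on $\mathscr H^{(x,r)}_y=\mathscr G_y(H_{\alpha(r)})$, yields $(\theta_y(\alpha(r))(c_2\otimes d))_{\psi_y}\otimes c_1^{\psi_y}$. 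Equating the two and then applying $id\otimes\varepsilon_C$, the entwining axiom for $\psi_x$ (with $\alpha$ pulled through by $K$-linearity) reduces the left side to $\alpha(\theta_x(r)(c\otimes d))$ and the axiom for $\psi_y$ reduces the right side to $\theta_y(\alpha(r))(c\otimes d)$, giving precisely \eqref{theta3}.

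I expect the main obstacle to be exactly this final chase: one must simultaneously track the two distinct entwinings $\psi_x$ and $\psi_y$, insert the functor $\alpha$ in the correct tensor slot, and crucially postpone applying $id\otimes\varepsilon_C$ until the very end, so that the counit axiom $\varepsilon_C(c^{\psi})f_\psi=\varepsilon_C(c)f$ can absorb both the $\psi$-twist and the $c_1/c_2$ coproduct splitting on each side at once. The only conceptual inputs beyond bookkeeping are the explicit identification of the structure morphism of $\mathscr H^{(x,r)}$ from Lemma \ref{L7.4} and the fact that every map in \eqref{cd7.4} is $C$-colinear, which is what legitimizes the $\varepsilon_C$-reduction.
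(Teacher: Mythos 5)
Your proposal is correct and follows essentially the same route as the paper: conditions \eqref{theta1} and \eqref{theta2} are delegated to the single-object results of \cite{BBR} (the paper cites \cite[Proposition 3.5]{BBR} directly for the recovery of $\theta_x$ from $\upsilon_x$, whereas you reconstruct it as the inverse of \cite[Proposition 3.6]{BBR}, which amounts to the same bijection the paper itself uses in Proposition \ref{P7.6}), and condition \eqref{theta3} is extracted by specializing the compatibility square \eqref{cd7.4} to the test object $\mathscr H^{(x,r)}$ at $r'=r$ and the element $id_r\otimes c\otimes d$, exactly as in the paper's diagram \eqref{cd7.18}. Your explicit element chase, including the identification $\mathscr H_y^{(x,r)}=\mathscr H_y^{(y,\alpha(r))}$ and the deferred application of $id\otimes\varepsilon_C$, matches the paper's argument.
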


\begin{proof} From the definition in \eqref{eq7.16}, we have explicitly that
\begin{equation}\label{eq7.17}
\theta_x(r)(c\otimes d)=((id\otimes \varepsilon_C)\circ (\upsilon_x(\mathscr R_x(\_\_,r)\otimes C)(r)))(id_r\otimes c\otimes d)
\end{equation} Then, it follows from \cite[Proposition 3.5]{BBR} that $\theta_x(r)$ satisfies the conditions in \eqref{theta1} and \eqref{theta2}. It remains to verify the condition \eqref{theta3}. For this we take $\alpha:x\longrightarrow y$ in $\mathscr X$ and consider the commutative diagram
\begin{equation}\label{cd7.18}
\begin{CD}
\mathscr R_x(r',r)\otimes C\otimes C @>\upsilon_x(\mathscr H^{(x,r)}_x)(r')>> \mathscr R_x(r',r)\otimes C @>id\otimes \varepsilon_C>> \mathscr R_x(r',r)\\
@V\alpha(\_\_)\otimes C\otimes CVV @VV\alpha(\_\_)\otimes CV @VV\alpha(\_\_)V\\
\mathscr R_y(\alpha(r'),\alpha(r))\otimes C\otimes C @>\upsilon_y(\mathscr H^{(x,r)}_y)(\alpha(r'))>>\mathscr R_y(\alpha(r'),\alpha(r))\otimes C @>id\otimes \varepsilon_C>>\mathscr R_y(\alpha(r'),\alpha(r))\\
\end{CD}
\end{equation} for any $r,r'\in \mathscr R_x$. Since $\upsilon\in Nat(\mathscr G\mathscr F,1_{Mod^C-\mathscr R})$, the commutativity of the left hand side square in \eqref{cd7.18} follows from \eqref{cd7.4}. It is clear that the right hand square in \eqref{cd7.18} is commutative. 

\smallskip
We notice that $\mathscr H_y^{(y,\alpha(r))}=\mathscr H_y^{(x,r)}$ in $\mathbf M_{\mathscr R_y}^C(\psi_y)$. Applying \eqref{cd7.18} with $r'=r\in\mathscr R_x$ and $id_r\otimes c\otimes d\in \mathscr R_x(r,r)\otimes C\otimes C$, it follows from \eqref{eq7.17} that $\alpha(\theta_x(r)(c\otimes d))=\theta_y(\alpha(r))(c\otimes d)$. This proves \eqref{theta3}. 

\end{proof}

\begin{thm}\label{P7.6}   $Nat(\mathscr G\mathscr F,1_{Mod^C-\mathscr R})$ is isomorphic to $V_1$.
\end{thm}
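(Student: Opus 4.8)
The plan is to show that the two constructions already carried out---the map $\Phi\colon V_1\longrightarrow Nat(\mathscr G\mathscr F,1_{Mod^C-\mathscr R})$ of Proposition \ref{P7.3} and the map $\Psi\colon Nat(\mathscr G\mathscr F,1_{Mod^C-\mathscr R})\longrightarrow V_1$ of Proposition \ref{P7.5}---are mutually inverse. Both are visibly $K$-linear, so it suffices to verify $\Psi\circ\Phi=id_{V_1}$ and $\Phi\circ\Psi=id$, and I would reduce both identities to the single-object case treated in \cite{BBR}.

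First I would invoke Proposition \ref{P7.25}, which identifies a natural transformation $\upsilon\in Nat(\mathscr G\mathscr F,1_{Mod^C-\mathscr R})$ with a family $\{\upsilon_x\in Nat(\mathscr G_x\mathscr F_x,1_{\mathbf M^C_{\mathscr R_x}(\psi_x)})\}_{x\in\mathscr X}$ subject to the compatibility diagram \eqref{cd7.3}. The key input is then \cite[Proposition 3.5]{BBR} and \cite[Proposition 3.6]{BBR}: for each fixed $x$, the assignment $\upsilon_x\mapsto\theta_x=\{\theta_x(r)\}_{r\in\mathscr R_x}$ given by \eqref{eq7.17} and the assignment $\theta_x\mapsto\upsilon_x$ given by \eqref{eq7.8} are mutually inverse bijections between $Nat(\mathscr G_x\mathscr F_x,1_{\mathbf M^C_{\mathscr R_x}(\psi_x)})$ and the space of collections $\{\theta_x(r)\colon C\otimes C\longrightarrow\mathscr R_x(r,r)\}_{r}$ satisfying \eqref{theta1} and \eqref{theta2}. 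Here I would use that $\mathscr H^{(x,r)}_x=\mathscr R_x(\_\_,r)\otimes C$ by definition, so that the formula \eqref{eq7.16} defining $\Psi$ restricts at each $x$ to exactly the \cite{BBR} construction, while \eqref{eq7.8} is exactly the inverse \cite{BBR} construction.

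With this in hand the verification is purely formal. Given $\theta\in V_1$, the natural transformation $\upsilon=\Phi(\theta)$ has components $\upsilon_x$ built from $\theta_x$ by \eqref{eq7.8}; applying $\Psi$ recovers, at each $x$, the pointwise forward map of \cite{BBR} applied to $\upsilon_x$, which returns $\theta_x$. Hence $\Psi(\Phi(\theta))_x(r)=\theta_x(r)$ for all $x$ and $r$, i.e. $\Psi\circ\Phi=id_{V_1}$. Conversely, given $\upsilon$ with components $\{\upsilon_x\}$, the element $\theta=\Psi(\upsilon)$ has components $\theta_x$, and $\Phi(\theta)$ has components obtained by feeding each $\theta_x$ back through \eqref{eq7.8}; the pointwise bijection gives $\Phi(\Psi(\upsilon))_x=\upsilon_x$ for every $x$. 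Since by Proposition \ref{P7.25} a natural transformation is determined by its components, $\Phi\circ\Psi=id$, and the two maps are mutually inverse isomorphisms.

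The only genuine subtlety---and the step I expect to require the most care---is the bookkeeping that makes the two \emph{global} side conditions agree with the pointwise picture: I must check that the family $\{\theta_x\}$ produced by $\Psi$ really lies in $V_1$, that is, additionally satisfies the cross-object condition \eqref{theta3}, and dually that the family $\{\upsilon_x\}$ produced by $\Phi$ really satisfies \eqref{cd7.3}. Both of these are, however, already established---the former in Proposition \ref{P7.5} and the latter in Proposition \ref{P7.3}---so no new computation is needed beyond citing them; the content of the present proof is exactly the observation that the single-object bijections of \cite{BBR} are compatible with these conditions and therefore glue to an isomorphism $V_1\cong Nat(\mathscr G\mathscr F,1_{Mod^C-\mathscr R})$.
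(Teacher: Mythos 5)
Your proposal is correct and follows essentially the same route as the paper: both arguments take the maps constructed in Propositions \ref{P7.3} and \ref{P7.5}, use Proposition \ref{P7.25} to reduce a natural transformation to its family of components $\{\upsilon_x\}$, and then invoke the single-object bijection from \cite{BBR} (the paper cites \cite[Proposition 3.7]{BBR} for the mutual-inverse statement, where you cite the two constituent constructions) to conclude that the composites are identities. Your closing remark that the global conditions \eqref{theta3} and \eqref{cd7.3} are already secured by Propositions \ref{P7.5} and \ref{P7.3} is exactly the bookkeeping the paper relies on implicitly.
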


\begin{proof} From Proposition \ref{P7.3} and Proposition \ref{P7.5}, we see that we have maps  $\psi:V_1\longrightarrow
Nat(\mathscr G\mathscr F,1_{Mod^C-\mathscr R})$ and $\phi:Nat(\mathscr G\mathscr F,1_{Mod^C-\mathscr R})\longrightarrow V_1$  in opposite directions.

\smallskip
We consider $\upsilon\in Nat(\mathscr G\mathscr F,1_{Mod^C-\mathscr R})$. By Proposition \ref{P7.5}, $\upsilon$ induces an element $\theta\in V_1$. Applying Proposition \ref{P7.3},
$\theta$ induces an element in $Nat(\mathscr G\mathscr F,1_{Mod^C-\mathscr R})$, which we denote by $\upsilon'$. 
Then, $\upsilon$ and $\upsilon'$ are determined respectively by natural transformations $\{\upsilon_x\in Nat(\mathscr G_x\mathscr F_x,1_{\mathbf M^C_{\mathscr R_x}(\psi_x)})\}_{x\in \mathscr X}$ and $\{\upsilon'_x\in Nat(\mathscr G_x\mathscr F_x,1_{\mathbf M^C_{\mathscr R_x}(\psi_x)})\}_{x\in \mathscr X}$ satisfying compatibility conditions as in \eqref{cd7.3}. From \cite[Proposition 3.7]{BBR}, it follows that $\upsilon'_x=\upsilon_x$ for each $x\in \mathscr X$. Hence, $\upsilon'=\upsilon$ and $\psi\circ \phi=id$. Similarly, we can show that $\phi\circ \psi=id$. 
\end{proof}

\begin{Thm}\label{T7.7}  Let $\mathscr X$ be a partially ordered set.  Let $C$ be a right semiperfect $K$-coalgebra and let $\mathscr R:\mathscr X\longrightarrow 
\mathscr Ent_C$ be an entwined $C$-representation. Then, the functor $\mathscr F:Mod^C-\mathscr R\longrightarrow Mod-\mathscr R$ is separable if and only if there exists
$\theta\in V_1$ such that 
\begin{equation}\label{eq7.19d}
\theta_x(r)(c_1\otimes c_2)=\varepsilon_C(c)\cdot id_r
\end{equation} for every $x\in \mathscr X$, $r\in\mathscr R_x$ and $c\in C$. 

\end{Thm}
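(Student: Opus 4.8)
The plan is to combine Rafael's separability criterion with the identification $Nat(\mathscr G\mathscr F,1_{Mod^C-\mathscr R})\cong V_1$ of Proposition \ref{P7.6}. Since $(\mathscr F,\mathscr G)$ is an adjoint pair by Proposition \ref{P7.2}, \cite[Theorem 1.2]{Raf} tells us that $\mathscr F$ is separable if and only if there exists $\upsilon\in Nat(\mathscr G\mathscr F,1_{Mod^C-\mathscr R})$ with $\upsilon\circ\mu=1_{Mod^C-\mathscr R}$, where $\mu$ is the unit of the adjunction. I would first observe that, because the adjunction $(\mathscr F,\mathscr G)$ is assembled pointwise from the adjunctions $(\mathscr F_x,\mathscr G_x)$ in the proof of Proposition \ref{P7.2}, its unit $\mu$ has $x$-component the comodule structure map $\mu_x(\mathcal M)\colon \mathcal M\to \mathscr G_x\mathscr F_x(\mathcal M)=\mathcal M\otimes C$, $m\mapsto m_0\otimes m_1$. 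By Proposition \ref{P7.6} the natural transformation $\upsilon$ corresponds to a unique $\theta\in V_1$, with each $\upsilon_x$ given by the formula \eqref{eq7.8}. The task is then to translate the single identity $\upsilon\circ\mu=1$ into the condition \eqref{eq7.19d} on $\theta$.

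The next step is to reduce the global identity to pointwise ones. Since kernels and cokernels, hence the property of being an isomorphism, are computed pointwise in $Mod^C-\mathscr R$, the identity $\upsilon\circ\mu=1$ is equivalent to $\upsilon_x(\mathscr M_x)\circ\mu_x(\mathscr M_x)=id$ for every $\mathscr M\in Mod^C-\mathscr R$ and every $x$. Using the extension functor of Lemma \ref{L5.1}, which satisfies $ex_x^C(\mathcal M)_x=\mathcal M$, every object $\mathcal M\in\mathbf M^C_{\mathscr R_x}(\psi_x)$ occurs as an $x$-component; hence the condition is equivalent to $\upsilon_x(\mathcal M)\circ\mu_x(\mathcal M)=id_{\mathcal M}$ for all $x$ and all $\mathcal M$. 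Substituting $\mu_x(\mathcal M)(m)=m_0\otimes m_1$ into \eqref{eq7.8} and using coassociativity of the comodule structure, I compute
\begin{equation*}
(\upsilon_x(\mathcal M)\circ\mu_x(\mathcal M))(m)=\mathcal M(\theta_x(r)(m_1\otimes m_2))(m_0)
\end{equation*}
for $m\in\mathcal M(r)$; this is the identity to be matched with \eqref{eq7.19d}. For the ``if'' direction, assuming \eqref{eq7.19d} I specialize to $m_1\otimes m_2=\Delta_C(m_1)$ to get $\theta_x(r)(m_1\otimes m_2)=\varepsilon_C(m_1)\,id_r$, whence the counit axiom of the comodule gives $\mathcal M(\varepsilon_C(m_1)id_r)(m_0)=\varepsilon_C(m_1)m_0=m$; thus $\upsilon\circ\mu=1$ and $\mathscr F$ is separable.

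The ``only if'' direction is where the real work lies, and I expect it to be the main obstacle: I must recover the full map $\theta_x(r)$ from the displayed identity by a judicious choice of test module on which the comodule comultiplication decouples cleanly from the entwining. The choice I would make is $\mathcal M=C\otimes H_r$ as in Lemma \ref{L3.3}, with $C$ the regular right $C$-comodule, evaluated at $m=c\otimes id_r\in\mathcal M(r)$. Here the entwining axiom $\psi(c\otimes id_r)=id_r\otimes c$ of Definition \ref{entcatx} collapses the comodule structure of Lemma \ref{L3.3} to $m_0\otimes m_1=(c_1\otimes id_r)\otimes c_2$, and the module action $(v\otimes g)\cdot h=v\otimes gh$ of Lemma \ref{L3.3} then yields
\begin{equation*}
(\upsilon_x(\mathcal M)\circ\mu_x(\mathcal M))(c\otimes id_r)=c_1\otimes\theta_x(r)(c_2\otimes c_3).
\end{equation*}
Setting this equal to $c\otimes id_r$ and applying $\varepsilon_C\otimes id$ to the first tensor factor produces $\theta_x(r)(c_1\otimes c_2)=\varepsilon_C(c)\,id_r$, which is exactly \eqref{eq7.19d}. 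The crux is thus the combination of picking a module and element for which the coalgebra comultiplication separates from $\psi$ (achieved by $C\otimes H_r$ together with $m=c\otimes id_r$ via the axiom $\psi(c\otimes id_r)=id_r\otimes c$) and then extracting $\theta_x(r)$ from a single tensor slot by the counit contraction.
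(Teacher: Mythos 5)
Your proposal is correct and follows essentially the same route as the paper: Rafael's criterion for the adjunction of Proposition \ref{P7.2}, the identification $Nat(\mathscr G\mathscr F,1_{Mod^C-\mathscr R})\cong V_1$ of Proposition \ref{P7.6}, pointwise reduction via $ex_x^C$, and a counit contraction on a test object whose coaction at the identity morphism decouples from $\psi$. The only (harmless) deviation is in the ``only if'' direction, where you evaluate on $C\otimes H_r$ at $c\otimes id_r$ using formula \eqref{eq7.8}, whereas the paper evaluates on $\mathscr H^{(x,r)}=H_r\otimes C$ at $id_r\otimes c$, which makes that direction immediate from the defining formula \eqref{eq7.16} of $\theta$.
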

\begin{proof} We suppose that $\mathscr F:Mod^C-\mathscr R\longrightarrow Mod-\mathscr R$ is separable. As mentioned before, this implies that there exists $\upsilon\in Nat(\mathscr G\mathscr F,1_{Mod^C-\mathscr R})$ such that
$\upsilon\circ \mu=1_{Mod^C-\mathscr R}$, where $\mu$ is the unit of the adjunction $(\mathscr F,\mathscr G)$. We set $\theta=\phi(\upsilon)$, where $\phi:Nat(\mathscr G\mathscr F,1_{Mod^C-\mathscr R})\longrightarrow V_1$  is the isomorphism described in the proof of Proposition \ref{P7.6}. In particular, for every $x\in \mathscr X$, $r\in\mathscr R_x$, we have
$\upsilon(\mathscr H^{(x,r)})\circ \mu(\mathscr H^{(x,r)})=id$. 
From \eqref{eq7.16}, it now follows that for every  $c\in C$, we have
\begin{equation}
\begin{array}{ll}
\theta_x(r)(c_1\otimes c_2)&=((id\otimes \varepsilon_C)\circ (\upsilon_x(\mathscr H^{(x,r)}_x)(r)))(id_r\otimes c_1\otimes c_2)\\
&=((id\otimes \varepsilon_C)\circ (\upsilon_x(\mathscr H^{(x,r)}_x)(r))\circ \mu(\mathscr H^{(x,r)})_x(r))(id_r\otimes c)\\
&=(id\otimes \varepsilon_C)(id_r\otimes c)=\varepsilon_C(c)\cdot id_r\\
\end{array}
\end{equation} Conversely, suppose that there exists $\theta\in V_1$ satisfying the condition in \eqref{eq7.19d}. We set $\upsilon:=\psi(\theta)$, where $\psi:V_1\longrightarrow
Nat(\mathscr G\mathscr F,1_{Mod^C-\mathscr R})$  is the other isomorphism described in the proof of Proposition \ref{P7.6}. We consider $\mathscr M\in Mod^C-\mathscr R$. By \eqref{eq7.8}, we know that
\begin{equation}\label{eq7.21}
\upsilon_x(\mathscr M_x):\mathscr M_x\otimes C\longrightarrow \mathscr M_x\qquad (m\otimes c)\mapsto \mathscr M_x(\theta_x(r)(m_1\otimes c))(m_0)
\end{equation} for any $x\in \mathscr X$, $r\in \mathscr R_x$, $m\in \mathscr M_x(r)$ and $c\in C$. 
We claim that $\upsilon\circ \mu=1_{Mod^C-\mathscr R}$. For this, we see that
\begin{equation}
\begin{array}{ll}
((\upsilon(\mathscr M)\circ \mu(\mathscr M))_x(r))(m)&=(\upsilon_x(\mathscr M_x)(r))(m_0\otimes m_1) \\
&= \mathscr M_x(\theta_x(r)(m_{01}\otimes m_1))(m_{00})\\
&= \mathscr M_x(\theta_x(r)(m_{11}\otimes m_{12}))(m_{0})\\
&= \varepsilon_C(m_1)m_0=m\\
\end{array}
\end{equation} This proves the result.

\end{proof}

We now turn to cartesian modules over entwined $C$-representations. For this, we assume additionally that 
$\mathscr R:\mathscr X\longrightarrow \mathscr Ent_C$ is flat. Then, it follows from Theorem \ref{T6.10} that $Cart^C-\mathscr R$ is a Grothendieck category. In particular, by taking
$C=K$, we note that $Cart-\mathscr R$ is also a Grothendieck category.

\begin{thm}\label{P7.8} Let $\mathscr X$ be a poset, $C$ be a right semiperfect $K$-coalgebra and $\mathscr R:\mathscr X\longrightarrow \mathscr Ent_C$ be an entwined $C$-representation
that is also flat. Then, the functor $\mathscr F:Mod^C-\mathscr R\longrightarrow Mod-\mathscr R$ restricts to a functor $\mathscr F^c:Cart^C-\mathscr R\longrightarrow Cart-\mathscr R$. Additionally, $\mathscr F^c$ has a right adjoint $\mathscr G^c:Cart-\mathscr R\longrightarrow Cart^C-\mathscr R$. 
\end{thm}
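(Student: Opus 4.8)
The plan is to handle the two assertions separately: the restriction in part (1) is immediate from the compatibility squares \eqref{cd7.1}, while the right adjoint in part (2) will be built as a composite of $\mathscr G$ with a coreflector onto the cartesian subcategory.

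\textbf{Restriction of $\mathscr F$.} For a morphism $\alpha:x\longrightarrow y$ in $\mathscr X$, the middle square of \eqref{cd7.1} furnishes the equality of functors $\mathscr F_y\circ \alpha^\ast=\alpha^\ast\circ \mathscr F_x:\mathbf M^C_{\mathscr R_x}(\psi_x)\longrightarrow \mathbf M_{\mathscr R_y}$. Under this identification the structure morphism $\mathscr F(\mathscr M)^\alpha:\alpha^\ast \mathscr F_x(\mathscr M_x)\longrightarrow \mathscr F_y(\mathscr M_y)$ of $\mathscr F(\mathscr M)$ coincides with $\mathscr F_y(\mathscr M^\alpha)$. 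If $\mathscr M\in Cart^C-\mathscr R$, then each $\mathscr M^\alpha$ is an isomorphism in $\mathbf M^C_{\mathscr R_y}(\psi_y)$ by Definition \ref{D6.1}; since the forgetful functor $\mathscr F_y$ preserves isomorphisms, $\mathscr F(\mathscr M)^\alpha$ is an isomorphism in $\mathbf M_{\mathscr R_y}$. Thus $\mathscr F(\mathscr M)\in Cart-\mathscr R$, and $\mathscr F$ corestricts to $\mathscr F^c:Cart^C-\mathscr R\longrightarrow Cart-\mathscr R$.

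\textbf{Construction of $\mathscr G^c$.} Write $i_C:Cart^C-\mathscr R\hookrightarrow Mod^C-\mathscr R$ and $i:Cart-\mathscr R\hookrightarrow Mod-\mathscr R$ for the (full) inclusions. I would define $\mathscr G^c:=R\circ \mathscr G\circ i$, where $\mathscr G$ is the right adjoint of $\mathscr F$ from Proposition \ref{P7.2} and $R:Mod^C-\mathscr R\longrightarrow Cart^C-\mathscr R$ is a right adjoint (a coreflector, analogous to the quasi-coherator on a scheme) of $i_C$. The insertion of $R$ is genuinely needed, since $\mathscr G(\mathscr N)_x=\mathscr N_x\otimes C$ need not be cartesian even when $\mathscr N$ is. Granting the existence of $R$, the adjunction $(\mathscr F^c,\mathscr G^c)$ follows from the chain of natural isomorphisms, for $\mathscr P\in Cart^C-\mathscr R$ and $\mathscr N\in Cart-\mathscr R$,
\[
Cart-\mathscr R(\mathscr F^c\mathscr P,\mathscr N)\cong Mod-\mathscr R(\mathscr F i_C\mathscr P,i\mathscr N)\cong Mod^C-\mathscr R(i_C\mathscr P,\mathscr G i\mathscr N)\cong Cart^C-\mathscr R(\mathscr P,R\mathscr G i\mathscr N),
\]
where the first isomorphism uses that $Cart-\mathscr R$ is full with $i\mathscr F^c\mathscr P=\mathscr F i_C\mathscr P$, the second is the adjunction $(\mathscr F,\mathscr G)$, and the third is the adjunction $(i_C,R)$.

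\textbf{Existence of the coreflector $R$.} This is the step I expect to be the main obstacle. First I would verify that $Cart^C-\mathscr R$ is closed under all colimits formed in $Mod^C-\mathscr R$. Colimits in $Mod^C-\mathscr R$ are computed objectwise, and each $\alpha^\ast$ is a left adjoint, hence preserves colimits; so for a diagram $\{\mathscr M^{(j)}\}$ of cartesian modules with colimit $\mathscr M$, the structure map factors as the composite of isomorphisms $\alpha^\ast\mathscr M_x=\alpha^\ast\varinjlim_j\mathscr M^{(j)}_x\cong\varinjlim_j\alpha^\ast\mathscr M^{(j)}_x\xrightarrow{\varinjlim_j\mathscr M^{(j),\alpha}}\varinjlim_j\mathscr M^{(j)}_y=\mathscr M_y$, so $\mathscr M$ is again cartesian. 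Therefore $i_C$ preserves all small colimits. By Theorems \ref{T4.9} and \ref{T6.10} (the latter using flatness of $\mathscr R$), $Mod^C-\mathscr R$ and $Cart^C-\mathscr R$ are Grothendieck categories; in particular $Cart^C-\mathscr R$ is cocomplete, co-well-powered, and has a generator. The dual Special Adjoint Functor Theorem then produces a right adjoint $R$ to the colimit-preserving functor $i_C$ (equivalently, one may invoke that a cocontinuous functor between locally presentable categories is a left adjoint). Substituting $R$ into the composite above yields $\mathscr G^c$ and completes the argument.
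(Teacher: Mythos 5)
Your proof is correct. Part (1) is essentially the paper's argument: the compatibility of $\mathscr F$ with $\alpha^\ast$ (the middle square of \eqref{cd7.1}) identifies $\mathscr F(\mathscr M)^\alpha$ with the image of $\mathscr M^\alpha$ under the forgetful functor, which preserves isomorphisms. For part (2), however, you take a genuinely different route. The paper applies the adjoint functor theorem for Grothendieck categories (\cite[Proposition 8.3.27]{KSch}) directly to $\mathscr F^c$: since $Cart^C-\mathscr R$ is closed under colimits in $Mod^C-\mathscr R$ and $\mathscr F$ is a left adjoint, $\mathscr F^c$ is cocontinuous between the Grothendieck categories $Cart^C-\mathscr R$ and $Cart-\mathscr R$ (Theorem \ref{T6.10}, using flatness), so a right adjoint exists abstractly. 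You instead apply the same existence theorem to the inclusion $i_C:Cart^C-\mathscr R\hookrightarrow Mod^C-\mathscr R$ to produce a coreflector $R$, and then set $\mathscr G^c=R\circ\mathscr G\circ i$; your chain of natural isomorphisms verifying the adjunction is valid. The two arguments consume the same ingredients (cocontinuity of the relevant inclusions plus Theorem \ref{T6.10}), but yours buys a slightly more explicit description of $\mathscr G^c$ as the coreflection of the already-known $\mathscr G(\mathscr N)$ --- which dovetails nicely with your correct observation that $\mathscr G(\mathscr N)$ need not itself be cartesian (compare Proposition \ref{P9.4}, where the Frobenius hypothesis is exactly what makes $\mathscr G$ restrict without coreflecting) --- while the paper's version is shorter because it never needs to name $R$.
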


\begin{proof} We consider $\mathscr M\in Cart^C-\mathscr R$. We claim that $\mathscr F(\mathscr M)\in Mod-\mathscr R$ actually lies in the subcategory  $Cart-\mathscr R$. Indeed, for $\alpha:x\longrightarrow y$ in $\mathscr X$, we have $\mathscr F(M)_\alpha:\mathscr F_x(\mathscr M_x)=\mathscr M_x\longrightarrow\alpha_\ast\mathscr M_y=\alpha_\ast \mathscr F_y(\mathscr M_y)$ in $\mathbf M_{\mathscr R_x}$. By adjunction, this corresponds to a morphism $\alpha^\ast\mathscr M_x\longrightarrow \mathscr M_y$ in $\mathbf M_{\mathscr R_y}$. But since $\mathscr M\in Cart^C-\mathscr R$, we already know that  $\alpha^\ast\mathscr M_x\longrightarrow \mathscr M_y$ is an isomorphism. Hence, $\mathscr F^c(\mathscr M):=
\mathscr F(\mathscr M)\in Cart-\mathscr R$. 

\smallskip
We also notice that $Cart^C-\mathscr R$ is closed under taking colimits in $Mod^C-\mathscr R$. Then  $\mathscr F^c:Cart^C-\mathscr R\longrightarrow Cart-\mathscr R$ preserves colimits and we know from Theorem \ref{T6.10} that both $Cart^C-\mathscr R$ and $Cart-\mathscr R$ are Grothendieck categories. It now follows from \cite[Proposition 8.3.27]{KSch} that $\mathscr F^c$ has a right adjoint. 
\end{proof}

\begin{thm}\label{P7.9}  Let $\mathscr X$ be a poset, $C$ be a right semiperfect $K$-coalgebra and $\mathscr R:\mathscr X\longrightarrow \mathscr Ent_C$ be an entwined $C$-representation
that is also flat. Suppose there exists
$\theta\in V_1$ such that 
\begin{equation}\label{eq7.19}
\theta_x(r)(c_1\otimes c_2)=\varepsilon_C(c)\cdot id_r
\end{equation} for every $x\in \mathscr X$, $r\in\mathscr R_x$ and $c\in C$. 
Then, $\mathscr F^c:Cart^C-\mathscr R\longrightarrow Cart-\mathscr R$ is separable.
\end{thm}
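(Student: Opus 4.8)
The plan is to deduce the separability of $\mathscr F^c$ from that of $\mathscr F$ by a purely formal restriction argument, exploiting the fact that $Cart^C-\mathscr R$ and $Cart-\mathscr R$ are \emph{full} subcategories of $Mod^C-\mathscr R$ and $Mod-\mathscr R$ respectively. First, since $\theta\in V_1$ satisfies \eqref{eq7.19}, Theorem \ref{T7.7} applies and shows that the forgetful functor $\mathscr F:Mod^C-\mathscr R\longrightarrow Mod-\mathscr R$ is separable. By the definition recalled before Proposition \ref{P7.25}, this means precisely that the natural transformation of bifunctors
\begin{equation*}
Mod^C-\mathscr R(\_\_,\_\_)\longrightarrow Mod-\mathscr R(\mathscr F(\_\_),\mathscr F(\_\_))
\end{equation*}
is a split monomorphism. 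Concretely, there is a natural transformation $\mathscr P$ in the opposite direction splitting it: for each pair $\mathscr M,\mathscr N\in Mod^C-\mathscr R$ we have a $K$-linear map $\mathscr P_{\mathscr M,\mathscr N}:Mod-\mathscr R(\mathscr F\mathscr M,\mathscr F\mathscr N)\longrightarrow Mod^C-\mathscr R(\mathscr M,\mathscr N)$, natural in both variables, satisfying $\mathscr P_{\mathscr M,\mathscr N}(\mathscr F(g))=g$ for every $g:\mathscr M\longrightarrow \mathscr N$.

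Next I would restrict this splitting to cartesian modules. For $\mathscr M,\mathscr N\in Cart^C-\mathscr R$, the fullness of $Cart^C-\mathscr R\hookrightarrow Mod^C-\mathscr R$ gives $Cart^C-\mathscr R(\mathscr M,\mathscr N)=Mod^C-\mathscr R(\mathscr M,\mathscr N)$, while the fullness of $Cart-\mathscr R\hookrightarrow Mod-\mathscr R$, together with the identifications $\mathscr F^c(\mathscr M)=\mathscr F(\mathscr M)$ and $\mathscr F^c(\mathscr N)=\mathscr F(\mathscr N)$ from Proposition \ref{P7.8}, gives $Cart-\mathscr R(\mathscr F^c\mathscr M,\mathscr F^c\mathscr N)=Mod-\mathscr R(\mathscr F\mathscr M,\mathscr F\mathscr N)$. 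Hence each $\mathscr P_{\mathscr M,\mathscr N}$ is already a map from $Cart-\mathscr R(\mathscr F^c\mathscr M,\mathscr F^c\mathscr N)$ to $Cart^C-\mathscr R(\mathscr M,\mathscr N)$, and we may simply set $\mathscr P^c_{\mathscr M,\mathscr N}:=\mathscr P_{\mathscr M,\mathscr N}$ for $\mathscr M,\mathscr N\in Cart^C-\mathscr R$.

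Finally I would verify that $\mathscr P^c$ splits the corresponding transformation for $\mathscr F^c$. The naturality of $\mathscr P^c$ in both variables is inherited verbatim from that of $\mathscr P$, since every Hom-space occurring in the naturality squares coincides with the one computed in the ambient category; and the splitting identity $\mathscr P^c_{\mathscr M,\mathscr N}(\mathscr F^c(g))=\mathscr P_{\mathscr M,\mathscr N}(\mathscr F(g))=g$ holds for each $g:\mathscr M\longrightarrow \mathscr N$ in $Cart^C-\mathscr R$. Therefore the natural transformation $Cart^C-\mathscr R(\_\_,\_\_)\longrightarrow Cart-\mathscr R(\mathscr F^c(\_\_),\mathscr F^c(\_\_))$ is a split monomorphism, which is exactly the assertion that $\mathscr F^c$ is separable.

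There is no substantive obstacle here: the arithmetic content of the statement is entirely carried by Theorem \ref{T7.7}, and the passage to the full subcategories is formal. The only subtlety worth flagging is that the right adjoint $\mathscr G^c$ produced in Proposition \ref{P7.8} need not be the restriction of $\mathscr G$, so it is cleanest to argue through the Hom-space characterisation of separability rather than to try to exhibit directly a section $\upsilon^c\in Nat(\mathscr G^c\mathscr F^c,1_{Cart^C-\mathscr R})$ of the unit of $(\mathscr F^c,\mathscr G^c)$.
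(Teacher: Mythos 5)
Your proposal is correct and follows essentially the same route as the paper: invoke Theorem \ref{T7.7} to get separability of $\mathscr F$, then restrict the splitting of $Mod^C-\mathscr R(\_\_,\_\_)\longrightarrow Mod-\mathscr R(\mathscr F(\_\_),\mathscr F(\_\_))$ to the full subcategories of cartesian modules. The paper states this restriction step more tersely, but your elaboration (including the remark about arguing via Hom-spaces rather than via $\mathscr G^c$) matches its argument exactly.
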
 

\begin{proof} From Theorem \ref{T7.7}, it follows that $\mathscr F:Mod^C-\mathscr R\longrightarrow Mod-\mathscr R$ is separable. In other words, for any $\mathscr M$, $\mathscr N\in 
Mod^C-\mathscr R$, the canonical morphism $Mod^C-\mathscr R(\mathscr M,\mathscr N)\longrightarrow Mod-\mathscr R(\mathscr F(\mathscr M),\mathscr F(\mathscr N))$ is a split monomorphism. Since $Cart^C-\mathscr R$ and $Cart-\mathscr R$ are full subcategories of $Mod^C-\mathscr R$ and $Mod-\mathscr R$ respectively and $\mathscr F^c$ is a restriction
of $\mathscr F$, the result follows.

\end{proof} 

\section{Separability of the functor $\mathscr G:Mod-\mathscr R\longrightarrow Mod^C-\mathscr R$}

We continue with $\mathscr X$ being a poset, $C$ being a right semiperfect coalgebra and let $\mathscr R:\mathscr X\longrightarrow \mathscr Ent_C$ be an entwined $C$-representation. In this section, we will give conditions for the right adjoint $\mathscr G:Mod-\mathscr R\longrightarrow Mod^C-\mathscr R$ to be separable. 

\smallskip Putting $C=K$ in Proposition \ref{P5.3}, we see that for each $x\in \mathscr X$, there is a functor $ex_x:\mathbf M_{\mathscr R_x}\longrightarrow Mod-\mathscr R$ having
right adjoint $ev_x:Mod-\mathscr R\longrightarrow \mathbf M_{\mathscr R_x}$. 
In a manner similar to Proposition \ref{P7.25}, we now can show that a natural transformation $\omega\in Nat(1_{Mod-\mathscr R},\mathscr F\mathscr G)$ consists of a collection of natural transformations $\{\omega_x\in
Nat(1_{\mathbf M_{\mathscr R_x}},\mathscr F_x\mathscr G_x)\}_{x\in \mathscr X}$ such that for any $\alpha:x\longrightarrow y$ in $\mathscr X$ and any $\mathscr N\in Mod-\mathscr R$, we have the following commutative diagram
\begin{equation}\label{eq8.1}
\begin{CD}
\mathscr N_x @>\omega_x(\mathscr N_x)>> \mathscr F_x\mathscr G_x(\mathscr N_x)\\
@V\mathscr N_\alpha VV @VV\mathscr F_x\mathscr G_x(\mathscr N_\alpha)V \\
\alpha_\ast\mathscr N_y @>\alpha_\ast\omega_y(\mathscr N_y)>> \alpha_\ast\mathscr F_y\mathscr G_y(\mathscr N_y)\\
\end{CD}
\end{equation} Here, $\omega_x\in
Nat(1_{\mathbf M_{\mathscr R_x}},\mathscr F_x\mathscr G_x)$ is determined by setting 
\begin{equation}\label{Yeq7.35b}
\omega_x(\mathcal N):=\omega(ex_x(\mathcal N))_x: (ex_x(\mathcal N))_x=\mathcal N\longrightarrow \mathscr F_x\mathscr G_x(\mathcal N)=\mathscr F_x\mathscr G_x((ex_x(\mathcal N))_x)\end{equation} for $\mathcal N\in \mathbf M_{\mathscr R_x}$. As in the proof of Proposition \ref{P7.25}, we can also show that
\begin{equation}\label{Ye8.15}
\omega_x(\mathscr N_x)=\omega(ex_x(\mathscr N_x))_x=\omega(\mathscr N)_x
\end{equation} for any $\mathscr N\in Mod-\mathscr R$ and $x\in \mathscr X$. More explicitly, for each $x\in \mathscr X$ and $r\in \mathscr R_x$, we have a commutative diagram
\begin{equation}\label{eq8.2}
\begin{CD}
\mathscr N_x(r) @>(\omega_x(\mathscr N_x))(r)>> (\mathscr F_x\mathscr G_x(\mathscr N_x))(r)=\mathscr N_x(r)\otimes C\\
@V\mathscr N_\alpha(r)VV @VV(\mathscr F_x\mathscr G_x(\mathscr N_\alpha))(r)V\\
\mathscr N_y(\alpha(r))=(\alpha_\ast\mathscr N_y)(r)@>(\alpha_\ast\omega_y(\mathscr N_y))(r)>=\omega_y(\mathscr N_y)(\alpha(r))> ( \alpha_\ast\mathscr F_y\mathscr G_y(\mathscr N_y))(r)=\mathscr N_y(\alpha(r))\otimes C \\
\end{CD}
\end{equation} We will now give another interpretation for the space $Nat(1_{Mod-\mathscr R},\mathscr F\mathscr G)$. For this, we consider a collection $\eta=\{\eta_x(s,r):H_r^x(s)=\mathscr R_x(s,r)\longrightarrow H_r^x(s)\otimes C=\mathscr R_x(s,r)\otimes C:f\mapsto \hat{f}\otimes c_f\}_{x\in \mathscr X,r,s\in \mathscr R_x}$ of $K$-linear maps satisfying the following conditions:

\smallskip
(1) Fix $x\in \mathscr X$. Then, for $s' \xrightarrow{h} s \xrightarrow{f} r \xrightarrow{g}  r'$ in $\mathscr R_x$, we have
\begin{equation}\label{8.3eta}
\eta_x(s',r')(gfh)=\sum \widehat{gfh}\otimes c_{gfh}=g\hat{f}h_{\psi_x}\otimes c_f^{\psi_x}\in \mathscr R_x(s',r')\otimes C
\end{equation}
(2) For $\alpha:x\longrightarrow y$ in $\mathscr X$ and $f\in \mathscr R_x(s,r)$ we have
\begin{equation}\label{8.4eta}
\alpha(\hat{f})\otimes c_f=\widehat{\alpha(f)}\otimes c_{\alpha(f)} \in \mathscr R_y(\alpha(s),\alpha(r))\otimes C
\end{equation} The space of all such $\eta$ will be denoted by $W_1$. 
We note that condition (1) is equivalent to saying that for each $x\in \mathscr X$, the element $\eta_x=\{\eta_x(s,r):\mathscr R_x(s,r)\longrightarrow \mathscr R_x(s,r)\otimes C:f\mapsto \hat{f}\otimes c_f\}_{r,s\in \mathscr R_x}\in Nat(H^x,H^x\otimes C)$, i.e., $\eta_x$ is a morphism in the category of $\mathscr R_x$-bimodules (functors $\mathscr R_x^{op}
\otimes \mathscr R_x\longrightarrow Vect_K$). Here $H^x$ is the canonical $\mathscr R_x$-bimodule that takes a pair of objects $(s,r)\in Ob(\mathscr R_x^{op}
\otimes \mathscr R_x)$ to $\mathscr R_x(s,r)$. Further, $H^x\otimes C$ is the $\mathscr R_x$-bimodule defined by setting
\begin{equation}
(H^x\otimes C)(s,r)=\mathscr R_x(s,r)\otimes C\qquad (H^x\otimes C)(h,g)(f\otimes c)=gfh_{\psi_x}\otimes c^{\psi_x}
\end{equation}  for $s' \xrightarrow{h} s \xrightarrow{f} r \xrightarrow{g}  r'$ in $\mathscr R_x$ and $c\in C$.

\begin{lem}\label{Lem8.1} There is a canonical morphism $Nat(1_{Mod-\mathscr R},\mathscr F\mathscr G)\longrightarrow W_1$. 

\end{lem}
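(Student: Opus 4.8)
The plan is to send a natural transformation $\omega\in Nat(1_{Mod-\mathscr R},\mathscr F\mathscr G)$ to the collection $\eta=\{\eta_x(s,r)\}$ obtained by evaluating the local components of $\omega$ on the representable modules. Concretely, for each $x\in\mathscr X$ and $r,s\in\mathscr R_x$, I would set
\begin{equation*}
\eta_x(s,r):=\omega_x(H^x_r)(s):\mathscr R_x(s,r)=H^x_r(s)\longrightarrow H^x_r(s)\otimes C=\mathscr R_x(s,r)\otimes C,
\end{equation*}
where $\omega_x\in Nat(1_{\mathbf M_{\mathscr R_x}},\mathscr F_x\mathscr G_x)$ is the local component from \eqref{Yeq7.35b}, and write $\eta_x(s,r)(f)=\hat f\otimes c_f$. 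It then remains to verify that $\eta$ lies in $W_1$, i.e. that it satisfies \eqref{8.3eta} and \eqref{8.4eta}; the association $\omega\mapsto\eta$ is visibly well defined, being built directly from the components of $\omega$.

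For condition \eqref{8.3eta} I would use that $\omega_x$ is a natural transformation on $\mathbf M_{\mathscr R_x}$, evaluated along representables. First, $\omega_x(H^x_r)\colon H^x_r\to\mathscr F_x\mathscr G_x(H^x_r)$ is a morphism of right $\mathscr R_x$-modules; since the module structure on $\mathscr F_x\mathscr G_x(H^x_r)=H^x_r\otimes C$ is the twisted one $(p\otimes c)\cdot h=ph_{\psi_x}\otimes c^{\psi_x}$, this $\mathscr R_x$-linearity gives $\eta_x(s',r)(fh)=\hat f\,h_{\psi_x}\otimes c_f^{\psi_x}$ for $h:s'\to s$. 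Second, naturality of $\omega_x$ along the morphism of representables $H_g:H^x_r\to H^x_{r'}$, $f\mapsto gf$, induced by $g:r\to r'$ (under which $\mathscr F_x\mathscr G_x(H_g)$ acts by $p\otimes c\mapsto gp\otimes c$), gives $\eta_x(s,r')(gf)=g\hat f\otimes c_f$. Composing these two identities yields $\eta_x(s',r')(gfh)=g\hat f\,h_{\psi_x}\otimes c_f^{\psi_x}$, which is exactly \eqref{8.3eta}; equivalently, this is the assertion that each $\eta_x$ is a morphism $H^x\to H^x\otimes C$ of $\mathscr R_x$-bimodules, as noted after the definition of $W_1$.

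The main work is condition \eqref{8.4eta}, which encodes the compatibility of $\eta$ across $\mathscr X$ and is where the global nature of $\omega$ enters. Here I would feed the test object $\mathscr N:=ex_x(H^x_r)\in Mod-\mathscr R$ (the extension functor of Proposition \ref{P5.3} with $C=K$) into the compatibility square \eqref{eq8.2}. By Lemma \ref{L6.3} one has $\mathscr N_x=H^x_r$ and $\mathscr N_y=\alpha^\ast H^x_r=H^y_{\alpha(r)}$ for $\alpha:x\to y$, while the transition map $\mathscr N_\alpha(s):\mathscr R_x(s,r)\to\mathscr R_y(\alpha(s),\alpha(r))$ is $f\mapsto\alpha(f)$ (its adjoint $\mathscr N^\alpha=id$, so $\mathscr N_\alpha$ is the adjunction unit $f\mapsto\alpha(f)$). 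Evaluating \eqref{eq8.2} at $f\in\mathscr R_x(s,r)$: going across-then-down sends $f\mapsto\hat f\otimes c_f\mapsto\alpha(\hat f)\otimes c_f$, while going down-then-across sends $f\mapsto\alpha(f)\mapsto\widehat{\alpha(f)}\otimes c_{\alpha(f)}$. Commutativity forces $\alpha(\hat f)\otimes c_f=\widehat{\alpha(f)}\otimes c_{\alpha(f)}$, which is precisely \eqref{8.4eta}. Thus $\eta\in W_1$, and $\omega\mapsto\eta$ is the desired canonical morphism $Nat(1_{Mod-\mathscr R},\mathscr F\mathscr G)\to W_1$. The one delicate point I expect is the explicit identification of $\mathscr N_\alpha$ for $\mathscr N=ex_x(H^x_r)$ (together with $\alpha^\ast H^x_r=H^y_{\alpha(r)}$); once that is in hand, both conditions reduce to short diagram chases.
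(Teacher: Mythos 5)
Your proposal is correct and follows essentially the same route as the paper: define $\eta_x(s,r):=\omega_x(H^x_r)(s)$, and verify the compatibility condition \eqref{8.4eta} by feeding the test object $ex_x(H^x_r)$ (with $\alpha^\ast H^x_r=H^y_{\alpha(r)}$) into the square \eqref{eq8.2}. The only difference is that you verify condition \eqref{8.3eta} directly from $\mathscr R_x$-linearity and naturality along $H_g$, whereas the paper simply cites \cite[Proposition 3.10]{BBR} for the correspondence $\omega_x\leftrightarrow\eta_x\in Nat(H^x,H^x\otimes C)$.
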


\begin{proof} As mentioned above, any $\omega\in Nat(1_{Mod-\mathscr R},\mathscr F\mathscr G)$ corresponds to  a collection of natural transformations $\{\omega_x\in
Nat(1_{\mathbf M_{\mathscr R_x}},\mathscr F_x\mathscr G_x)\}_{x\in \mathscr X}$ satisfying \eqref{eq8.1}. From the proof of \cite[Proposition 3.10]{BBR}, we know that each $\omega_x\in
Nat(1_{\mathbf M_{\mathscr R_x}},\mathscr F_x\mathscr G_x)$ corresponds to $\eta_x\in  Nat(H^x,H^x\otimes C)$ determined by setting
\begin{equation}\label{eqr8.6}
\eta_x(s,r):H_r^x(s)=\mathscr R_x(s,r)\longrightarrow H_r^x(s)\otimes C=\mathscr R_x(s,r)\otimes C \qquad \eta_x(s,r):=\omega_x(H^x_r)(s)
\end{equation} for $r$, $s\in \mathscr R_x$. Here, $H^x_r$ is the right $\mathscr R_x$-module $H_r^x:=\mathscr R_x(\_\_,r):\mathscr R_x^{op}\longrightarrow Vect_K$. We now consider
$\alpha:x\longrightarrow y$ in $\mathscr X$ and some $f\in \mathscr R_x(s,r)$. By applying Lemma \ref{L5.1} with $C=K$, we have $ex_x(H^x_r)\in Mod-\mathscr R$
which satisfies $(ex_x(H^x_r))_y=\alpha^\ast H^x_r=H^y_{\alpha(r)}$. Setting $\mathscr N=ex_x(H^x_r)$ in \eqref{eq8.2}, we have 
\begin{equation}\label{eq8.7}
\begin{CD}
\mathscr N_x(s)=H^x_r(s) @>(\omega_x(H^x_r))(s)>=\eta_x(s,r)> (\mathscr F_x\mathscr G_x(\mathscr N_x))(s)=H^x_r(s)\otimes C\\
@V\mathscr N_\alpha(s)VV @VV(\mathscr F_x\mathscr G_x(\mathscr N_\alpha))(s)V\\
\mathscr N_y(\alpha(s))=H^y_{\alpha(r)}(\alpha(s))@>\eta_y(\alpha(s),\alpha(r))>=\omega_y(H^y_{\alpha(r)})(\alpha(s))>\mathscr N_y(\alpha(s))\otimes C =H^y_{\alpha(r)}(\alpha(s))
\otimes C\\
\end{CD}
\end{equation} It follows that that the collection $\eta_x(s,r)$ satisfies condition \eqref{8.4eta}. This proves the result. 
\end{proof}

\begin{thm}\label{Pro8.2} The spaces $Nat(1_{Mod-\mathscr R},\mathscr F\mathscr G)$ and $ W_1$ are isomorphic.
\end{thm}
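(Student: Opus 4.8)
The plan is to invert the canonical map $\phi:Nat(1_{Mod-\mathscr R},\mathscr F\mathscr G)\to W_1$ furnished by Lemma~\ref{Lem8.1}, following the template of Proposition~\ref{P7.6}. First I would construct a map $\psi:W_1\to Nat(1_{Mod-\mathscr R},\mathscr F\mathscr G)$ in the opposite direction. Given $\eta\in W_1$, condition \eqref{8.3eta} asserts precisely that for each fixed $x\in\mathscr X$ the family $\eta_x=\{\eta_x(s,r)\}_{r,s\in\mathscr R_x}$ is a morphism of $\mathscr R_x$-bimodules, i.e.\ $\eta_x\in Nat(H^x,H^x\otimes C)$. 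By \cite[Proposition 3.10]{BBR} each such $\eta_x$ corresponds to a natural transformation $\omega_x\in Nat(1_{\mathbf M_{\mathscr R_x}},\mathscr F_x\mathscr G_x)$, given on $\mathcal N\in\mathbf M_{\mathscr R_x}$ by $\omega_x(\mathcal N)(r)(n)=n\widehat{id_r}\otimes c_{id_r}$, where $\eta_x(r,r)(id_r)=\widehat{id_r}\otimes c_{id_r}$. It then remains to check that the collection $\{\omega_x\}_{x\in\mathscr X}$ assembles into a single element $\omega=\psi(\eta)$ of $Nat(1_{Mod-\mathscr R},\mathscr F\mathscr G)$, which, by the discussion preceding Lemma~\ref{Lem8.1}, amounts to verifying the commutativity of \eqref{eq8.1} (equivalently \eqref{eq8.2}) for every $\mathscr N\in Mod-\mathscr R$ and every $\alpha:x\to y$ in $\mathscr X$.

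This gluing verification is the main obstacle, and it is exactly where condition \eqref{8.4eta} enters. Chasing $n\in\mathscr N_x(r)$ around \eqref{eq8.2}, the top–right route yields $\mathscr N_\alpha(r)(n\widehat{id_r})\otimes c_{id_r}$, while the left–bottom route yields $(\mathscr N_\alpha(r)(n))\widehat{id_{\alpha(r)}}\otimes c_{id_{\alpha(r)}}$. Since $\mathscr N_\alpha$ is a morphism of right $\mathscr R_x$-modules into $\alpha_\ast\mathscr N_y$, it intertwines the module actions, so $\mathscr N_\alpha(r)(n\widehat{id_r})=(\mathscr N_\alpha(r)(n))\alpha(\widehat{id_r})$, turning the first route into $(\mathscr N_\alpha(r)(n))\alpha(\widehat{id_r})\otimes c_{id_r}$. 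Thus the two sides agree as soon as $\alpha(\widehat{id_r})\otimes c_{id_r}=\widehat{id_{\alpha(r)}}\otimes c_{id_{\alpha(r)}}$ in $\mathscr R_y(\alpha(r),\alpha(r))\otimes C$; but this is precisely \eqref{8.4eta} applied to $f=id_r$, using $\alpha(id_r)=id_{\alpha(r)}$. Hence \eqref{eq8.2} commutes and $\psi(\eta)$ is a well-defined natural transformation.

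Finally I would verify that $\phi$ and $\psi$ are mutually inverse $K$-linear maps, yielding the asserted isomorphism. Both are manifestly $K$-linear. By \eqref{Ye8.15}, an element of $Nat(1_{Mod-\mathscr R},\mathscr F\mathscr G)$ is completely determined by its pointwise components $\{\omega_x\}_{x\in\mathscr X}$, and under $\phi$ these components are recorded as the $\eta_x=\omega_x(H^x_r)$ of \eqref{eqr8.6}. Since at each $x$ the correspondence $\omega_x\leftrightarrow\eta_x$ supplied by \cite[Proposition 3.10]{BBR} is a bijection, the composites $\phi\circ\psi$ and $\psi\circ\phi$ reduce to the identity componentwise, exactly as in the proof of Proposition~\ref{P7.6}. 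This establishes $Nat(1_{Mod-\mathscr R},\mathscr F\mathscr G)\cong W_1$.
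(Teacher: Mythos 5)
Your proof is correct, and its overall architecture coincides with the paper's: both invert the map of Lemma \ref{Lem8.1} by using \cite[Proposition 3.10]{BBR} pointwise to turn each $\eta_x$ into $\omega_x\in Nat(1_{\mathbf M_{\mathscr R_x}},\mathscr F_x\mathscr G_x)$, and the whole content is then to show that the $\omega_x$ glue into a single natural transformation, i.e.\ that \eqref{eq8.1} holds. Where you diverge is in how that gluing is verified. The paper first checks \eqref{eq8.1} only for the generators $\mathscr N=ex_x(H^x_r)$, where it reduces immediately to \eqref{8.4eta}, and then extends to an arbitrary $\mathscr N'$ by choosing an epimorphism from a direct sum of such generators and chasing three commutative diagrams. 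You instead check \eqref{eq8.2} directly on an arbitrary $\mathscr N$ by an element computation, using the explicit formula $\omega_x(\mathcal N)(r)(n)=n\widehat{id_r}\otimes c_{id_r}$ (which is forced from $\omega_x(H^x_r)(s)=\eta_x(s,r)$ by naturality against the morphism $H^x_r\to\mathcal N$, $id_r\mapsto n$, so your appeal to \cite[Proposition 3.10]{BBR} is legitimate), the fact that $\mathscr N_\alpha$ is $\mathscr R_x$-linear into $\alpha_\ast\mathscr N_y$, and \eqref{8.4eta} at $f=id_r$. Your route is shorter and avoids the generator/epimorphism reduction entirely, at the cost of having to name the explicit formula for $\omega_x$; the paper's route stays at the level of the representables, where \eqref{eqr8.6} makes the correspondence with $\eta_x$ tautological, and then uses the Grothendieck-category machinery already in place. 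Both are complete; the concluding verification that the two assignments are mutually inverse is handled the same way in each.
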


\begin{proof} We consider an element $\eta\in W_1$. As mentioned before, this gives a collection $\{\eta_x\in  Nat(H^x,H^x\otimes C)\}_{x\in\mathscr X}$ satisfying the compatibility condition in \eqref{8.4eta}. From the proof of \cite[Proposition 3.10]{BBR}, it follows that each $\eta_x$ corresponds to a natural transformation $\omega_x\in
Nat(1_{\mathbf M_{\mathscr R_x}},\mathscr F_x\mathscr G_x)$ which satisfies $\omega_x(H^x_r)(s)=\eta_x(s,r)$ for $r$, $s\in \mathscr R_x$. We claim that the collection
$\{\omega_x\}_{x\in \mathscr X}$ satisfies the compatibility condition in \eqref{eq8.1} for each $\mathscr N\in Mod-\mathscr R$, thus determining an element $\omega\in Nat(1_{Mod-\mathscr R},\mathscr F\mathscr G)$. 

\smallskip
We start with $\mathscr N=ex_x(H^x_r)$ for some $x\in \mathscr X$ and $r\in \mathscr R_x$. We consider a morphism $\alpha:y\longrightarrow z$ in $\mathscr X$. If $x\not\leq y$, then
$\mathscr N_y=0$ and the condition in \eqref{eq8.1} is trivially satisfied. Otherwise, let $\beta:x\longrightarrow y$ in $\mathscr X$ and set $s=\beta(r)$. In particular, $\mathscr N_y=\beta^\ast H^x_r=H^y_{\beta(r)}=H^y_s$ and $\mathscr N_z=H^z_{\alpha\beta(r)}=H^z_{\alpha(s)}$. Applying the condition \eqref{8.4eta}, we see that the following diagram is commutative for any $s'\in \mathscr R_y$: 
\begin{equation}
\begin{CD}
\mathscr R_y(s',s)=\mathscr N_y(s')@>\eta_y(s',s)>=\omega_y(\mathscr N_y)(s')> \mathscr N_y(s')\otimes C=\mathscr R_y(s',s)\otimes C\\
@V\mathscr N_\alpha(s')VV @VV(\mathscr F_y\mathscr G_y(\mathscr N_\alpha))(s')V \\
\mathscr R_z(\alpha(s'),\alpha(s))=\mathscr N_z(\alpha(s'))@>\eta_z(\alpha(s'),\alpha(s))>=\omega_z(\mathscr N_z)(\alpha(s'))> \mathscr N_z(\alpha(s'))\otimes C=\mathscr R_z(\alpha(s'),
\alpha(s))\otimes C \\
\end{CD}
\end{equation}
In other words, the condition in \eqref{eq8.1} is satisfied for $\mathscr N=ex_x(H^x_r)$. 
From Theorem \ref{T5.5}, we know that the collection 
\begin{equation}\label{8gen}
\{\mbox{$ex_x(H_r^x)$ $\vert$ $x\in \mathscr X$, $r\in \mathscr R_x$}\}
\end{equation} is a set of generators for $Mod-\mathscr R$. Accordingly, for any $\mathscr N'\in Mod-\mathscr R$, we can choose an epimorphism $\phi:\mathscr N\longrightarrow \mathscr N'$ where $\mathscr N$ is a direct sum of copies of objects in \eqref{8gen}. Then, $\mathscr N$ satisfies \eqref{eq8.1} and we have commutative diagrams 
\begin{equation}
\begin{CD}
\mathscr N_y  @>\mathscr N_\alpha>> \alpha_\ast\mathscr N_z @>\alpha_\ast\omega_z(\mathscr N_z)>> \alpha_\ast\mathscr F_z\mathscr G_z(\mathscr N_z) \\
@V\phi_yVV @V\alpha_\ast\phi_zVV @VV\alpha_\ast\mathscr F_z\mathscr G_z(\phi_z)V \\
\mathscr N'_y  @>\mathscr N'_\alpha>> \alpha_\ast\mathscr N'_z @>\alpha_\ast\omega_z(\mathscr N'_z)>> \alpha_\ast\mathscr F_z\mathscr G_z(\mathscr N'_z) \\
\end{CD}
\end{equation}
\begin{equation}
\begin{array}{c}
 \begin{CD}
\mathscr N_y  @>\omega_y(\mathscr N_y)>> \mathscr F_y\mathscr G_y(\mathscr N_y) @>\mathscr F_y\mathscr G_y(\mathscr N_\alpha)>> \alpha_\ast\mathscr F_z\mathscr G_z(\mathscr N_z) \\
@V\phi_yVV @V\mathscr F_y\mathscr G_y(\phi_y)VV @VV\alpha_\ast\mathscr F_z\mathscr G_z(\phi_z)V \\
\mathscr N'_y  @>\omega_y(\mathscr N'_y)>> \mathscr F_y\mathscr G_y(\mathscr N'_y) @>\mathscr F_y\mathscr G_y(\mathscr N'_\alpha)>> \alpha_\ast\mathscr F_z\mathscr G_z(\mathscr N'_z) \\
\end{CD}\qquad \qquad 
\begin{CD}
\mathscr N_y @>\omega_y(\mathscr N_y)>> \mathscr F_y\mathscr G_y(\mathscr N_y)\\
@V\mathscr N_\alpha VV @VV\mathscr F_y\mathscr G_y(\mathscr N_\alpha)V \\
\alpha_\ast\mathscr N_z @>\alpha_\ast\omega_z(\mathscr N_z)>> \alpha_\ast\mathscr F_z\mathscr G_z(\mathscr N_z)\\
\end{CD}\\
\end{array}
\end{equation}
for any $\alpha:y\longrightarrow z$ in $\mathscr X$. Since $\phi_y:\mathscr N_y\longrightarrow \mathscr N'_y$ is an epimorphism, it follows that $\mathscr N'$ also satisfies the condition in \eqref{eq8.1}. This gives a morphism $W_1\longrightarrow Nat(1_{Mod-\mathscr R},\mathscr F\mathscr G)$. It may be verified that this is inverse to the morphism $Nat(1_{Mod-\mathscr R},\mathscr F\mathscr G)\longrightarrow W_1$ in Lemma \ref{Lem8.1}, which proves the result. 

\end{proof}

We will now give conditions for the functor $\mathscr G:Mod-\mathscr R\longrightarrow Mod^C-\mathscr R$ to be separable. Since $\mathscr G$ has a left adjoint, it follows (see \cite[Theorem 1.2]{Raf}) that $\mathscr G$ is separable if and only if there exists a natural transformation $\omega\in Nat(1_{Mod-\mathscr R},\mathscr F\mathscr G)$ such that $\nu\circ \omega=1_{Mod-\mathscr R}$, where
$\nu$ is the counit of the adjunction. 

\begin{Thm}\label{T8.3} Let $\mathscr X$ be a partially ordered set, $C$ be a right semiperfect $K$-coalgebra and let $\mathscr R:\mathscr X\longrightarrow \mathscr Ent_C$ be an entwined $C$-representation. Then,  the functor
$\mathscr G:Mod-\mathscr R\longrightarrow Mod^C-\mathscr R$  is separable if and only if there exists $\eta\in W_1$ such that 
\begin{equation}\label{cond8.3}
\begin{CD} id=(id\otimes \varepsilon_C)\circ \eta_x(s,r):\mathscr R_x(s,r)@>\eta_x(s,r)>> \mathscr R_x(s,r)\otimes C@>(id\otimes \varepsilon_C)>> \mathscr R_x(s,r)\end{CD}
\end{equation}
for each $x\in \mathscr X$ and $s$, $r\in \mathscr R_x$.
\end{Thm}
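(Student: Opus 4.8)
The plan is to invoke the separability criterion recalled just before the statement: since $\mathscr G$ admits the left adjoint $\mathscr F$, it is separable if and only if there is some $\omega\in Nat(1_{Mod-\mathscr R},\mathscr F\mathscr G)$ with $\nu\circ\omega=1_{Mod-\mathscr R}$, where $\nu$ is the counit of $(\mathscr F,\mathscr G)$. By Proposition \ref{Pro8.2} the space $Nat(1_{Mod-\mathscr R},\mathscr F\mathscr G)$ is identified with $W_1$, and under this identification an $\omega$ corresponds to an $\eta=\{\eta_x(s,r)\}$ determined by $\omega_x(H^x_r)(s)=\eta_x(s,r)$. Thus the whole problem reduces to translating the single equation $\nu\circ\omega=1_{Mod-\mathscr R}$ into a condition on $\eta$.

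For this I first make the counit explicit. The global counit $\nu$ is assembled from the local counits $\nu_x$ of the adjoint pairs $(\mathscr F_x,\mathscr G_x)$. Since $\mathscr G_x(\mathscr N_x)=\mathscr N_x\otimes C$ and $\mathscr F_x$ merely forgets the comodule structure, this local counit is $\nu_x(\mathscr N_x)=id\otimes\varepsilon_C:\mathscr N_x\otimes C\longrightarrow\mathscr N_x$. Evaluating $\nu\circ\omega$ on the generating object $ex_x(H^x_r)$, at the component $x\in\mathscr X$ and at the object $s\in\mathscr R_x$, and using $(ex_x(H^x_r))_x=H^x_r$ together with $\omega_x(H^x_r)(s)=\eta_x(s,r)$, the composite becomes exactly $(id\otimes\varepsilon_C)\circ\eta_x(s,r):\mathscr R_x(s,r)\longrightarrow\mathscr R_x(s,r)$. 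Hence the requirement $\nu\circ\omega=1$, read off on these generators, is precisely the condition \eqref{cond8.3}.

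It remains to check that validity of $\nu\circ\omega=1_{Mod-\mathscr R}$ on the generators $\{ex_x(H^x_r)\}$ already forces it on every object; this is the only non-formal point, and I would handle it exactly as in the proof of Proposition \ref{Pro8.2}. The natural endomorphism $\nu\circ\omega$ of $1_{Mod-\mathscr R}$ respects direct sums by naturality, so it is the identity on any direct sum of generators; for a general $\mathscr N'$ I choose, using Theorem \ref{T5.5} with $C=K$, an epimorphism $\phi:\mathscr N\twoheadrightarrow\mathscr N'$ out of such a direct sum $\mathscr N$, and then naturality gives $(\nu\circ\omega)(\mathscr N')\circ\phi=\phi\circ(\nu\circ\omega)(\mathscr N)=\phi$, whence $(\nu\circ\omega)(\mathscr N')=id$ because $\phi$ is epic. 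Running this in both directions yields the equivalence: if $\mathscr G$ is separable the chosen $\omega$ produces an $\eta\in W_1$ satisfying \eqref{cond8.3} by evaluation on generators, and conversely any such $\eta$ yields $\omega$ with $\nu\circ\omega=1_{Mod-\mathscr R}$.

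The main obstacle I anticipate is bookkeeping rather than conceptual: correctly matching the global counit $\nu$ against the local data and confirming that, under the identification of Proposition \ref{Pro8.2}, the component of $\nu\circ\omega$ on $ex_x(H^x_r)$ is literally $(id\otimes\varepsilon_C)\circ\eta_x(s,r)$. Once the counit is pinned down as $id\otimes\varepsilon_C$ and the generator-reduction is in place, everything else is formal.
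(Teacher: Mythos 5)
Your proposal is correct and follows essentially the same route as the paper: Rafael's criterion $\nu\circ\omega=1$, the identification of $Nat(1_{Mod-\mathscr R},\mathscr F\mathscr G)$ with $W_1$ from Proposition \ref{Pro8.2}, evaluation of the counit $id\otimes\varepsilon_C$ on the generators $ex_x(H^x_r)$ to read off condition \eqref{cond8.3}, and the reduction from generators to arbitrary objects via an epimorphism from a direct sum of generators together with naturality. No gaps.
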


\begin{proof} First, we suppose that $\mathscr G$ is separable, i.e., there exists a natural transformation $\omega\in Nat(1_{Mod-\mathscr R},\mathscr F\mathscr G)$ such that $\nu\circ \omega=1_{Mod-\mathscr R}$. Using Proposition \ref{Pro8.2}, we consider $\eta\in W_1$ corresponding to $\omega$.

\smallskip
By definition, the counit $\nu$ of the adjunction $(\mathscr F,\mathscr G)$ is described as follows: for any $\mathscr N\in Mod-\mathscr R$, we have
\begin{equation}
\nu(\mathscr N)_x(s):\mathscr N_x(s)\otimes C\longrightarrow \mathscr N_x(s) \qquad n\otimes c\mapsto n\varepsilon_C(c) 
\end{equation} for each $x\in \mathscr X$, $s\in \mathscr R_x$. We choose $x\in\mathscr X$, $r\in \mathscr R_x$ and set $\mathscr N=ex_x(H^x_r)$. Since $\nu\circ \omega=1_{Mod-\mathscr R}$, it now follows from \eqref{eqr8.6} that
\begin{equation}\label{cond8.13}
id=\nu(ex_x(H^x_r))_x(s)\circ \omega(ex_x(H^x_r))_x(s)=(id\otimes \varepsilon_C)\circ \omega_x(H^x_r)(s)=(id\otimes \varepsilon_C)\circ \eta_x(s,r)
\end{equation}
Conversely, suppose that we have $\eta\in W_1$ such that the condition in \eqref{cond8.3} is satisfied. Using the isomorphism in Proposition \ref{Pro8.2}, we obtain the natural transformation $\omega\in Nat(1_{Mod-\mathscr R},\mathscr F\mathscr G)$ corresponding to $\eta$. Then, it is clear from \eqref{cond8.13} that $\nu(\mathscr N)\circ \omega(\mathscr N)=id$
for $\mathscr N=ex_x(H^x_r)$. Since $\{\mbox{$ex_x(H_r^x)$ $\vert$ $x\in \mathscr X$, $r\in \mathscr R_x$}\}$ is a set of generators for $Mod-\mathscr R$, it follows that for any
$\mathscr N'\in Mod-\mathscr R$, there is an epimorphism $\phi:\mathscr N\longrightarrow \mathscr N'$ such that $\nu(\mathscr N)\circ \omega(\mathscr N)=id$. We now consider the commutative diagram
\begin{equation}\label{cd8.14}
\begin{CD}
\mathscr N @>\omega(\mathscr N)>> \mathscr F\mathscr G(\mathscr N) @>\nu(\mathscr N)>> \mathscr N \\
@V\phi VV @V\mathscr F\mathscr G(\phi) VV @VV\phi V\\
\mathscr N' @>\omega(\mathscr N')>> \mathscr F\mathscr G(\mathscr N') @>\nu(\mathscr N')>> \mathscr N' \\
\end{CD}
\end{equation} Since the upper horizontal composition in \eqref{cd8.14} is the identity and $\phi$ is an epimorphism, it follows that $\nu(\mathscr N')\circ \omega(\mathscr N')=id$. This proves the result. 
\end{proof}

\section{$(\mathscr F,\mathscr G)$ as a Frobenius pair}

In Sections 7 and 8, we have given conditions for the functor $\mathscr F:Mod^C-\mathscr R\longrightarrow Mod-\mathscr R$ and its right adjoint $\mathscr G:Mod-\mathscr R
\longrightarrow Mod^C-\mathscr R$ to be separable. In this section, we will give necessary and sufficient conditions for $(\mathscr F,\mathscr G)$ to be a Frobenius pair, i.e.,
$\mathscr G$ is both a right and a left adjoint of $\mathscr F$. First, we note that it follows from the characterization of Frobenius pairs (see for instance, \cite[$\S$ 1]{uni}) that
$(\mathscr F,\mathscr G)$ is a Frobenius pair if and only if there exist $\upsilon\in Nat(\mathscr G\mathscr F,1_{Mod^C-\mathscr R})$ and $\omega\in Nat(1_{Mod-\mathscr R},\mathscr F
\mathscr G)$ 
 such that 
 \begin{equation}\label{eq9.1}
\mathscr F(\upsilon(\mathscr M))\circ\omega(\mathscr F(\mathscr M))= id_{\mathscr F(\mathscr M)}\qquad \upsilon(\mathscr G(\mathscr N))\circ \mathscr G(\omega(\mathscr N))=id_{\mathscr G(\mathscr N)}
 \end{equation}
 for any $\mathscr M\in Mod^C-\mathscr R$ and $\mathscr N\in Mod-\mathscr R$. Equivalently, for each $x\in \mathscr X$, we must have
 \begin{equation}\label{eq9.15}
 \begin{array}{c}
(\mathscr F(\upsilon(\mathscr M)))_x\textrm{ }\circ\textrm{ }\omega(\mathscr F(\mathscr M))_x= \mathscr F_x(\upsilon_x(\mathscr M_x))\textrm{ }\circ\textrm{ }\omega_x(\mathscr F_x(\mathscr M_x))= id_{\mathscr F_x(\mathscr M_x)}\\
 \upsilon(\mathscr G(\mathscr N))_x\textrm{ }\circ\textrm{ } \mathscr G(\omega(\mathscr N))_x=\upsilon_x(\mathscr G_x(\mathscr N_x))\textrm{ }\circ\textrm{ }\mathscr G_x(\omega_x(\mathscr N_x))=id_{\mathscr G_x(\mathscr N_x)} \\
 \end{array}
\end{equation}  for any $\mathscr M\in Mod^C-\mathscr R$ and $\mathscr N\in Mod-\mathscr R$. 
 
 \begin{Thm}\label{T9.1} Let $\mathscr X$ be a partially ordered set, $C$ be a right semiperfect $K$-coalgebra and let $\mathscr R:\mathscr X\longrightarrow \mathscr Ent_C$ be an entwined $C$-representation. Let $\mathscr F:Mod^C-\mathscr R\longrightarrow Mod-\mathscr R$  be the forgetful functor and  $\mathscr G:Mod-\mathscr R
\longrightarrow Mod^C-\mathscr R$ its right adjoint. Then, $(\mathscr F,\mathscr G)$ is a Frobenius pair if and only if 
there exist $\theta\in V_1$ and $\eta\in W_1$ such that
\begin{equation}\label{eq9.2}
\varepsilon_C(d)f=\sum \widehat{f}\circ \theta_x(r)(c_f\otimes d)\qquad \varepsilon_C(d)f=\sum \widehat{f_{\psi_x}} 
\circ \theta_x(r)(d^{\psi_x}\otimes c_f)
\end{equation} for every $x\in \mathscr X$, $r\in \mathscr R_x$, $f\in \mathscr R_x(r,s)$ and $d\in C$, where $\eta_x(r,s)(f)=\widehat{f}\otimes c_f$. 
 
 \end{Thm}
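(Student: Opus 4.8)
The plan is to reduce the assertion to the known Frobenius criterion for a single entwining structure and then apply it uniformly over $\mathscr{X}$, transporting everything across the two isomorphisms already at our disposal. Recall from \cite[$\S 1$]{uni} that $(\mathscr{F},\mathscr{G})$ is a Frobenius pair precisely when there exist $\upsilon\in Nat(\mathscr{G}\mathscr{F},1_{Mod^C-\mathscr{R}})$ and $\omega\in Nat(1_{Mod-\mathscr{R}},\mathscr{F}\mathscr{G})$ satisfying the zig-zag identities \eqref{eq9.1}. By Proposition \ref{P7.6} and Proposition \ref{Pro8.2}, such an $\upsilon$ corresponds to a unique $\theta\in V_1$ and such an $\omega$ to a unique $\eta\in W_1$, the correspondences being given by \eqref{eq7.8} and \eqref{eqr8.6}. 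First I would fix these correspondences, so that the task becomes to identify for which pairs $(\theta,\eta)$ the identities \eqref{eq9.1} hold.

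Second, I would pass to the componentwise form \eqref{eq9.15}, which is equivalent to \eqref{eq9.1} because objects, morphisms and composition in both $Mod^C-\mathscr{R}$ and $Mod-\mathscr{R}$ are computed pointwise in $x$, and $\mathscr{F}$, $\mathscr{G}$, $\upsilon$, $\omega$ all respect evaluation at $x$. The key observation is that $ex_x^C(\mathcal{M})_x=\mathcal{M}$ for every $\mathcal{M}\in\mathbf{M}^C_{\mathscr{R}_x}(\psi_x)$ by Lemma \ref{L5.1} (take the identity $id_x$), and likewise $ex_x(\mathcal{N})_x=\mathcal{N}$ for the functor $ex_x$ obtained by putting $C=K$ in Proposition \ref{P5.3}; hence every local object occurs as an $x$-component. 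Consequently the family of identities \eqref{eq9.15}, ranging over all global objects, is equivalent to the two zig-zag identities for the adjoint pair $(\mathscr{F}_x,\mathscr{G}_x)$ attached to the single entwining structure $(\mathscr{R}_x,C,\psi_x)$, for every $x\in\mathscr{X}$, with respect to the natural transformations $\upsilon_x$ and $\omega_x$ determined by $\theta_x$ and $\eta_x$.

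Third, at each fixed $x$ these are exactly the zig-zag identities treated for a single entwining structure in \cite{BBR}, and I would invoke that criterion. Substituting the explicit formula $\upsilon_x(\mathcal{M})(m\otimes c)=\mathcal{M}(\theta_x(r)(m_1\otimes c))(m_0)$ from \eqref{eq7.8} and $\omega_x(H^x_r)(s)=\eta_x(s,r)$ from \eqref{eqr8.6}, and evaluating the two composites in \eqref{eq9.15} on the generating modules, I expect the identity $\mathscr{F}_x(\upsilon_x)\circ\omega_x(\mathscr{F}_x)=id$ to unwind to $\varepsilon_C(d)f=\sum\widehat{f}\circ\theta_x(r)(c_f\otimes d)$ and the identity $\upsilon_x(\mathscr{G}_x)\circ\mathscr{G}_x(\omega_x)=id$ to unwind to $\varepsilon_C(d)f=\sum\widehat{f_{\psi_x}}\circ\theta_x(r)(d^{\psi_x}\otimes c_f)$, which are the two relations in \eqref{eq9.2}. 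Assembling these over all $x\in\mathscr{X}$ yields the equivalence in both directions: in the forward direction one extracts $\theta$ and $\eta$ from a given Frobenius structure and reads off \eqref{eq9.2}, while in the converse direction \eqref{eq9.2} at each $x$ forces \eqref{eq9.15}, hence \eqref{eq9.1}, hence that $(\mathscr{F},\mathscr{G})$ is Frobenius.

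The main obstacle I anticipate is the explicit componentwise computation of the third step, and in particular tracking how the entwining map $\psi_x$ enters the second relation when one forms $\upsilon_x(\mathscr{G}_x(\mathcal{N}))\circ\mathscr{G}_x(\omega_x(\mathcal{N}))$: the right $\mathscr{R}_x$-action on $\mathscr{G}_x(\mathcal{N})=\mathcal{N}\otimes C$ twists $f$ to $f_{\psi_x}$ and $c$ to $c^{\psi_x}$, and it is precisely this twist that produces the $\widehat{f_{\psi_x}}$ and $d^{\psi_x}$ appearing in \eqref{eq9.2}. By contrast, the global-to-pointwise reduction and the transport across the isomorphisms of Propositions \ref{P7.6} and \ref{Pro8.2} are formal, once one notes that $ex_x^C$ and $ex_x$ realize every local object as an $x$-component.
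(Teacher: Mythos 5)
Your proposal is correct and follows essentially the same route as the paper: both reduce the global zig-zag identities \eqref{eq9.15} to the local ones for each $(\mathscr F_x,\mathscr G_x)$ by using the extension functors $ex_x^C$ and $ex_x$ to realize every local object as an $x$-component, transport $\upsilon$ and $\omega$ to $\theta\in V_1$ and $\eta\in W_1$ via Propositions \ref{P7.6} and \ref{Pro8.2}, and then invoke the single-entwining-structure Frobenius criterion (the paper cites \cite[Theorem 3.14]{BBR} for the unwinding you describe in your third step).
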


\begin{proof} We suppose there exist $\theta\in V_1$ and $\eta\in W_1$  satisfying \eqref{eq9.2} and consider $\mathscr M\in Mod^C-\mathscr R$, $\mathscr N\in Mod-\mathscr R$. Using the isomorphisms in Proposition \ref{P7.6} and Proposition \ref{Pro8.2}, we obtain $\upsilon\in Nat(\mathscr G\mathscr F,1_{Mod^C-\mathscr R})$ and $\omega\in Nat(1_{Mod-\mathscr R},\mathscr F
\mathscr G)$  corresponding to $\theta$ and $\eta$ respectively. 

\smallskip For fixed 
$x\in \mathscr X$, it follows that $\theta_x=\{\theta_x(r):C\otimes C \longrightarrow \mathscr R_x(r,r)\}_{r\in \mathscr R_x}$ and 
the $\mathscr R_x$-bimodule morphism $\eta_x\in Nat(H^x,H^x\otimes C)$ satisfy the conditions in \cite[Theorem 3.14]{BBR}. Hence, we have \begin{equation}\label{eq9.4}
\mathscr F_x(\upsilon_x(\mathcal M))\textrm{ }\circ\textrm{ }\omega_x(\mathscr F_x(\mathcal  M))= id_{\mathscr F_x(\mathcal M)}\qquad \upsilon_x(\mathscr G_x(\mathcal N))\textrm{ }\circ\textrm{ }\mathscr G_x(\omega_x(\mathcal N))=id_{\mathscr G_x(\mathcal N)} 
\end{equation}  for any $\mathcal M\in \mathbf M^C_{\mathscr R_x}(\psi_x)$ and $\mathcal N\in \mathbf M_{\mathscr R_x}$.  In particular, 
\eqref{eq9.15} holds for $\mathscr M_x\in \mathbf M^C_{\mathscr R_x}(\psi_x)$ and $\mathscr N_x\in \mathbf M_{\mathscr R_x}$. 

\smallskip
Conversely, suppose that $(\mathscr F,\mathscr G)$ is a Frobenius pair. Then, there exist  $\upsilon\in Nat(\mathscr G\mathscr F,1_{Mod^C-\mathscr R})$ and $\omega\in Nat(1_{Mod-\mathscr R},\mathscr F
\mathscr G)$   satisfying \eqref{eq9.15} for each $x\in \mathscr X$. Again using the  isomorphisms in Proposition \ref{P7.6} and Proposition \ref{Pro8.2}, we obtain corresponding $\theta\in V_1$ and $\eta\in W_1$. 

\smallskip
We now consider $\mathcal M\in \mathbf M^C_{\mathscr R_x}(\psi_x)$ and $\mathcal N\in \mathbf M_{\mathscr R_x}$. Applying \eqref{eq9.15} with $\mathscr M=ex^C_x(\mathcal M)$
and $\mathscr N=ex_x(\mathcal N)$, we have
\begin{equation}
 \mathscr F_x(\upsilon_x(\mathcal M))\textrm{ }\circ\textrm{ }\omega_x(\mathscr F_x(\mathcal M))= id_{\mathscr F_x(\mathcal M)}\qquad \upsilon_x(\mathscr G_x(\mathcal N))\textrm{ }\circ\textrm{ }\mathscr G_x(\omega_x(\mathcal N))=id_{\mathscr G_x(\mathcal N)}
\end{equation} It now follows from \cite[Theorem 3.14]{BBR} that  $\theta_x=\{\theta_x(r):C\otimes C \longrightarrow \mathscr R_x(r,r)\}_{r\in \mathscr R_x}$ and the $\mathscr R_x$-bimodule morphism 
$\eta_x\in Nat(H^x,H^x\otimes C)$ satisfy \eqref{eq9.2}. This proves the result.

\end{proof}

\begin{cor}\label{C9.2} Let $(\mathscr F,\mathscr G)$ be a Frobenius pair. Then, for each $x\in \mathscr X$, $(\mathscr F_x,\mathscr G_x)$ is a Frobenius pair of adjoint functors.
\end{cor}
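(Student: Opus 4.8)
The plan is to deduce the statement directly from Theorem \ref{T9.1} together with the single-object criterion of \cite[Theorem 3.14]{BBR}. Since $(\mathscr F,\mathscr G)$ is assumed to be a Frobenius pair, the forward implication of Theorem \ref{T9.1} supplies a pair $\theta\in V_1$ and $\eta\in W_1$ whose components satisfy the identities \eqref{eq9.2} for every $x\in \mathscr X$, $r\in \mathscr R_x$, $f\in \mathscr R_x(r,s)$ and $d\in C$. The essential observation is that these identities are formulated indexwise in $x$, so they may be read off one object at a time.

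First I would fix an arbitrary $x\in \mathscr X$ and isolate the $x$-components $\theta_x=\{\theta_x(r):C\otimes C\longrightarrow \mathscr R_x(r,r)\}_{r\in \mathscr R_x}$ and the $\mathscr R_x$-bimodule morphism $\eta_x\in Nat(H^x,H^x\otimes C)$. Because \eqref{eq9.2} holds for this particular $x$, the data $\theta_x$ and $\eta_x$ satisfy precisely the hypotheses required by \cite[Theorem 3.14]{BBR}, which characterizes when the forgetful functor $\mathscr F_x:\mathbf M^C_{\mathscr R_x}(\psi_x)\longrightarrow \mathbf M_{\mathscr R_x}$ and its right adjoint $\mathscr G_x$ form a Frobenius pair over the single entwining structure $(\mathscr R_x,C,\psi_x)$. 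Invoking that theorem then yields natural transformations $\upsilon_x\in Nat(\mathscr G_x\mathscr F_x,1_{\mathbf M^C_{\mathscr R_x}(\psi_x)})$ and $\omega_x\in Nat(1_{\mathbf M_{\mathscr R_x}},\mathscr F_x\mathscr G_x)$ verifying the compatibility identities \eqref{eq9.4}, which are exactly the defining conditions for $(\mathscr F_x,\mathscr G_x)$ to be a Frobenius pair. As $x$ was arbitrary, the conclusion follows.

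In fact this computation has already been carried out inside the proof of Theorem \ref{T9.1}, where the $x$-components of the globally chosen $\theta$ and $\eta$ were shown to verify \eqref{eq9.4}; the corollary merely records that intermediate observation. Consequently there is no genuine obstacle here: the only point meriting attention is that passing from the global datum to its $x$-component loses no information, which is immediate since membership in $V_1$ and $W_1$, as well as the relations \eqref{eq9.2}, are stated separately for each $x$. I would therefore keep the proof to a single short paragraph that cites Theorem \ref{T9.1} to extract $\theta,\eta$, restricts to the components $\theta_x,\eta_x$, and applies \cite[Theorem 3.14]{BBR}.
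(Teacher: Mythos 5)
Your argument is correct and is essentially identical to the paper's: the paper proves the corollary by citing equation \eqref{eq9.4}, which is obtained in the proof of Theorem \ref{T9.1} exactly as you describe — extracting $\theta\in V_1$ and $\eta\in W_1$ from the Frobenius hypothesis and applying the single-object criterion of \cite[Theorem 3.14]{BBR} to the components $\theta_x$, $\eta_x$. Nothing further is needed.
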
 

\begin{proof}
This is immediate from \eqref{eq9.4}. 
\end{proof}

We consider $\alpha:x\longrightarrow y$ in $\mathscr X$. In \eqref{cd7.1}, we observed directly that the functors  $\{\mathscr F_x:\mathbf M^C_{\mathscr R_x}(\psi_x)\longrightarrow \mathbf M_{\mathscr R_x}\}_{x\in \mathscr X}$ commute with both $\alpha^\ast$ and 
$\alpha_\ast$, while the functors 
$\{\mathscr G_x:  \mathbf M_{\mathscr R_x}\longrightarrow \mathbf M^C_{\mathscr R_x}(\psi_x)\}_{x\in \mathscr X}$ commute only with $\alpha_\ast$. We will now give a sufficient condition
for the functors $\{\mathscr G_x:  \mathbf M_{\mathscr R_x}\longrightarrow \mathbf M^C_{\mathscr R_x}(\psi_x)\}_{x\in \mathscr X}$ to  commute with $\alpha^\ast$.

\begin{lem}\label{L9.3}  Let $(\mathscr F,\mathscr G)$ be a Frobenius pair. Then, for any $\alpha:x\longrightarrow y$ in $\mathscr X$, we have a commutative diagram
\begin{equation}\label{eq9.6}
 \begin{CD}
\mathbf M _{\mathscr R_x}@>\alpha^\ast >> \mathbf M_{\mathscr R_y}\\
@V\mathscr G_xVV @VV\mathscr G_yV \\
\mathbf M^C_{\mathscr R_x}(\psi_x)@>\alpha^\ast >> \mathbf M^C_{\mathscr R_y}(\psi_y)\\
\end{CD}
\end{equation}

\end{lem}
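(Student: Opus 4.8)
The plan is to deduce the commutativity of \eqref{eq9.6} purely formally from the uniqueness of adjoint functors, exploiting the fact that the Frobenius hypothesis makes each $\mathscr G_x$ into a \emph{left} adjoint of $\mathscr F_x$, not merely a right adjoint. Indeed, by Corollary \ref{C9.2} the pair $(\mathscr F_x,\mathscr G_x)$ is Frobenius for every $x\in\mathscr X$, so $\mathscr G_x\dashv\mathscr F_x$; together with the standard adjunction $\alpha^\ast\dashv\alpha_\ast$ (available both on the plain and on the entwined module categories from Section 4), this is exactly the extra input that lets both composites in sight be realized as left adjoints of the \emph{same} functor.

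First I would record the two functors to be compared, namely $\alpha^\ast\mathscr G_x$ and $\mathscr G_y\alpha^\ast$, both going from $\mathbf M_{\mathscr R_x}$ to $\mathbf M^C_{\mathscr R_y}(\psi_y)$, and exhibit an adjoint for each. Since $\mathscr G_x\dashv\mathscr F_x$ and $\alpha^\ast\dashv\alpha_\ast$ on entwined modules, the composite $\alpha^\ast\mathscr G_x$ is left adjoint to $\mathscr F_x\alpha_\ast$. Symmetrically, since $\alpha^\ast\dashv\alpha_\ast$ on plain modules and $\mathscr G_y\dashv\mathscr F_y$, the composite $\mathscr G_y\alpha^\ast$ is left adjoint to $\alpha_\ast\mathscr F_y$.

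Next I would invoke the first commutative diagram in \eqref{cd7.1}, which gives the identity of functors $\mathscr F_x\alpha_\ast=\alpha_\ast\mathscr F_y:\mathbf M^C_{\mathscr R_y}(\psi_y)\longrightarrow\mathbf M_{\mathscr R_x}$. Thus $\alpha^\ast\mathscr G_x$ and $\mathscr G_y\alpha^\ast$ are both left adjoints of one and the same functor, and by uniqueness of adjoints up to natural isomorphism they are naturally isomorphic; this is precisely the commutativity (understood up to canonical natural isomorphism) asserted in \eqref{eq9.6}.

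The main obstacle, and the only place the Frobenius hypothesis is genuinely used, is the passage that turns $\mathscr G_x$ (a priori only a right adjoint of $\mathscr F_x$) into a left adjoint. Without it, the composite $\alpha^\ast\mathscr G_x$ would mix a left adjoint with a right adjoint and need not itself be an adjoint, so the uniqueness argument would collapse; note that the third diagram in \eqref{cd7.1} only records commutation of the $\mathscr G_\bullet$ with $\alpha_\ast$, never with $\alpha^\ast$. One should take mild care that the two instances of $\alpha^\ast\dashv\alpha_\ast$ live in different categories (entwined versus plain modules), but both adjunctions are supplied by Section 4, so no hypothesis beyond that of a Frobenius pair is required.
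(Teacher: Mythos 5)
Your argument is correct and is essentially the paper's own proof: the paper also invokes Corollary \ref{C9.2} to make each $\mathscr G_x$ a left adjoint of $\mathscr F_x$, composes with the adjunctions $\alpha^\ast\dashv\alpha_\ast$, and uses the identity $\mathscr F_x\alpha_\ast=\alpha_\ast\mathscr F_y$ from \eqref{cd7.1}, writing the uniqueness-of-left-adjoints step explicitly as a chain of natural hom-set isomorphisms. Your remark that the conclusion should be read as a canonical natural isomorphism rather than a literal equality is a fair point of care, but the substance is identical.
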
 

\begin{proof} 
For $\mathcal M\in \mathbf M _{\mathscr R_x}$, we will show that $\mathscr G_y\alpha^\ast(\mathcal M)=\alpha^\ast\mathscr G_x(\mathcal M)\in  \mathbf M^C_{\mathscr R_y}(\psi_y)$. From Corollary \ref{C9.2} we know that each $(\mathscr F_x,\mathscr G_x)$ is a Frobenius pair of adjoint functors. Using this fact and the commutative diagrams in \eqref{cd7.1}, we now have that for any $\mathcal N\in \mathbf M^C_{\mathscr R_y}(\psi_y)$:
\begin{equation}
\mathbf M^C_{\mathscr R_y}(\psi_y)(\mathscr G_y\alpha^\ast(\mathcal M),\mathcal N)=\mathbf M_{\mathscr R_x}(\mathcal M,\alpha_\ast\mathscr F_y(\mathcal N))=\mathbf M_{\mathscr R_x}(\mathcal M,\mathscr F_x\alpha_\ast(\mathcal N))=\mathbf M^C_{\mathscr R_y}(\psi_y)(\alpha^\ast\mathscr G_x(\mathcal M),\mathcal N)
\end{equation}
\end{proof}

\begin{thm}\label{P9.4}  Let $(\mathscr F,\mathscr G)$ be a Frobenius pair.  Suppose that  $\mathscr R:\mathscr X\longrightarrow \mathscr Ent_C$  is flat. Then, $\mathscr G:Mod-\mathscr R\longrightarrow Mod^C-\mathscr R$ restricts to a functor
$\mathscr G^c:Cart-\mathscr R\longrightarrow Cart^C-\mathscr R$. 
\end{thm}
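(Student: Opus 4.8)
The plan is to verify the cartesian condition for $\mathscr G(\mathscr N)$ objectwise. Fix $\mathscr N\in Cart-\mathscr R$ and a morphism $\alpha:x\longrightarrow y$ in $\mathscr X$; the goal is to show that the transition morphism $\mathscr G(\mathscr N)^\alpha:\alpha^\ast\mathscr G_x(\mathscr N_x)\longrightarrow \mathscr G_y(\mathscr N_y)$ is an isomorphism in $\mathbf M^C_{\mathscr R_y}(\psi_y)$, after which setting $\mathscr G^c:=\mathscr G|_{Cart-\mathscr R}$ finishes the statement. First I would unwind the transition morphisms of $\mathscr G(\mathscr N)$ as an object of $Mod^C-\mathscr R$: from the construction of $\mathscr G$ together with the third commutative diagram in \eqref{cd7.1}, which gives $\mathscr G_x\circ\alpha_\ast=\alpha_\ast\circ\mathscr G_y$, the morphism $\mathscr G(\mathscr N)_\alpha:\mathscr G_x(\mathscr N_x)\longrightarrow \alpha_\ast\mathscr G_y(\mathscr N_y)$ is exactly $\mathscr G_x(\mathscr N_\alpha)$, and $\mathscr G(\mathscr N)^\alpha$ is its adjunct under the adjunction $(\alpha^\ast,\alpha_\ast)$ on entwined modules.

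The key input is Lemma \ref{L9.3}, available because $(\mathscr F,\mathscr G)$ is a Frobenius pair, which supplies the identification $\alpha^\ast\mathscr G_x=\mathscr G_y\alpha^\ast$; in particular $\alpha^\ast\mathscr G_x(\mathscr N_x)=\mathscr G_y(\alpha^\ast\mathscr N_x)$, so both $\mathscr G(\mathscr N)^\alpha$ and the morphism $\mathscr G_y(\mathscr N^\alpha)$ are maps $\mathscr G_y(\alpha^\ast\mathscr N_x)\longrightarrow \mathscr G_y(\mathscr N_y)$, where $\mathscr N^\alpha:\alpha^\ast\mathscr N_x\longrightarrow\mathscr N_y$ is the transition morphism of $\mathscr N$ in $Mod-\mathscr R$. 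Under the Lemma \ref{L9.3} identification I would show that these two maps coincide. Granting this, the proof concludes at once: $\mathscr N\in Cart-\mathscr R$ means $\mathscr N^\alpha$ is an isomorphism, and since $\mathscr G_y$ is a functor it preserves isomorphisms, so $\mathscr G_y(\mathscr N^\alpha)$, hence $\mathscr G(\mathscr N)^\alpha$, is an isomorphism for every $\alpha$. This is precisely the cartesian condition, giving $\mathscr G(\mathscr N)\in Cart^C-\mathscr R$.

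The main obstacle is the identification $\mathscr G(\mathscr N)^\alpha=\mathscr G_y(\mathscr N^\alpha)$ itself. The left-hand side is built from the counit of $(\alpha^\ast,\alpha_\ast)$ on $\mathbf M^C_{\mathscr R_y}(\psi_y)$ applied after $\alpha^\ast\mathscr G_x(\mathscr N_\alpha)$, whereas the right-hand side is $\mathscr G_y$ applied to the counit on $\mathbf M_{\mathscr R_y}$ composed with $\alpha^\ast(\mathscr N_\alpha)$. Reconciling them amounts to checking that the natural isomorphism of Lemma \ref{L9.3} is compatible with the units and counits of the two $(\alpha^\ast,\alpha_\ast)$ adjunctions, a Beck--Chevalley type triangle-identity diagram chase that traces how the isomorphism constructed in Lemma \ref{L9.3} from $(\mathscr F_x,\mathscr G_x)$ being a Frobenius pair and from $\mathscr F_x\alpha_\ast=\alpha_\ast\mathscr F_y$ interacts with adjunction transposition. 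I expect this compatibility to follow formally from naturality of the adjunction isomorphisms together with the commutative squares in \eqref{cd7.1}, requiring no further input from the entwining data; the flatness hypothesis on $\mathscr R$ enters only to ensure that $Cart-\mathscr R$ and $Cart^C-\mathscr R$ are defined and that each $\alpha^\ast$ is exact.
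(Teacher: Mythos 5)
Your proposal follows essentially the same route as the paper's proof: unwind $\mathscr G(\mathscr N)_\alpha=\mathscr G_x(\mathscr N_\alpha)$ via the third square of \eqref{cd7.1}, invoke Lemma \ref{L9.3} (available by the Frobenius hypothesis) to identify $\alpha^\ast\mathscr G_x(\mathscr N_x)$ with $\mathscr G_y(\alpha^\ast\mathscr N_x)$, and conclude that $\mathscr G(\mathscr N)^\alpha=\mathscr G_y(\mathscr N^\alpha)$ is an isomorphism because $\mathscr N$ is cartesian. The only difference is that you explicitly flag the compatibility check $\mathscr G(\mathscr N)^\alpha=\mathscr G_y(\mathscr N^\alpha)$ as a diagram chase to be done, whereas the paper asserts this identification without further justification.
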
 

\begin{proof}
For  any $\mathscr N\in Cart-\mathscr R$, we claim that $\mathscr G(\mathscr N)\in Mod^C-\mathscr R$ actually lies in $Cart^C-\mathscr R$. By definition of $\mathscr G$, we have  for any $\alpha:x\longrightarrow y$, a morphism 
$\mathscr G(\mathscr N)_\alpha=\mathscr G_x(\mathscr N_\alpha):\mathscr G_x(\mathscr N_x)\longrightarrow \mathscr G_x(\alpha_\ast(\mathscr N_y))=\alpha_\ast(\mathscr G_y(\mathscr N_y))$ in $\mathbf M^C_{\mathscr R_x}(\psi_x)$ which corresponds to a morphism $\mathscr G(\mathscr N)^\alpha:\alpha^\ast(\mathscr G_x(\mathscr N_x))\longrightarrow 
\mathscr G_y(\mathscr N_y)$ in $\mathbf M^C_{\mathscr R_y}(\psi_y)$. Since $(\mathscr F,\mathscr G)$ is a Frobenius pair, it follows  from Lemma \ref{L9.3} that $\mathscr G_y\alpha^\ast(\mathscr N_x)=\alpha^\ast\mathscr G_x(\mathscr N_x)\in  \mathbf M^C_{\mathscr R_y}(\psi_y)$. Since $\mathcal N$ is cartesian, we know that $\alpha^\ast\mathscr N_x$ is isomorphic to 
$\mathscr N_y$ and hence $\mathscr G(\mathscr N)^\alpha=\mathscr G_y(\mathscr N^\alpha)$ is an isomorphism. 
\end{proof}

\begin{cor}\label{C9.5} Let $(\mathscr F,\mathscr G)$ be a Frobenius pair.  Suppose that  $\mathscr R:\mathscr X\longrightarrow \mathscr Ent_C$  is flat.  Then, $(\mathscr F^c,\mathscr G^c)$ is a Frobenius pair of adjoint functors between $Cart^C-\mathscr R$ and
$Cart-\mathscr R$.

\end{cor}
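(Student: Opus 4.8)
The plan is to build the corollary directly on the two propositions that precede it, using only the fullness of the cartesian subcategories and the characterization of Frobenius pairs recorded at the start of this section. By Proposition \ref{P7.8} the forgetful functor restricts to $\mathscr F^c:Cart^C-\mathscr R\longrightarrow Cart-\mathscr R$ and admits a right adjoint; by Proposition \ref{P9.4}, since $(\mathscr F,\mathscr G)$ is Frobenius and $\mathscr R$ is flat, $\mathscr G$ sends cartesian modules to cartesian modules, hence restricts to $\mathscr G^c:Cart-\mathscr R\longrightarrow Cart^C-\mathscr R$. First I would identify this restriction with the right adjoint of $\mathscr F^c$: for $\mathscr M\in Cart^C-\mathscr R$ and $\mathscr N\in Cart-\mathscr R$, fullness of both subcategories and the adjunction of Proposition \ref{P7.2} give
\begin{equation*}
Cart-\mathscr R(\mathscr F^c\mathscr M,\mathscr N)=Mod-\mathscr R(\mathscr F\mathscr M,\mathscr N)\cong Mod^C-\mathscr R(\mathscr M,\mathscr G\mathscr N)=Cart^C-\mathscr R(\mathscr M,\mathscr G^c\mathscr N).
\end{equation*}
By uniqueness of adjoints, the right adjoint produced abstractly in Proposition \ref{P7.8} coincides with $\mathscr G^c$, so $(\mathscr F^c,\mathscr G^c)$ is an adjoint pair.

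It then remains only to supply the second adjunction. Since $(\mathscr F,\mathscr G)$ is a Frobenius pair, Theorem \ref{T9.1} (and the characterization opening Section 9) furnishes $\upsilon\in Nat(\mathscr G\mathscr F,1_{Mod^C-\mathscr R})$ and $\omega\in Nat(1_{Mod-\mathscr R},\mathscr F\mathscr G)$ satisfying the identities \eqref{eq9.1}. The plan is to restrict $\upsilon$ and $\omega$ to the cartesian subcategories. Because $\mathscr F$ and $\mathscr G$ preserve cartesianness (Propositions \ref{P7.8} and \ref{P9.4}) and $Cart^C-\mathscr R\hookrightarrow Mod^C-\mathscr R$, $Cart-\mathscr R\hookrightarrow Mod-\mathscr R$ are full, the components $\upsilon(\mathscr M)$ with $\mathscr M\in Cart^C-\mathscr R$ and $\omega(\mathscr N)$ with $\mathscr N\in Cart-\mathscr R$ are again morphisms in these subcategories and assemble into natural transformations $\upsilon^c\in Nat(\mathscr G^c\mathscr F^c,1_{Cart^C-\mathscr R})$ and $\omega^c\in Nat(1_{Cart-\mathscr R},\mathscr F^c\mathscr G^c)$.

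Finally I would restrict the two equalities of \eqref{eq9.1} to cartesian arguments. For $\mathscr M\in Cart^C-\mathscr R$ and $\mathscr N\in Cart-\mathscr R$ they read
\begin{equation*}
\mathscr F^c(\upsilon^c(\mathscr M))\circ\omega^c(\mathscr F^c(\mathscr M))=id_{\mathscr F^c(\mathscr M)},\qquad \upsilon^c(\mathscr G^c(\mathscr N))\circ \mathscr G^c(\omega^c(\mathscr N))=id_{\mathscr G^c(\mathscr N)},
\end{equation*}
and each term is literally the corresponding term of \eqref{eq9.1} evaluated at a cartesian object, $\mathscr F^c,\mathscr G^c$ being restrictions of $\mathscr F,\mathscr G$. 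Thus the identities hold, exhibiting $\omega^c,\upsilon^c$ as unit and counit of an adjunction with $\mathscr G^c$ left adjoint to $\mathscr F^c$; together with the first paragraph, $\mathscr G^c$ is both a left and a right adjoint of $\mathscr F^c$, so $(\mathscr F^c,\mathscr G^c)$ is a Frobenius pair.

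The only genuine point to verify is in the second paragraph: that $\upsilon$ and $\omega$ really do restrict, i.e. that their components at cartesian objects remain inside the full subcategories and that naturality is inherited from the ambient categories. This is routine once one records that the inclusions are full and that $\mathscr F,\mathscr G$ map cartesian modules to cartesian modules, so no computation beyond Theorem \ref{T9.1} is needed; the substantive work has already been absorbed into Propositions \ref{P7.8} and \ref{P9.4}.
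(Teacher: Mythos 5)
Your argument is correct and follows essentially the same route as the paper: restrict $\mathscr F$ via Proposition \ref{P7.8}, restrict $\mathscr G$ via Proposition \ref{P9.4}, and observe that both adjunctions pass to the full subcategories of cartesian modules. The paper simply asserts this last step as clear, whereas you spell out the hom-set identification and the restriction of the unit and counit; the extra detail is harmless and the substance is identical.
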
 

\begin{proof} From Proposition \ref{P7.8}, we know that $\mathscr F:Mod^C-\mathscr R\longrightarrow Mod-\mathscr R$ restricts to a functor $\mathscr F^c:Cart^C-\mathscr R\longrightarrow 
Cart-\mathscr R$. From Proposition \ref{P9.4}, we know that $\mathscr G:Mod-\mathscr R\longrightarrow Mod^C-\mathscr R$ restricts to a functor
$\mathscr G^c:Cart-\mathscr R\longrightarrow Cart^C-\mathscr R$ on the full subcategories of cartesian modules.  Since $\mathscr G$ is both right and left adjoint to $\mathscr F$, it is clear that $\mathscr G^c$ is both right and left adjoint to $\mathscr F^c$. 

\end{proof} 

\section{Constructing entwined representations}

In this final section, we will give examples of  how to construct entwined representations and describe modules over them. Let $(\mathcal R,C,\psi)$ be an entwining structure. Then, we consider the   
$K$-linear category $(C,\mathcal R)_\psi$ defined as follows
\begin{equation}
Ob((C,\mathcal R)_\psi)=Ob(\mathcal R)\qquad (C,\mathcal R)_\psi(s,r):=Hom_K(C,\mathcal R(s,r))
\end{equation} for $s$, $r\in \mathcal R$. The composition in  $(C,\mathcal R)_\psi$ is as follows: given $\phi:C\longrightarrow \mathcal R(s,r)$ and $\phi':C\longrightarrow \mathcal R(t,s)$ respectively
in $(C,\mathcal R)_\psi(s,r)$ and $(C,\mathcal R)_\psi(t,s)$, we set
\begin{equation}
\phi\ast\phi': C\longrightarrow \mathcal R(t,r)\qquad c\mapsto \sum \phi(c_2)_\psi\circ \phi'(c_1^\psi)
\end{equation}

\begin{lem}\label{L99.1}
Let $(\mathcal R,C,\psi)$ be an entwining structure. Then, there is a canonical functor $P_\psi: \mathbf M_{\mathcal R}^C(\psi)\longrightarrow \mathbf M_{(C,\mathcal R)_\psi}$. 
\end{lem}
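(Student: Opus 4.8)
The plan is to let $P_\psi$ act as the identity on underlying vector spaces and to turn the coaction into an action of the enlarged morphism spaces $\mathrm{Hom}_K(C,\mathcal R(s,r))$. Concretely, for $\mathcal M\in \mathbf M_{\mathcal R}^C(\psi)$ I would set $P_\psi(\mathcal M)(r):=\mathcal M(r)$ for every $r\in\mathcal R$, and for a morphism $\phi\in (C,\mathcal R)_\psi(s,r)=\mathrm{Hom}_K(C,\mathcal R(s,r))$ I would define
\[
m\cdot\phi:=m_0\,\phi(m_1)=\mathcal M(\phi(m_1))(m_0)\in\mathcal M(s)\qquad (m\in\mathcal M(r)),
\]
using the right $C$-comodule structure $\rho_{\mathcal M(r)}(m)=m_0\otimes m_1$ together with the right $\mathcal R$-module structure of $\mathcal M$. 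Since every map involved is $K$-linear and we sum over Sweedler components, this operation is well defined.

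The next step is to check that $P_\psi(\mathcal M)$ is a genuine right $(C,\mathcal R)_\psi$-module, i.e. a functor $(C,\mathcal R)_\psi^{op}\to Vect_K$. The identity of the object $r$ in $(C,\mathcal R)_\psi$ is the map $c\mapsto \varepsilon_C(c)\,id_r$: using the entwining axiom $\psi(c\otimes id_r)=id_r\otimes c$ and counitality one verifies that this is a two-sided unit for the convolution $\ast$, and that $m\cdot(\varepsilon_C(\_\_)id_r)=\varepsilon_C(m_1)m_0=m$, so identities are preserved. The substantive point is compatibility with composition, namely $m\cdot(\phi\ast\phi')=(m\cdot\phi)\cdot\phi'$ for $m\in\mathcal M(r)$, $\phi\in(C,\mathcal R)_\psi(s,r)$ and $\phi'\in(C,\mathcal R)_\psi(t,s)$.

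To prove this I would first compute the coaction on $m\cdot\phi$. Writing $(id\otimes\Delta_C)\rho(m)=m_0\otimes m_1\otimes m_2$ and applying the entwined compatibility of Definition~\ref{D2.2} to the element $m_0$ and the morphism $\phi(m_2)$, together with coassociativity to relabel the legs, one obtains $\rho_{\mathcal M(s)}(m\cdot\phi)=m_0\,\phi(m_2)_\psi\otimes {m_1}^\psi$. Feeding this back into a second application of the action and invoking contravariant functoriality of $\mathcal M$ (so that $\mathcal M(\phi'(\_\_))\circ\mathcal M(\phi(\_\_)_\psi)=\mathcal M(\phi(\_\_)_\psi\circ\phi'(\_\_))$) yields $(m\cdot\phi)\cdot\phi'=\mathcal M\big(\phi(m_2)_\psi\circ\phi'(m_1^\psi)\big)(m_0)$. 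On the other hand, expanding $m\cdot(\phi\ast\phi')$ directly from the definition of $\ast$ gives precisely the same expression, so the two coincide. This Sweedler bookkeeping — keeping straight which comodule leg each $\psi$ acts on, and using coassociativity to identify the three tensor slots — is the main obstacle; everything else is formal.

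Finally, on morphisms I would set $P_\psi(\eta):=\{\eta_r\}_{r\in\mathcal R}$ for $\eta:\mathcal M\to\mathcal N$ in $\mathbf M_{\mathcal R}^C(\psi)$. Because each $\eta_r$ is simultaneously $\mathcal R$-linear and $C$-colinear, the computation $\eta_s(m\cdot\phi)=\mathcal N(\phi(m_1))(\eta_r(m_0))=\eta_r(m)\cdot\phi$ shows that $P_\psi(\eta)$ is a morphism of $(C,\mathcal R)_\psi$-modules. Preservation of identities and composites by $P_\psi$ is then immediate, since $P_\psi$ leaves the underlying spaces and the components of morphisms unchanged, which completes the construction of the functor $P_\psi$.
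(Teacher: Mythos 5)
Your proposal is correct and follows essentially the same route as the paper: define $P_\psi(\mathcal M)(r)=\mathcal M(r)$ with action $m\cdot\phi=m_0\,\phi(m_1)$, and verify associativity by the Sweedler computation showing both $(m\cdot\phi)\cdot\phi'$ and $m\cdot(\phi\ast\phi')$ equal $m_0\bigl(\phi(m_2)_\psi\circ\phi'(m_1^{\psi})\bigr)$ via the entwined compatibility condition and coassociativity. Your additional checks of unitality and of functoriality on morphisms are routine points the paper leaves implicit.
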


\begin{proof}
We consider $\mathcal M\in \mathbf M_{\mathcal R}^C(\psi)$. We will define $\mathcal N=P_\psi(\mathcal M)\in  \mathbf M_{(C,\mathcal R)_\psi}$ by setting $\mathcal N(r):=\mathcal M(r)$
for each $r\in (C,\mathcal R)$. Given $\phi:C\longrightarrow \mathcal R(s,r)$ in $(C,\mathcal R)_\psi(s,r)$, we define $m\ast \phi\in \mathcal N(s)=\mathcal M(s)$ by setting $m\ast \phi=
\sum m_0\phi(m_1)$. Here, $\rho_{\mathcal M(r)}(m)=\sum m_0\otimes m_1$ is the right $C$-comodule structure on $\mathcal M(r)$. 

\smallskip
For $\phi':C\longrightarrow \mathcal R(t,s)$ in  $(C,\mathcal R)_\psi(t,s)$, we now have
\begin{equation}
\begin{array}{ll}
m\ast (\phi\ast\phi') &  = \sum  m_0(\phi\ast \phi')(m_1) 
=\sum m_0\phi(m_{12})_\psi\phi'(m_{11}^\psi)=\sum m_0\phi(m_{2})_\psi\phi'(m_{1}^\psi) \\
&\\  & \\
(m\ast \phi)\ast \phi' & =\sum (m\ast \phi)_0\phi'((m\ast \phi)_1 ) 
 =\sum (m_0 \phi(m_1))_0\phi'((m_0 \phi(m_1))_1 ) \\
 &=\sum (m_{00}\phi(m_1)_\psi)\phi'(m_{01}^\psi)=\sum m_0\phi(m_{2})_\psi\phi'(m_{1}^\psi)  \\
\end{array}
\end{equation} This proves the result. 
\end{proof}

\begin{lem}\label{L99.2} Let $(\alpha,id):(\mathcal R,C,\psi)\longrightarrow (\mathcal S,C,\psi')$ be a morphism of entwining structures. Then, $P_{\psi}\circ (\alpha,id)_\ast=\alpha_\ast
\circ P_{\psi'}:\mathbf M_{\mathcal S}^C(\psi')\longrightarrow \mathbf M_{(C,\mathcal R)}$.
\end{lem}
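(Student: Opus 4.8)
The plan is to first make precise the functor $\alpha_\ast$ appearing on the right, and then to produce an explicit natural isomorphism between the two composites and check that it is compatible with the module structures over $(C,\mathcal R)_\psi$.

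First I would record that the functor $\alpha:\mathcal R\longrightarrow\mathcal S$ induces a $K$-linear functor $\widetilde\alpha:(C,\mathcal R)_\psi\longrightarrow(C,\mathcal S)_{\psi'}$, equal to $\alpha$ on objects and sending $\phi:C\longrightarrow\mathcal R(s,r)$ to $\alpha\circ\phi:C\longrightarrow\mathcal S(\alpha(s),\alpha(r))$; the map $\alpha_\ast$ in the statement is restriction of scalars along $\widetilde\alpha$. The only point to check here is that $\widetilde\alpha$ respects the convolution composition of $(C,\mathcal R)_\psi$. Comparing $\alpha\circ(\phi\ast\phi')$ with $(\alpha\circ\phi)\ast(\alpha\circ\phi')$ and applying $\alpha$ term by term, this reduces exactly to the morphism-of-entwining-structures identity $\alpha(f_\psi)\otimes c^\psi=\alpha(f)_{\psi'}\otimes c^{\psi'}$ taken with $f=\phi(c_2)$ and $c=c_1$.

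Next I would evaluate both composites on a fixed $\mathcal N\in\mathbf M_{\mathcal S}^C(\psi')$. By Proposition \ref{P2.3} with $\gamma=\mathrm{id}_C$, the left-hand composite sends $r$ to $\mathcal N(\alpha(r))\,\Box_C C$, whereas the right-hand composite sends $r$ to $\mathcal N(\alpha(r))$. These two vector spaces are canonically isomorphic as right $C$-comodules through the coaction $\iota_r:=\rho_{\mathcal N(\alpha(r))}:\mathcal N(\alpha(r))\longrightarrow\mathcal N(\alpha(r))\,\Box_C C$, $n\mapsto n_0\otimes n_1$, with inverse $n\otimes c\mapsto\varepsilon_C(c)\,n$. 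The substance of the lemma is therefore to verify that the collection $\{\iota_r\}_{r}$ intertwines the two $(C,\mathcal R)_\psi$-actions. Because $\iota_r$ is $C$-colinear, the coinduced right $C$-coaction on the cotensor carries $\iota_r(n)$ to $\iota_r(n_0)\otimes n_1$; feeding this into the action formula of Lemma \ref{L99.1} together with the $\mathcal R$-action \eqref{eq2.9} on the cotensor yields, for $\phi\in(C,\mathcal R)_\psi(s,r)$,
\[
\iota_r(n)\ast\phi=\sum n_0\,\alpha\bigl(\phi(n_2)_\psi\bigr)\otimes n_1^{\,\psi},
\]
while the action on $P_{\psi'}(\mathcal N)$ together with the entwined compatibility \eqref{comp 2} for $\mathcal N$ over $(\mathcal S,C,\psi')$ gives
\[
\iota_s\bigl(n\ast\phi\bigr)=\sum n_0\,\alpha\bigl(\phi(n_2)\bigr)_{\psi'}\otimes n_1^{\,\psi'}.
\]

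These two expressions are reconciled by invoking the morphism-of-entwining-structures identity once more, now with $f=\phi(n_2)\in\mathcal R(s,r)$ and $c=n_1$: it says precisely $\alpha(\phi(n_2)_\psi)\otimes n_1^{\psi}=\alpha(\phi(n_2))_{\psi'}\otimes n_1^{\psi'}$, and applying $\mathcal N(-)(n_0)$ to the first tensor factor and summing over the coproduct makes the two displays coincide. Thus $\iota=\{\iota_r\}_r$ is an isomorphism of $(C,\mathcal R)_\psi$-modules, and it is automatically natural in $\mathcal N$ since both functors act on a colinear morphism $\mathcal N\to\mathcal N'$ through its components $\mathcal N(\alpha(r))\to\mathcal N'(\alpha(r))$, with which the coaction maps commute. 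I expect the genuine difficulty to be entirely bookkeeping: threading the two distinct entwinings $\psi$ and $\psi'$ through the iterated Sweedler notation $n_0\otimes n_1\otimes n_2$ and through the coinduced comodule structure on $\mathcal N(\alpha(r))\Box_C C$, and applying the entwining-morphism identity at exactly the correct tensor slot. Under the standard identification $N\Box_C C=N$ effected by $\iota$, this gives the stated equality $P_\psi\circ(\alpha,\mathrm{id})_\ast=\alpha_\ast\circ P_{\psi'}$.
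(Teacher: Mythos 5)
Your proof is correct and follows essentially the same route as the paper's, which simply identifies $(P_\psi\circ(\alpha,id)_\ast)(\mathcal N)(r)$ and $(\alpha_\ast\circ P_{\psi'})(\mathcal N)(r)$ with $\mathcal N(\alpha(r))$ and observes that both actions reduce to $n\ast\phi=\sum n_0\,\alpha(\phi(n_1))$. You additionally supply two details the paper leaves implicit --- that $\phi\mapsto\alpha\circ\phi$ is a $K$-linear functor $(C,\mathcal R)_\psi\longrightarrow(C,\mathcal S)_{\psi'}$ (verified via the entwining-morphism identity, and without which the restriction $\alpha_\ast$ on the right-hand side is not even defined), and the explicit counit identification $\mathcal N(\alpha(r))\Box_C C\cong\mathcal N(\alpha(r))$ intertwining the two actions --- both of which are worth keeping.
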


\begin{proof}
We begin with $\mathcal N\in \mathbf M_{\mathcal S}^C(\psi')$. From the construction in Lemma \ref{L99.1}, it is clear that for any $r\in (C,\mathcal R)_\psi$, we have $(P_{\psi}\circ (\alpha,id)_\ast)(\mathcal N)(r)=(\alpha_\ast
\circ P_{\psi'})(\mathcal N)(r)=\mathcal N(\alpha(r))$. We set $\mathcal N_1:=(P_{\psi}\circ (\alpha,id)_\ast)(\mathcal N)$ and $\mathcal N_2:=(\alpha_\ast
\circ P_{\psi'})(\mathcal N)$ and consider $n\in \mathcal N_1(r)=\mathcal N_2(r)$ as well as $\phi:C\longrightarrow \mathcal R(s,r)$ in $(C,\mathcal R)_\psi(s,r)$. Then, in both $\mathcal N_1(s)$ and $\mathcal N_2(s)$, we have $n\ast \phi=\sum n_0 \alpha(\phi(n_1))$. This proves the result. 
\end{proof}

Now let $\mathscr X$ be a small category and $\mathscr R:\mathscr X\longrightarrow \mathscr Ent_C$ an entwined $C$-representation. By replacing each entwining structure
$(\mathscr R_x,C,\psi_x)$ with the category $(C,\mathscr R_x)_{\psi_x}$, we obtain an induced representation $(C,\mathscr R)_\psi:\mathscr X\longrightarrow \mathscr Lin$ (we recall that
$\mathscr Lin$ is the category of small $K$-linear categories).

\begin{thm}\label{P99.3} There is a canonical functor $Mod^C-\mathscr R\longrightarrow Mod-(C,\mathscr R)_\psi$. 
\end{thm}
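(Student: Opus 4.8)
The plan is to assemble the pointwise functors $P_{\psi_x}$ of Lemma \ref{L99.1} into a single functor $\Phi:Mod^C-\mathscr R\longrightarrow Mod-(C,\mathscr R)_\psi$, using the intertwining relation of Lemma \ref{L99.2} to guarantee that the transition (gluing) data are carried to transition data of the correct type. First I would define $\Phi$ on objects: given $\mathscr M\in Mod^C-\mathscr R$, set $\Phi(\mathscr M)_x:=P_{\psi_x}(\mathscr M_x)\in\mathbf M_{(C,\mathscr R_x)_{\psi_x}}$ for each $x\in\mathscr X$. For a morphism $\alpha:x\longrightarrow y$ in $\mathscr X$, the transition map $\mathscr M_\alpha:\mathscr M_x\longrightarrow\alpha_\ast\mathscr M_y$ of Definition \ref{D4.2} is sent by the functor $P_{\psi_x}$ to a morphism $P_{\psi_x}(\mathscr M_\alpha):P_{\psi_x}(\mathscr M_x)\longrightarrow P_{\psi_x}(\alpha_\ast\mathscr M_y)$. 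Here Lemma \ref{L99.2} is exactly what is needed: it gives $P_{\psi_x}(\alpha_\ast\mathscr M_y)=\alpha_\ast P_{\psi_y}(\mathscr M_y)=\alpha_\ast\Phi(\mathscr M)_y$, so that $\Phi(\mathscr M)_\alpha:=P_{\psi_x}(\mathscr M_\alpha)$ is a morphism $\Phi(\mathscr M)_x\longrightarrow\alpha_\ast\Phi(\mathscr M)_y$, which is precisely the form of transition datum required for an object of $Mod-(C,\mathscr R)_\psi$.

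Next I would check that $\Phi(\mathscr M)$ satisfies the module axioms over the induced representation $(C,\mathscr R)_\psi$. Since each $P_{\psi_x}$ is a functor, $\Phi(\mathscr M)_{id_x}=P_{\psi_x}(\mathscr M_{id_x})=P_{\psi_x}(id)=id$. For composable morphisms $x\overset{\alpha}{\longrightarrow}y\overset{\beta}{\longrightarrow}z$, I would apply $P_{\psi_x}$ to the cocycle identity $\mathscr M_{\beta\alpha}=\alpha_\ast(\mathscr M_\beta)\circ\mathscr M_\alpha$ of Definition \ref{D4.2}; using functoriality of $P_{\psi_x}$ together with Lemma \ref{L99.2} to rewrite $P_{\psi_x}(\alpha_\ast(\mathscr M_\beta))=\alpha_\ast P_{\psi_y}(\mathscr M_\beta)$, this yields $\Phi(\mathscr M)_{\beta\alpha}=\alpha_\ast(\Phi(\mathscr M)_\beta)\circ\Phi(\mathscr M)_\alpha$, so $\Phi(\mathscr M)\in Mod-(C,\mathscr R)_\psi$. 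On morphisms, given $\eta:\mathscr M\longrightarrow\mathscr M'$ in $Mod^C-\mathscr R$, I would set $\Phi(\eta)_x:=P_{\psi_x}(\eta_x)$; applying $P_{\psi_x}$ to the naturality square that defines $\eta$ as a morphism in $Mod^C-\mathscr R$, and once more invoking Lemma \ref{L99.2} to identify $P_{\psi_x}(\alpha_\ast\eta_y)=\alpha_\ast P_{\psi_y}(\eta_y)$, shows that the maps $\Phi(\eta)_x$ commute with the transition morphisms $\Phi(\mathscr M)_\alpha$, so that $\Phi(\eta)$ is a morphism in $Mod-(C,\mathscr R)_\psi$. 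Preservation of identities and of composition by $\Phi$ then follows immediately from the corresponding properties of each $P_{\psi_x}$.

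The last two steps are routine bookkeeping, each amounting to ``apply a functor at every object $x$.'' The one genuinely load-bearing point, and the hardest part of the argument, is the identification $P_{\psi_x}(\alpha_\ast\mathscr M_y)=\alpha_\ast P_{\psi_y}(\mathscr M_y)$ furnished by Lemma \ref{L99.2}. Without this intertwining relation the image $P_{\psi_x}(\mathscr M_\alpha)$ would not have target $\alpha_\ast\Phi(\mathscr M)_y$, and the transition data for $\Phi(\mathscr M)$ would simply fail to be of the right type; so I expect this compatibility to be the crux, with every other verification a formal consequence of the functoriality of the $P_{\psi_x}$.
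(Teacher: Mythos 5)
Your proposal is correct and follows exactly the paper's own argument: the paper likewise defines the functor pointwise via the $P_{\psi_x}$ of Lemma \ref{L99.1} and invokes Lemma \ref{L99.2} to transport the transition morphisms $\mathscr M_\alpha:\mathscr M_x\longrightarrow\alpha_\ast\mathscr M_y$, leaving the remaining verifications as routine. You have simply written out in full the cocycle and naturality checks that the paper declares ``now clear.''
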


\begin{proof} By definition, an object $\mathscr M\in Mod^C-\mathscr R$ consists of a collection $\{\mathscr M_x\in \mathbf M^C_{\mathscr R_x}(\psi_x)\}_{x\in \mathscr X}$ and for each
 $\alpha:x\longrightarrow y$ in $\mathscr X$, a morphism $\mathscr M_\alpha:\mathscr M_x\longrightarrow \alpha_\ast\mathscr M_y$ in $\mathbf M^C_{\mathscr R_x}(\psi_x)$. Applying the functors $P_{\psi_x}:\mathbf M^C_{\mathscr R_x}(\psi_x)\longrightarrow \mathbf M_{(C,\mathscr R_x)_{\psi_x}}$ for $x\in \mathscr X$ and using Lemma \ref{L99.2}, the result is now clear.
\end{proof}

Now let $C$ be finitely generated as a $K$-vector space and let $C^\ast$ denote its $K$-linear dual. Then, the canonical map $C^\ast\otimes V\longrightarrow Hom_K(C,V)$ is an isomorphism
for any vector space $V$. For an entwining structure $(\mathcal R,C,\psi)$, the category $(C,\mathcal R)_\psi$ can now be rewritten as $(C^\ast\otimes \mathcal R)_\psi$ where $(C^\ast\otimes \mathcal R)_\psi (s,r)=C^\ast\otimes \mathcal R(s,r)$ for $s$, $r\in Ob((C^\ast\otimes \mathcal R)_\psi)=Ob(\mathcal R)$. Given $c^\ast\otimes f\in C^\ast\otimes \mathcal R(s,r)$ and
$d^\ast\otimes g\in C^\ast\otimes \mathcal R(t,s)$, the composition in $(C^\ast\otimes \mathcal R)_\psi$ is expressed as 
\begin{equation}\label{eq99.47}
(c^\ast\otimes f)\circ (d^\ast \otimes g):C\longrightarrow \mathcal R(t,r)\qquad x\mapsto \sum c^\ast(x_2)d^\ast (x_1^\psi)(f_\psi\circ g)
\end{equation} for $x\in C$. It is important to note that when $f$ and $g$ are identity maps, the composition in \eqref{eq99.47} simplifies to 
\begin{equation}\label{eq99.48}
(c^\ast\otimes id_r)\circ (d^\ast \otimes id_r):C\longrightarrow \mathcal R(t,r)\qquad x\mapsto \sum c^\ast(x_2)d^\ast (x_1)id_r
\end{equation} In other words, for the canonical morphism  $C^\ast\longrightarrow C^\ast\otimes\mathcal R(r,r)$ given by $c^\ast\mapsto c^\ast\otimes id_r$
to be a morphism of algebras, we must use the opposite of the usual convolution product on $C^\ast$. 

\smallskip 
Similarly, given an entwined $C$-representation $\mathscr R:\mathscr X\longrightarrow \mathscr Ent_C$ with $C$ finitely generated as a $K$-vector space, we can  replace the  induced representation $(C,\mathscr R)_\psi:\mathscr X\longrightarrow \mathscr Lin$ by $(C^\ast\otimes\mathscr R)_\psi$. Then, $Mod-(C,\mathscr R)_\psi$ may be replaced by $Mod-(C^\ast\otimes \mathscr R)_\psi$. 

\begin{thm}\label{P99.4} Let $\mathscr X$ be a small category and $\mathscr R:\mathscr X\longrightarrow \mathscr Ent_C$ an entwined $C$-representation. Suppose that $C$ is finitely generated as a $K$-vector space. Then, the categories $Mod^C-\mathscr R$ and $Mod-(C^\ast\otimes \mathscr R)_\psi$ are equivalent. 
\end{thm}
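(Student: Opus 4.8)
The plan is to prove that the functor of Proposition \ref{P99.3} is itself an equivalence once we invoke the identification $(C,\mathscr R)_\psi=(C^\ast\otimes\mathscr R)_\psi$ coming from $\dim_K C<\infty$. Both $Mod^C-\mathscr R$ and $Mod-(C^\ast\otimes\mathscr R)_\psi$ are assembled in exactly the same way from their fibre categories over $\mathscr X$: an object is a family indexed by $x\in\mathscr X$ together with transition morphisms built from the restriction-of-scalars functors $\alpha_\ast$, subject to the same cocycle conditions, and a morphism is a family commuting with the transitions. The functor in question applies $P_{\psi_x}$ fibrewise, and by Lemma \ref{L99.2} we have the \emph{strict} identity $P_{\psi_x}\circ\alpha_\ast=\alpha_\ast\circ P_{\psi_y}$ for each $\alpha:x\longrightarrow y$. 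Hence it suffices to prove that each fibre functor $P_{\psi_x}:\mathbf M^C_{\mathscr R_x}(\psi_x)\longrightarrow\mathbf M_{(C^\ast\otimes\mathscr R_x)_{\psi_x}}$ is an equivalence; a quasi-inverse chosen fibrewise then transports through the transition data, precisely because the analogue of Lemma \ref{L99.2} holds for the quasi-inverses, giving a quasi-inverse to the global functor.

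Thus I reduce to a single entwining structure $(\mathcal R,C,\psi)$ with $\dim_K C<\infty$ and must show that $P_\psi$ is an equivalence; this is a many-object incarnation of the classical identification of entwined modules with modules over the smash product $\mathcal R\# C^\ast$. I would construct the quasi-inverse $Q_\psi$ directly. Given $\mathcal N\in\mathbf M_{(C^\ast\otimes\mathcal R)_\psi}$, set $\mathcal M(r):=\mathcal N(r)$ as a vector space. The canonical algebra map $C^\ast\longrightarrow (C^\ast\otimes\mathcal R)_\psi(r,r)$, $c^\ast\mapsto c^\ast\otimes id_r$, makes $\mathcal N(r)$ a module over $C^\ast$ with the \emph{opposite} convolution product, as recorded in \eqref{eq99.48}; since $C$ is finite dimensional this is the same datum as a right $C$-comodule structure $\rho_{\mathcal M(r)}$ on $\mathcal M(r)$. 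The $K$-linear functor $\iota:\mathcal R\longrightarrow(C^\ast\otimes\mathcal R)_\psi$ given on morphisms by $f\mapsto\varepsilon_C\otimes f$ (one checks it respects composition using the counit axiom of Definition \ref{entcatx}) then endows $\mathcal M$ with a right $\mathcal R$-module structure by restriction along $\iota$, and the remaining point is the entwined compatibility \eqref{comp 2}, to be verified from the twisted composition law \eqref{eq99.47}.

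The two functors are mutually inverse essentially on the nose: $P_\psi$ leaves the underlying spaces $\mathcal M(r)=\mathcal N(r)$ untouched, the passage $Comod\text{-}C\simeq\mathbf M_{C^\ast}$ is an equivalence because $\dim_K C<\infty$, and the decomposition $m\ast(c^\ast\otimes f)=\big(\sum c^\ast(m_1)\,m_0\big)f$, read off from the formula $m\ast\phi=\sum m_0\phi(m_1)$ of Lemma \ref{L99.1}, shows that the full $(C^\ast\otimes\mathcal R)_\psi$-action is reconstructed from, and determines, the comodule datum together with the $\mathcal R$-action. The step I expect to be the genuine obstacle is not the abstract equivalence but the bookkeeping of conventions: one must align the left/right variance in the correspondence between right $C$-comodules and $C^\ast$-modules with the opposite-convolution orientation flagged at \eqref{eq99.48}, and then check that the axioms of $\psi$ exactly reproduce the twist in \eqref{eq99.47}, so that $Q_\psi(\mathcal N)$ genuinely satisfies \eqref{comp 2} and $Q_\psi$ is functorial. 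Once the single-object equivalence is established with these conventions fixed, assembling over $\mathscr X$ as in the first paragraph completes the proof.
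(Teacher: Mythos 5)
Your proposal is correct and follows essentially the same route as the paper: the inverse functor is built fibrewise by restricting along $f\mapsto\varepsilon_C\otimes f$ for the $\mathscr R_x$-action, extracting the $C^\ast$-action from $c^\ast\mapsto c^\ast\otimes id_r$ (with the opposite-convolution caveat of \eqref{eq99.48}), converting it to a right $C$-comodule structure via $\dim_K C<\infty$, and then checking the entwined compatibility and the colinearity of the transition maps $\mathscr M_\alpha$. The paper likewise leaves the compatibility \eqref{comp 2} and the mutual inverseness to direct computation, so your identification of the convention bookkeeping as the only real work matches its level of detail.
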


\begin{proof} By Proposition \ref{P99.3}, we already know that any object in $Mod^C-\mathscr R$ may be equipped with a $(C^\ast\otimes\mathscr R)_\psi$-module structure. For the converse, we consider some $\mathscr M\in Mod-(C^\ast\otimes\mathscr R)_\psi$ and choose some $x\in \mathscr X$.

\smallskip
We make $\mathscr M_x$ into an $\mathscr R_x$-module as follows:  for $f\in \mathscr R_x(s,r)$ and $m\in \mathscr M_x(r)$, we set  $mf\in \mathscr M_x(s)$ to be $mf:=m(\varepsilon_C\otimes f)$. By considering the canonical morphism  $C^\ast\longrightarrow C^\ast\otimes\mathscr  R_x(r,r)$, it follows that the right  $(C^\ast\otimes \mathscr R_x)_{\psi_x}(r,r)$ module
$\mathscr M_x(r)$ carries a right $C^\ast$-module structure. As observed in \eqref{eq99.48}, here the product on $C^\ast$ happens to be the opposite of the usual convolution product. Hence, the right 
$C^\ast$-module structure on $\mathscr M_x(r)$ leads to a left $C^\ast$-module structure on $\mathscr M_x(r)$ when $C^\ast$ is equipped with the usual product. Since $C$ is finite dimensional, it is well known (see, for instance, \cite[$\S$ 2.2]{book3}) that we have an induced right $C$-comodule structure on $\mathscr M_x(r)$. It may be  verified by direct computation 
that $\mathscr M_x\in \mathbf M^C_{\mathscr R_x}(\psi_x)$. Finally, for a morphism $\alpha: x\longrightarrow y$ in $\mathscr X$, the map $\mathscr M_\alpha:\mathscr M_x
\longrightarrow \alpha_\ast\mathscr M_y$ in $\mathbf M_{(C^\ast\otimes \mathscr R_x)_{\psi_x}}$ induces a morphism in $\mathbf M^C_{\mathscr R_x}(\psi_x)$. Hence, $\mathscr M\in Mod-(C^\ast\otimes\mathscr R)_\psi$ may be treated as an object of $Mod^C-\mathscr R$. It may be directly verified that this structure is the inverse of the one defined by Propostion \ref{P99.3}.

\end{proof}

\smallskip
Finally, we will give an example of constructing entwined representations starting from $B$-comodule categories, where $B$ is a bialgebra. So let $B$ be a bialgebra over $K$, having multiplication $\mu_B$, unit map $u_B$ as well as comultiplication $\Delta_B$ and counit map $\varepsilon_B$. Then, the notion of a ``$B$-comodule category,'' which behaves like a $B$-comodule algebra with many objects, is implicit in the literature. 

\begin{defn}\label{D100.1} Let $B$ be a $K$-bialgebra. We will say that a small $K$-linear category $\mathcal R$ is a right $B$-comodule category if it satisfies the following conditions:

\smallskip
(i) For any $r$, $s\in \mathcal R$, there is a coaction $\rho=\rho(r,s):\mathcal R(r,s)\longrightarrow \mathcal R(r,s)\otimes B$, $f\mapsto \sum f_0\otimes f_1$, making $\mathcal R(r,s)$ a right $B$-comodule. Further, 
$\rho(id_r)=id_r\otimes 1_B$ for each $r\in \mathcal R$. 

\smallskip
(ii) For $f\in \mathcal R(r,s)$ and $g\in \mathcal R(s,t)$, we have
\begin{equation}\label{eq100.1}
\rho(g\circ f)=(g\circ f)_0\otimes (g\circ f)_1= (g_0\circ f_0)\otimes (g_1f_1)
\end{equation} We have   suppressed the summation signs  in \eqref{eq100.1}. We will always refer to a right $B$-comodule category as a co-$B$-category. We will only consider those $K$-linear functors
between co-$B$-categories whose action on morphisms is $B$-colinear. Together, the  co-$B$-categories form a new category, which we will denote by $Cat^B$. 
\end{defn}

\begin{lem}\label{L100.2} Let $B$ be a bialgebra over $K$. Let $\mathcal R$ be a co-$B$-category and let $C$ be a right $B$-module coalgebra. The collection $\psi:=\psi_{\mathcal R}=\{\psi_{rs}:C\otimes \mathcal R(r,s)\longrightarrow \mathcal R(r,s)\otimes C\}_{r,s
\in \mathcal R}$ defined by setting
\begin{equation}
\psi_{rs}(c\otimes f)=f_\psi\otimes c^\psi=f_0\otimes cf_1 \qquad f\in \mathcal R(r,s), \textrm{ }c\in C
\end{equation}
makes $(\mathcal R,C,\psi)$ an entwining structure.

\end{lem}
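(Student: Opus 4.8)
The plan is to verify directly the four defining conditions of an entwining structure listed in Definition \ref{entcatx}, specializing them to the explicit map $\psi_{rs}(c\otimes f)=f_0\otimes cf_1$. In the suppressed-summation notation of that definition this amounts to reading off $f_\psi=f_0$ and $c^\psi=cf_1$, so each condition becomes an identity that must follow from exactly two packages of axioms: the co-$B$-category structure of $\mathcal R$ (the composition rule \eqref{eq100.1} together with $\rho(id_r)=id_r\otimes 1_B$, and the counit/coassociativity laws of the right $B$-comodule $\mathcal R(r,s)$), and the right $B$-module coalgebra structure of $C$ (unitality and associativity of the action, together with the fact that $\Delta_C$ and $\varepsilon_C$ are morphisms of right $B$-modules, the module structure on $C\otimes C$ being the codiagonal one and on $K$ being $\varepsilon_B$). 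First I would fix this notational dictionary so that the remaining work is pure bookkeeping.

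I would then dispose of the two elementary conditions. The normalization $\psi(c\otimes id_r)=id_r\otimes c$ is immediate: $\psi(c\otimes id_r)=(id_r)_0\otimes c(id_r)_1=id_r\otimes c\cdot 1_B=id_r\otimes c$, using $\rho(id_r)=id_r\otimes 1_B$ and that $1_B$ acts as the identity on the $B$-module $C$. The counit condition $\varepsilon_C(c^\psi)f_\psi=\varepsilon_C(c)f$ unpacks to $\varepsilon_C(cf_1)f_0$; since $\varepsilon_C$ is a $B$-module map this equals $\varepsilon_C(c)\varepsilon_B(f_1)f_0$, and the comodule counit law $f_0\varepsilon_B(f_1)=f$ finishes it.

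Next I would treat the multiplicativity condition $(gf)_\psi\otimes c^\psi=g_\psi f_\psi\otimes{c^\psi}^\psi$. The left side is $(gf)_0\otimes c(gf)_1=g_0f_0\otimes c(g_1f_1)$ by the composition rule \eqref{eq100.1}, while iterating $\psi$ on the right side gives $g_0\circ f_0\otimes (cg_1)f_1$; these agree by associativity of the $B$-action on $C$. The remaining comultiplicativity condition $f_\psi\otimes\Delta_C(c^\psi)={f_\psi}_\psi\otimes{c_1}^\psi\otimes{c_2}^\psi$ is the step I expect to be the main obstacle, since it is where both the module-coalgebra compatibility and the coassociativity of the comodule coaction must be combined. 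Here the left side is $f_0\otimes\Delta_C(cf_1)$, which by $\Delta_C$ being a $B$-module map equals $f_0\otimes c_1(f_1)_{(1)}\otimes c_2(f_1)_{(2)}$ with $\Delta_B(f_1)=(f_1)_{(1)}\otimes(f_1)_{(2)}$; the right side, obtained by applying $\psi$ first to $(c_2,f)$ and then to $(c_1,f_0)$, equals $(f_0)_0\otimes c_1(f_0)_1\otimes c_2f_1$. The plan is to reconcile the two by invoking coassociativity of the coaction $\rho$ on $\mathcal R(r,s)$, namely $(f_0)_0\otimes(f_0)_1\otimes f_1=f_0\otimes(f_1)_{(1)}\otimes(f_1)_{(2)}$, which substitutes the second expression into the first; care is needed to match the three Sweedler slots correctly, as this is where a misplaced index would go unnoticed. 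Once all four conditions are checked, $(\mathcal R,C,\psi)$ is an entwining structure and the lemma follows.
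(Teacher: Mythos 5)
Your proposal is correct and follows essentially the same route as the paper: a direct verification of the four entwining axioms, with the counit and normalization conditions handled by $\rho(id_r)=id_r\otimes 1_B$ and the $B$-linearity of $\varepsilon_C$, the multiplicativity condition by \eqref{eq100.1} together with associativity of the $B$-action, and the comultiplicativity condition by combining $B$-linearity of $\Delta_C$ with coassociativity of the coaction $\rho$ exactly as in the paper's chain $f_0\otimes c_1f_1\otimes c_2f_2=f_{00}\otimes c_1f_{01}\otimes c_2f_1$. The Sweedler bookkeeping in your final step matches the paper's computation, so nothing further is needed.
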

\begin{proof} We consider morphisms $f$, $g$ in $\mathcal R$ so that $gf$ is defined. Then, for $c\in C$, we see that
\begin{equation}
\begin{array}{c}
(gf)_\psi\otimes c^\psi=(gf)_0\otimes c(gf)_1=(g_0f_0)\otimes c(g_1f_1)=g_\psi f_\psi\otimes c^{\psi\psi}\\
f_\psi\otimes \Delta_C(c^\psi)=f_0\otimes \Delta_C(cf_1)=f_0\otimes c_1f_1\otimes c_2f_2=f_{00}\otimes c_1f_{01}\otimes c_2f_1=f_{\psi\psi}\otimes c_1^\psi\otimes c_2^\psi\\
\varepsilon_C(c^\psi)f_\psi=\varepsilon_C(c)\varepsilon_B(f_1)f_0=\varepsilon_C(c)f\qquad \psi(c\otimes id_r)=id_r\otimes c1_B\\
\end{array}
\end{equation} This proves the result.
\end{proof}

\begin{thm}\label{P100.3} Let $B$ be a $K$-bialgebra and let $C$ be a right $B$-module coalgebra. If $\mathscr X$ is a small category, a functor
$\mathscr R':\mathscr X\longrightarrow Cat^B$ induces an entwined $C$-representation of $\mathscr X$
\begin{equation}
\mathscr R:\mathscr X\longrightarrow \mathscr Ent_C\qquad x\mapsto (\mathscr R_x,C,\psi_x):=(\mathscr R'_x,C,\psi_{\mathscr R'_x})
\end{equation}

\end{thm}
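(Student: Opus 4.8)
The plan is to assemble $\mathscr R$ objectwise and morphismwise from the data of $\mathscr R'$, using Lemma \ref{L100.2} to produce the entwining structures at the objects and the $B$-colinearity built into $Cat^B$ to produce the morphisms of entwining structures. On objects, for each $x \in \mathscr X$ the category $\mathscr R'_x$ is a co-$B$-category, so Lemma \ref{L100.2} supplies an entwining structure $(\mathscr R'_x, C, \psi_x)$ with $\psi_x(c \otimes f) = f_0 \otimes cf_1$; this is exactly the assignment $x \mapsto (\mathscr R_x, C, \psi_x)$ appearing in the statement, so the object-level part of the functor is already handled by the previous lemma.

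The heart of the argument is to show that for every $\alpha: x \to y$ in $\mathscr X$ the pair $(\mathscr R'_\alpha, id_C)$ is a morphism of entwining structures in the sense of Definition \ref{entcatx}, so that it defines $\mathscr R_\alpha$ in $\mathscr Ent_C$. Writing $\alpha$ for $\mathscr R'_\alpha$, the only condition to verify is $\alpha(f_{\psi_x}) \otimes c^{\psi_x} = \alpha(f)_{\psi_y} \otimes c^{\psi_y}$ for $c \otimes f \in C \otimes \mathscr R'_x(r,s)$. Using the explicit form of the entwinings from Lemma \ref{L100.2}, the left-hand side equals $\alpha(f_0) \otimes cf_1$ while the right-hand side equals $\alpha(f)_0 \otimes c\,\alpha(f)_1$. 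These coincide precisely because the action of $\mathscr R'_\alpha$ on morphisms is $B$-colinear, namely $\alpha(f)_0 \otimes \alpha(f)_1 = \alpha(f_0) \otimes f_1$, which is the defining requirement for a functor in $Cat^B$. Thus the compatibility condition is not an independent computation but a direct translation of the colinearity hypothesis on $\mathscr R'$.

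Functoriality of $\mathscr R: \mathscr X \to \mathscr Ent_C$ is then formal: since the coalgebra component is the constant $id_C$ and the category component is $\mathscr R'_\alpha$, the relations $\mathscr R_{id_x} = id$ and $\mathscr R_{\beta\alpha} = \mathscr R_\beta \circ \mathscr R_\alpha$ reduce to the corresponding identities for $\mathscr R': \mathscr X \to Cat^B$, which hold by hypothesis. The only point requiring care — and it is bookkeeping rather than a genuine obstacle — is tracking the suppressed $B$-coaction sums $f_0 \otimes f_1$ correctly through the application of $\mathscr R'_\alpha$; once the single colinearity identity above is recorded, the verification is complete.
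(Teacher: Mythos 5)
Your proposal is correct and follows the same route as the paper, whose proof simply states that the construction of Lemma \ref{L100.2} is ``easily verified'' to be functorial with respect to $B$-colinear functors; you have supplied exactly that verification, correctly reducing the compatibility condition $\alpha(f_{\psi_x})\otimes c^{\psi_x}=\alpha(f)_{\psi_y}\otimes c^{\psi_y}$ of Definition \ref{entcatx} (with $\gamma=id_C$) to the $B$-colinearity $\alpha(f)_0\otimes\alpha(f)_1=\alpha(f_0)\otimes f_1$ built into $Cat^B$. The remaining functoriality checks are formal, as you note, so nothing is missing.
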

\begin{proof}
It may be easily verified that the entwining structures constructed in Lemma \ref{L100.2} are functorial with respect to $B$-colinear functors between $B$-comodule categories. This proves the result. 
\end{proof}

We now consider a representation $\mathscr R':\mathscr X\longrightarrow Cat^B$ as in Proposition \ref{P100.3} and the corresponding entwined $C$-representation 
$\mathscr R:\mathscr X\longrightarrow \mathscr Ent_C$. By considering the underlying $K$-linear category of any co-$B$-category, we obtain an induced representation that we continue
to denote by $\mathscr R':\mathscr X\longrightarrow Cat^B\longrightarrow \mathscr Lin$.  We conclude by showing how entwined modules over $\mathscr R$ are related to modules over $\mathscr R'$ in the sense of Estrada and Virili \cite{EV}. 

\begin{thm} \label{P100.4}  Let $B$ be a $K$-bialgebra and let $C$ be a right $B$-module coalgebra. Let $\mathscr X$ be a small category, $\mathscr R':\mathscr X\longrightarrow Cat^B$ a functor and let $\mathscr R:\mathscr X\longrightarrow\mathscr Ent_C$ be the corresponding entwined $C$-representation. Then, a module $\mathscr M$ over $\mathscr R$
consists of the following data:

\smallskip
(1) A module $\mathscr M$ over the induced representation $\mathscr R':\mathscr X\longrightarrow Cat^B\longrightarrow \mathscr Lin$.

\smallskip
(2) For each $x\in \mathscr X$ and $r\in \mathscr R_x$ a right $C$-comodule structure $\rho_r^x:\mathscr M_x(r)\longrightarrow \mathscr M_x(r)\otimes C$ such that
\begin{equation*}
\rho_{s}^x(mf)= \big(mf\big)_0 \otimes \big(mf\big)_{1}=m_0f_0\otimes  {m_1}f_1
\end{equation*}
for every   $f \in \mathscr{R}_x(s,r)$  and $m \in \mathscr{M}_x(r).$ 

\smallskip
(3) For each morphism $\alpha:x\longrightarrow y$ in $\mathscr X$, the morphism $\mathscr M_\alpha(r):\mathscr M_x(r)\longrightarrow (\alpha_\ast\mathscr M_y)(r)$ is $C$-colinear for
each $r\in \mathscr R_x$. 

\end{thm}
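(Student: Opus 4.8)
The plan is to show that the data defining a module over the entwined representation $\mathscr R$ in the sense of Definition \ref{D4.2} unwinds precisely into the three pieces (1), (2), (3), once the entwining maps $\psi_x$ are replaced by their explicit description from Lemma \ref{L100.2}. I would establish the correspondence by comparing the defining data termwise in both directions, the only substantive point being the specialization of the abstract entwining compatibility.

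First, recall that by Definition \ref{D4.2} an object $\mathscr M\in Mod^C-\mathscr R$ consists of an entwined module $\mathscr M_x\in \mathbf M^C_{\mathscr R_x}(\psi_x)$ for each $x\in \mathscr X$ together with transition morphisms $\mathscr M_\alpha:\mathscr M_x\longrightarrow \alpha_\ast\mathscr M_y$ in $\mathbf M^C_{\mathscr R_x}(\psi_x)$ for each $\alpha:x\longrightarrow y$, subject to the identity and cocycle relations \eqref{4.25d}. Unwinding $\mathscr M_x\in \mathbf M^C_{\mathscr R_x}(\psi_x)$ via Definition \ref{D2.2}, this says that $\mathscr M_x$ is a right $\mathscr R_x$-module equipped with a right $C$-comodule structure $\rho_r^x$ on each $\mathscr M_x(r)$ satisfying $\rho_s^x(mf)=m_0(f)_{\psi_x}\otimes (m_1)^{\psi_x}$ for $f\in \mathscr R_x(s,r)$ and $m\in\mathscr M_x(r)$. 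The key computation is to substitute the explicit entwining of Lemma \ref{L100.2}, namely $(f)_{\psi_x}=f_0$ and $(c)^{\psi_x}=cf_1$, where $f\mapsto f_0\otimes f_1$ is the $B$-coaction on $\mathscr R'_x$ and $cf_1$ is the right $B$-action on $C$; this turns the abstract compatibility into exactly $\rho_s^x(mf)=m_0f_0\otimes m_1f_1$, which is condition (2).

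Next I would separate the underlying linear data from the comodule data. Forgetting the $C$-comodule structures, the family $\{\mathscr M_x\in \mathbf M_{\mathscr R_x}\}_x$ together with the underlying $\mathscr R_x$-module maps $\mathscr M_\alpha:\mathscr M_x\longrightarrow \alpha_\ast\mathscr M_y$ is precisely an object of $Mod-\mathscr R'$ in the sense of Estrada and Virili recalled in Section 7, since the identity and cocycle relations of Definition \ref{D4.2} coincide with those governing $Mod-\mathscr R'$; this is condition (1). Finally, the requirement that each $\mathscr M_\alpha$ be a morphism in $\mathbf M^C_{\mathscr R_x}(\psi_x)$, rather than merely in $\mathbf M_{\mathscr R_x}$, is by Definition \ref{D2.2} exactly the requirement that each component $\mathscr M_\alpha(r):\mathscr M_x(r)\longrightarrow (\alpha_\ast\mathscr M_y)(r)$ be $C$-colinear, which is condition (3).

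Conversely, starting from data (1), (2), (3) I would reverse each of these identifications: the comodule structures of (2) together with the compatibility formula make each $\mathscr M_x$ an object of $\mathbf M^C_{\mathscr R_x}(\psi_x)$ (again via Lemma \ref{L100.2}), the colinearity of (3) promotes the transition maps of (1) to morphisms in $\mathbf M^C_{\mathscr R_x}(\psi_x)$, and the relations carried by (1) supply the identity and cocycle conditions of Definition \ref{D4.2}. The only genuine verification is the termwise translation of the entwining compatibility under the substitution $(f)_{\psi_x}=f_0$, $(c)^{\psi_x}=cf_1$; everything else is a direct matching of definitions, so I do not anticipate a serious obstacle beyond careful bookkeeping of the comodule and module axioms.
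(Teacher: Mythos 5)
Your proposal is correct and follows essentially the same route as the paper's proof: unwind Definition \ref{D4.2} and Definition \ref{D2.2}, substitute the explicit entwining $\psi_x(c\otimes f)=f_0\otimes cf_1$ from Lemma \ref{L100.2} to turn the abstract compatibility into $\rho_s^x(mf)=m_0f_0\otimes m_1f_1$, and use the compatibility of the forgetful functors with $\alpha_\ast$ (the diagrams in \eqref{cd7.1}) to identify condition (3) with $\mathscr M_\alpha$ being a morphism in $\mathbf M^C_{\mathscr R_x}(\psi_x)$. The paper's own argument is terser and only spells out the direction from the datum (1)--(3) to an object of $Mod^C-\mathscr R$, but the content is the same.
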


\begin{proof} We consider a datum as described by the three conditions above.
The conditions (1) and (2) ensure that each $\mathscr M_x\in \mathbf M^C_{\mathscr R_x}(\psi_x)$. For each $x\in \mathscr X$, there is a forgetful functor $\mathscr F_x:\mathbf M^C_{\mathscr R_x}(\psi_x)\longrightarrow \mathbf M_{\mathscr R_x}$. Let $\alpha:x\longrightarrow y$ be a morphism in $\mathscr X$. From \eqref{cd7.1}, we know that   $(\alpha,id)_\ast: \mathbf M_{\mathscr R_y}^C(\psi_y)
\longrightarrow \mathbf M_{\mathscr R_x}^C(\psi_x)$ and $\alpha_\ast: \mathbf M_{\mathscr R_y}
\longrightarrow \mathbf M_{\mathscr R_x}$ are well behaved with respect to these forgetful functors. 
For each $r\in \mathscr R_x$, if $\mathscr M_\alpha(r):\mathscr M_x(r)\longrightarrow (\alpha_\ast\mathscr M_y)(r)$  is also $C$-colinear, it follows that $\mathscr M_\alpha$ is a morphism in $\mathbf M_{\mathscr R_x}^C(\psi_x)$. The result is now clear. 
\end{proof}

\small

\begin{bibdiv}
	\begin{biblist}
	
	\bib{Abu}{article}{
   author={Abuhlail, J. Y.},
   title={Dual entwining structures and dual entwined modules},
   journal={Algebr. Represent. Theory},
   volume={8},
   date={2005},
   number={2},
   pages={275--295},
}

	\bib{AR}{book}{
   author={Ad\'{a}mek, J.},
   author={Rosick\'{y}, J.},
   title={Locally presentable and accessible categories},
   series={London Mathematical Society Lecture Note Series},
   volume={189},
   publisher={Cambridge University Press, Cambridge},
   date={1994},
   pages={xiv+316},
}

	\bib{BBR0}{article}{
   author={Balodi, M.},
   author={Banerjee, A.},
   author={Ray, S.},
   title={Cohomology of modules over $H$-categories and co-$H$-categories},
   journal={Canad. J. Math.},
   volume={72},
   date={2020},
   number={5},
   pages={1352--1385},
}

	\bib{BBR}{article}{
   author={Balodi, M.},
   author={Banerjee, A.},
   author={Ray, S.},
   title={Entwined modules over linear categories and Galois extensions},
   journal={Israel J. Math.},
   volume={241},
   date={2021},
   number={2},
   pages={623--692},
}

\bib{Brx1}{article}{
   author={Brzezi\'{n}ski, T.},
   title={On modules associated to coalgebra Galois extensions},
   journal={J. Algebra},
   volume={215},
   date={1999},
   number={1},
   pages={290--317},
}

\bib{Brx5}{article}{
   author={Brzezi\'{n}ski, T.},
   title={Frobenius properties and Maschke-type theorems for entwined
   modules},
   journal={Proc. Amer. Math. Soc.},
   volume={128},
   date={2000},
   number={8},
   pages={2261--2270},
}

\bib{Brx2}{article}{
   author={Brzezi\'{n}ski, T.},
   title={The cohomology structure of an algebra entwined with a coalgebra},
   journal={J. Algebra},
   volume={235},
   date={2001},
   number={1},
   pages={176--202},
}

\bib{Brx3}{article}{
   author={Brzezi\'{n}ski, T.},
   title={The structure of corings: induction functors, Maschke-type
   theorem, and Frobenius and Galois-type properties},
   journal={Algebr. Represent. Theory},
   volume={5},
   date={2002},
   number={4},
   pages={389--410},
}
	
\bib{BrMj}{article}{
   author={Brzezi\'{n}ski, T.},
   author={Majid, S.},
   title={Coalgebra bundles},
   journal={Comm. Math. Phys.},
   volume={191},
   date={1998},
   number={2},
   pages={467--492},
}

\bib{uni}{article}{
   author={Brzezi\'{n}ski, T.},
   author={Caenepeel, S.},
   author={Militaru, G.},
   author={Zhu, S.},
   title={Frobenius and Maschke type theorems for Doi-Hopf modules and
   entwined modules revisited: a unified approach},
   conference={
      title={Ring theory and algebraic geometry},
      address={Le\'{o}n},
      date={1999},
   },
   book={
      series={Lecture Notes in Pure and Appl. Math.},
      volume={221},
      publisher={Dekker, New York},
   },
   date={2001},
   pages={1--31},
}

\bib{Wibook}{book}{
   author={Brzezinski, T.},
   author={Wisbauer, R.},
   title={Corings and comodules},
   series={London Mathematical Society Lecture Note Series},
   volume={309},
   publisher={Cambridge University Press, Cambridge},
   date={2003},
   pages={xii+476},
}

\bib{BuTa2}{article}{
   author={Bulacu, D.},
   author={Caenepeel, S.},
   author={Torrecillas, B.},
   title={Frobenius and separable functors for the category of entwined
   modules over cowreaths, II: applications},
   journal={J. Algebra},
   volume={515},
   date={2018},
   pages={236--277},
}

\bib{BuTa1}{article}{
   author={Bulacu, D.},
   author={Caenepeel, S.},
   author={Torrecillas, B.}
   title={Frobenius and Separable Functors for the Category of Entwined Modules over Cowreaths, I: General Theory},
   journal={Algebra and Representation Theory},
   volume={23},
   date={2020},
   pages={1119-1157},
}

\bib{CaDe}{article}{
   author={Caenepeel, S.},
   author={De Groot, E.},
   title={Modules over weak entwining structures},
   conference={
      title={New trends in Hopf algebra theory},
      address={La Falda},
      date={1999},
   },
   book={
      series={Contemp. Math.},
      volume={267},
      publisher={Amer. Math. Soc., Providence, RI},
   },
   date={2000},
   pages={31--54},
}

\bib{X13}{article}{
   author={Caenepeel, S.},
   author={Militaru, G.},
   author={Ion, Bogdan},
   author={Zhu, Shenglin},
   title={Separable functors for the category of Doi-Hopf modules,
   applications},
   journal={Adv. Math.},
   volume={145},
   date={1999},
   number={2},
   pages={239--290},
}

\bib{X14}{article}{
   author={Caenepeel, S.},
   author={Militaru, G.},
   author={Zhu, Shenglin},
   title={A Maschke type theorem for Doi-Hopf modules and applications},
   journal={J. Algebra},
   volume={187},
   date={1997},
   number={2},
   pages={388--412},
   issn={0021-8693},
   review={\MR{1430990}},
   doi={10.1006/jabr.1996.6794},
}

\bib{X15}{article}{
   author={Caenepeel, S.},
   author={Militaru, G.},
   author={Zhu, S.},
   title={Doi-Hopf modules, Yetter-Drinfel\cprime d modules and Frobenius type
   properties},
   journal={Trans. Amer. Math. Soc.},
   volume={349},
   date={1997},
   number={11},
   pages={4311--4342},

}

\bib{book3}{book}{
   author={D\u{a}sc\u{a}lescu, S.},
   author={N\u{a}st\u{a}sescu, C.},
   author={Raianu, \c{S}.},
   title={Hopf algebras},
   series={Monographs and Textbooks in Pure and Applied Mathematics},
   volume={235},
   note={An introduction},
   publisher={Marcel Dekker, Inc., New York},
   date={2001},
   pages={x+401},
}

\bib{EV}{article}{
   author={Estrada, S.},
   author={Virili, S.},
   title={Cartesian modules over representations of small categories},
   journal={Adv. Math.},
   volume={310},
   date={2017},
   pages={557--609},
}

\bib{Tohoku}{article}{
   author={Grothendieck, A.},
   title={Sur quelques points d'alg\`ebre homologique},
   language={French},
   journal={Tohoku Math. J. (2)},
   volume={9},
   date={1957},
   pages={119--221},
}

\bib{HP}{article}{
   author={Hobst, D.},
   author={Pareigis, B.},
   title={Double quantum groups},
   journal={J. Algebra},
   volume={242},
   date={2001},
   number={2},
   pages={460--494},
}

\bib{Jia}{article}{
   author={Jia, L.},
   title={The sovereign structure on categories of entwined modules},
   journal={J. Pure Appl. Algebra},
   volume={221},
   date={2017},
   number={4},
   pages={867--874},
}

\bib{KSch}{book}{
   author={Kashiwara, M.},
   author={Schapira, P.},
   title={Categories and sheaves},
   series={Grundlehren der Mathematischen Wissenschaften [Fundamental
   Principles of Mathematical Sciences]},
   volume={332},
   publisher={Springer-Verlag, Berlin},
   date={2006},
   pages={x+497},
}

\bib{Mit}{article}{
   author={Mitchell, B.},
   title={Rings with several objects},
   journal={Advances in Math.},
   volume={8},
   date={1972},
   pages={1--161},
}

\bib{MF}{book}{
   author={Mumford, D.},
   author={Fogarty, J.},
   title={Geometric invariant theory},
   series={Ergebnisse der Mathematik und ihrer Grenzgebiete [Results in
   Mathematics and Related Areas]},
   volume={34},
   edition={2},
   publisher={Springer-Verlag, Berlin},
   date={1982},
   pages={xii+220},
  
}

\bib{NBO}{article}{
   author={N\u{a}st\u{a}sescu, C.},
   author={Van den Bergh, M.},
   author={Van Oystaeyen, F.},
   title={Separable functors applied to graded rings},
   journal={J. Algebra},
   volume={123},
   date={1989},
   number={2},
   pages={397--413},
}

\bib{Raf}{article}{
   author={Rafael, M. D.},
   title={Separable functors revisited},
   journal={Comm. Algebra},
   volume={18},
   date={1990},
   number={5},
   pages={1445--1459},
}

\bib{Schb}{article}{
   author={Schauenburg, Peter},
   title={Doi-Koppinen Hopf modules versus entwined modules},
   journal={New York J. Math.},
   volume={6},
   date={2000},
   pages={325--329},
}

\bib{Schn}{article}{
   author={Schneider, H.-J},
   title={Principal homogeneous spaces for arbitrary Hopf algebras},
   journal={Israel J. Math.},
   volume={72},
   date={1990},
   number={1-2},
   pages={167--195},
}

\end{biblist}
\end{bibdiv}

\end{document}